\newtheorem{MainThm}{Theorem}
\newtheorem{Thm}{Theorem}[subsection]
\newtheorem{Prop}[Thm]{Proposition}
\newtheorem{Cor}[Thm]{Corollary}
\newtheorem{Lem}[Thm]{Lemma}
\newtheorem{Sublem}[Thm]{Sublemma}
\newtheorem{Def}[Thm]{Definition}
\theoremstyle{remark}
\newtheorem{Rmk}[Thm]{Remark}
\newtheorem*{Notation}{Notation}
\newtheorem*{Ack}{Acknowledgments}
\newcommand{\Order}{\mathcal{O}}
\newcommand{\into}{\hookrightarrow}
\newcommand{\onto}{\twoheadrightarrow}
\newcommand{\isomto}{\overset{\sim}{\to}}
\newcommand{\compose}{\circ}
\newcommand{\tensor}{\otimes}
\newcommand{\dtensor}{\tensor^{L}}
\newcommand{\closure}[1]{\overline{#1}}
\newcommand{\N}{\mathbb{N}}
\newcommand{\Z}{\mathbb{Z}}
\newcommand{\Q}{\mathbb{Q}}
\newcommand{\F}{\mathbb{F}}
\newcommand{\Affine}{\mathbb{A}}
\newcommand{\ab}{\mathrm{ab}}
\newcommand{\ur}{\mathrm{ur}}
\newcommand{\et}{\mathrm{et}}
\newcommand{\zar}{\mathrm{zar}}
\newcommand{\fppf}{\mathrm{fppf}}
\newcommand{\rat}{\mathrm{rat}}
\newcommand{\gen}{\mathrm{gen}}
\newcommand{\perf}{\mathrm{perf}}
\newcommand{\sch}{\mathrm{sch}}
\newcommand{\id}{\mathrm{id}}
\newcommand{\incl}{\mathrm{incl}}
\newcommand{\proj}{\mathrm{proj}}
\newcommand{\set}{\mathrm{set}}
\newcommand{\pre}{\mathrm{pre}}
\newcommand{\fl}{\mathrm{fl}}
\newcommand{\fin}{\mathrm{fin}}
\newcommand{\op}{\mathrm{op}}
\newcommand{\Alg}{\mathrm{Alg}}
\newcommand{\Pro}{\mathrm{P}}
\newcommand{\Loc}{\mathrm{L}}
\newcommand{\PDual}{\mathrm{PD}}
\newcommand{\CDual}{\mathrm{CD}}
\newcommand{\Gm}{\mathbf{G}_{m}}
\newcommand{\Ga}{\mathbf{G}_{a}}
\newcommand{\boundary}{\partial}
\newcommand{\ind}{\mathrm{ind}}
\newcommand{\pro}{\mathrm{pro}}
\newcommand{\dlog}{\mathrm{dlog}}
\newcommand{\dirlim}{\varinjlim}
\newcommand{\invlim}{\varprojlim}
\newcommand{\ideal}[1]{\mathfrak{#1}}
\newcommand{\DM}{\mathrm{DM}}
\newcommand{\alg}[1]{\mathbf{#1}}
\mathchardef\mhyphen="2D
\DeclareMathOperator{\Gal}{Gal}
\DeclareMathOperator{\Hom}{Hom}
\DeclareMathOperator{\Ker}{Ker}
\DeclareMathOperator{\Ext}{Ext}
\DeclareMathOperator{\Spec}{Spec}
\DeclareMathOperator{\Ab}{Ab}
\DeclareMathOperator{\Set}{Set}
\DeclareMathOperator{\Res}{Res}
\DeclareMathOperator{\Map}{Map}
\DeclareMathOperator{\Frac}{Frac}
\DeclareMathOperator{\Tor}{Tor}
\DeclareMathOperator{\GrAb}{GrAb}
\DeclareMathOperator{\DGAb}{DGAb}
\let\Im\relax
\DeclareMathOperator{\Im}{Im}
\DeclareMathOperator{\sh}{\mathsf{sh}}
\DeclareMathOperator{\sheafhom}{\alg{Hom}}
\DeclareMathOperator{\sheafext}{\alg{Ext}}
\DeclareMathOperator{\Pic}{Pic}
\newcommand{\BetweenThmAndList}{\leavevmode}
\title[Duality for local fields]
{Duality for local fields and sheaves on the category of fields}
\author{Takashi Suzuki}
\address{
	Department of Mathematics, Chuo University,
	1-13-27 Kasuga, Bunkyo-ku, Tokyo 112-8551, Japan
}
\email{tsuzuki@gug.math.chuo-u.ac.jp}
\date{January 27, 2021}
\subjclass[2010]{Primary: 11G45; Secondary: 14F20, 13D03}
\keywords{Duality for local fields; category of fields; Grothendieck topologies}
\begin{document}

\begin{abstract}
	Duality for complete discrete valuation fields with perfect residue field
	with coefficients in (possibly $p$-torsion) finite flat group schemes
	was obtained by B\'egueri, Bester and Kato.
	In this paper, we give another formulation and proof of this result.
	We use the category of fields and a Grothendieck topology on it.
	This simplifies the formulation and proof
	and reduces the duality to classical results on Galois cohomology.
	A key point is that the resulting site correctly captures
	extension groups between algebraic groups.
\end{abstract}

\maketitle

\tableofcontents


\section{Introduction}

\subsection{Aim of the paper}

Let $K$ be a complete discrete valuation field
with algebraically closed (more generally, perfect) residue field $k$ of characteristic $p > 0$.
Duality for $K$ with coefficients in finite flat group schemes
(with torsion not necessarily prime to $p$)
was obtained by B\'egueri (\cite{Beg81}), Bester (\cite{Bes78})
and Kato (unpublished, but announcements
in a much more general setting can be found in \cite{Kat86}, \cite[\S 3.3]{Kat91}).
This is a generalization of Serre's local class field theory (\cite{Ser61})
in the style of local Tate duality.
A summary of the work of B\'egueri and Bester can be found in Milne's book \cite[III, \S 4, \S 10]{Mil80}.
The hard part of this duality is how to give a nice geometric structure for cohomology groups.

In this paper, we give another formulation and proof of this result.
Our method is simple and straightforward,
requiring only classical results on Galois cohomology of such discrete valuation fields
written for example in \cite{Ser79} or \cite{Ser02}
together with Serre's local class field theory,
once we define a certain Grothendieck site as a set-up for the duality
and establish its basic site-theoretic properties.
The underlying category of the site we use is
the \emph{category of fields} (possibly transcendental) over $k$,
in contrast to other usual sites whose underlying categories are categories of rings or schemes.
An easy fact is that
the cohomology theory of our site is simple:
it is essentially Galois cohomology.
A difficult fact is that
extensions between sheaves on our site are rich:
they correctly capture extensions between algebraic groups.

In \cite{Suz20},
we apply our formulation and results to
Grothendieck's conjecture on the special fibers of abelian varieties over $K$ that he made in SGA7.


\subsection{Main results}

Now we formulate here a relatively simple part of the duality.
Let $k$ be a perfect field of characteristic $p > 0$.
We say that a $k$-algebra is \emph{rational}
if it is a finite direct product of
the perfect closures of finitely generated fields over $k$.
We denote the category of rational $k$-algebras by $k^{\rat}$.
This is essentially the category of (perfect) fields.
Give it the \'etale topology and denote the resulting site by $\Spec k^{\rat}_{\et}$.
We call this site the \emph{rational \'etale site} of $k$.

Let $K$ be a complete discrete valuation field
with ring of integers $\Order_{K}$ and residue field $k$.
We denote by $W$ the affine ring scheme of Witt vectors of infinite length.
The ring $\Order_{K}$ has a natural structure of a $W(k)$-algebra of profinite length
(which factors through the residue field $k$ when $K$ has equal characteristic).
We denote by $\hat{\tensor}$ the completed tensor product.
We define sheaves of rings on the site $\Spec k^{\rat}_{\et}$ by assigning to each $k' \in k^{\rat}$,
	\[
			\alg{O}_{K}(k')
		=
			W(k') \hat{\tensor}_{W(k)} \Order_{K},
		\quad
			\alg{K}(k')
		=
			\alg{O}_{K}(k') \tensor_{\Order_{K}} K.
	\]
Note that if $k'$ has only one direct factor, then
$\alg{K}(k')$ is the complete discrete valuation field
obtained from $K$ by extending its residue field from $k$ to $k'$.
The sheaves of invertible elements of $\alg{O}_{K}$ and $\alg{K}$ are denoted by
$\alg{U}_{K}$ and $\alg{K}^{\times}$, respectively.

We define a category $K_{\et} / k^{\rat}$ as follows.
An object is a pair $(L, k_{L})$,
where $k_{L} \in k^{\rat}$ and $L$ is an \'etale $\alg{K}(k_{L})$-algebra.
A morphism $(L, k_{L}) \to (L', k_{L'})$ consists of
a $k$-algebra homomorphism $k_{L} \to k_{L'}$ and a ring homomorphism $L \to L'$
such that the diagram
	\[
		\begin{CD}
				\alg{K}(k_{L})
			@>>>
				\alg{K}(k_{L'})
			\\
			@VVV
			@VVV
			\\
				L
			@>>>
				L'
		\end{CD}
	\]
commutes.
The composite of two morphisms is defined in an obvious way.
We say that a morphism $(L, k_{L}) \to (L', k_{L'})$ is \emph{\'etale}
if $k_{L} \to k_{L'}$ (and hence $L \to L'$) is \'etale.
Then we can define the \emph{relative \'etale site of $K$ over $k$},
denoted by $\Spec K_{\et} / k^{\rat}_{\et}$,
to be the category $K_{\et} / k^{\rat}$
endowed with the topology whose covering families
over an object $(L, k_{L}) \in K_{\et} / k^{\rat}$
are finite families $\{(L_{i}, k_{L_{i}})\}$ of objects \'etale over $(L, k_{L})$
with $\prod_{i} L_{i}$ faithfully flat over $L$.
The global version of this type of relative sites was defined by Artin-Milne
(\cite[\S 3]{AM76}; ``an auxiliary site $X / S_{\perf}$'').

The functor
	$
			k^{\rat}
		\to
			K_{\et} / k^{\rat}
	$,
	$
			k'
		\mapsto
			(\alg{K}(k'), k')
	$
defines a morphism of sites
	\[
			\pi
		\colon
			\Spec K_{\et} / k^{\rat}_{\et}
		\to
			\Spec k^{\rat}_{\et}.
	\]
We call this the \emph{structure morphism of $K$ over $k$}.
Note that there is no morphism $\Spec \Q_{p} \to \Spec \F_{p}$
on the level of schemes.
We denote by $\Ab(k^{\rat}_{\et})$ and $\Ab(K_{\et} / k^{\rat}_{\et})$
the categories of sheaves of abelian groups on $\Spec k^{\rat}_{\et}$
and on $\Spec K_{\et} / k^{\rat}_{\et}$, respectively,
and by $D(k^{\rat}_{\et})$ and $D(K_{\et} / k^{\rat}_{\et})$
their derived categories.
We denote
	\begin{gather*}
				\alg{\Gamma}(K_{\et}, \;\cdot\;)
			=
				\pi_{\ast},\;
				\alg{H}^{i}(K_{\et}, \;\cdot\;)
			=
				R^{i} \pi_{\ast}
			\colon
				\Ab(K_{\et} / k^{\rat}_{\et})
			\to
				\Ab(k^{\rat}_{\et}),
		\\
				R \alg{\Gamma}(K_{\et}, \;\cdot\;)
			=
				R \pi_{\ast}
			\colon
				D(K_{\et} / k^{\rat}_{\et})
			\to
				D(k^{\rat}_{\et}).
	\end{gather*}
For $A \in \Ab(K_{\et} / k^{\rat}_{\et})$,
the sheaf $\alg{H}^{i}(K_{\et}, A)$ is the \'etale sheafification of the presheaf
	\[
			k'
		\mapsto
			H^{i}(\alg{K}(k')_{\et}, A).
	\]
We denote by $\sheafhom_{k^{\rat}_{\et}}$ (resp.\ $\sheafext_{k^{\rat}_{\et}}^{i}$)
the sheaf-Hom (resp.\ the $i$-th sheaf-Ext)
and $R \sheafhom_{k^{\rat}_{\et}}$ its derived version.

Assume that $K$ has mixed characteristic.
For an \'etale group scheme $A$ over $K$,
we denote by $A^{\CDual}$ the Cartier dual of $A$
and by $A(1)$ the Tate twist.

\begin{MainThm} \label{thm: main theorem, duality}
	Assume that $K$ has mixed characteristic.
	\begin{enumerate}
		\item
			For any torsion \'etale group scheme $A$ over K ($p$-torsion being allowed),
			we have
				$
						\alg{H}^{i}(K_{\et}, A)
					=
						0
				$
			for $i \ne 0, 1$.
		\item
			The Kummer sequence gives an isomorphism
				\[
						\alg{H}^{1}(K_{\et}, \Q / \Z (1))
					=
						\alg{K}^{\times} \tensor_{\Z} \Q / \Z
				\]
			in $\Ab(k^{\rat}_{\et})$.
			By composing this with the valuation map
			$\alg{K}^{\times} \onto \Z$, we obtain a morphism
				\[
						\alg{H}^{1}(K_{\et}, \Q / \Z (1))
					\to
						\Q / \Z,
				\]
			which we call the \emph{trace map}.
		\item \label{ass: main theorem, cup pairing}
			Let $A$ be a finite \'etale group scheme over $K$.
			Consider the pairing
				\[
						R \alg{\Gamma}(K_{\et}, A^{\CDual})
					\times
						R \alg{\Gamma}(K_{\et}, A)
					\to
						\Q / \Z [-1]
				\]
			in $D(k^{\rat}_{\et})$
			given by the cup product and the trace map.
			The induced morphism
				\[
						R \alg{\Gamma}(K_{\et}, A^{\CDual})
					\to
						R \sheafhom_{k^{\rat}_{\et}}(R \alg{\Gamma}(K_{\et}, A), \Q / \Z [-1])
				\]
			is an isomorphism.
	\end{enumerate}
\end{MainThm}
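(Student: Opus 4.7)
The plan is: parts (1) and (2) are stalkwise computations of sheafified Galois cohomology, while part (3) is a sheaf-theoretic refinement of classical local Tate duality for finite étale coefficients, which reduces to it through the site-theoretic machinery developed earlier in the paper.

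For part (1), $\alg{H}^{i}(K_{\et}, A)$ is by construction the étale sheafification of $k' \mapsto H^{i}(\alg{K}(k')_{\et}, A)$, so vanishing can be checked at stalks. These correspond to algebraically closed perfect extensions $\bar{k}'/k$, for which $\alg{K}(\bar{k}')$ is a complete discretely valued field with algebraically closed residue field of characteristic $p$. A classical theorem of Serre (\emph{Corps Locaux} II~§4.3; Artin–Schreier–Witt in the $p$-primary direction) says its absolute Galois group has cohomological dimension one, which yields the vanishing. For part (2), the Kummer sequence combined with étale Hilbert 90 gives $\alg{H}^{1}(K_{\et}, \mu_{n}) \cong \alg{K}^{\times}/n$ stalkwise; passing to the colimit in $n$ and composing with the valuation produces the asserted isomorphism and the trace map.

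The substance is therefore part (3). By part (1), $R\alg{\Gamma}(K_{\et}, A)$ is concentrated in degrees $0$ and $1$, and parts (1)--(2) together identify its cohomology with sheaves of the type $\alg{K}^{\times}/n$, $\alg{U}_{K}/\alg{U}_{K}^{n}$, $\Z/n$, and finite étale group schemes --- precisely the ``algebraic-group-like'' sheaves on $\Spec k^{\rat}_{\et}$ for which, by the site-theoretic results of the earlier sections, $\sheafext$ groups coincide with classical extensions between algebraic groups. The plan is to exploit this to replace $R\alg{\Gamma}(K_{\et}, A)$ by an explicit two-term complex of such sheaves, so that
\[
R\sheafhom_{k^{\rat}_{\et}}(R\alg{\Gamma}(K_{\et}, A), \Q/\Z [-1])
\]
becomes computable as a small complex of $\sheafhom$s and $\sheafext$s into $\Q/\Z$. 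Serre's local class field theory then identifies these Ext groups with characters of $\alg{K}^{\times}$ and its subquotients, and a term-by-term matching with $R\alg{\Gamma}(K_{\et}, A^{\CDual})$ --- computed via the same recipe applied to $A^{\CDual}$ --- produces the desired quasi-isomorphism.

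The main obstacle is the sheaf-theoretic upgrade of the pairing. One has to verify that (i) the explicit representatives on the two sides realize the duality term by term, (ii) the resulting map of complexes is, up to homotopy, the one induced from the cup product and the trace of part (2), and (iii) the comparison with classical Ext groups supplied by the site-theoretic results of the paper is compatible with the duality pairings under Serre's LCFT. The site $\Spec k^{\rat}_{\et}$ is designed precisely so that these checks are possible; without the coincidence of its $\sheafext$ with classical $\Ext$ of algebraic groups, there would be no bridge from Serre's LCFT --- a statement about algebraic groups --- to the derived $\sheafhom$ appearing in the theorem.
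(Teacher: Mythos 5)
Your treatment of parts (1) and (2) is in line with the paper's: both come down to classical Galois cohomology of complete discretely valued fields with algebraically closed residue field (the paper routes (1) through the vanishing of $\alg{H}^{i}(\Gm)$ in positive degrees and the Kummer triangle rather than quoting $\mathrm{cd}\le 1$ directly, but the input is the same). One small point of care: the stalk of the presheaf $k'\mapsto H^{i}(\alg{K}(k')_{\et},A)$ at a geometric point is computed on the maximal unramified extension $\alg{K}(k')^{\ur}$, not on its completion $\alg{K}(\algcl{k'})$, so you need the (standard, for torsion étale coefficients) invariance of cohomology under completion of a Henselian discretely valued field; the paper makes this identification explicitly when citing Serre.

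For part (3) your plan diverges from the paper's proof, and the divergence matters. You propose to represent $R\alg{\Gamma}(K_{\et},A)$ by a two-term complex of quasi-algebraic sheaves, compute the $R\sheafhom$ into $\Q/\Z[-1]$ term by term via Theorem B, and match the result against $R\alg{\Gamma}(K_{\et},A^{\CDual})$. The three checks you defer --- above all (ii), that the map so obtained is the one induced by the cup product and the trace --- are precisely the hard content, and a term-by-term matching of two abstractly isomorphic complexes does not by itself supply them. The paper avoids this by first proving the duality with coefficients in $\Gm$: the morphism $R\alg{\Gamma}(\Z)\to R\sheafhom_{k}(\alg{K}^{\times},\Z)$ is shown to be a quasi-isomorphism by (a) computing both sides outside degree $2$ (cohomological dimension one on one side, Serre's vanishing of $\pi_{i}(\alg{U}_{K})$ for $i\ne 1$ on the other, via Theorem B), and (b) in degree $2$, explicitly tracing a cyclic extension $L/K$ through the functoriality morphism --- using the periodic resolution $0\to\Z\to\Z[G]\to\Z[G]\to\Z\to 0$ and its image $0\to\Gm\to\Res_{L/K}\Gm\to\Res_{L/K}\Gm\to\Gm\to 0$ --- to land on the extension $0\to G\to\alg{L}^{\times}/I_{G}\alg{U}_{L}\to\alg{K}^{\times}\to 0$, which is exactly Serre's reciprocity isomorphism. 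The finite-coefficient statement is then deduced formally: the Kummer triangle gives $A=\mu_{l}$, and a d\'evissage (trivialize $A^{\CDual}$ over a finite Galois extension, pass to the subextension fixed by an $l$-Sylow subgroup, use $\Res_{M/K}$ together with the fact that the composite of inclusion and norm is multiplication by $[M:K]$, invertible on $l$-power torsion) handles general multiplicative, hence in mixed characteristic general finite \'etale, $A$. This d\'evissage is also absent from your outline: for $A$ with nontrivial Galois action there is no direct Kummer-theoretic two-term complex to match. I would restructure your argument around the $\Gm$-coefficient duality; as written, the compatibility with local class field theory remains an unproved assertion rather than being reduced to a computation.
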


For example, we have
	$
			\alg{H}^{2}(K_{\et}, \Q / \Z (1))
		=
			0
	$.
This is equivalent to the classical vanishing result of
$H^{2}(\alg{K}(k')_{\et}, \Q / \Z (1)) = $
the Brauer group of the complete discrete valuation field $\alg{K}(k')$
with any algebraically closed residue field $k'$ (over $k$).
Here it is essential to use the category $k^{\rat}$ of fields.
If $k'$ is replaced by a more general $k$-algebra $R$,
then the cohomology of a similarly defined ring $\alg{K}(R)$ is at least not classical.

The proof of Assertion \eqref{ass: main theorem, cup pairing} requires the following theorem
to understand extension groups over $\Spec k^{\rat}_{\et}$.
Note that the quotient $\alg{U}_{K} / (\alg{U}_{K})^{n}$ by $n$-th power elements
for any $n \ge 1$ is represented by a quasi-algebraic group of units over $k$
studied by Serre (\cite{Ser61}).
Recall that a quasi-algebraic group is the perfection (inverse limit along Frobenii)
of an algebraic group (\cite{Ser60}).
Let $\Alg / k$ be the category of commutative affine quasi-algebraic groups over $k$
and $\Ext_{\Alg / k}^{n}$ the $n$-th Ext functor for $\Alg / k$.

\begin{MainThm} \label{thm: main theorem, comparison of Ext}
	For any $A, B \in \Alg / k$ and any $n \ge 0$, we have
		\[
				\Ext_{k^{\rat}_{\et}}^{n}(A, B)
			=
				\Ext_{\Alg / k}^{n}(A, B).
		\]
\end{MainThm}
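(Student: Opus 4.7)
The strategy is to view $\Alg / k$ as a full abelian subcategory of $\Ab(k^{\rat}_{\et})$ via the functor $A \mapsto (k' \mapsto A(k'))$, verify that this embedding is exact and closed under extensions, and then bootstrap from the cases $n = 0, 1$ to higher $\Ext^{n}$ by a parallel effaceability argument in both categories. For the base cases I would first check that $A \mapsto (k' \mapsto A(k'))$ is a sheaf (\'etale descent for affine schemes), exact (surjections in $\Alg / k$ admit \'etale-local sections since quasi-algebraic groups are smooth after perfection), and fully faithful (a natural transformation of sheaves, evaluated at the perfect closure of the function field of $A$, produces a rational map $A \dashrightarrow B$ which extends uniquely to a homomorphism by the classical regularization theorem for maps into commutative algebraic groups). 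This settles $n = 0$.

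\medskip

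For $n = 1$, the key point is that a sheaf extension $0 \to B \to E \to A \to 0$ realizes $E$ as an \'etale-locally trivial $B$-torsor over $A$; \'etale descent, applicable because $B$ is smooth, then upgrades $E$ to an object of $\Alg / k$ whose sheaf-theoretic group law comes from a quasi-algebraic one, and compatibility of Baer sums yields $\Ext^{1}_{\Alg / k}(A, B) = \Ext^{1}_{k^{\rat}_{\et}}(A, B)$. For $n \ge 2$ I would then argue by dimension shifting: for every $A \in \Alg / k$ construct an embedding $A \hookrightarrow A^{\ast}$ in $\Alg / k$ with $\Ext^{i}(A^{\ast}, B) = 0$ for all $i \ge 1$ and every $B \in \Alg / k$, computed in \emph{both} categories simultaneously. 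Natural candidates for $A^{\ast}$ are Weil restrictions along finite \'etale covers (playing the role of Shapiro-induced modules) for the \'etale part, and Serre's standard resolving objects built from $\Ga$, $\Gm$ and Witt schemes for the connected part. The long exact sequence associated to $0 \to A \to A^{\ast} \to A^{\ast}/A \to 0$ shifts $\Ext^{n}(A, B)$ to $\Ext^{n-1}(A^{\ast}/A, B)$ compatibly in the two categories, and induction from the $n = 1$ case concludes.

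\medskip

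\textbf{Expected main obstacle.} The technical heart of the argument is the construction of $A^{\ast}$: I need resolving objects that are \emph{simultaneously} acyclic in $\Alg / k$ (where Serre's theory of pro-algebraic $\Ext$ applies) and in $\Ab(k^{\rat}_{\et})$ (where acyclicity reduces to Galois cohomology of fields, by design of the rational \'etale site). The delicate point is not the vanishing on either side in isolation --- both are classical inputs --- but producing the \emph{same} $A^{\ast}$ that works for both, i.e.\ ensuring that the resolutions dictated by Galois cohomology actually live inside $\Alg / k$ and are not pushed out to a larger pro-completion by the sheaf machinery. This compatibility is precisely what is meant by the slogan that the site ``correctly captures extensions of algebraic groups'', and it is here that the comparison theorem has genuine content.
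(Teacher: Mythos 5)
Your treatment of $n = 0, 1$ is essentially the route the paper itself takes in Section \ref{sec: Comparison of Ext in low degrees: birational groups} (evaluation at the generic point, regularization of the resulting rational map, and for $n = 1$ a factor-system argument), with one correction: a sheaf extension $0 \to B \to E \to A \to 0$ on $\Spec k^{\rat}_{\et}$ is \emph{not} an \'etale-locally trivial $B$-torsor over the scheme $A$, because $A$ itself is not an object of the site --- only its points are. Pulling back to $\xi_{A}$ and killing the class in $H^{1}((\xi_{A})_{\et}, B)$ produces a section over the generic point only, and upgrading that to a quasi-algebraic group requires Weil's theorem on birational group laws and rational symmetric factor systems, not \'etale descent over $A$. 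This is a repairable imprecision.

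The $n \ge 2$ step has a genuine gap, in two layers. First, the homological algebra is oriented the wrong way: for an embedding $A \hookrightarrow A^{\ast}$ with $A^{\ast}$ acyclic for $\Ext^{i}(\,\cdot\,, B)$, the long exact sequence attached to $0 \to A \to A^{\ast} \to A^{\ast}/A \to 0$ gives $\Ext^{n}(A, B) \cong \Ext^{n + 1}(A^{\ast}/A, B)$ --- the degree goes \emph{up}, not down. To shift the first variable downward you need acyclic objects \emph{surjecting onto} $A$, i.e.\ a projective-direction resolution. Second, and more seriously, no candidate for such resolving objects is exhibited, and producing one that is acyclic simultaneously in $\Alg / k$ and in $\Ab(k^{\rat}_{\et})$ is essentially the content of the theorem rather than an input to it. Serre's resolving objects in this context (universal coverings, Witt covers) are infinite-dimensional proalgebraic groups, and the paper notes explicitly that for such groups $\Ext^{1}_{k^{\rat}_{\et}}$ already fails to agree with $\Ext^{1}_{\Pro \Alg / k}$ --- this is precisely why the ind-rational site is introduced --- so their acyclicity over $\Spec k^{\rat}_{\et}$ is unavailable and in part false. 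The natural projective-direction resolution in the sheaf category has terms built from free sheaves $\Z[A^{m}]$, whose $\Ext$ into $B$ are cohomology groups $H^{j}(A^{m}, B)$ of schemes that do not belong to the site; the paper's actual argument in Section \ref{sec: extensions of algebraic groups as sheaves on the rational etale site} is devoted to exactly this obstacle: it uses Mac Lane's cubical resolution $M(A)$, the pro-fppf covering $\xi_{A} \times_{k} \xi_{A} \onto A$, and the splitting homotopy to show that the ``generic part'' $h^{\ast} M(A)$ is still a resolution, and then invokes Breen's computation of $\Ext$ over the perfect site. Your plan correctly identifies where the difficulty sits, but it does not supply the mechanism that resolves it.
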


It is important for this theorem that
the category $k^{\rat}$ contains the generic points of $A, B \in \Alg / k$.
With this theorem, assuming $k$ algebraically closed, which we may,
the essential part of the spectral sequence
associated with the morphism in \eqref{ass: main theorem, cup pairing} is
the sequence
	\begin{align*}
				0
		&	\to
				\Ext_{\Alg / k}^{1}(\alg{U}_{K} / (\alg{U}_{K})^{n}, \Z / n \Z)
			\to
				H^{1}(K_{\et}, \Z / n \Z)
		\\
		&	\qquad
			\to
				\Hom_{\Alg / k}(\Z / n \Z (1)(\alg{K}), \Z / n \Z)
			\to
				0,
	\end{align*}
where $\Z / n \Z (1)(\alg{K})$ is the finite \'etale group over $k$ given by
the kernel of multiplication by $n$ on $\alg{K}^{\times}$.
We can show that this sequence agrees with the exact sequence given by
Serre's local class field theory.
This proves the exactness of our sequence and hence
Assertion \eqref{ass: main theorem, cup pairing}.

This paper does not contain essentially new duality results.
Trying to give such results is not the purpose of this paper.
Our purpose is to give a clear exposition of the use of sheaves on the category of fields
in the duality theories of B\'egueri, Bester and Kato.
We hope that similar techniques can be applied to other situations
to reduce cohomology of schemes to cohomology of fields.

When writing this paper,
the author was informed that
P\'epin \cite{Pep14} found a similar formulation to ours,
independently at almost the same time.
He views cohomology of $K$ as a functor on the category of fields over $k$,
which is a key common feature between his and our formulations.
This might explain how the method of the category of fields may naturally arise.


\subsection{Organization}

The details of the duality are explained in
Section \ref{sec: duality for local fields with perfect residue field}
except for the proof of Theorem \ref{thm: main theorem, comparison of Ext}.
The proof of Theorem \ref{thm: main theorem, duality} finishes
at Section \ref{sec: duality with coefficients in a finite flat group scheme}.
We treat the following more general setting.
Using fppf cohomology of $K$,
the equal and mixed characteristic cases are treated together.
Since the group $\alg{U}_{K}$ is an infinite-dimensional proalgebraic group,
we treat proalgebraic groups in order to give a transparent argument.
This option is actually necessary in the equal characteristic case,
since even the group $\alg{U}_{K} / (\alg{U}_{K})^{p}$ in that case is infinite-dimensional
and $\Ext_{k^{\rat}_{\et}}^{1}(\alg{U}_{K} / (\alg{U}_{K})^{p}, \Z / p \Z)$
does not agree with the extension group for the category $\Pro \Alg / k$ of proalgebraic groups.
This leads us to define a larger site than $\Spec k^{\rat}_{\et}$,
which we call the \emph{ind-rational \'etale site} of $k$,
denoted by $\Spec k^{\ind\rat}_{\et}$.
Its underlying category $k^{\ind\rat}$ consists of ind-rational $k$-algebras,
which are defined as filtered unions of rational $k$-algebras.
The corresponding generalization of Theorem \ref{thm: main theorem, comparison of Ext}
is Theorem \ref{thm: comparison of Ext, proalgebraic setting},
where $A$ is allowed to be proalgebraic.
Assuming Theorem \ref{thm: comparison of Ext, proalgebraic setting},
the proof of our duality theorem goes mostly in the manner outlined above.
Unfortunately, however, the option we take raises an additional problem.
We have to compute $H^{2}(\alg{K}(k')_{\et}, \Gm)$
with $k'$ ind-rational that may have infinitely many direct factors.
In this case, the ring $\alg{K}(k')$ is no longer a finite product of fields,
not even a direct limit of finite products of fields.
In Section \ref{sec: Cohomology of local fields with ind-rational base},
we compute this type of cohomology
by approximating it with cohomology of complete discrete valuation subfields.

We do not redo all results of B\'egueri, Bester and Kato.
We do, however, include a duality for varieties over $K$ (assuming $K$ has mixed characteristic)
with possibly $p$-torsion coefficients in
Section \ref{sec: duality for a variety over a local field}.
This is a simple combination of the duality for $K$ and
the Poincar\'e duality for varieties over an algebraic closure of $K$,
which does not seem to have been written down elsewhere.
We also restate Theorem \ref{thm: main theorem, duality} as a Verdier-type duality for $K$
and discuss a possible relation with the Albanese property of $\alg{K}^{\times}$ (\cite{CC94})
in Section \ref{sec: the right adjoint to the pushforward}.
When writing this paper,
the author realized that some categories closely related to our site
have already been studied by Rovinsky \cite{Rov05} and Jannsen-Rovinsky \cite{JR10}
in their study of motives.
We explain these relations in
Section \ref{sec: comparison with Jannsen-Rovinsky's dominant topology}.

The proof of Theorem \ref{thm: comparison of Ext, proalgebraic setting},
thereby Theorem \ref{thm: main theorem, comparison of Ext},
occupies the entire part of
Section \ref{sec: extensions of algebraic groups as sheaves on the rational etale site}.
We outline the proof.
First note that
if $A$ is an algebraic group over $k$ with generic point $\xi_{A}$,
then the group operation map
	\[
		\xi_{A} \times_{k} \xi_{A} \to A
	\]
is faithfully flat.%
\footnote{
	See \cite[II, \S 5, Lemma 1.2]{DG70} for example,
	applying the limit argument given in \cite[III, \S 3, Lemma 7.1]{DG70}.
}
In other words, any point of $A$ can be written as the sum of two generic points.
A key observation is to regard this well-known fact as saying that $A$ is \emph{covered by fields}.
Now let $\Spec k^{\perf}_{\et}$ be the category of perfect $k$-algebras (having invertible Frobenius)
endowed with the \'etale topology.
Breen's results \cite{Bre70} and \cite{Bre81} tell us that
	\[
			\Ext_{k^{\perf}_{\et}}^{n}(A, B)
		=
			\Ext_{\Pro \Alg / k}^{n}(A, B).
	\]
It does not seem possible to directly compare extensions
over $\Spec k^{\perf}_{\et}$ and $\Spec k^{\ind\rat}_{\et}$.
The \'etale topology is too coarse to treat the above morphism
$\xi_{A} \times_{k} \xi_{A} \to A$ as a covering.
Instead of trying a direct comparison, we define another site,
the \emph{perfect pro-fppf site}, denoted by $\Spec k^{\perf}_{\pro\fppf}$.
Its underlying category is the category of perfect $k$-algebras,
where a covering of a perfect affine $k$-scheme $X$ is
a jointly surjective finite family of filtered inverse limits
of perfect flat affine $X$-schemes of finite presentation.
This is a flat analog of Scholze's pro-\'etale site \cite{Sch13}.
Using the pro-fppf topology,
we can include the faithfully flat morphism
$\xi_{A} \times_{k} \xi_{A} \onto A$ above as a covering,
even though it is not of finite presentation or pro-\'etale.
There is no difference between cohomology with respect to
$\Spec k^{\perf}_{\pro\fppf}$ and $\Spec k^{\perf}_{\et}$
with coefficients in a quasi-algebraic group $B$
since $B$ is the perfection of a smooth algebraic group.
(There seems to be no known analogous comparison result for the fpqc cohomology instead of pro-fppf
for the third cohomology $H^{3}(R, \Gm)$ or higher.)
The comparison of extensions over $\Spec k^{\perf}_{\pro\fppf}$ and $\Spec k^{\ind\rat}_{\et}$
is complicated due to the fact that
the category $k^{\rat}$ of fields do not have all finite fiber sums
and the pullback functor for the natural continuous functor
$\Spec k^{\perf}_{\pro\fppf} \to \Spec k^{\ind\rat}_{\et}$ is not exact.
To overcome this, we follow Breen's method (\cite{Bre78}) to write extension groups of $A$
in terms of spectral sequences whose $E_{2}$-terms are given by
cohomology groups of products of $A$.
More precisely, we use Mac Lane's resolution
defined as the bar construction for the cubical construction (\cite{Mac57}),
and apply it to the left variable $A$ of the functor $\Ext^{n}(A, B)$.
A key property of the cubical construction is that it is an additive functor
up to a very explicit and simple chain homotopy called the \emph{splitting homotopy}
(\cite[\S 5, Lemme 2]{Mac57}).
This homotopy and variants of the covering $\xi_{A} \times_{k} \xi_{A} \onto A$
allow us to replace the cohomology groups appearing in the $E_{2}$-terms
by cohomology groups of fields.
This is the hardest part of this paper.

Here are the logical connections of the sections:
	\[
		\begin{CD}
			@.
				\text{\ref{sec: The ind-rational etale site}}
			@>>>
				\text{(\ref{sec: Comparison of Ext in low degrees: birational groups})}
			@.
			\\
			@.
			@VVV
			@VVV
			@.
			\\
				\text{\ref{sec: comparison with Jannsen-Rovinsky's dominant topology}}
			@<<<
				\text{\ref{sec: extensions of algebraic groups as sheaves on the rational etale site}}
			@>>>
				\text{\ref{sec: The relative fppf site of a local field}%
				--\ref{sec: duality with coefficients in a finite flat group scheme}}
			@>>>
				\text{\ref{sec: duality for a variety over a local field}%
				--\ref{sec: the right adjoint to the pushforward}}
		\end{CD}
	\]
Section \ref{sec: Comparison of Ext in low degrees: birational groups} does not substitute
Section \ref{sec: extensions of algebraic groups as sheaves on the rational etale site}.
It proves only the case $n = 0, 1$ of Theorem \ref{thm: main theorem, comparison of Ext},
which, though, gives some ideas about the general case.
Section \ref{sec: Cohomology of local fields with ind-rational base} can be skipped
if one is only interested in the mixed characteristic case.
Hence the quickest way to Theorem \ref{thm: main theorem, duality} may be
\ref{sec: The ind-rational etale site}%
--\ref{sec: duality with coefficients in a finite flat group scheme}
with \ref{sec: Cohomology of local fields with ind-rational base} skipped,
interpreting the ind-rational \'etale site as the rational \'etale site.

\begin{Ack}
	The author expresses his deep gratitude to Kazuya Kato
	for many helpful discussions and continuous support.
	Kato's duality theory was the starting point of this work.
	It is a pleasure to thank Jon Peter May and Takefumi Nosaka.
	Several discussions with each of them have helped and encouraged the author
	to attack to Theorem \ref{thm: main theorem, comparison of Ext}
	with the machinery from algebraic topology.
	The author is grateful to Teruhisa Koshikawa
	for suggesting a relation with Scholze's pro-\'etale site among other useful conversations,
	Madhav Nori for helpful conversations about flat morphisms,
	Alessandra Bertapelle for her comments on an earlier version of this paper,
	Aise Johan de Jong for suggesting a simpler proof of
	Proposition \ref{prop: composite of flat of ind-finite presentation},
	and the referee for their careful comments.
\end{Ack}

\begin{Notation}
	We fix a perfect field $k$ of characteristic $p > 0$.
	A perfect field over $k$ is said to be finitely generated
	if it is the perfection (direct limit along Frobenii) of a finitely generated field over $k$.
	The same convention is applied to morphisms of perfect $k$-algebras or $k$-schemes
	being finite type, finite presentation etc.
	A perfect $k$-scheme of finite type
	is also said to be quasi-algebraic following Serre's terminology \cite{Ser60}.
	The perfections of $\Ga$, $\Gm$ and $\Affine_{k}^{n}$ over $k$ are denoted by the same symbols
	$\Ga$, $\Gm$ and $\Affine_{k}^{n}$ by abuse of notation.
	A point of a scheme is a point of its underlying set unless otherwise noted,
	usually identified with the $\Spec$ of its residue field.
	If $X$ is a perfect (hence reduced) $k$-scheme of finite type,
	then its generic point, denoted by $\xi_{X}$,
	means the disjoint union of the generic points of its irreducible components.
	For a set $X$,
	we denote by $\Z[X]$ the free abelian group generated by $X$.
	We denote by $\Set$, $\Ab$, $\GrAb$, $\DGAb$
	the categories of sets, abelian groups,
	graded abelian groups,
	differential graded abelian groups, respectively.
	Set theoretic issues are omitted for simplicity
	as the main results hold independent of the choice of universes.
	All groups (except for Galois groups) are assumed to be commutative.
	We denote by $\Alg / k$
	the category of (commutative) affine quasi-algebraic groups over $k$
	in the sense of Serre \cite{Ser60},
	that is, group objects in the category of affine quasi-algebraic schemes over $k$.
	Its procategory $\Pro \Alg / k$ is the category of affine proalgebraic groups over $k$.
	We denote by $\Loc \Alg / k$
	the category of the perfections of smooth group schemes over $k$.
	For a Grothendieck site $S$ and a category $\mathcal{C}$,
	we denote by $\mathcal{C}(S)$ the category of sheaves on $S$ with values in $\mathcal{C}$.
	For an object $X$ of $S$,
	the category $S / X$ of objects of $S$ over $X$ is equipped with the induced topology
	(\cite[III, \S 3]{AGV72a}),
	which is the localization of $S$ at $X$ (\cite[III, \S 5]{AGV72a}).
	For $F \in \Set(S)$, we denote by $\Z[F]$ the sheafification of the presheaf
	$X \mapsto \Z[F(X)]$.
	By a continuous map $f \colon S' \to S$ between sites $S'$ and $S$
	we mean a continuous functor from the underlying category of $S$ to that of $S'$,
	which means that the right composition (or the pushforward $f_{\ast}$)
	sends sheaves on $S'$ to sheaves on $S$.
	By a morphism $f \colon S' \to S$ of sites
	we mean a continuous map whose pullback functor $\Set(S) \to \Set(S')$ is exact.
	For an abelian category $\mathcal{A}$,
	we denote by $\Ext_{\mathcal{A}}^{i}$ the $i$-th Ext functor for $\mathcal{A}$.
	The bounded, bounded below, bounded above and unbounded derived categories of $\mathcal{A}$ are denoted by
	$D^{b}(\mathcal{A})$, $D^{+}(\mathcal{A})$, $D^{-}(\mathcal{A})$ and $D(\mathcal{A})$, respectively.
	If $\mathcal{A} = \Ab(S)$,
	we also write $D^{\ast}(S) = D^{\ast}(\Ab(S))$ for $\ast = b, +, -$ and (blank),
	and $\Ext_{S}^{i} = \Ext_{\mathcal{A}}^{i}$.
	For sites such as $\Spec k^{\rat}_{\et}$,
	we also use $\Ext_{k^{\rat}_{\et}}$, $\Ab(k^{\rat}_{\et})$ etc.\ omitting $\Spec$ from the notation.
	When a continuous map $f \colon S' \to S$ of subcanonical sites is obtained
	by the identity functor on the underlying categories,
	we simply write $f_{\ast} F = F$ for a representable sheaf $F$.
\end{Notation}


\section{Duality for local fields with perfect residue field}
\label{sec: duality for local fields with perfect residue field}

\subsection{The ind-rational \'etale site}
\label{sec: The ind-rational etale site}
Before introducing the ind-rational \'etale site,
we first need to fix our terminology on Grothendieck sites \cite{AGV72a},
which we will use throughout the paper,
and recall some basic facts.
All sites we use are (fortunately) defined by covering families or pretopologies
\cite[II, D\'efinition 1.3]{AGV72a},
except for the dominant topology $\DM_{k}$ that we study in
Sections \ref{sec: the flat case} and \ref{sec: The dominant topology and the pro-fppf topology}.
As in Notation, a continuous map $f \colon S' \to S$ of sites
(called a continuous functor in \cite[III, D\'efinition 1.1]{AGV72a}) is a functor from $S$ to $S'$
such that the pushforward $f_{\ast}$ sends sheaves on $S'$ to sheaves on $S$.
In general, this is stronger than saying that $f$ sends coverings in $S$ to coverings in $S'$.
But these are equivalent if
	$
			f(Y \times_{X} Z)
		=
			f(Y) \times_{f(X)} f(Z)
	$
in $S'$ for any morphism $Y \to X$ in $S$ appearing in a covering family
and any morphism $Z \to X$ in $S$.
(See \cite[III, Proposition 1.6]{AGV72a} for these two facts.)
All continuous maps in this paper,
except for those in Sections \ref{sec: the flat case}
and \ref{sec: The dominant topology and the pro-fppf topology},
satisfy this extra condition.
The pushforward functor of such a continuous map (between sites defined by pretopologies)
sends acyclic sheaves to acyclic sheaves 
and hence induces Leray spectral sequences \cite[\S 2.4]{Art62}.
We say that $A \in \Ab(S)$ is \emph{acyclic} if
$H^{n}(X, A) = 0$ for any $X$ in $S$ and any $n \ge 1$.
This is called $S$-acyclic in \cite[V, D\'efinition 4.1]{AGV72b}
and flask in \cite[\S 2.4]{Art62}.

When $f^{\ast}$ is exact, we say that $f$ is a morphism of sites \cite[IV, \S 4.9]{AGV72a}.
We will have to use continuous maps without exact pullbacks
in Sections \ref{sec: extensions of algebraic groups as sheaves on the rational etale site}
and \ref{sec: comparison with Jannsen-Rovinsky's dominant topology}.
The pushforward functor of such a functor does not send injectives to injectives,
which prevents us from using the Grothendieck spectral sequence for example.
There are two sufficient conditions for
a continuous map $f \colon S' \to S$ to be a morphism of sites:
$S$ has all finite limits and $f$ commutes with finite limits \cite[III, Proposition 1.3 (5)]{AGV72a};
or $f$ admits a left adjoint \cite[III, Proposition 2.5]{AGV72a}.
All continuous maps in this section satisfy one of these conditions,
hence morphisms of sites.

Now we define key notions.

\begin{Def}
	We say that a perfect $k$-algebra $k'$ is \emph{rational}
	if it is a finite direct product of finitely generated perfect fields over $k$,
	and \emph{ind-rational} if it is a filtered union of rational $k$-subalgebras.
	The rational (resp.\ ind-rational) $k$-algebras form a full subcategory of
	the category of perfect $k$-algebras,
	which we denote by $k^{\rat}$ (resp.\ $k^{\ind\rat}$).
\end{Def}

Note that $k^{\ind\rat}$ is naturally equivalent to the ind-category of $k^{\rat}$
and $k^{\rat}$ consists of the compact objects in $k^{\ind\rat}$.
Rational $k$-algebras appear as the rings of rational functions on perfect $k$-schemes of finite type.
Examples of ind-rational $k$-algebras include a perfect field over $k$ and
the ring of global sections of the structure sheaf of a profinite set regarded as a $k$-scheme.
Any homomorphism in $k^{\ind\rat}$ is flat.
A homomorphism in $k^{\ind\rat}$ is faithfully flat
if and only if it is injective.

\begin{Prop}
	Any $k$-algebra \'etale over a rational (resp.\ ind-rational) $k$-algebra
	is rational (resp.\ ind-rational).
\end{Prop}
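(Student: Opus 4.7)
The plan is to handle the two cases in sequence, the rational one directly and the ind-rational one by a standard limit argument.

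For the rational case, let $R = \prod_{i=1}^n K_i$ with each $K_i$ a finitely generated perfect field over $k$, and let $S$ be an étale $R$-algebra. Since $\Spec R$ is the disjoint union of the $\Spec K_i$, the algebra $S$ decomposes as $S = \prod_i S_i$ with each $S_i$ étale over the field $K_i$. An étale algebra over a field is a finite product of finite separable field extensions, so each $S_i = \prod_j L_{ij}$ with $L_{ij}/K_i$ finite separable. Since any algebraic extension of a perfect field is perfect and any finite extension of a finitely generated field is finitely generated, each $L_{ij}$ is a finitely generated perfect field over $k$. Hence $S = \prod_{i,j} L_{ij}$ is rational.

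For the ind-rational case, write $R = \bigcup_\alpha R_\alpha$ as a filtered union of rational $k$-subalgebras, and let $S$ be étale over $R$. By the standard descent of finitely presented étale morphisms along filtered colimits of rings (EGA IV, \S 8 / \cite{AGV72a}), there exists an index $\alpha_0$ and an étale $R_{\alpha_0}$-algebra $S_0$ with $S \cong S_0 \tensor_{R_{\alpha_0}} R$. For each $\beta \geq \alpha_0$, set $S_\beta = S_0 \tensor_{R_{\alpha_0}} R_\beta$; this is étale over the rational algebra $R_\beta$, hence rational by the first part. The natural maps $S_\beta \to S$ are injective because $S_0$ is flat over $R_{\alpha_0}$ and the inclusion $R_\beta \into R$ of ind-rational algebras is faithfully flat (any injection in $k^{\ind\rat}$ being faithfully flat, as noted just above the proposition). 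Taking the colimit, $S = \bigcup_{\beta \geq \alpha_0} S_\beta$ realizes $S$ as a filtered union of rational $k$-subalgebras, so $S$ is ind-rational.

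The only point requiring care is the last one in the ind-rational case: we must exhibit $S$ as a genuine filtered union of rational subalgebras, not merely as a filtered colimit of rational algebras. This is why we invoke flatness of $S_0$ over $R_{\alpha_0}$ together with the fact that transition maps among the $R_\beta$ are injective; without this, the same descent would still realize $S$ as a filtered colimit, but we would not immediately see that the structural maps are injections. The rest of the argument is a mechanical application of the classification of étale algebras over a field and the finite generation / perfectness properties being closed under finite separable extensions.
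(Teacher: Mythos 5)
Your proof is correct and follows essentially the same route as the paper: the rational case by the classification of \'etale algebras over a (perfect, finitely generated) field, and the ind-rational case by descending $S$ to an \'etale algebra $S_0$ over some rational $R_{\alpha_0}$ and exhibiting $S$ as the filtered union of the rational subalgebras $S_0 \tensor_{R_{\alpha_0}} R_\beta$. The extra care you take with injectivity of the maps $S_\beta \to S$ (via flatness of $S_0$) is exactly the point the paper leaves implicit.
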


\begin{proof}
	Let $k_{2}$ be a $k$-algebra \'etale over a perfect $k$-algebra $k_{1}$.
	If $k_{1}$ is rational, clearly so is $k_{2}$.
	If $k_{1}$ is ind-rational,
	then we can write $k_{2} = k_{2}' \tensor_{k_{1}'} k_{1}$
	with $k_{1}'$ a rational $k$-subalgebra of $k_{1}$
	and $k_{2}'$ \'etale over $k_{1}'$.
	Write $k_{1}$ as a filtered union of rational $k$-subalgebras $k_{1, \lambda}$ containing $k_{1}'$.
	Then $k_{2}$ is the filtered union of the rational $k$-subalgebras $k_{2}' \tensor_{k_{1}'} k_{1, \lambda}$,
	hence ind-rational.
\end{proof}

The proposition above leads to the following definition.

\begin{Def}
	We define the \emph{rational \'etale site} $\Spec k^{\rat}_{\et}$
	(resp.\ \emph{ind-rational \'etale site} $\Spec k^{\ind\rat}_{\et}$)
	to be the category $k^{\rat}$ (resp.\ $k^{\ind\rat}$)
	endowed with the \'etale topology.
\end{Def}

Note that usual tensor products of rings do not always give fiber sums for $k^{\rat}$ or $k^{\ind\rat}$,
and not all fiber sums in $k^{\rat}$ or $k^{\ind\rat}$ exist.
Therefore a continuous map to the site $\Spec k^{\ind\rat}_{\et}$
does not always have an exact pullback functor
(an example is given in Proposition \ref{prop: pullback is not exact}).
In this section, we will always need to check that
the continuous maps we have are indeed morphisms of sites.

Some care is needed for localizations (see Notation) of
$\Spec k^{\rat}_{\et}$ and $\Spec k^{\ind\rat}_{\et}$.
If $k' \in k^{\ind\rat}$ is a field,
then a rational $k'$-algebra is ind-rational over $k$
since it is a finite product of field extensions of $k$.
Hence an ind-rational $k'$-algebra is ind-rational over $k$.
But a $k'$-algebra ind-rational over $k$ is not necessarily ind-rational over $k'$.%
\footnote{
	An example is given as follows.
	Let $k'_{n}$ be the perfection of $k(x_{1}, \dots, x_{n})$
	and $k''_{n} = (k'_{n})^{n} \times k'_{n + 1}$ with the natural $k'_{n}$-algebra structure.
	Consider the inclusion $k'_{n} \into k'_{n + 1}$ and
	the $k'_{n}$-algebra homomorphism $k''_{n} \into k''_{n + 1}$
	given by $(f_{1}, \dots, f_{n + 1}) \mapsto (f_{1}, \dots, f_{n + 1}, \varphi(f_{n + 1}))$,
	where $\varphi(f(x_{1}, \dots, x_{n + 1})) = f(x_{1}, \dots, x_{n}, x_{n + 2})$.
	Let $k' = \bigcup k'_{n}$ and consider the $k'$-algebra $k'' = \bigcup k''_{n}$ ind-rational over $k$.
	Then it can be shown that $k''$ is not ind-rational over $k'$.
}
Hence the category $k^{\ind\rat} / k'$ of objects over $k'$ in $k^{\ind\rat}$
can strictly contain $k'^{\ind\rat}$
and the localization $\Spec k^{\ind\rat}_{\et} / k'$ can be different from $\Spec k'^{\ind\rat}_{\et}$.
But if $k' \in k^{\rat}$ is a field
(that is, if $k'$ is a finitely generated perfect field over $k$),
then $k^{\ind\rat} / k' = k'^{\ind\rat}$ and $k^{\rat} / k' = k'^{\rat}$.
Therefore for any $k' \in k^{\rat}$, we can define
	\begin{gather*}
				k'^{\ind\rat}
			:=
				k^{\ind\rat} / k',
			\quad
				k'^{\rat}
			:=
				k^{\rat} / k',
		\\
				\Spec k'^{\ind\rat}_{\et}
			:=
				\Spec k^{\ind\rat}_{\et} / k',
			\quad
				\Spec k'^{\rat}_{\et}
			:=
				\Spec k^{\rat}_{\et} / k'
	\end{gather*}
without ambiguity.
At any rate,
the localization $\Spec k^{\ind\rat}_{\et} / k'$ for any $k' \in k^{\ind\rat}$
is the category of $k'$-algebras ind-rational over $k$
endowed with the \'etale topology.

Also if $k'$ is an algebraic extension field of $k$,
then $k^{\ind\rat} / k' = k'^{\ind\rat}$.
To see this, let $k'' = \bigcup k''_{\lambda}$ with $k''_{\lambda} \in k^{\rat}$
and suppose $k''$ has a structure of a $k'$-algebra.
Then $k''_{\lambda} \tensor_{k} k' \in k'^{\rat}$
since $k'$ is algebraic (hence separable) over $k$.
Let $k'''_{\lambda} \in k'^{\rat}$ be the image of $k''_{\lambda} \tensor_{k} k'$ in $k''$.
Then $k'' = \bigcup k'''_{\lambda} \in k'^{\ind\rat}$.
In particular, we have
	\[
			\closure{k}^{\ind\rat}
		=
			k^{\ind\rat} / \closure{k},
		\quad
			\Spec \closure{k}^{\ind\rat}_{\et}
		=
			\Spec k^{\ind\rat}_{\et} / \closure{k},
	\]
where $\closure{k}$ is an algebraic closure of $k$.

The cohomology theory of the site $\Spec k^{\ind\rat}_{\et}$ is
essentially Galois cohomology, as follows.

\begin{Prop} \label{prop: rational etale cohomology is Galois cohomology}
	Let $k' \in k^{\ind\rat}$.
	Let $f \colon \Spec k^{\ind\rat}_{\et} / k' \to \Spec k'_{\et}$ be the morphism defined by the identity.
	Then $f_{\ast}$ is exact.
	We have
		\[
				R \Gamma(k^{\ind\rat}_{\et} / k', A)
			=
				R \Gamma(k'_{\et}, f_{\ast} A)
		\]
	for any $A \in \Ab(k^{\ind\rat}_{\et} / k')$,
	where the left-hand side is the cohomology of the site $\Spec k^{\ind\rat}_{\et} / k'$
	(at the final object $k'$)
	with coefficients in $A$.
\end{Prop}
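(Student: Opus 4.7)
The plan is to establish two facts about $f$: (a) $f$ is a morphism of sites, so that $f^{\ast}$ is exact and $f_{\ast}$ preserves injectives; and (b) $f_{\ast}$ is itself exact. Granting (a) and (b), any injective resolution $A \to I^{\bullet}$ in $\Ab(k'^{\ind\rat}_{\et})$ is carried by $f_{\ast}$ to an injective resolution $f_{\ast} A \to f_{\ast} I^{\bullet}$ in $\Ab(k'_{\et})$. Since $\Gamma(k'^{\ind\rat}_{\et}, \;\cdot\;) = \Gamma(k'_{\et}, \;\cdot\;) \compose f_{\ast}$ on the nose, computing $R \Gamma$ with this resolution on both sides yields the quasi-isomorphism asserted.

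For (a), the underlying continuous functor of $f$ is the natural inclusion of the small \'etale site of $\Spec k'$ (\'etale $k'$-algebras) into $\Spec k'^{\ind\rat}_{\et}$ (ind-rational $k$-algebras over $k'$), well-defined because any \'etale $k'$-algebra is ind-rational by the preceding proposition. The inclusion preserves finite limits: the tensor product of two \'etale $k'$-algebras is again \'etale over $k'$, and this tensor product computes the fiber product in both sites. By the first of the two sufficient criteria recalled at the start of this subsection, $f$ is therefore a morphism of sites.

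For (b), let $A \onto B$ be an epimorphism in $\Ab(k'^{\ind\rat}_{\et})$, fix an \'etale $k'$-algebra $L$, and take $b \in B(L) = (f_{\ast} B)(L)$. By the definition of epimorphism in the ind-rational \'etale topology, there is a covering $\{L \to M_{j}\}$ in $\Spec k'^{\ind\rat}_{\et}$, consisting of \'etale maps with jointly surjective spectra, such that each $b|_{M_{j}} \in B(M_{j})$ lifts to $A(M_{j})$. But each $M_{j}$ is \'etale over $L$, hence also \'etale over $k'$, so $\{L \to M_{j}\}$ is already a covering of $L$ in the small \'etale site $\Spec k'_{\et}$. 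Thus $b$ lifts \'etale-locally in $\Spec k'_{\et}$, proving that $f_{\ast} A \to f_{\ast} B$ is surjective as a map of sheaves on $\Spec k'_{\et}$.

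The only step carrying real content is (b); the worry one might have is that the ind-rational \'etale topology, built on the much larger category $k'^{\ind\rat}$, could enforce a strictly finer notion of surjectivity than the small \'etale site does. The argument above dissolves this concern: \'etale coverings in either site are nothing but finite jointly surjective families of \'etale ring maps, and \'etaleness of a ring map is independent of the ambient category.
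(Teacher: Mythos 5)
Your proof is correct and follows the same route as the paper's: establish that $f$ is a morphism of sites via the finite-limits criterion, observe that $f_{\ast}$ is exact, and conclude by the (degenerate) Grothendieck spectral sequence, which your injective-resolution argument simply proves by hand. The paper dismisses the exactness of $f_{\ast}$ as obvious; your step (b) is a correct spelling-out of why coverings of an \'etale $k'$-algebra in the big ind-rational site are already coverings in the small \'etale site.
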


\begin{proof}
	First note that $f$ is a morphism of sites
	since the underlying category of the target site $\Spec k'_{\et}$ has all finite limits
	and $f$ commutes with finite limits.
	The exactness of $f_{\ast}$ is obvious.
	Hence the Grothendieck spectral sequence yields the result.
\end{proof}

For the rest of the paper, we will denote the object
$R \Gamma(k^{\ind\rat}_{\et} / k', A)$ appearing in the proposition
simply by $R \Gamma(k'_{\et}, A)$.
For any sheaf $A \in \Ab(k^{\ind\rat}_{\et})$,
the cohomology of $k' \in k^{\ind\rat}$ with coefficients in $A$
(that is, the derived functor of $\Gamma(k', \;\cdot\;) \colon \Ab(k^{\ind\rat}_{\et}) \to \Ab$)
is given by the cohomology of the site $\Spec k^{\ind\rat}_{\et} / k'$
with coefficients in the restriction $A|_{k'} \in \Ab(k^{\ind\rat}_{\et} / k')$
by the relation between cohomology and localization
\cite[V, \S 2.2, 1st paragraph]{AGV72b}, \cite[IV, \S 5.1, 1st paragraph]{AGV72a}.
Hence the above is enough for describing cohomology of any $k' \in k^{\ind\rat}$.

As in Notation,
we denote by $\Alg / k$ and $\Pro \Alg /k$
the categories of affine quasi-algebraic and affine proalgebraic groups over $k$, respectively.
We also denote by $\Loc \Alg / k$ the category of the perfections of smooth group schemes over $k$,
which contains $\Alg / k$ and $\Z$ for example.%
\footnote{
	The ``L'' stands for ``locally''.
	A related fact that will not be used later is that
	if a perfect group scheme $G$ over $k$ is covered by quasi-algebraic open affine subschemes,
	then it is the perfection of a smooth group scheme.
	Use the fact that the underlying topological space of $G$ is locally noetherian
	and hence the disjoint union of open connected components.
	The identity component $G_{0}$ of $G$ is quasi-compact and separated due to the group structure.
	Hence $G_{0}$ is the perfection of a smooth algebraic group $G_{0}'$
	by \cite[\S 1.4, Proposition 10]{Ser60}.
	If $N$ is the kernel of the natural morphism $G_{0} \onto G_{0}'$,
	then $G / N$ is a smooth group scheme since $\pi_{0}(G)$ is \'etale.
	Then $G$ is the perfection of $G / N$.
}
Recall from \cite[\S 3.6, Proposition 13]{Ser60} that
the $\Hom$ group and the $\Ext^{n}$ groups between the perfections of algebraic groups $A, B$
are the direct limits of those between $A, B$ along Frobenii.
We frequently apply results on algebraic groups to quasi-algebraic groups
by passing to limits along Frobenii, without giving the detailed procedures.
By evaluation,
we have natural functors from any of the categories
$\Alg / k$, $\Pro \Alg /k$ and $\Loc \Alg / k$
to $\Ab(k^{\rat}_{\et})$ and $\Ab(k^{\ind\rat}_{\et})$.
The functors $\Alg / k \to \Ab(k^{\rat}_{\et}) \into \Ab(k^{\ind\rat}_{\et})$ are exact
and hence induce homomorphisms
	\[
			\Ext_{\Alg / k}^{n}(A, B)
		\to
			\Ext_{k^{\rat}_{\et}}^{n}(A, B)
		\to
			\Ext_{k^{\ind\rat}_{\et}}^{n}(A, B)
	\]
for any $A, B \in \Alg / k$ and $n \ge 0$.
Hence for $A = \invlim A_{\lambda} \in \Pro \Alg / k$ with $A_{\lambda} \in \Alg / k$ and $B \in \Alg / k$,
we have a homomorphism
	\[
			\Ext_{\Pro \Alg / k}^{n}(A, B)
		=
			\dirlim_{\lambda}
				\Ext_{\Alg / k}^{n}(A_{\lambda}, B)
		\to
			\dirlim_{\lambda}
				\Ext_{k^{\ind\rat}_{\et}}^{n}(A_{\lambda}, B)
		\to
			\Ext_{k^{\ind\rat}_{\et}}^{n}(A, B)
	\]
(see \cite[\S 3.4, Proposition 7]{Ser60} for the first isomorphism).
We will prove Theorem \ref{thm: main theorem, comparison of Ext} and the following generalization
in Section \ref{sec: extensions of algebraic groups as sheaves on the rational etale site}.

\begin{Thm} \label{thm: comparison of Ext, proalgebraic setting}
	Let $A \in \Pro \Alg / k$, $B \in \Loc \Alg / k$ and
	$k' = \bigcup_{\nu} k'_{\nu} \in k^{\ind\rat}$ with $k'_{\nu} \in k^{\rat}$.
	Then for any $n \ge 0$, we have
		\[
				\Ext_{k^{\ind\rat}_{\et} / k'}^{n}(A, B)
			=
				\dirlim_{\nu}
					\Ext_{(k'_{\nu})^{\ind\rat}_{\et}}^{n}(A, B).
		\]
	If $k'$ is a field, then this is further isomorphic to
	$\Ext_{k'^{\ind\rat}_{\et}}^{n}(A, B)$.
	If $B \in \Alg / k$, then
		\begin{gather*}
					\Ext_{k^{\ind\rat}_{\et}}^{n}(A, B)
				=
					\Ext_{\Pro \Alg / k}^{n}(A, B),
			\\
					\Ext_{k^{\ind\rat}_{\et}}^{n}(A, \Q)
				=
					\Hom_{k^{\ind\rat}_{\et}}(A, \Z)
				=
					0,
			\\
					\Ext_{k^{\ind\rat}_{\et}}^{n + 1}(A, \Z)
				=
					\Ext_{k^{\ind\rat}_{\et}}^{n}(A, \Q / \Z)
				=
					\dirlim_{m}
						\Ext_{\Pro \Alg / k}^{n}(A, \Z / m \Z),
		\end{gather*}
	If $A \in \Alg / k$, then the isomorphisms in the last three lines also hold
	with $k^{\ind\rat}_{\et}$ replaced by $k^{\rat}_{\et}$
	and $\Pro \Alg / k$ by $\Alg / k$.
\end{Thm}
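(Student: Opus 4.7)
The plan is to follow the strategy sketched in the introduction: compare $\Ext$ groups over $\Spec k^{\ind\rat}_{\et}$ with those over the auxiliary perfect pro-fppf site $\Spec k^{\perf}_{\pro\fppf}$, and then invoke Breen's identification $\Ext^{n}_{k^{\perf}_{\et}}(A,B) = \Ext^{n}_{\Pro \Alg / k}(A,B)$. The three clusters — the filtered-colimit assertion, the identification with $\Pro \Alg / k$, and the computations against $\Q$, $\Z$, $\Q/\Z$ — will be handled in sequence, with the last being formal consequences of the main comparison.

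First I would prove the filtered-colimit statement. By definition, every object of $(k')^{\ind\rat}$ is a filtered union of rational subalgebras, each of which is defined over some $k'_{\nu}$; in this sense $\Spec (k')^{\ind\rat}_{\et}$ is a filtered colimit of the $\Spec (k'_{\nu})^{\ind\rat}_{\et}$. A projective resolution of $A$ by sheaves of the form $\Z[A^{\times n}]$, combined with an injective resolution of $B$ whose sections are computed termwise, then reduces the assertion to the exactness of filtered colimits in $\Ab$, since the representing objects $A^{\times n}$ and $B$ are already defined over $k$.

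The core step is the identification $\Ext^{n}_{k^{\ind\rat}_{\et}}(A,B) = \Ext^{n}_{\Pro \Alg / k}(A,B)$ for $B \in \Alg / k$. I would factor the comparison through $\Spec k^{\perf}_{\pro\fppf}$: since $B$ is the perfection of a smooth algebraic group, its pro-fppf and étale cohomology on $\Spec k^{\perf}$ agree, and Breen's theorem then presents $\Ext^{n}_{\Pro \Alg / k}(A,B)$ as $\Ext^{n}_{k^{\perf}_{\pro\fppf}}(A,B)$. To compare with $\Ext^{n}_{k^{\ind\rat}_{\et}}(A,B)$ along the natural continuous map $\Spec k^{\perf}_{\pro\fppf} \to \Spec k^{\ind\rat}_{\et}$, I would use Mac Lane's cubical bar resolution of $A$: its terms $\Z[A^{\times n}]$ yield, after applying $R\sheafhom(-,B)$, a spectral sequence whose $E_{2}$-page involves the cohomology of products $A^{\times n}$. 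The splitting homotopy for the cubical construction makes the resolution additive up to an explicit chain homotopy, which lets one replace each $A^{\times n}$ by $\xi_{A}^{\times n}$ using the faithfully flat cover $\xi_{A} \times_{k} \xi_{A} \onto A$ (which is a pro-fppf covering in $\Spec k^{\perf}_{\pro\fppf}$). Since $\xi_{A}^{\times n}$ lies in $k^{\rat} \subset k^{\ind\rat}$, the $E_{2}$-terms reduce to cohomology over a field, where the two sites agree by Proposition \ref{prop: rational etale cohomology is Galois cohomology} (essentially Galois cohomology on either side).

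Given the main comparison, the remaining assertions are formal: $\Ext^{n}_{\Pro\Alg/k}(A,\Q)$ vanishes because $\Q$ is uniquely divisible and the source is a commutative proalgebraic group, and $\Hom_{k^{\ind\rat}_{\et}}(A,\Z) = 0$ because $A$ is geometrically connected while $\Z$ is discrete; the identification $\Ext^{n}_{k^{\ind\rat}_{\et}}(A,\Q/\Z) = \dirlim_{m} \Ext^{n}_{\Pro\Alg/k}(A,\Z/m\Z)$ follows by combining the main comparison with exactness of filtered colimits, and the isomorphism $\Ext^{n+1}(A,\Z) \cong \Ext^{n}(A,\Q/\Z)$ comes from the short exact sequence $0 \to \Z \to \Q \to \Q/\Z \to 0$. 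The version for $A \in \Alg / k$ over $\Spec k^{\rat}_{\et}$ is obtained by the same argument, noting that $\xi_{A}^{\times n}$ remains rational. The main obstacle is the core comparison step: because $k^{\rat}$ lacks finite fiber sums, the pullback along $\Spec k^{\perf}_{\pro\fppf} \to \Spec k^{\ind\rat}_{\et}$ is not exact and no naive Grothendieck spectral sequence is available; controlling the double complex produced by Mac Lane's resolution, and verifying that the splitting homotopy genuinely replaces every $A^{\times n}$ appearing in the $E_{2}$-page by a product of fields, is the technical heart of the argument.
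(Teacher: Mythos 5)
Your proposal follows the paper's strategy in all essentials: the detour through $\Spec k^{\perf}_{\pro\fppf}$, Breen's identification over the perfect \'etale site, and Mac Lane's resolution together with the splitting homotopy to replace products of $A$ by products of fields. You also correctly identify the technical heart. One organizational remark: the paper does not compare $E_{2}$-pages of two hyperext spectral sequences directly. Instead it proves that the pullback $h^{\ast}M(A')$ of Mac Lane's resolution along $h \colon \Spec k'^{\perf}_{\pro\fppf} \to \Spec k'^{\ind\rat}_{\et}$ is still a resolution of $A'$ (this is where the splitting homotopy and the ``generic'' covers enter), deduces $Lh^{\ast}A' = A'$, and then gets the comparison of $R\Hom$'s by pure adjunction. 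This sidesteps a real issue with your version: on $\Spec k^{\ind\rat}_{\et}$ the sheaf $\Z[A^{\times n}]$ is not representable (it is a filtered union of $\Z[x]$ over profinite sets of points $x$), so the $E_{1}$-terms $\Ext^{j}_{k^{\ind\rat}_{\et}}(M_{i}(A),B)$ are not literally cohomology groups of the schemes $A^{\times n}$, and the asserted identification of $E_{2}$-pages is exactly what needs proving, not a starting point. Also note that the covers actually used are not just $\xi_{A}\times_{k}\xi_{A}\onto A$ but preimages of generic points under evaluation maps $X\times_{k}[L,A]\to A$, arranged so that $a\mapsto\tilde a$ is additive; this additivity is what makes the splitting homotopy usable.

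There is one genuine gap: the vanishing $\Ext^{n}_{k^{\ind\rat}_{\et}}(A,\Q)=0$ is not formal. Unique divisibility of $\Q$ makes it injective in $\Ab$ but not as a sheaf on the site, and $\Q$ is not in $\Alg/k$, so the main comparison does not apply to it. The paper proves this by running the hyperext spectral sequence for Mac Lane's resolution a second time and invoking Deninger's theorem that the \'etale cohomology of a normal scheme with $\Q$-coefficients vanishes in positive degrees (plus connectedness in degree $0$, after splitting off the finite part of $A$, for which the claim is trivial). Your derivation of $\Ext^{n+1}(A,\Z)=\Ext^{n}(A,\Q/\Z)$ from $0\to\Z\to\Q\to\Q/\Z\to0$ is fine once this input is supplied, and your remaining reductions (continuity in $k'$, the $\Q/\Z$ computation via filtered colimits, and the passage to $\Spec k^{\rat}_{\et}$ for $A\in\Alg/k$ using that the points of $A$ are rational) match the paper. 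For the continuity statement, be aware that the paper deliberately uses the cubical construction $Q$ rather than $M$ there, since the graded group $\mathcal{B}$ in $M(A)=Q(A)\tensor_{\Z}\mathcal{B}$ has infinite rank in each degree and does not pass to limits.
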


In particular, the functors $\Alg / k \to \Ab(k^{\rat}_{\et})$
and $\Pro \Alg / k \to \Ab(k^{\ind\rat}_{\et})$ are fully faithful.
Using this theorem,
we will interpret Serre's local class field theory as
a duality for local fields with coefficients in $\Gm$ and extensions by $\Z$.
(We denote the perfection of $\Gm$ simply by $\Gm$ as in Notation.)
The result is Theorem \ref{thm: duality, Z-coefficients}.
We will then deduce Theorem \ref{thm: main theorem, duality}
by trivializing finite (multiplicative) $A$
and embedding it into a product of copies of $\Gm$.

\begin{Rmk}
	Theorem \ref{thm: comparison of Ext, proalgebraic setting} holds
	also for commutative quasi-algebraic groups $A$ and $B$
	not necessarily affine,
	for example, abelian varieties.
	See Remark \ref{rmk: generalizing to abelian varieties}.
	Affine groups as stated above are sufficient
	for the purpose of duality for local fields with coefficients in finite group schemes.
\end{Rmk}


\subsection{Comparison of Ext in low degrees: birational groups}
\label{sec: Comparison of Ext in low degrees: birational groups}
In this subsection,
we give a proof of the following important part of
Theorem \ref{thm: main theorem, comparison of Ext}
(or \ref{thm: comparison of Ext, proalgebraic setting}):

\begin{Prop} \label{prop: special case of comparison of Ext}
	Let $A, B \in \Alg / k$.
	Then we have
		\[
				\Ext_{\Alg / k}^{n}(A, B)
			\isomto
				\Ext_{k^{\rat}_{\et}}^{n}(A, B)
		\]
	for $n = 0, 1$.
\end{Prop}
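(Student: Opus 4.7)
The plan is to exploit the fundamental geometric fact recalled in the introduction: for $A \in \Alg / k$, the generic point $\xi_{A}$ (the disjoint union of generic points of the irreducible components) lies in $k^{\rat}$, and the group law induces a faithfully flat map $\xi_{A} \times_{k} \xi_{A} \to A$. Combined with Weil's theorems on the extension of rational homomorphisms and of birational group laws, in their quasi-algebraic (i.e.\ perfect) versions, this should let us reconstruct morphisms and extensions in $\Alg / k$ from the sheaf-theoretic data on $\xi_{A}$ and $\xi_{A} \times_{k} \xi_{A}$.

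For $n = 0$, given $f \in \Hom_{k^{\rat}_{\et}}(A, B)$, I would evaluate $f$ at $\xi_{A}$ and apply it to the tautological element $\eta_{A} \in A(\xi_{A})$ corresponding to $\xi_{A} \hookrightarrow A$. This produces a $\xi_{A}$-point of $B$, i.e., a rational map $\varphi \colon A \dashrightarrow B$. Applying naturality of $f$ to the element $\mathrm{pr}_{1} + \mathrm{pr}_{2} \in A(\xi_{A} \times_{k} \xi_{A})$ (which, by faithfully flatness of the addition map, writes every point of $A$ as a sum of two generic points) forces $\varphi$ to be a rational homomorphism. Weil's extension theorem then yields a unique morphism in $\Alg / k$ extending $\varphi$, and the resulting map $\Hom_{k^{\rat}_{\et}}(A, B) \to \Hom_{\Alg / k}(A, B)$ inverts the natural functor.

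For $n = 1$, I would show that every Yoneda extension class in $\Ext^{1}_{k^{\rat}_{\et}}(A, B)$ is represented by an extension in $\Alg / k$. Evaluating $0 \to B \to E \to A \to 0$ at $\xi_{A}$ gives a long exact sequence; since $B$ is smooth (being the perfection of a smooth algebraic group), the obstruction in $H^{1}(\xi_{A}, B)$ to lifting $\eta_{A}$ dies after pullback along a finite \'etale cover $\xi_{A}' \to \xi_{A}$. A lift then yields a rational section $s \colon A \dashrightarrow E$ defined after this base change, and the failure of $s$ to be a homomorphism provides a rational $2$-cocycle $c \colon A \times_{k} A \dashrightarrow B$ which, by \'etale descent, is defined over $k$ generically on $\xi_{A} \times_{k} \xi_{A}$. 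This cocycle equips the $k$-variety $A \times_{k} B$ with a rational group law; Weil's theorem on birational group laws then produces a quasi-algebraic group $E^{\alg}$ realizing it, fitting in an exact sequence $0 \to B \to E^{\alg} \to A \to 0$ in $\Alg / k$. A final application of the $n = 0$ case, together with Yoneda's embedding, identifies $E^{\alg}$ as a sheaf with $E$, giving surjectivity of $\Ext^{1}_{\Alg / k}(A,B) \to \Ext^{1}_{k^{\rat}_{\et}}(A,B)$; injectivity follows from the $n = 0$ case applied to a splitting.

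The main obstacle is the construction of $E^{\alg}$ in the case $n = 1$: one must verify that the rational $2$-cocycle assembled from an \'etale-local section genuinely descends to a $k$-rational datum on $\xi_{A} \times_{k} \xi_{A}$, and that Weil's theorem on birational group laws can be invoked in the quasi-algebraic setting to produce a group whose associated sheaf actually coincides with $E$, not merely with a sheaf birationally equivalent to it. Bookkeeping is also needed for non-connected $A$, where $\xi_{A}$ is only a finite product of fields.
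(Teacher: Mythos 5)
Your $n=0$ argument is essentially the paper's: evaluate at the tautological point of $\xi_{A}$, use naturality at $\xi_{A\times A}$ to get a rational homomorphism, invoke Weil/Serre to regularize, and check agreement with the original sheaf map by writing an arbitrary point as a sum of two generic points. That part is fine.

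For $n=1$ the skeleton (generic section, factor system, Weil's theorem on birational group laws, Baer difference) is the right one, but two steps you wave at are genuine gaps, and the first is not merely bookkeeping. (1) You trivialize the obstruction in $H^{1}((\xi_{A})_{\et},B)$ only after a finite \'etale cover $\xi_{A}'\to\xi_{A}$, and then claim the resulting $2$-cocycle descends to $\xi_{A\times A}$ ``by \'etale descent.'' It does not: the cocycle $c(x_{1},x_{2})=s(x_{1})+s(x_{2})-s(x_{1}+x_{2})$ is Galois-equivariant only if the section $s$ can be chosen equivariantly, which is exactly what the nonvanishing of $H^{1}$ obstructs; only the cohomology class of $c$, not $c$ itself, is a descent datum. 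The paper removes this obstruction \emph{before} starting the construction, by a preliminary reduction: handle finite \'etale $A$ (e.g.\ $A=\Z/m\Z$) directly via the long exact sequence for $0\to\Z\to\Z\to\Z/m\Z\to 0$, embed \'etale $B$ into a connected affine group, and then use Hochschild--Serre spectral sequences on both $\Ext_{k^{\rat}_{\et}}$ and $\Ext_{\Alg/k}$ (the latter from Milne) to replace $k$ by a finite Galois extension over which the maximal torus of $B$ splits, so that $H^{1}(k'_{\et},B)=0$ for all $k'\in k^{\rat}$ and the section $s\colon\xi_{A}\to C$ exists with no cover at all. You need some such reduction; without it the descent claim fails. (2) Identifying the Baer difference $E=C-D$ with the trivial extension is not ``a final application of the $n=0$ case'': what you have at that stage is a section $t\colon\xi_{A}\to E$ of sheaves of \emph{sets} that is additive only on $\xi_{A^{2}}$, whereas the $n=0$ case applies to homomorphisms already defined on all of $A$. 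Extending $t$ to a global homomorphism $A\to E$ is the technical core of the paper's proof: one defines $t(a)=t(a-x)+t(x)$ using a generic auxiliary point $x$, and must check independence of the choice via a sheaf condition of $E$ for the rational flat topology (exactness of $0\to E(k')\to E(k'')\to E(\Frac(k''\tensor_{k'}k''))$) together with a ``generic homomorphism property'' of $t$. None of this is supplied by Yoneda or by the $n=0$ case, so as written the surjectivity argument is incomplete.
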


This part can be treated by means of
\emph{generic translations} and \emph{birational group laws} in the sense of Weil
(\cite{Wei55}; see also \cite[V, \S1.5]{Ser88}).
By evaluating an element of $\Ext_{k^{\rat}_{\et}}^{n}(A, B)$ at the generic point of $A$,
we can obtain a homomorphism of birational groups if $n = 0$
and a rational symmetric factor system (\cite[VII, \S 1.4]{Ser88}) if $n = 1$,
which are regularizable and come from $\Ext_{\Alg / k}^{n}(A, B)$ by means of generic translations.
This proof is illustrative and helpful to understand the general case $n \ge 2$,
where we will use Mac Lane's resolution and
apply a similar but more involved regularization (or generification) technique for it
instead of Weil's results.
The cases $n = 0, 1$ are important since
the groups $\Ext_{\Pro \Alg / k}^{n}(A, B)$ for $n \ge 2$
(resp.\ $n \ge 3$) with $k$ algebraically closed are
mostly (resp.\ always) zero (\cite[\S 10]{Ser60}).
Our explanation of the duality, however,
does need the same vanishing for $\Ext_{k^{\rat}_{\et}}^{n}(A, B)$.

\begin{Lem} \label{lem: n equals zero case of comparison of Ext}
	Proposition \ref{prop: special case of comparison of Ext} is true for $n = 0$.
\end{Lem}

\begin{proof}
	For the injectivity, let $\varphi \colon A \to B$ in $\Alg / k$
	become zero in $\Ab(k^{\rat}_{\et})$.
	Let $\xi_{A}$ be the generic point of $A$
	(which is defined to be the disjoint union of the generic points of all irreducible components of $A$).
	Then $\varphi(\xi_{A}) \in B(\xi_{A})$ is zero.
	This means that $\varphi$ is generically zero.
	A generic translation then shows that $\varphi$ is everywhere zero.
	
	For the surjectivity,
	let $\varphi \in \Hom_{k^{\rat}}(A, B)$.
	Consider the element $\varphi(\xi_{A}) \in B(\xi_{A})$.
	This corresponds to a rational map
	$\tilde{\varphi} \colon A \dashrightarrow B$.
	Consider the following commutative diagram in $\Set(k^{\rat}_{\et})$:
		\[
			\begin{CD}
					A \times_{k} A
				@>> \varphi \times \varphi >
					B \times_{k} B
				\\
				@VVV
				@VVV
				\\
					A
				@> \varphi >>
					B
			\end{CD}
		\]
	The vertical arrows are given by the group operations.
	Applying this commutativity for the element
	$\xi_{A \times A} \in A \times A$,
	we obtain an equality
		$
				\tilde{\varphi}(x_{1} + x_{2})
			=
				\tilde{\varphi}(x_{1}) + \tilde{\varphi}(x_{2})
		$
	as rational maps $A \times A \dashrightarrow B$.
	This and a generic translation imply that
	the rational map $\tilde{\varphi} \colon A \dashrightarrow B$ is
	everywhere regular and is a homomorphism of algebraic groups
	(\cite[V, \S 1.5, Lemma 6]{Ser88}).

	It remains to show that $\tilde{\varphi} = \varphi$ in $\Hom_{k^{\rat}}(A, B)$.
	Let $k' \in k^{\rat}$ and $a \in A(k')$.
	We want to decompose $a$ into the sum of two generic points.
	Let $A_{k'} = \Spec k' \times_{k} A$ and
	$\Spec k''$ the generic point of $A_{k'}$.
	The natural projection $A_{k'} \to A$ gives a point $x \in \xi_{A}(k'')$.
	We have $\varphi(x) = \tilde{\varphi}(x)$ by definition of $\tilde{\varphi}$.
	If we show that $a - x \in \xi_{A}(k'')$, then
	$\varphi(a - x) = \tilde{\varphi}(a - x)$, so
		\[
				\varphi(a)
			=
				\varphi(a - x) + \varphi(x)
			=
				\tilde{\varphi}(a - x) + \tilde{\varphi}(x)
			=
				\tilde{\varphi}(a),
		\]
	which proves $\varphi = \tilde{\varphi}$.
	To show that $a - x \in \xi_{A}(k'')$,
	note that the correspondence $y \leftrightarrow a - y$
	gives an automorphism of the scheme $A_{k'}$.
	Hence $x \in \xi_{A}(k'')$ implies that $a - x \in \xi_{A}(k'')$.
	This completes the proof of
	$\Hom_{\Alg / k}(A, B) = \Hom_{k^{\rat}}(A, B)$.
\end{proof}

In particular, the exact functor $\Alg / k \to \Ab(k^{\rat}_{\et})$ is fully faithful.

\begin{Lem}
	For the case $n = 1$ of Proposition \ref{prop: special case of comparison of Ext},
	it is enough to show the surjectivity of the map,
	and we may assume that $A$ and $B$ are connected
	and $H^{1}(k'_{\et}, B) = 0$ for all $k' \in k^{\rat}$.
\end{Lem}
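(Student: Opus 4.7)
The plan is a three-step d\'evissage using long exact sequences of $\Ext$ together with the case $n = 0$ of Proposition \ref{prop: special case of comparison of Ext}, which has just been proved. First I would reduce $A$ to be connected, then $B$ to be connected, and finally replace $B$ by a larger target with vanishing $H^{1}$.

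For the first reduction, apply $\Ext^{*}(-, B)$ in both $\Alg / k$ and $\Ab(k^{\rat}_{\et})$ to the exact sequence $0 \to A^{0} \to A \to \pi_{0}(A) \to 0$, where $A^{0}$ is the identity component and $\pi_{0}(A)$ is \'etale. The resulting six-term sequences map compatibly to each other; the $\Hom$ entries already agree by the $n = 0$ case, so a five-lemma argument reduces the $n = 1$ claim for $A$ to that for $A^{0}$, provided we have the auxiliary comparison $\Ext^{i}_{\Alg / k}(\pi_{0}(A), B) \cong \Ext^{i}_{k^{\rat}_{\et}}(\pi_{0}(A), B)$ for $i = 1, 2$. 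After a finite \'etale base change making $\pi_{0}(A)$ a direct sum of copies of $\Z / n \Z$ (handled via Galois descent), the resolution $0 \to \Z \to \Z \to \Z / n \Z \to 0$ reduces both $\Ext$ groups to Galois cohomology of $B$---on the sheaf side by Proposition \ref{prop: rational etale cohomology is Galois cohomology}, on the $\Alg / k$ side by direct inspection---and these agree tautologically. The analogous argument applied to $0 \to B^{0} \to B \to \pi_{0}(B) \to 0$ reduces $B$ to be connected.

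For the final step, with $A, B$ both connected, I would embed $B$ into an affine quasi-algebraic group $B'$ such that $H^{1}(k'_{\et}, B') = 0$ for every $k' \in k^{\rat}$. Using the standard structure of a connected affine group as an extension of a torus by a unipotent group, the unipotent part embeds into copies of $\Ga$, which is \'etale acyclic on affines by Serre's theorem, and the toric part embeds into a quasi-trivial torus $\Res_{L / k} \Gm^{r}$ for a finite separable $L / k$ splitting the torus, whose $H^{1}$ vanishes by Shapiro's lemma and Hilbert 90; these vanishings persist after base change to any $k' \in k^{\rat}$. Setting $C = B' / B$ and comparing the $\Ext$-sequences for $0 \to B \to B' \to C \to 0$ in both categories, the $n = 1$ comparison for $B$ follows from the case $n = 0$ applied to $B'$ and $C$ together with the comparison for $B'$ that the hypothesis covers, arguing by iteration on $C$ if necessary.

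The main obstacle is the auxiliary comparison for the \'etale sources $\pi_{0}(A)$ and $\pi_{0}(B)$, which forces one to control $\Ext^{2}$ with \'etale source---one degree beyond what the proposition itself delivers. This remains tractable only because of Proposition \ref{prop: rational etale cohomology is Galois cohomology}: both sides reduce to the same Galois cohomology of $B(k^{\sep})$ and match tautologically. Once this reduction lemma is proved, the subsequent argument may freely assume $A, B$ connected and $H^{1}(k'_{\et}, B) = 0$, which is precisely the setting in which the generic-point and birational-group-law regularization technique outlined for $n = 0$ can be adapted to close the $n = 1$ case.
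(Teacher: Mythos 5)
Your overall d\'evissage is in the right spirit, but three steps do not go through as written. First, the auxiliary comparison $\Ext^{2}_{\Alg / k}(\pi_{0}(A), B) \cong \Ext^{2}_{k^{\rat}_{\et}}(\pi_{0}(A), B)$ is not ``tautological'': the resolution $0 \to \Z \to \Z \to \Z / n \Z \to 0$ is unavailable in $\Alg / k$, since $\Z$ is not an affine quasi-algebraic group, so the $\Alg / k$ side does not reduce to Galois cohomology ``by direct inspection''; you would have to invoke Serre's computations of higher $\Ext$ in $\Pro \Alg / k$, a nontrivial input. More to the point, the whole $\Ext^{2}$ problem is avoidable: injectivity of $\Ext^{1}_{\Alg / k}(A, B) \to \Ext^{1}_{k^{\rat}_{\et}}(A, B)$ holds for \emph{every} $A, B$ directly from the $n = 0$ case (a sheaf-theoretic splitting of an algebraic extension is automatically algebraic), so the d\'evissage only ever needs to be run for surjectivity, where the connecting map into $\Ext^{2}$ never appears. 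The paper accordingly compares only $\Ext^{1}(\Z / m \Z, B)$, exhibiting both sides as the middle term of $0 \to B(k) / m B(k) \to \Ext^{1} \to H^{1}(k_{\et}, B)[m] \to 0$.

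Second, ``the analogous argument applied to $0 \to B^{0} \to B \to \pi_{0}(B) \to 0$'' is not analogous: applying $\Ext(A, \;\cdot\;)$ to that sequence produces the term $\Ext^{1}(A, \pi_{0}(B))$ with \emph{finite \'etale target}, which is neither the connected case nor covered by your source reduction, and needs its own treatment; the paper's device is to embed an \'etale $B$ into a connected affine quasi-algebraic group and use the resulting four-term $\Ext$ sequence. Third, in your last step the claim that the unipotent part of $B$ embeds into copies of $\Ga$ is false (e.g.\ $W_{2}$ has exponent $p^{2}$ and admits no embedding into any product of copies of $\Ga$); fortunately it is also unnecessary, since $H^{1}(k'_{\et}, U) = 0$ already holds for every connected unipotent $U$ over a perfect field, so only the torus needs attention. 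There your embedding into a quasi-trivial torus is a genuine, workable alternative to the paper's route, which instead passes to a finite Galois extension splitting the maximal torus and descends via the Hochschild--Serre spectral sequences in both categories.
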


\begin{proof}
	If $0 \to B \to C \to A \to 0$ is an extension in $\Alg / k$
	that admits a splitting $s \colon A \into C$ in $\Ab(k^{\rat}_{\et})$,
	then $s \in \Hom_{\Alg / k}(A, C)$ by the $n = 0$ case,
	hence the extension splits also in $\Alg / k$.
	This shows the injectivity.
	
	For the second half of the lemma,
	let $k''$ be a finite Galois extension of $k$.
	We first show that if the statement of the $n = 1$ case of the proposition is true
	for $A \times_{k} k''$, $B \times_{k} k''$, that is,
		\[
				\Ext_{\Alg / k''}^{1}(A, B)
			=
				\Ext_{k''^{\rat}_{\et}}^{1}(A, B),
		\]
	then it is true for $A, B$.
	For any $i \ge 0$, let $k''^{\tensor i + 1}$ be the tensor product of $i + 1$ copies of $k''$ over $k$
	and $B^{i}$ the Weil restriction $\Res_{k''^{\tensor i + 1} / k} B \in \Alg / k$.
	The usual formulas for coboundary maps of augmented \v{C}ech complexes define a complex
	$0 \to B \to B^{0} \to B^{1} \to \cdots$ in $\Alg / k$
	(see the proof of \cite[III, Theorem 3.9]{Mil80}).
	On $\closure{k}$-points, it is the augmented \v{C}ech complex of
	$k'' \tensor_{k} \closure{k} \, / \, \closure{k}$ with coefficients in $B$,
	which is exact since $\closure{k}$ has trivial cohomology.
	Hence the complex $B^{\bullet} = \{B^{i}\}_{i \ge 0}$ is a resolution of $B$ in $\Alg / k$.
	Let $F_{1}^{j} = \Ext_{\Alg / k}^{j}(A, \;\cdot\;)$
	and $F_{2}^{j} = \Ext_{k^{\rat}_{\et}}^{j}(A, \;\cdot\;)$ for $j \ge 0$.
	For $n = 1, 2$ and $j \ge 0$, denote the $i$-th cohomology of the complex
	$F_{n}^{j}(B^{\bullet}) = [0 \to F_{n}^{j}(B^{0}) \to F_{n}^{j}(B^{1}) \to \cdots]$
	by $H^{i} F_{n}^{j}(B^{\bullet})$.
	Since $\{F_{n}^{j}\}_{j \ge 0}$ is a $\delta$-functor for $n = 1, 2$,
	a simple diagram chase shows that we have a four term exact sequence
		\[
				0
			\to
				H^{1} F_{n}^{0}(B^{\bullet})
			\to
				F_{n}^{1}(B)
			\to
				H^{0} F_{n}^{1}(B^{\bullet})
			\to
				H^{2} F_{n}^{0}(B^{\bullet})
		\]
	for $n = 1, 2$.
	The functor $\Alg / k \to \Ab(k^{\rat}_{\et})$ induces a morphism of sequences
	from the above sequence for $n = 1$ to that for $n = 2$.
	The morphism $F_{1}^{0}(B^{i}) \to F_{2}^{0}(B^{i})$ is an isomorphism for $i \ge 0$
	by Lemma \ref{lem: n equals zero case of comparison of Ext}.
	For each $i \ge 0$, the $k$-algebra $k''^{\tensor i + 1}$ is a finite product of copies of $k''$
	and the group $B^{i}$ is a finite product of copies of $\Res_{k'' / k} B$.
	We have
		\begin{gather*}
					F_{1}^{1}(\Res_{k'' / k} B)
				=
					\Ext_{\Alg / k''}^{1}(A, B),
			\\
					F_{2}^{1}(\Res_{k'' / k} B)
				=
					\Ext_{k''^{\rat}_{\et}}^{1}(A, B).
		\end{gather*}
	Therefore if the statement is true for $A \times_{k} k''$, $B \times_{k} k''$,
	then $F_{1}^{1}(B^{i}) \to F_{2}^{1}(B^{i})$ is an isomorphism for $i \ge 0$
	and hence $F_{1}^{1}(B) \to F_{2}^{1}(B)$ is an isomorphism,
	which is the statement for $A, B$.
	
	Now assume that the statement of the $n = 1$ case of the proposition is true
	for connected $A, B \in \Alg / k$ with $H^{1}(k'_{\et}, B) = 0$ for $k' \in k^{\rat}$.
	We show that the same statement is true for arbitrary $A, B \in \Alg / k$.
	
	Case $A, B$ connected:
	We can take a finite Galois extension $k''$
	such that $B \times_{k} k''$ is a successive extension of copies of $\Ga$ and $\Gm$.
	In particular, $H^{1}(k'_{\et}, B) = 0$ for $k' \in k''^{\rat}$,
	and the statement is true for $A \times_{k} k''$, $B \times_{k} k''$.
	So is for $A, B$ by the above Galois descent.
	
	Case $A$ \'etale and $B$ arbitrary:
	We may assume $A = \Z / m \Z$ for some $m \ge 1$ by again extending $k$.
	The long exact sequence for $\Ext_{k^{\rat}_{\et}}^{\cdot}(\;\cdot\;, B)$
	associated with the short exact sequence
	$0 \to \Z \to \Z \to \Z / m \Z \to 0$ yields
	an exact sequence
		\[
				0
			\to
				B(k) / m B(k)
			\to
				\Ext_{k^{\rat}_{\et}}^{1}(\Z / m \Z, B)
			\to
				H^{1}(k_{\et}, B)[m]
			\to
				0,
		\]
	where $[m]$ denotes the $m$-torsion part.
	The group $\Ext_{\Alg / k}^{1}(\Z / m \Z, B)$ also fits in the middle of this sequence.
	Hence we have $\Ext_{k^{\rat}_{\et}}^{1}(\Z / m \Z, B) = \Ext_{\Alg / k}^{1}(\Z / m \Z, B)$.
	
	Case $A$ arbitrary and $B$ connected:
	We denote the identity component of $A$ by $A_{0}$.
	Let $0 \to B \to C \to A \to 0$ be an extension in $\Ab(k^{\rat}_{\et})$.
	Let $0 \to B \to C' \to A_{0} \to 0$ be its pullback by $A_{0} \into A$.
	It defines an element of $\Ext_{k^{\rat}_{\et}}^{1}(A_{0}, B)$.
	The connected case above implies that $C' \in \Alg / k$.
	We have an exact sequence $0 \to C' \to C \to \pi_{0}(A) \to 0$,
	or an element of $\Ext_{k^{\rat}_{\et}}^{1}(\pi_{0}(A), C')$.
	Since $\pi_{0}(A)$ is \'etale, the previous case implies that $C \in \Alg / k$.
	Then the sequence $0 \to B \to C \to A \to 0$ is an exact sequence in $\Alg / k$
	since $\Alg / k \to \Ab(k^{\rat}_{\et})$ is fully faithful.
	
	Case $A$ arbitrary and $B$ \'etale:
	This reduces to the previous case by embedding $B$ into a connected affine quasi-algebraic group
	and the five lemma.
	
	Case $A, B$ arbitrary:
	We denote the identity component of $B$ by $B_{0}$.
	Let $0 \to B \to C \to A \to 0$ be an extension in $\Ab(k^{\rat}_{\et})$.
	Let $0 \to \pi_{0}(B) \to C' \to A \to 0$ be its pushout by $B \onto \pi_{0}(B)$.
	It defines an element of $\Ext_{k^{\rat}_{\et}}^{1}(A, \pi_{0}(B))$.
	The previous case implies that $C' \in \Alg / k$.
	We have an exact sequence $0 \to B_{0} \to C \to C' \to 0$,
	or an element of $\Ext_{k^{\rat}_{\et}}^{1}(C', B_{0})$.
	Since $B_{0}$ is connected, the previous to the previous case implies that $C \in \Alg / k$.
	Then the sequence $0 \to B \to C \to A \to 0$ is an exact sequence in $\Alg / k$.
\end{proof}

The following finishes the proof of Proposition \ref{prop: special case of comparison of Ext}.

\begin{Lem}
	Proposition \ref{prop: special case of comparison of Ext}
	is true for $n = 1$ if $A$ and $B$ are connected and
	$H^{1}(k'_{\et}, B) = 0$ for all $k' \in k^{\rat}$.
\end{Lem}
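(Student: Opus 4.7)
The plan is to imitate the strategy of the $n = 0$ case, replacing Weil's theorem on birational group laws by Serre's theorem on rational symmetric factor systems \cite[VII, \S 1.4]{Ser88}. First I would show that the natural map $\Ext_{\Alg / k}^{1}(A, B) \to \Ext_{k^{\rat}_{\et}}^{1}(A, B)$ is bijective by constructing an explicit inverse on classes of extensions.

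Starting with a sheaf extension $0 \to B \to E \to A \to 0$ in $\Ab(k^{\rat}_{\et})$, note that since $A$ is connected and quasi-algebraic, its function field $L = \kappa(\xi_{A})$ lies in $k^{\rat}$. Evaluating the sequence at $L$, the long exact cohomology sequence on $\Spec L_{\et}$ together with the hypothesis $H^{1}(L_{\et}, B) = 0$ allows us to lift the tautological element $\xi_{A} \in A(L)$ to a section $s \in E(L)$, i.e., a rational section of $E \to A$ defined at the generic point of $A$. Forming the defect
\[
	c(x_{1}, x_{2}) = s(x_{1} + x_{2}) - s(x_{1}) - s(x_{2})
\]
and evaluating at $\xi_{A \times_{k} A}$, whose residue field again lies in $k^{\rat}$, I obtain an element $c \in B(\kappa(\xi_{A \times_{k} A}))$. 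Since $E \to A$ is a homomorphism, $c$ takes values in $B$; the commutativity and associativity of $E$ make $c$ a rational symmetric $2$-cocycle. Applying \cite[VII, \S 1.4]{Ser88}, after a generic translation to arrange the required regularity, regularizes $c$ into an algebraic extension
\[
	0 \to B \to \tilde{E} \to A \to 0
\]
in $\Alg / k$.

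To conclude that $[\tilde{E}]$ maps to $[E]$ under the natural map, I would construct a morphism of sheaves $\tilde{E} \to E$ over $A$ compatible with the maps from $B$. By construction, both sheaves carry a distinguished rational section at $\xi_{A}$ producing the same factor system $c$, so the induced rational map $\tilde{E} \dashrightarrow E$ is a homomorphism of sheaves at the generic point; the generic-sums argument from the $n = 0$ case, writing any element as a sum of two generic points, then extends this to a genuine morphism of sheaves over all of $A$. The five lemma upgrades this to an isomorphism of extensions. Injectivity of the natural map is immediate from the $n = 0$ case: any sheaf-theoretic splitting $A \to E$ of an algebraic extension is a morphism in $\Hom_{k^{\rat}_{\et}}(A, E)$, hence in $\Hom_{\Alg / k}(A, E)$ by $n = 0$, so the algebraic extension is already split.

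The main obstacle is the two regularization steps: verifying that the hypotheses of \cite[VII, \S 1.4]{Ser88} hold for $c$ after a suitable generic translation, and then verifying that the candidate comparison map $\tilde{E} \to E$, built by a generic-sums prescription, is a well-defined morphism of sheaves on $\Spec k^{\rat}_{\et}$ rather than merely a rational map. Both steps rely on the covering-by-fields principle that every point of a quasi-algebraic group is a sum of two generic points, together with the vanishing $H^{1}(L_{\et}, B) = 0$, which was precisely the purpose of the preceding reduction.
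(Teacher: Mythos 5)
Your overall route is the same as the paper's: use the hypothesis $H^{1}((\xi_{A})_{\et},B)=0$ to produce a section over the generic point, extract a rational symmetric factor system, regularize it by Weil's theorem into a quasi-algebraic extension, and then compare the two extensions by a generic-sums argument. The only cosmetic difference is that you build a comparison morphism $\tilde{E}\to E$ over $A$ directly, whereas the paper forms the Baer difference of the two extensions and splits it; these are equivalent, since a morphism of extensions inducing the identity on $B$ and $A$ is the same datum as a splitting of the Baer difference.

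There is, however, one genuine gap at the step you yourself flag as ``the main obstacle.'' Extending the generically defined homomorphism to all of $A$ requires decomposing $a\in A(k')$ as $(a-x)+x$ with $x$ the tautological point over the function field $k''$ of $A_{k'}$. The covering $\Spec k''\to\Spec k'$ is \emph{not} an \'etale covering, so the \'etale sheaf axiom for $E$ does not by itself guarantee that the value $t(a-x)+t(x)\in E(k'')$ descends to $E(k')$, nor that it is independent of the choice of generic decomposition. The paper resolves this with a separate sublemma (Sublemma \ref{sublem: an etale extension of rational sheaves is a rational sheaf}): the sequence $0\to E(k')\to E(k'')\to E(\Frac(k''\tensor_{k'}k''))$ is exact. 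Its proof is not a formal consequence of the ``covering-by-fields'' principle; it uses that $A$ and $B$ are quasi-algebraic groups (so a rational map to a variety taking equal values at two independent generic points is constant, giving the analogous exactness for $A$ and $B$) together with the exactness of $0\to B(k')\to E(k')\to A(k')\to 0$ coming from the hypothesis $H^{1}(k'_{\et},B)=0$, followed by a diagram chase. You should supply this descent argument; invoking the vanishing of $H^{1}$ and the generic covering alone does not close it. Once that sublemma is in place, the verification that the prescription $a\mapsto t(a-x)+t(x)$ is well defined and additive follows from the generic homomorphism identities exactly as in the $n=0$ case.
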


\begin{proof}
	Let $0 \to B \to C \to A \to 0$ be an extension
	in $\Ab(k^{\rat}_{\et})$.
	By pulling back by the inclusion $\xi_{A} \into A$,
	we have an element of $H^{1}((\xi_{A})_{\et}, B)$,
	which is trivial by assumption.
	Hence we have a section $s \colon \xi_{A} \to C$ (in $\Set(k^{\rat}_{\et})$)
	to the projection $C \to A$.
	Consider the associated factor system
		\begin{gather*}
					f
				\colon
					\xi_{A \times A}
				\to
					B,
			\\
					(x_{1}, x_{2})
				\mapsto
					s(x_{1}) + s(x_{2}) - s(x_{1} + s_{2}).
		\end{gather*}
	We have equalities
		\begin{gather*}
					f(x_{2}, x_{3})
				-
					f(x_{1} + x_{2}, x_{3})
				+
					f(x_{1}, x_{2} + x_{3})
				-
					f(x_{1}, x_{2})
				=
					0,
			\\
					f(x_{1}, x_{2})
				=
					f(x_{2}, x_{1})
		\end{gather*}
	as morphisms $\xi_{A \times A \times A} \to B$
	and $\xi_{A \times A} \to B$.
	This means that $f$ as a rational map $A \times A \dashrightarrow B$
	is a rational symmetric factor system (\cite[VII, \S 1.4]{Ser88}).
	Hence $f$ defines a birational group,
	which comes from a true quasi-algebraic group $D$ by Weil's theorem,
	which fits in an extension $0 \to B \to D \to A \to 0$ of quasi-algebraic groups (\cite[VII, \S 1.4]{Ser88}).
	By construction, we have a rational section $A \dashrightarrow D$
	whose associated rational factor system agrees with $f$.
	Consider the Baer difference of the two extensions
		\[
				[0 \to B \to E \to A \to 0]
			:=
					[0 \to B \to C \to A \to 0]
				-
					[0 \to B \to D \to A \to 0]
		\]
	in $\Ab(k^{\rat}_{\et})$.
	This admits a section $t \colon \xi_{A} \to E$ (in $\Set(k^{\rat}_{\et})$)
	whose associated factor system is $f - f = 0$.
	This means the equality
		\begin{gather*}
					t(x_{1} + x_{2})
				=
					t(x_{1}) + t(x_{2})
				\text{ in }
					E(k')
			\\
				\text{ for }
					(x_{1}, x_{2}) \in \xi_{A^{2}}(k').
		\end{gather*}
	We want to extend $t \colon \xi_{A} \to E$
	to a homomorphism $\tilde{t} \colon A \to E$ in $\Ab(k^{\rat}_{\et})$ in a unique way.
	We need two sublemmas.
	
	\begin{Sublem} \label{sublem: generic homomorphism property} \BetweenThmAndList
		\begin{enumerate}
			\item \label{eq: generic homomorphism property}
					$
							t(x_{1} + \dots + x_{m})
						=
							t(x_{1}) + \dots + t(x_{m})
					$
				in $E(k')$ for $m \ge 2$ and
				$x_{1}, \dots, x_{m} \in \xi_{A}(k')$
				with
				$x_{1} + \dots + x_{m} \in \xi_{A}(k')$.
			\item \label{eq: generic inverse property}
				$t(- y) = - t(y)$ in $E(k')$
				for
				$y \in \xi_{A}(k')$.
		\end{enumerate}
	\end{Sublem}
	
	\begin{proof}[Proof of Sublemma \ref{sublem: generic homomorphism property}]
		\eqref{eq: generic homomorphism property}
		We prove this only for $m = 2$ as the general case is similar.
		As before, let $A_{k'} = \Spec k' \times_{k} A$
		and $k''$ the function field of $A_{k'}$.
		The natural projection $A_{k'} \to A$ gives a point $x \in \xi_{A}(k'')$.
		The morphism $x_{2} \colon \Spec k' \to A$
		corresponding to the point $x_{2} \in \xi_{A}(k')$ is flat.
		Taking the fiber product of this morphism with $A$,
		we have a flat morphism $A_{k'} \to A^{2}$,
		hence a morphism $\Spec k'' \to \xi_{A^{2}}$,
		which corresponds to the point $(x_{2}, x) \in \xi_{A^{2}}(k'')$.
		Similarly we have $(x_{1} + x_{2}, x) \in \xi_{A^{2}}(k'')$.
		The direct sum of the identity map $A_{k'} \to A_{k'}$
		and the addition $A_{k'} \to A_{k'}$ by $x_{2}$
		gives a $k'$-scheme automorphism
		$A_{k'}^{2} \isomto A_{k'}^{2}$.
		This sends $(x_{1}, x) \in A_{k'}^{2}(k'') = A^{2}(k'')$ to $(x_{1}, x_{2} + x)$.
		Hence $(x_{1}, x) \in \xi_{A^{2}}(k'')$ implies
		$(x_{1}, x_{2} + x) \in \xi_{A^{2}}(k'')$.
		Therefore
			\[
					t(x_{1} + x_{2}) + t(x)
				=
					t(x_{1} + x_{2} + x)
				=
					t(x_{1}) + t(x_{2} + x)
				=
					t(x_{1}) + t(x_{2}) + t(x),
			\]
		and hence the element
		$t(x_{1}) + t(x_{2}) - t(x_{1} + x_{2})$
		of $B(k')$ becomes zero in $B(k'')$.
		Since $B(k') \to B(k'')$ is injective,
		we have $t(x_{1}) + t(x_{2}) = t(x_{1} + x_{2})$ in $E(k')$.
		
		\eqref{eq: generic inverse property}
		Using \eqref{eq: generic homomorphism property} for $m = 3$,
		we have
			$
					t(y) + t(- y) + t(y)
				=
					t(y),
			$
		so $t(-y) = - t(y)$.
	\end{proof}
	
	\begin{Sublem} \label{sublem: an etale extension of rational sheaves is a rational sheaf}
		For any field extension $k'' / k'$ in $k^{\rat}$,
		the sequence
			\[
					0
				\to
					E(k')
				\to
					E(k'')
				\to
					E(\Frac(k'' \tensor_{k'} k''))
			\]
		is exact, where $\Frac$ denotes the total quotient ring,
		and the first homomorphism comes from the inclusion $k' \into k''$
		and the second the difference of the two homomorphisms coming from
		the inclusions $k'' \rightrightarrows \Frac(k'' \tensor_{k'} k'')$ into the two factors.%
		\footnote{This sublemma says that $E$ is a sheaf for the rational flat topology
		defined in Section \ref{sec: the flat case}.
		See Proposition \ref{prop: sheaf condition in the rational flat site}.
		The argument below shows that $A$ and $B$ are sheaves for the rational flat topology.
		Hence so is $E$.}
	\end{Sublem}
	
	\begin{proof}[Proof of Sublemma \ref{sublem: an etale extension of rational sheaves is a rational sheaf}]
		Consider the similar sequences
			\begin{gather*}
					0
				\to
					A(k')
				\to
					A(k'')
				\to
					A(\Frac(k'' \tensor_{k'} k'')),
				\\
					0
				\to
					B(k')
				\to
					B(k'')
				\to
					B(\Frac(k'' \tensor_{k'} k'')).
			\end{gather*}
		These are exact, since $A$ and $B$ are quasi-algebraic groups,
		and a rational map $g$ from a variety to a quasi-algebraic group (which in particular is separated) such that
		$g(x_{1}) = g(x_{2})$ for independent generic points $(x_{1}, x_{2})$
		is constant everywhere.
		Also the sequence
			\[
				0 \to B(k') \to E(k') \to A(k') \to 0
			\]
		is exact since $H^{1}(k'_{\et}, B) = 0$.
		These yield the required exactness by a diagram chase.
	\end{proof}
	
	Now let $a \in A(k')$.
	As above,
	let $A_{k'} = \Spec k' \times_{k} A$
	and $k''$ the function field of $A_{k'}$.
	The natural projection $A_{k'} \to A$ gives a point $x \in \xi_{A}(k'')$,
	which satisfies $a - x \in \xi_{A}(k'')$.
	We define
		\[
				t(a)
			=
				t(a - x) + t(x)
			\in
				E(k'').
		\]
	We show that $t(a) \in E(k')$.
	The two inclusions $k'' \rightrightarrows \Frac(k'' \tensor_{k'} k'')$
	and the point $x \in \xi_{A}(k'')$
	defines two points $x_{1}, x_{2} \in \xi_{A}(\Frac(k'' \tensor_{k'} k''))$.
	In view of Sublemma \ref{sublem: an etale extension of rational sheaves is a rational sheaf},
	we have to show that $t(a - x_{1}) + t(x_{1}) = t(a - x_{2}) + t(x_{2})$.
	This follows from \eqref{eq: generic homomorphism property} and
	\eqref{eq: generic inverse property} of Sublemma \ref{sublem: generic homomorphism property}.
	We show that $t(a + b) = t(a) + t(b)$ for $a, b \in A(k')$.
	This is equivalent to the equality
	$t(a + b - x) = t(a - x) + t(b - x) + t(x)$
	in $E(k'')$,
	which follows from \eqref{eq: generic homomorphism property}
	of Sublemma \ref{sublem: generic homomorphism property}.
	The homomorphism $A \to E$ thus obtained is
	the only possible extension of $t$.

	Therefore the extension
	$0 \to B \to E \to A \to 0$ in $\Ab(k^{\rat}_{\et})$ is trivial.
	Hence the two extensions
	$0 \to B \to C \to A \to 0$ and
	$0 \to B \to D \to A \to 0$ are equivalent.
	Since $D$ is a quasi-algebraic group,
	this proves that
	$\Ext_{\Alg / k}^{1}(A, B) = \Ext_{k^{\rat}_{\et}}^{1}(A, B)$.
\end{proof}

\begin{Rmk}
	The above proof works for an arbitrary commutative quasi-algebraic group $A$
	and affine commutative quasi-algebraic group $B$.
	To allow a non-affine quasi-algebraic group $B$,
	we need the following modification.
	An element of $\Ext_{k^{\rat}_{\et}}^{1}(A, B)$ defines a non-zero element $x$ of
	$H^{1}((\xi_{A})_{\et}, B)$ in general.
	This element is primitive in the sense of \cite[VII, \S 3.14]{Ser88},
	that is, $s^{\ast} x = \proj_{1}^{\ast} x + \proj_{2}^{\ast} x$ in $H^{1}((\xi_{A^{2}})_{\et}, B)$,
	where $s, \proj_{1}, \proj_{2} \colon \xi_{A^{2}} \to \xi_{A}$
	are the group operation, first and second projections, respectively.
	There is a homomorphism from the subgroup of primitive classes of $H^{1}((\xi_{A})_{\et}, B)$
	to a certain subquotient of $H^{0}((\xi_{A^{3}})_{\et}, B) \oplus H^{0}((\xi_{A^{2}})_{\et}, B)$,
	whose kernel consists of the classes coming from $\Ext_{k^{\rat}_{\et}}^{1}(A, B)$.
	This homomorphism comes from Eilenberg-Mac Lane's abelian complex (\cite{Bre69}).
	From such an element $x$, we can again define a birational group
	just as in the proof of \cite[VII, \S 3.15, Theorem 5]{Ser88}.
	
	In the next section, we will use the cubical construction and Mac Lane's resolution instead.
	With this, we can treat arbitrary commutative quasi-algebraic groups $A, B$
	and all higher Ext groups.
\end{Rmk}


\subsection{The relative fppf site of a local field}
\label{sec: The relative fppf site of a local field}

Let $K$ be a complete discrete valuation field
with ring of integers $\Order_{K}$ and perfect residue field $k$ of characteristic $p > 0$.
We denote by $W$ the affine ring scheme of Witt vectors of infinite length.
As in Introduction,
we define sheaves of rings on the site $\Spec k^{\ind\rat}_{\et}$
by assigning to each $k' \in k^{\ind\rat}$,
	\[
			\alg{O}_{K}(k')
		=
			W(k') \hat{\tensor}_{W(k)} \Order_{K},
		\quad
			\alg{K}(k')
		=
			\alg{O}_{K}(k') \tensor_{\Order_{K}} K.
	\]
We define a category $K / k^{\ind\rat}$ as follows.
An object is a pair $(S, k_{S})$,
where $k_{S} \in k^{\ind\rat}$ and $S$ is a $\alg{K}(k_{S})$-algebra of finite presentation.
A morphism $(S, k_{S}) \to (S', k_{S'})$ consists of
a $k$-algebra homomorphism $k_{S} \to k_{S'}$ and a ring homomorphism $S \to S'$
such that the diagram
	\[
		\begin{CD}
				\alg{K}(k_{S})
			@>>>
				\alg{K}(k_{S'})
			\\
			@VVV
			@VVV
			\\
				S
			@>>>
				S'
		\end{CD}
	\]
commutes.
The composite of two morphisms is defined in the obvious way.
We say that a morphism $(S, k_{S}) \to (S', k_{S'})$ is \emph{flat/\'etale}
if $S \to S'$ is flat and $k_{S} \to k_{S'}$ is \'etale.

\begin{Prop} \label{prop: canonical lifts and coverings}
	Let $(S, k_{S}) \to (S', k_{S'})$, $(S, k_{S}) \to (S'', k_{S''})$
	be two morphisms in $K / k^{\ind\rat}$.
	Assume that the first one is flat/\'etale.
	Then we have
		$
				\alg{K}(k_{S'}) \tensor_{\alg{K}(k_{S})} \alg{K}(k_{S''})
			=
				\alg{K}(k_{S'} \tensor_{k_{S}} k_{S''})
		$.
	The pair $(S' \tensor_{S} S'', k_{S'} \tensor_{k_{S}} k_{S''})$
	is the fiber sum of $(S', k_{S'})$ and $(S'', k_{S''})$ over $(S, k_{S})$
	in the category $K / k^{\ind\rat}$,
	and is flat/\'etale over $(S'', k_{S''})$.
\end{Prop}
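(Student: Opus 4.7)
The plan is to treat the ring-theoretic identity
$\alg{K}(k_{S'}) \tensor_{\alg{K}(k_S)} \alg{K}(k_{S''}) = \alg{K}(k_{S'} \tensor_{k_S} k_{S''})$
as the heart of the proposition, and to deduce the fiber-sum and flat/\'etale assertions formally from it.

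For the central identity I would work at the level of $\alg{O}_K$ and tensor with $K$ over $\Order_K$ only at the end. Since $k_S \to k_{S'}$ is \'etale in $k^{\ind\rat}$, it is a filtered colimit of finite \'etale subextensions. Standard Witt vector theory gives that $W(k_S) \to W(k_{S'})$ is then formally \'etale for the $p$-adic topology. This is what lets the completion in the definition of $\alg{O}_K(k_{S'})$ disappear along the map $W(k_S) \to W(k_{S'})$: one has $\alg{O}_K(k_{S'}) = \alg{O}_K(k_S) \tensor_{W(k_S)} W(k_{S'})$ with an honest tensor product, verified levelwise on the finite \'etale pieces (where the completeness is preserved by finite \'etale base change) and then by taking the filtered colimit. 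With this in hand I can chain
\begin{align*}
	\alg{O}_K(k_{S'}) \tensor_{\alg{O}_K(k_S)} \alg{O}_K(k_{S''})
	&=
		W(k_{S'}) \tensor_{W(k_S)} \alg{O}_K(k_{S''})
	\\
	&=
		\bigl(W(k_{S'}) \tensor_{W(k_S)} W(k_{S''})\bigr) \hat{\tensor}_{W(k)} \Order_K
	\\
	&=
		W(k_{S'} \tensor_{k_S} k_{S''}) \hat{\tensor}_{W(k)} \Order_K
	\\
	&=
		\alg{O}_K(k_{S'} \tensor_{k_S} k_{S''}),
\end{align*}
the pivot step $W(k_{S'}) \tensor_{W(k_S)} W(k_{S''}) = W(k_{S'} \tensor_{k_S} k_{S''})$ being justified by agreement modulo $p$ combined with unique lifting of \'etale algebras across the $p$-adically complete ring $W(k_{S''})$. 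Inverting $p$ (i.e.\ tensoring with $K$ over $\Order_K$) yields the desired identity for $\alg{K}$.

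Given this identity, the rest is bookkeeping. The tensor $k_{S'} \tensor_{k_S} k_{S''}$ is ind-rational by the \'etale stability proposition already proved, and $S' \tensor_S S''$ is finitely presented as a $\alg{K}(k_{S'} \tensor_{k_S} k_{S''})$-algebra: I would realise it as the closed subscheme of $\Spec S' \times_{\Spec \alg{K}(k_S)} \Spec S''$ cut out by the finitely many relations coming from a chosen set of generators of $S$ over $\alg{K}(k_S)$, and note that the outer product is finitely presented over $\alg{K}(k_{S'}) \tensor_{\alg{K}(k_S)} \alg{K}(k_{S''}) = \alg{K}(k_{S'} \tensor_{k_S} k_{S''})$ by the identity just established. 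The fiber-sum property in $K/k^{\ind\rat}$ then drops out of the universal properties of the two underlying tensor products, the identity being precisely what is needed to match the base rings. Lastly, $S'' \to S' \tensor_S S''$ is flat as the base change of the flat map $S \to S'$, and $k_{S''} \to k_{S'} \tensor_{k_S} k_{S''}$ is \'etale as the base change of the \'etale map $k_S \to k_{S'}$, so the second morphism is flat/\'etale.

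The single point where care is required is the commutation of the \'etale base change $W(k_{S'}) \tensor_{W(k_S)} -$ with the $p$-adic completion sitting inside $\hat{\tensor}$; this is what forces the reduction to finite-\'etale layers described above, after which everything else is formal.
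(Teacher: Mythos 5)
Your overall strategy matches the paper's: isolate the commutation of the \'etale base change $k_{S} \to k_{S'}$ with the completion hidden in $\hat{\tensor}$, reduce it to a finiteness statement about Witt vectors, and let the fiber-sum and flat/\'etale assertions follow formally. That is exactly how the paper argues (it factors $k_{S} \to k_{S'}$ as a projection onto a direct factor followed by an injection making $k_{S'}$ finite free, so that $W(k_{S'})$ is a finite projective $W(k_{S})$-module and $(\;\cdot\;)\tensor_{W(k_{S})}W(k_{S'})$ preserves completeness), and your handling of the bookkeeping at the end is fine.

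However, the one step you yourself flag as delicate is also the one that fails as written. You propose to verify $\alg{O}_{K}(k_{S'}) = \alg{O}_{K}(k_{S})\tensor_{W(k_{S})}W(k_{S'})$ ``levelwise on the finite \'etale pieces and then by taking the filtered colimit,'' implicitly writing $k_{S} = \bigcup_{\lambda} k_{1,\lambda}$ with $k_{1,\lambda}$ rational. But neither $W$ nor $\hat{\tensor}_{W(k)}\Order_{K}$ commutes with filtered colimits of ind-rational algebras: $\dirlim_{\lambda}\alg{O}_{K}(k_{1,\lambda})$ is the strictly smaller subring $\alg{O}_{K}^{0}(k_{S})$ that the paper must introduce separately in Section \ref{sec: Cohomology of local fields with ind-rational base} precisely because it differs from $\alg{O}_{K}(k_{S})$ (in equal characteristic, $\bigcup_{\lambda}k_{1,\lambda}[[T]]\subsetneq k_{S}[[T]]$). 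So the colimit of the levelwise identities does not compute the two sides you need to compare. The repair is to avoid any colimit at the completion stage: since $k_{S'} = k_{1}'\tensor_{k_{1}}k_{S}$ with $k_{1}'/k_{1}$ finite \'etale and $k_{1}$ rational, the map $k_{S}\to k_{S'}$ is itself globally a projection onto a direct factor followed by a finite free extension (in particular it is finite projective as a module map, not merely a colimit of such), whence $W(k_{S'})$ is finite projective over $W(k_{S})$ and the tensor product of the complete module $\alg{O}_{K}(k_{S})$ with it is already complete; this is the content of the paper's proof. A secondary point: your wording ($p$-adic topology, ``inverting $p$'') silently assumes mixed characteristic, whereas the proposition is also used when $K$ has equal characteristic; there the completion in $\hat{\tensor}$ is along the profinite-length (equivalently $\pi$-adic) topology and $K\ne\Order_{K}[1/p]$, so the same finiteness argument must be run for that topology instead.
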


\begin{proof}
	For the equality
		$
				\alg{K}(k_{S'}) \tensor_{\alg{K}(k_{S})} \alg{K}(k_{S''})
			=
				\alg{K}(k_{S'} \tensor_{k_{S}} k_{S''})
		$,
	note that we can write $k_{S'} = k'_{1} \tensor_{k_{1}} k_{S}$
	with $k_{1}$ a rational $k$-subalgebra of $k_{S}$ and $k'_{1}$ \'etale over $k_{1}$.
	Writing $k_{1}$ as a finite product of fields $k_{1, i}$,
	we know that $k_{1} \to k'_{1}$ is a finite product of
	finite free \'etale homomorphisms $k_{1, i} \to k'_{1, i}$
	(where $k'_{1, i}$ can be zero).
	Hence $k_{S} \to k_{S'}$ can be written as a finite product of
	finite free \'etale homomorphisms $k_{S, i} \to k_{S', i}$.
	Therefore $W(k_{S}) \to W(k_{S'})$ can be written as the product of
	the finite free \'etale homomorphisms $W(k_{S, i}) \to W(k_{S', i})$.
	Hence the equality
		$
				\alg{K}(k_{S'}) \tensor_{\alg{K}(k_{S})} \alg{K}(k_{S''})
			=
				\alg{K}(k_{S'} \tensor_{k_{S}} k_{S''})
		$
	follows.
	The rest is obvious.
\end{proof}

\begin{Def}
	We define the \emph{relative fppf site of $K$ over $k$},
	 $\Spec K_{\fppf} / k^{\ind\rat}_{\et}$,
	to be the category $K / k^{\ind\rat}$
	endowed with the topology whose covering families
	over an object $(S, k_{S}) \in K_{\fppf} / k^{\ind\rat}$
	are finite families $\{(S_{i}, k_{S_{i}})\}$ each flat/\'etale over $(S, k_{S})$
	with $\prod_{i} S_{i}$ faithfully flat over $S$.
\end{Def}

Note that in this definition,
$\prod_{i} k_{S_{i}}$ is not required to be faithfully flat over $k_{S}$.

\begin{Prop} \label{prop: etale residual extensions induce bijections on sections}
	Let $k' \to k''$ be \'etale in $k^{\ind\rat}$,
	$(S, k'') \in K / k^{\ind\rat}$
	and $A \in \Ab(K_{\fppf} / k^{\ind\rat}_{\et})$.
	Then the natural morphism $(S, k') \to (S, k'')$ in $K / k^{\ind\rat}$ induces an isomorphism
	$A(S, k') \isomto A(S, k'')$.
\end{Prop}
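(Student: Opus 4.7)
The plan is to apply the sheaf axiom twice: first to a singleton covering of $(S, k')$ to reduce the claim to equality of two restriction maps, and then to a second singleton covering to force that equality.

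First I would observe that the natural morphism $\iota \colon (S, k') \to (S, k'')$ in $K / k^{\ind\rat}$ is flat/\'etale: its $S$-part is $\id_S$, and its $k$-part is \'etale by hypothesis. Hence $\{(S, k'') \to (S, k')\}$ is a singleton covering of $(S, k')$ in the scheme site (faithful flatness at the $S$-level is trivial). By Proposition \ref{prop: canonical lifts and coverings}, the self-fiber-product is $(S, k'' \tensor_{k'} k'')$, and the two projections are $\iota_j = (\id_S, i_j) \colon (S, k'') \to (S, k'' \tensor_{k'} k'')$ for $j = 1, 2$, where $i_1, i_2 \colon k'' \rightrightarrows k'' \tensor_{k'} k''$ are the canonical $k'$-algebra embeddings. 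The sheaf axiom identifies $A(S, k')$ with the equalizer of $A(\iota_1), A(\iota_2) \colon A(S, k'') \rightrightarrows A(S, k'' \tensor_{k'} k'')$, with $A(\iota)$ the equalizer inclusion; the task thus reduces to showing $A(\iota_1) = A(\iota_2)$.

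Next I would use the multiplication $m \colon k'' \tensor_{k'} k'' \to k''$, which is \'etale because $k' \to k''$ is (the diagonal of an \'etale morphism is an open immersion). By construction of the fiber-sum in the first step, the $\alg{K}(k'' \tensor_{k'} k'')$-algebra structure on $S$ is obtained by pulling back $\alg{K}(k'') \to S$ along $m$, so $\mu = (\id_S, m) \colon (S, k'' \tensor_{k'} k'') \to (S, k'')$ is a well-defined, flat/\'etale morphism in $K / k^{\ind\rat}$, and a singleton covering of $(S, k'' \tensor_{k'} k'')$ in the scheme site. By Proposition \ref{prop: canonical lifts and coverings}, its self-fiber-product is $(S, k'' \tensor_{k'' \tensor_{k'} k''} k'') = (S, k'')$, since $m$ is surjective, $k''$ is the quotient of $k'' \tensor_{k'} k''$ by $\ker m$, and tensoring this quotient with itself over $k'' \tensor_{k'} k''$ recovers $k''$; both legs of the fiber sum are the identity on $(S, k'')$. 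The sheaf axiom therefore forces $A(\mu) \colon A(S, k'' \tensor_{k'} k'') \to A(S, k'')$ to be an isomorphism.

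To conclude, each $i_j$ is a section of $m$, so $\mu \circ \iota_j = \id_{(S, k'')}$ for $j = 1, 2$, giving $A(\mu) \circ A(\iota_j) = \id$; since $A(\mu)$ is invertible, $A(\iota_1) = A(\iota_2) = A(\mu)^{-1}$, which finishes the argument. The main subtlety is the bookkeeping of the $\alg{K}(k_S)$-algebra structures on $S$ as $k_S$ varies, in particular verifying that the fiber-sum structure on $S$ over $\alg{K}(k'' \tensor_{k'} k'')$ is indeed the one pulled back through $m$, so that $\mu$ is a genuine morphism in the site; once this is in hand, the rest of the argument is formal.
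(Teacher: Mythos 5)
Your argument is correct and follows essentially the same route as the paper: both proofs apply the sheaf condition to the covering $(S,k') \to (S,k'')$ and then exploit that the multiplication $k'' \tensor_{k'} k'' \onto k''$ is \'etale, so that $(S,k'')$ covers $(S, k'' \tensor_{k'} k'')$, to force the two restriction maps to coincide. The only cosmetic difference is that you extract full bijectivity of $A(\mu)$ from the second covering, whereas the paper uses only the injectivity half of that sheaf condition together with the retraction identity $m \compose i_{j} = \id$.
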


\begin{proof}
	The object $(S, k'')$ covers $(S, k')$.
	The sheaf condition says that the sequence
		\[
				0
			\to
				A(S, k')
			\to
				A(S, k'')
			\to
				A(S, k'' \tensor_{k'} k'')
		\]
	is exact.
	Since $k''$ is \'etale over $k'$,
	the diagonal homomorphism $k'' \tensor_{k'} k'' \onto k''$ is \'etale.
	Hence $(S, k'')$ covers $(S, k'' \tensor_{k'} k'')$.
	In particular, the homomorphism $A(S, k'' \tensor_{k'} k'') \to A(S, k'')$ is injective.
	This and the above sequence imply the result.
\end{proof}

A little closer observation shows that
a representable presheaf
$F = (S, k_{S})$ on $\Spec K_{\fppf} / k^{\ind\rat}_{\et}$ is a sheaf
if and only if $k_{S} = k$.
In particular, an affine scheme $\Spec S$ finitely presented over $K$
can be regarded as a sheaf on $\Spec K_{\fppf} / k^{\ind\rat}_{\et}$
by identifying it with the sheaf $(S, k)$.
By patching, any locally finitely presented scheme over $K$
can be regarded as a sheaf on $\Spec K_{\fppf} / k^{\ind\rat}_{\et}$.
The sheafification of $F = (S, k_{S})$ sends an object $(S', k_{S'})$ to
the filtered direct limit of $F(S', k')$,
where the limit is indexed by pairs $k'$ (an \'etale $k_{S'}$-algebra) and
$\alg{K}(k') \to S'$ (a $\alg{K}(k_{S'})$-algebra homomorphism).
When we write $\Z[F]$, we mean
the sheafification of the presheaf of free abelian groups generated by $F$.

The cohomology theory of the site $\Spec K_{\fppf} / k^{\ind\rat}_{\et}$
is essentially fppf cohomology of $K$, as follows.

\begin{Prop} \label{prop: cohomology of the relative site}
	Let $(S, k_{S}) \in K / k^{\ind\rat}$.
	Let $\Spec S_{\fppf}$ be the fppf site of the ring $S$
	(on the category of finitely presented $S$-algebras).
	Let $(\Spec K_{\fppf} / k^{\ind\rat}_{\et}) / (S, k_{S})$ be
	the localization of $\Spec K_{\fppf} / k^{\ind\rat}_{\et}$ at the object $(S, k_{S})$.
	Finally let
		\[
				f
			\colon
				(\Spec K_{\fppf} / k^{\ind\rat}_{\et}) / (S, k_{S})
			\to
				\Spec S_{\fppf}
		\]
	be the morphism of sites defined by sending a finitely presented $S$-algebra $S'$
	to the object $(S', k_{S})$ of $K / k^{\ind\rat}$.
	Then $f_{\ast}$ is exact.
	We have
		\[
				R \Gamma((S, k_{S})_{\fppf}, A)
			=
				R \Gamma(S_{\fppf}, f_{\ast} A)
		\]
	for any sheaf $A$ of abelian groups on the site
	$(\Spec K_{\fppf} / k^{\ind\rat}_{\et}) / (S, k_{S})$,
	where the left-hand side is the cohomology of this site
	(at the final object $(S, k_{S})$) with coefficients in $A$.
\end{Prop}

\begin{proof}
	First $f$ is a morphism of sites
	since the underlying category of the target $\Spec S_{\fppf}$ has all finite limits
	and the functor $S' \mapsto (S', k_{S})$ commutes with these limits (or tensor products).
	We show the exactness of $f_{\ast}$.
	Let $\varphi \colon A \onto B$ be a surjection
	of sheaves on  $\Spec(K_{\fppf} / k^{\ind\rat}_{\et}) / (S, k_{S})$,
	$S'$ a finitely presented $S$-algebra
	and $b \in (f_{\ast} B)(S') = B(S', k_{S})$.
	The surjectivity says that
	there exists a cover $(S'', k_{S''})$ of $(S', k_{S})$
	and an element $a \in A(S'', k_{S''})$ such that
	$\varphi(a) = b$ in $B(S'', k_{S''})$.
	By Proposition \ref{prop: etale residual extensions induce bijections on sections},
	we may assume that $k_{S''} = k_{S}$.
	Then $a \in (f_{\ast} A)(S'')$ and $\varphi(a) = b \in (f_{\ast} B)(S'')$.
	Therefore $f_{\ast} A \to f_{\ast} B$ is surjective.
	Hence $f_{\ast}$ is exact.
	The equality of cohomology complexes then follows from the Grothendieck spectral sequence.
\end{proof}

The above is enough for describing the cohomology theory of $\Spec K_{\fppf} / k^{\ind\rat}_{\et}$;
see the paragraph after Proposition \ref{prop: rational etale cohomology is Galois cohomology}.


\subsection{The structure morphism of a local field and the cup product pairing}
\label{sec: The structure morphism of a local field and the cup product pairing}

\begin{Prop}
	The functor $k^{\ind\rat} \to K / k^{\ind\rat}$ defined by
		\[
				k'
			\mapsto
				(\alg{K}(k'), k')
		\]
	sends \'etale coverings to \'etale (hence fppf) coverings
	and has a right adjoint given by
		\[
				(S, k_{S})
			\mapsto
				k_{S}.
		\]
\end{Prop}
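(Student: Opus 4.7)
The plan is to unwind definitions and apply standard properties of the Witt vector functor $W$.

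For the right adjoint, a morphism $(\alg{K}(k'), k') \to (S, k_{S})$ in $K / k^{\ind\rat}$ consists of a $k$-algebra map $\phi \colon k' \to k_{S}$ together with a ring map $\psi \colon \alg{K}(k') \to S$ compatible with $\phi$ via the structure map $\alg{K}(k_{S}) \to S$. The commutativity constraint forces $\psi$ to coincide with the composite $\alg{K}(k') \to \alg{K}(k_{S}) \to S$ induced by $\phi$, so the morphism is entirely determined by $\phi$; conversely, every such $\phi$ produces a unique pair. This yields the natural bijection $\Hom_{K / k^{\ind\rat}}((\alg{K}(k'), k'), (S, k_{S})) \cong \Hom_{k^{\ind\rat}}(k', k_{S})$ exhibiting the stated adjunction.

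For the covering claim, let $\{k'_{i}\}$ be an étale covering of $k'$, so that each $k' \to k'_{i}$ is étale and $k' \to \prod_{i} k'_{i}$ is faithfully flat (equivalently, injective in $k^{\ind\rat}$). Since $W$ preserves étale morphisms of perfect $k$-algebras, each $W(k') \to W(k'_{i})$ is étale. Base changing by $\Order_{K} / \ideal{m}_{K}^{n}$ preserves étaleness for every $n$, and passing to the inverse limit preserves it as well, because $\alg{O}_{K}(k')$ is $\ideal{m}_{K}$-adically complete and étale algebras lift uniquely across such henselian pairs. Inverting $p$ then shows $\alg{K}(k') \to \alg{K}(k'_{i})$ is étale. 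Analogously, $W$ preserves faithful flatness of injections between perfect $k$-algebras (check modulo $p^{n}$) and commutes with finite products, so $W(k') \to \prod_{i} W(k'_{i}) = W(\prod_{i} k'_{i})$ is faithfully flat; completed base change along $W(k) \to \Order_{K}$ and subsequent inversion of $p$ both preserve this, yielding that $\prod_{i} \alg{K}(k'_{i})$ is faithfully flat over $\alg{K}(k')$. This realizes $\{(\alg{K}(k'_{i}), k'_{i})\}$ as an étale (in particular, flat/\'etale) covering in $\Spec K_{\fppf} / k^{\ind\rat}_{\et}$.

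The only delicate point is the interaction of the completed tensor product $W(\cdot) \hat{\tensor}_{W(k)} \Order_{K}$ with étale and faithfully flat base change in the first factor. I would dispatch this via the presentation $\Order_{K} = \invlim_{n} \Order_{K} / \ideal{m}_{K}^{n}$ as an inverse limit of finite-length $W(k)$-modules, where ordinary tensor products suffice and the desired properties are standard, and then invoke the henselian property of the resulting completion to lift étaleness and faithful flatness from the reductions modulo $\ideal{m}_{K}^{n}$ to $\alg{O}_{K}$ itself.
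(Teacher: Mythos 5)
Your treatment of the adjunction is correct and is exactly what the paper leaves as ``obvious'': the commutativity constraint forces the ring component of any morphism out of $(\alg{K}(k'), k')$ to be the composite $\alg{K}(k') \to \alg{K}(k_{S}) \to S$ induced by the residue map, so such morphisms are the same as $k$-algebra homomorphisms $k' \to k_{S}$.

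The covering half is where the real content sits, and your argument has a gap at precisely the point you flag. The general claims you invoke --- that $W$ preserves \'etaleness and faithful flatness of perfect $k$-algebras, and that these survive the completed tensor product $W(\cdot)\,\hat{\tensor}_{W(k)}\,\Order_{K} = \invlim_{n} W(\cdot)\tensor_{W(k)}\Order_{K}/\ideal{m}_{K}^{n}$ --- are not available for arbitrary (non-finite) \'etale morphisms: an inverse limit of flat algebras over a non-Noetherian base need not be flat, completion does not commute with non-finite base change, and the henselian lifting of \'etale algebras produces \emph{some} \'etale lift but does not by itself identify that lift with the completed tensor product, nor show that the latter is flat. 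What rescues the argument, and what the paper's proof (via the preceding Proposition on canonical lifts and coverings) actually uses, is the structure of \'etale morphisms in $k^{\ind\rat}$: any such $k' \to k''$ factors as a projection $k' \onto k_{0}$ onto a direct factor followed by an injection $k_{0} \into k''$ making $k''$ finite free over $k_{0}$. Consequently $W(k_{0})$ is a direct factor of $W(k')$ and $W(k'')$ is finite free over $W(k_{0})$, so every tensor product in sight is with a finite free module (or a direct factor); these commute with the inverse limit defining $\hat{\tensor}$, flatness of $\alg{K}(k') \to \alg{K}(k'_{i})$ is immediate, and faithful flatness of $\prod_{i}\alg{K}(k'_{i})$ over $\alg{K}(k')$ follows because the covering condition makes $k' \to \prod_{i} k'_{i}$ injective. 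You need to establish (or cite) this factorization before your limit argument can be run. A smaller slip: you ``invert $p$'' to pass from $\alg{O}_{K}$ to $\alg{K}$; in the equal-characteristic case $p = 0$ in $\Order_{K}$, and one must invert a uniformizer instead.
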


\begin{proof}
	If $k'' / k'$ is an \'etale covering in $k^{\ind\rat}$,
	then
		$
				\alg{K}(k'') \tensor_{\alg{K}(k')} \alg{K}(k'')
			=
				\alg{K}(k'' \tensor_{k'} k'')
		$
	and $\alg{K}(k'') / \alg{K}(k')$ is an \'etale covering,
	as we saw in Proposition \ref{prop: canonical lifts and coverings} and its proof.
	The adjointness is obvious.
\end{proof}

\begin{Def}
	The functor $k' \mapsto (\alg{K}(k'), k')$ above defines a morphisms of sites
		\[
				\pi
			\colon
				\Spec K_{\fppf} / k^{\ind\rat}_{\et}
			\to
				\Spec k^{\ind\rat}_{\et}.
		\]
	We call this the (fppf) \emph{structure morphism of $K$ over $k$}.
	We denote
		\begin{gather*}
					\alg{\Gamma}(K_{\fppf}, \;\cdot\;)
				=
					\pi_{\ast},
				\quad
					\alg{H}^{i}(K_{\fppf}, \;\cdot\;)
				=
					R^{i} \pi_{\ast},
			\\
					R \alg{\Gamma}(K_{\fppf}, \;\cdot\;)
				=
					R \pi_{\ast}
				\colon
					D(K_{\fppf} / k^{\ind\rat}_{\et})
				\to
					D(k^{\ind\rat}_{\et})
		\end{gather*}
\end{Def}

Note that the pullback $\pi^{\ast}$ sends
a sheaf $A$ to the sheafification of the presheaf $(S, k_{S}) \mapsto A(k_{S})$ by the above proposition.
This is exact,
so $\pi$ is indeed a morphism of sites.
A remark is that by forgetting $k'$ from $(\alg{K}(k'), k')$,
we have a continuous map
from the fppf site of $K$-algebras to the site
$\Spec K_{\fppf} / k^{\ind\rat}_{\et}$.
But neither of it or its composite with our morphisms $\pi$
has an obvious reason to be a morphism of sites.
Therefore we only use $\pi \colon \Spec K_{\fppf} / k^{\ind\rat}_{\et} \to \Spec k^{\ind\rat}_{\et}$.
For the four operation formalism (about pushforward, pullback, Hom and tensor product)
for an arbitrary morphism of sites and unbounded derived categories,
see \cite[Chapter 18]{KS06}.

In a concrete term, we have
$\alg{\Gamma}(K_{\fppf}, A)(k') = A(\alg{K}(k'), k')$
for any $k' \in k^{\ind\rat}$ and $A \in \Ab(K_{\fppf} / k^{\ind\rat}_{\et})$,
and the sheaf $\alg{H}^{i}(K_{\fppf}, A)$ on $\Spec k^{\ind\rat}_{\et}$
is the \'etale sheafification of the presheaf
	\[
			k'
		\mapsto
			H^{i}(\alg{K}(k')_{\fppf}, f_{\ast} A),
	\]
where $(f_{\ast} A)(S) = A(S, k')$ as in Proposition \ref{prop: cohomology of the relative site}.
If $A$ is a locally algebraic group scheme over $K$
viewed as a sheaf on $\Spec K_{\fppf} / k^{\ind\rat}_{\et}$
(see the paragraph after Proposition \ref{prop: etale residual extensions induce bijections on sections}),
then $A(S, k') = A(S)$, hence $f_{\ast} A = A$.
Recall that if $k' \in k^{\rat}$, then
$\alg{K}(k')$ is a finite direct product of complete discrete valuation fields with perfect residue field.
Therefore $R \alg{\Gamma}(K_{\fppf}, A)$,
at least when restricted to the site $\Spec k^{\rat}_{\et}$,
is essentially a classical object
when $A$ is a locally algebraic group scheme over $K$.
In the next subsection, we will need to understand
cohomology of $\alg{K}(k')$ when $k'$ is ind-rational but not necessarily rational.

To simplify the notation, we will write
	\begin{gather*}
				\alg{\Gamma}(\;\cdot\;) = \pi_{\ast},
				\alg{H}^{i}(\;\cdot\;) = R^{i} \pi_{\ast}
			\colon
				\Ab(K)
			\to
				\Ab(k),
		\\
				R \alg{\Gamma}(\;\cdot\;) = R \pi_{\ast}
			\colon
				D(K)
			\to
				D(k)
	\end{gather*}
when there is no confusion.
Let $\sheafhom$ (resp.\ $\sheafext^{i}$) denote
the sheaf-Hom (resp.\ the $i$-th sheaf-Ext)
and $R \sheafhom$ its derived version.
We want to define a morphism
	\[
			R \alg{\Gamma}(\Z)
		\to
			R \sheafhom_{k}(\alg{K}^{\times}, \Z)
	\]
of duality with coefficients in $\Gm$.
We need two propositions.

\begin{Prop} \label{prop: functoriality}
	There exists a canonical morphism
		\begin{equation} \label{eq: functoriality}
				R \alg{\Gamma}
				R \sheafhom_{K}(A, B)
			\to
				R \sheafhom_{k}(R \alg{\Gamma}(A), R \alg{\Gamma}(B))
		\end{equation}
	in $D(k)$ for $A, B \in D(K)$,
	called the \emph{morphism of functoriality of $R \alg{\Gamma}$}.
	If a sequence
		\begin{equation} \label{eq: ext class}
				0 \to B \to C_{n} \to \dots \to C_{1} \to A \to 0
		\end{equation}
	in $\Ab(K)$ is exact with
	$\alg{H}^{i}(A) = \alg{H}^{i}(B) = \alg{H}^{i}(C_{j}) = 0$ for any $i \ge 1$ and any $j$,
	then the sequence
		\begin{equation} \label{eq: image of ext class}
				0 \to \alg{\Gamma}(B) \to \alg{\Gamma}(C_{n}) \to \dots \to \alg{\Gamma}(C_{1}) \to \alg{\Gamma}(A) \to 0
		\end{equation}
	is exact, and
	the extension class of \eqref{eq: ext class} maps to that of \eqref{eq: image of ext class}
	via the induced map
		\begin{equation} \label{eq: hom between Ext}
				\Ext_{K}^{n}(A, B)
			\to
				\Ext_{k}^{n}(\alg{\Gamma}(A), \alg{\Gamma}(B)).
		\end{equation}
\end{Prop}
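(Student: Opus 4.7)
The plan is to construct the morphism \eqref{eq: functoriality} as the adjoint of a natural pairing coming from the lax monoidal structure of $R\alg{\Gamma}$, and then deduce the extension-class statement by dimension shifting through the $\pi_{\ast}$-acyclicity hypothesis.

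To construct \eqref{eq: functoriality}: since $\pi$ is a morphism of sites (so a geometric morphism of topoi), the four-operation formalism \cite[Chap.~18]{KS06} provides that $R\alg{\Gamma} = R\pi_{\ast}$ is lax symmetric monoidal for the derived tensor product. Hence for $F, G \in D(K)$ there is a canonical map $R\alg{\Gamma}(F) \otimes^{L} R\alg{\Gamma}(G) \to R\alg{\Gamma}(F \otimes^{L} G)$. Take $F = R\sheafhom_{K}(A, B)$, $G = A$, compose with $R\alg{\Gamma}$ of the evaluation $R\sheafhom_{K}(A,B) \otimes^{L} A \to B$ to obtain a map
\[
R\alg{\Gamma}\, R\sheafhom_{K}(A,B) \otimes^{L} R\alg{\Gamma}(A) \to R\alg{\Gamma}(B),
\]
and take the adjoint under the tensor-Hom adjunction in $D(k)$. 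Naturality in $A, B$ is formal.

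Next, to show \eqref{eq: image of ext class} is exact, I would split \eqref{eq: ext class} into short exact sequences $0 \to K_{i} \to C_{i} \to K_{i-1} \to 0$ for $i = 1, \dots, n$, with $K_{0} = A$ and $K_{n} = B$. Applied to $0 \to B \to C_{n} \to K_{n-1} \to 0$, the long exact sequence of higher direct images $\alg{H}^{j}$, together with $\alg{H}^{j}(B) = \alg{H}^{j}(C_{n}) = 0$ for $j \geq 1$, forces $\alg{H}^{j}(K_{n-1}) = 0$ for $j \geq 1$ and the short exactness of $0 \to \alg{\Gamma}(B) \to \alg{\Gamma}(C_{n}) \to \alg{\Gamma}(K_{n-1}) \to 0$. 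Iterating downward in $i$ gives $\alg{H}^{j}(K_{i}) = 0$ for all $i$ and all $j \geq 1$, together with the short exactness of each $0 \to \alg{\Gamma}(K_{i}) \to \alg{\Gamma}(C_{i}) \to \alg{\Gamma}(K_{i-1}) \to 0$; splicing these yields \eqref{eq: image of ext class}.

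For the Ext class identification, the map \eqref{eq: hom between Ext} is the map induced by the functor $R\alg{\Gamma}$ on $\Hom$ in derived categories, $\Hom_{D(K)}(A, B[n]) \to \Hom_{D(k)}(R\alg{\Gamma}(A), R\alg{\Gamma}(B)[n])$, combined with the identifications $R\alg{\Gamma}(A) = \alg{\Gamma}(A)$ and $R\alg{\Gamma}(B) = \alg{\Gamma}(B)$ provided by the acyclicity hypothesis. The Yoneda class of \eqref{eq: ext class} is the composite in $D(K)$ of the connecting morphisms $K_{i-1} \to K_{i}[1]$ attached to the short exact sequences above. Since the vanishing just proved also gives $R\alg{\Gamma}(K_{i}) = \alg{\Gamma}(K_{i})$ for every $i$, and since $R\alg{\Gamma}$ is triangulated, applying $R\alg{\Gamma}$ to the distinguished triangle $K_{i} \to C_{i} \to K_{i-1} \to K_{i}[1]$ yields the distinguished triangle associated to $0 \to \alg{\Gamma}(K_{i}) \to \alg{\Gamma}(C_{i}) \to \alg{\Gamma}(K_{i-1}) \to 0$. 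Composing these, $R\alg{\Gamma}$ carries the Yoneda class of \eqref{eq: ext class} to that of \eqref{eq: image of ext class}, which is the asserted compatibility. The main obstacle I foresee is precisely this last step: matching the image of the extension class under \eqref{eq: hom between Ext} (defined abstractly through $R\alg{\Gamma}$ and $\Hom_{D}$) with the Yoneda class of the pushed sequence. It requires invoking the standard compatibility between the Yoneda-theoretic and derived-category descriptions of $\Ext^{n}$, and the compatibility of this identification with triangulated functors; the content is routine, but the bookkeeping with signs and shifts needs care.
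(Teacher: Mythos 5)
Your proposal is correct and follows essentially the same route as the paper: the paper constructs \eqref{eq: functoriality} directly from the unit/counit of the adjunction $\pi^{\ast} \dashv R\pi_{\ast}$, and it explicitly remarks afterwards that this is equivalent to your cup-product (lax monoidal) construction, while your dimension-shifting through the kernels $K_{i}$ and the splicing of connecting morphisms is just a more explicit rendering of the paper's one-line observation that $R\alg{\Gamma}[B \to C_{n} \to \dots \to C_{1}] = [\alg{\Gamma}(B) \to \dots \to \alg{\Gamma}(C_{1})]$ applied to the roof $A \isomfrom [B \to C_{n} \to \dots \to C_{1}] \to B[n]$. The final compatibility you flag as needing care is exactly the point the paper also treats as routine, so there is no gap.
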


\begin{proof}
	The adjunction between $\pi^{\ast}$ and $R \pi_{\ast} = R \alg{\Gamma}$
	induces morphisms
		\[
				R \alg{\Gamma}
				R \sheafhom_{K}(A, B)
			\to
				R \alg{\Gamma}
				R \sheafhom_{K}(\pi^{\ast} R \alg{\Gamma}(A), B)
			\to
				R \sheafhom_{k}(R \alg{\Gamma}(A), R \alg{\Gamma}(B)).
		\]
	See \cite[Theorem 18.6.9]{KS06} for this setting of a morphism of sites,
	sheaf-Hom and unbounded derived categories.
	This construction shows that \eqref{eq: hom between Ext} is the homomorphism
		\[
				\Hom_{D(K)}(A, B[n])
			\to
				\Hom_{D(k)}(\alg{\Gamma}(A), \alg{\Gamma}(B[n])).
		\]
	The extension class of \eqref{eq: ext class} corresponds to the morphism
		\[
				A
			\overset{\sim}{\leftarrow}
				[B \to C_{n} \to \dots \to C_{1}]
			\to
				B[n],
		\]
	where the $B$ in the complex in the middle is placed at degree $-n$.
	The assumptions show that
		\[
				R \alg{\Gamma} [B \to C_{n} \to \dots \to C_{1}]
			=
				[\alg{\Gamma}(B) \to \alg{\Gamma}(C_{n}) \to \dots \to \alg{\Gamma}(C_{1})].
		\]
	Hence we obtain the corresponding morphism
		\[
				\alg{\Gamma}(A)
			\overset{\sim}{\leftarrow}
				[\alg{\Gamma}(B) \to \alg{\Gamma}(C_{n}) \to \dots \to \alg{\Gamma}(C_{1})]
			\to
				\alg{\Gamma}(B[n]).
		\]
\end{proof}

Note that the morphism of functoriality of $R \alg{\Gamma}$ is equivalent to the cup-product pairing
	\[
				R \alg{\Gamma}(A)
			\tensor_{k}^{L}
				R \alg{\Gamma}(C)
		\to
				R \alg{\Gamma}(A \tensor_{K}^{L} C)
	\]
(where $\tensor_{k}^{L}$ and $\tensor_{K}^{L}$ denote
the derived tensor products over $k$ and $K$, respectively)
by the derived tensor-hom adjunction \cite[Theorem 18.6.4 (vii)]{KS06}
via the change of variables
$R \sheafhom_{K}(A, B) \leadsto C$ and
$A \tensor_{K}^{L} C \leadsto B$.
We prefer Ext groups rather than Tor for the treatment of proalgebraic groups.

Before the next proposition, we recall the functorial valuation map from \cite[\S 4.1]{SY12}.
(The definition of this map does not depend on the choice of the topology, fpqc or \'etale.)
For $k' \in k^{\ind\rat}$ and $\ideal{m} \in \Spec k'$,
the ring $\alg{K}(k' / \ideal{m})$ is a complete discrete valuation field extending $K$.
We denote by $v_{\ideal{m}}$ the composite of the natural surjection
$\alg{K}(k')^{\times} \onto \alg{K}(k' / \ideal{m})^{\times}$
and the normalized valuation $\alg{K}(k' / \ideal{m})^{\times} \onto \Z$.
For $\alpha \in \alg{K}(k')^{\times}$, the map
$\ideal{m} \mapsto v_{\ideal{m}}(\alpha)$
is a locally constant $\Z$-valued function on the underlying topological space of $\Spec k'$
(\cite[Proposition 4.1.1]{SY12}).
This defines a morphism of sheaves $\alg{K}^{\times} \to \Z$ in $\Ab(k^{\ind\rat}_{\et})$,
which we call the \emph{valuation map} for $\alg{K}^{\times}$.
Denote $\alg{U}_{K} = \alg{O}_{K}^{\times}$ as in Introduction.
The sequence
	\[
			0
		\to
			\alg{U}_{K}
		\to
			\alg{K}^{\times}
		\to
			\Z
		\to
			0
	\]
in $\Ab(k^{\ind\rat}_{\et})$ is split exact (\cite[Proposition 4.1.2]{SY12}).

\begin{Prop} \label{prop: cohomology of Gm}
	We have 
		\[
				R \alg{\Gamma}(\Gm)
			=
				\alg{\Gamma}(\Gm)
			=
				\alg{K}^{\times}.
		\]
	We call the composite of this isomorphism and the valuation map $\alg{K}^{\times} \onto \Z$ (see above)
	the \emph{trace map}.
\end{Prop}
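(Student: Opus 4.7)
The plan is to verify separately the two claims $\pi_*\Gm = \alg{K}^\times$ and $R^i\pi_*\Gm = 0$ for $i \geq 1$.

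The degree-zero identification is tautological: since $\Gm$ on $\Spec K_{\fppf}/k^{\ind\rat}_{\et}$ sends $(S, k_S)$ to $S^\times$, one has
\[
	(\pi_*\Gm)(k') = \Gm(\alg{K}(k'), k') = \alg{K}(k')^\times = \alg{K}^\times(k')
\]
for every $k' \in k^{\ind\rat}$, so $\pi_*\Gm = \alg{K}^\times$ on the nose.

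For $i \geq 1$, I would apply Proposition \ref{prop: cohomology of the relative site} to identify $\alg{H}^i(K_{\fppf},\Gm)$ with the étale sheafification on $\Spec k^{\ind\rat}_{\et}$ of the presheaf $k' \mapsto H^i(\alg{K}(k')_{\fppf},\Gm)$, and then invoke Grothendieck's comparison theorem for the smooth group $\Gm$ to replace this with $k' \mapsto H^i(\alg{K}(k')_{\et},\Gm)$. To prove the sheafification vanishes, it suffices to check that the presheaf takes the value $0$ on every separably closed $k' \in k^{\ind\rat}$ (automatically algebraically closed since $k'$ is perfect), since such a $k'$ admits no non-trivial étale covers and so the sheafification coincides with the presheaf value there. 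For such $k'$, a direct unwinding of the completed tensor product shows that $\alg{O}_K(k') = W(k')\hat{\tensor}_{W(k)}\Order_K$ is a complete discrete valuation ring (finite free over $W(k')$ in mixed characteristic, and $k'[[T]]$ in equal characteristic), so $L := \alg{K}(k')$ is a complete discretely valued field with algebraically closed residue field.

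The vanishing of $H^i(L_{\et},\Gm)$ in degrees $i \geq 1$ for such $L$ is then a classical input from Galois cohomology: Hilbert 90 handles $i = 1$; the Brauer group $\mathrm{Br}(L) = H^2(L_{\et},\Gm)$ vanishes by Tsen's theorem in equal characteristic and by local class field theory in mixed characteristic; and for $i \geq 3$ one combines Serre's bound $\mathrm{cd}(G_L) \leq 1$ (which follows from the vanishing of the cohomological dimension of the algebraically closed residue field) with the identification $H^i(L,\Gm) \cong H^i(L,\Order_{L^{\sep}}^\times)$ for $i \geq 1$ (via the divisible torsion-free quotient of the extended valuation) and the filtration of $\Order_{L^{\sep}}^\times$ by higher unit subgroups. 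I expect the only non-routine input to be the combination of these classical Galois-cohomology facts for $L$; the remaining steps are purely bookkeeping with the definitions of the relative fppf site and its structure morphism.
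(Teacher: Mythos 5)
Your degree-zero computation and the reduction from fppf to \'etale cohomology of the smooth group $\Gm$ are fine, and for an algebraically closed field $k'$ the classical vanishing of $H^i(\alg{K}(k')_{\et},\Gm)$ for $i\ge 1$ is indeed the right classical input. The gap is in the reduction step. Knowing that a separably closed $k'$ admits no non-trivial \'etale covers shows that the sheafification agrees with the presheaf there, hence gives the implication ``sheafification zero $\Rightarrow$ presheaf zero at such $k'$'' --- the wrong direction. For the converse you need that every class in $H^i(\alg{K}(k')_{\et},\Gm)$, for \emph{every} $k'\in k^{\ind\rat}$, dies on some \'etale covering of $k'$; equivalently (granting enough points) that the stalks $\dirlim_{k''}H^i(\alg{K}(k'')_{\et},\Gm)$ vanish, where $k''$ runs over \'etale neighborhoods of a geometric point. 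This stalk is \emph{not} $H^i(\alg{K}(\algcl{k'})_{\et},\Gm)$: the functor $k'\mapsto\alg{K}(k')$ involves a completed tensor product and does not commute with filtered colimits, so $\alg{K}(\algcl{k'})$ is the completion of $\bigcup_{k''}\alg{K}(k'')$ rather than the union. Vanishing of cohomology of the completed field does not by itself force a class defined at a finite level to die at a finite level.

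Bridging exactly this gap is the content of Section \ref{sec: Cohomology of local fields with ind-rational base}. There one introduces $\alg{K}^0(k')=\bigcup_\lambda\alg{K}(k'_\lambda)$, proves by a Hensel/Krasner-type approximation (Lemma \ref{lem: refinement}) that every \'etale covering of $\alg{K}(k')$ is refined by one pulled back from $\alg{K}^0(k')$, deduces exactness of the pushforward along $\Spec\alg{K}(k')\to\Spec\alg{K}^0(k')$ (Lemma \ref{lem: push is exact}), and thereby identifies $H^i(\alg{K}(k')_{\et},\Gm)$ with a colimit of Galois cohomology groups $H^i(\Gal(L/\alg{K}(k'_\lambda)),\alg{L}^\times_{k'_\lambda}(k'))$. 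The vanishing for $i\ge 1$ is then proved not stalkwise but by showing directly (Lemma \ref{lem: vanishing of Tate cohomology sheaves}, via surjectivity of the norm on unit groups as proalgebraic groups) that each such class is killed after a \emph{finite} \'etale faithfully flat extension of $k'$, which is what vanishing of the sheafification actually requires. Your argument as written assumes this hard part away; note also that it does not address the non-field objects of $k^{\ind\rat}$ (e.g.\ rings of functions on profinite sets), over which local vanishing must also be checked.
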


We will prove this proposition in the next subsection.
Note that the sheaf
$\alg{H}^{i}(\Gm)$ on $\Spec k^{\ind\rat}_{\et}$ is the \'etale sheafification of the presheaf
	\[
			k' \in k^{\ind\rat}
		\mapsto
			H^{i}(\alg{K}(k')_{\fppf}, \Gm)
		=
			H^{i}(\alg{K}(k')_{\et}, \Gm),
	\]
where we used the fact that the fppf cohomology with coefficients in a smooth group scheme
agrees with the \'etale cohomology
(\cite[III, Remark 3.11 (b)]{Mil80}).
When restricted to $\Spec k^{\rat}_{\et}$,
its vanishing for $i \ge 1$ follows from
classical results on Galois cohomology of complete discrete valuation fields
with algebraically closed residue field:
for each field $k' \in k^{\rat}$ and $i \ge 1$, we have
	\[
			H^{i}(\alg{K}(\closure{k'})_{\et}, \Gm)
		=
			H^{i}(\alg{K}(k')^{\ur}_{\et}, \Gm)
		=
			0
	\]
by \cite[V, \S 4, Proposition 7, and X, \S 7, Proposition 11]{Ser79},
where $\alg{K}(k')^{\ur}$ is the maximal unramified extension of
the complete discrete valuation field $\alg{K}(k')$
and $\alg{K}(\closure{k'})$ its completion.

Using Propositions \ref{prop: functoriality} and \ref{prop: cohomology of Gm},
we have a desired morphism
	\[
			R \alg{\Gamma}(\Z)
		\to
			R \alg{\Gamma} R \sheafhom_{K}(\Gm, \Gm)
		\to
			R \sheafhom_{k}(\alg{K}^{\times}, \alg{K}^{\times})
		\to
			R \sheafhom_{k}(\alg{K}^{\times}, \Z).
	\]

\begin{Rmk}
	The site $\Spec K_{\fppf} / k^{\ind\rat}_{\et}$ has the following relationship with
	oriented products of sites defined by Laumon \cite[\S 3.1.3]{Lau83}.
	Consider the continuous map $\pi_{0} \colon \Spec K_{\fppf} \to \Spec k^{\ind\rat}_{\et}$
	defined by the functor $k' \mapsto \alg{K}(k')$
	(here we allow all $K$-algebras in the underlying category of the site $\Spec K_{\fppf}$).
	We denote the empty site by $\emptyset$.
	Then we have two continuous maps
	$\Spec K_{\fppf} \to \Spec k^{\ind\rat}_{\et} \gets \emptyset$.
	Their oriented product
	$\Spec K_{\fppf} \stackrel{\gets}{\times}_{\Spec k^{\ind\rat}_{\et}} \emptyset$
	can be defined as in loc.\ cit.,
	even though $\pi_{0}$ is not a morphism of sites,
	and it agrees with our site $\Spec K_{\fppf} / k^{\ind\rat}_{\et}$.
	The projection
		$
				\Spec K_{\fppf} \stackrel{\gets}{\times}_{\Spec k^{\ind\rat}_{\et}} \emptyset
			\to
				\Spec k^{\ind\rat}_{\et}
		$
	agrees with our morphism $\pi$ of sites.
	
	More generally,
	let $S, S'$ be sites defined by pretopologies.
	Let $\pi_{0} \colon S' \to S$ be a continuous map
	whose underlying functor (also denoted by $\pi_{0}$)
	sends coverings to coverings
	such that $\pi_{0}(Y \times_{X} Z) = \pi_{0}(Y) \times_{\pi_{0}(X)} \pi_{0}(Z)$
	if $Y \to X$ in $S$ appears in a covering family.
	Then we can define the relative site $S' / S$ in the same way as above
	and it agrees with $S' \stackrel{\gets}{\times}_{S} \emptyset$.
	The morphism $\pi \colon S' / S \to S$ of sites is similarly defined
	and agrees with the projection $S' \stackrel{\gets}{\times}_{S} \emptyset \to S$.
	The effect of multiplying $\emptyset$ is to force a continuous map to have an exact pullback functor.
	Assume moreover that if $Y \to X$ is a morphism in $S$ that appears in a covering family,
	then the diagonal morphism $Y \to Y \times_{X} Y$ also appears in a covering family.
	Then the cohomology theory of $S' / S$ is essentially that of $S'$
	in the sense of Proposition \ref{prop: cohomology of the relative site}.
\end{Rmk}


\subsection{Cohomology of local fields with ind-rational base}
\label{sec: Cohomology of local fields with ind-rational base}

To prove Proposition \ref{prop: cohomology of Gm},
we first need to understand the \'etale site of $\alg{K}(k')$ when $k' \in k^{\ind\rat}$.
We define subsheaves
$\alg{O}_{K}^{0} \subset \alg{O}_{K}$ and $\alg{K}^{0} \subset \alg{K}$
by
	\[
			\alg{O}_{K}^{0}(k')
		=
			\bigcup_{\lambda} \alg{O}_{K}(k'_{\lambda}),
		\quad
			\alg{K}^{0}(k')
		=
			\bigcup_{\lambda} \alg{K}(k'_{\lambda}).
	\]
for each $k' = \bigcup_{\lambda} k'_{\lambda} \in k^{\ind\rat}$
with $k'_{\lambda} \in k^{\rat}$.
The strategy is to compare $\alg{K}(k')_{\et}$ with $\alg{K}^{0}(k')_{\et}$,
the latter of which is described by the \'etale sites of complete discrete valuation fields.
The argument in this comparison goes basically in the same line
as the proof of Krasner's lemma using Hensel's lemma.
Additional complications come from the underlying topological space of $\Spec k'$,
which is a profinite space.
We treat this topology and the topology coming from the valuation simultaneously.
Then we will be reduced to considering the cohomology of the pushforward of $\Gm$
from $\alg{K}(k')_{\et}$ to $\alg{K}^{0}(k')_{\et}$.
The computation of this is essentially classical.
Up to a notational preparation,
we only need to recall the fact that for a finite extension $L / K$,
any element of $K^{\times}$ becomes a norm in $L^{\times}$
after a finite unramified extension (\cite[V, \S3]{Ser79}).

We need notation and several lemmas to prove Proposition \ref{prop: cohomology of Gm}.
We fix $k' \in k^{\ind\rat}$.
If $\ideal{m}$ is a maximal ideal of $k'$,
then the kernel $\alg{K}(\ideal{m})$ of the natural surjection
$\alg{K}(k') \onto \alg{K}(k' / \ideal{m})$ is a maximal ideal of $\alg{K}(k')$.
Conversely, we have:

\begin{Lem}
	Any maximal ideal of $\alg{K}(k')$ is of the form
	$\alg{K}(\ideal{m})$ for some maximal ideal $\ideal{m}$ of $k'$.
\end{Lem}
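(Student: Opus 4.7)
Plan: The plan is to reduce to the rational case via the ind-rational decomposition $k' = \dirlim_\lambda k'_\lambda$ with $k'_\lambda \in k^\rat$, then to extract $\ideal{m}$ by a coefficient-by-coefficient induction. I first contract $\mathfrak{M}$ to $\alg{O}_K(k')$, obtaining a prime $\mathfrak{p} := \mathfrak{M} \cap \alg{O}_K(k')$ that does not contain $\pi$ and is maximal among such primes (since $\mathfrak{M} = \mathfrak{p}[1/\pi]$). For each rational $k'_\lambda$, the ring $\alg{O}_K(k'_\lambda)$ is a finite product of complete discrete valuation rings, so its primes not containing $\pi$ are exactly the factor-deletion ideals, corresponding bijectively to maximal ideals of $k'_\lambda$ via $\ideal{n} \mapsto \alg{O}_K(\ideal{n})$. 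Contracting $\mathfrak{p}$ gives $\mathfrak{p} \cap \alg{O}_K(k'_\lambda) = \alg{O}_K(\ideal{m}_\lambda)$ for a unique maximal $\ideal{m}_\lambda \subseteq k'_\lambda$, compatible under $\lambda \leq \mu$. Setting $\ideal{m} := \bigcup_\lambda \ideal{m}_\lambda$, the quotient $k'/\ideal{m} = \dirlim k'_\lambda/\ideal{m}_\lambda$ is a filtered direct limit of fields, hence a field, making $\ideal{m}$ maximal in $k'$.

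Next I identify the image $\ideal{m}_0 := \phi(\mathfrak{p})$ of $\mathfrak{p}$ under the reduction $\phi\colon \alg{O}_K(k') \onto \alg{O}_K(k')/\pi\alg{O}_K(k') = k'$ with $\ideal{m}$. The inclusion $\ideal{m} \subseteq \ideal{m}_0$ is immediate from the rational description. For the reverse, since $\ideal{m}$ is maximal, $\ideal{m}_0$ must equal $\ideal{m}$ or $k'$; the case $\ideal{m}_0 = k'$ would mean $\mathfrak{p} + \pi\alg{O}_K(k') = \alg{O}_K(k')$, making $\mathfrak{p}$ a maximal ideal of $\alg{O}_K(k')$ not containing $\pi$. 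This is excluded because $\alg{O}_K(k')$ is $\pi$-adically complete: the geometric series formula gives $1 + \pi\alg{O}_K(k') \subseteq \alg{O}_K(k')^\times$, placing $\pi$ in the Jacobson radical. Hence $\ideal{m}_0 = \ideal{m}$.

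Finally, I prove $\mathfrak{p} \subseteq \alg{O}_K(\ideal{m})$ by induction on coefficients. For $y = \sum_i a_i t^i \in \mathfrak{p}$ in equal characteristic (with the analogous Witt-coordinate argument in mixed characteristic), and assuming inductively $a_0, \ldots, a_{i-1} \in \ideal{m}$, these finitely many coefficients lie in $\ideal{m}_\mu$ for some sufficiently large $\mu$ by directedness. Hence the truncation $y^{(i)} := \sum_{j<i} a_j t^j$ lies in $\alg{O}_K(\ideal{m}_\mu) = \mathfrak{p} \cap \alg{O}_K(k'_\mu) \subseteq \mathfrak{p}$, so $(y - y^{(i)})/t^i = \sum_{j \geq 0} a_{i+j} t^j \in \mathfrak{p}$ (using $\pi \notin \mathfrak{p}$), and applying $\phi$ gives $a_i \in \phi(\mathfrak{p}) = \ideal{m}$. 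Thus $\mathfrak{p} \subseteq \alg{O}_K(\ideal{m})$; since $\alg{O}_K(\ideal{m})$ is itself a prime not containing $\pi$ (as $\alg{O}_K(k'/\ideal{m})$ is a complete DVR, hence a domain) and is properly contained in $\alg{O}_K(k')$, the maximality of $\mathfrak{p}$ among non-$\pi$-containing primes forces $\mathfrak{p} = \alg{O}_K(\ideal{m})$, and inverting $\pi$ yields $\mathfrak{M} = \alg{K}(\ideal{m})$. The main obstacle I expect is giving a uniform treatment of the equal- and mixed-characteristic cases in the coefficient induction, where in mixed characteristic the "coefficients" must be extracted from the Witt-vector structure rather than from a direct power-series expansion.
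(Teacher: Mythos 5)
Your proof is correct, but it reaches $\ideal{m}$ by a different route than the paper. The paper works intrinsically inside $k'$: it defines $\ideal{m}$ as the image of $\alg{O}_{K}(k') \cap \ideal{n}$ under reduction, then proves the characterization ``$a \in \ideal{m}$ if and only if $\omega(a) \in \ideal{n}$'' by writing $a = u e$ with $u$ a unit and $e$ an idempotent (the structural feature of ind-rational rings), from which primality and hence maximality of $\ideal{m}$ follow; the coefficient induction then subtracts off leading terms using $\omega(a_{n}) \in \ideal{n}$. You instead exploit the presentation $k' = \bigcup_{\lambda} k'_{\lambda}$ with $k'_{\lambda} \in k^{\rat}$, where $\alg{O}_{K}(k'_{\lambda})$ is a finite product of complete discrete valuation rings and the primes not containing $\pi$ are visibly the factor-deletion ideals; you assemble $\ideal{m} = \bigcup_{\lambda} \ideal{m}_{\lambda}$ from compatible contractions, get maximality for free from $k'/\ideal{m} = \dirlim k'_{\lambda}/\ideal{m}_{\lambda}$ being a filtered colimit of fields, and identify $\ideal{m}$ with $\phi(\mathfrak{p})$ via the observation that $\pi$ lies in the Jacobson radical of the $\pi$-adically complete ring $\alg{O}_{K}(k')$. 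Both arguments converge to the same coefficient-by-coefficient induction on the expansion $\sum \omega(a_{i}) \pi^{i}$; in your version the truncation lands in $\mathfrak{p}$ because it lies in $\alg{O}_{K}(\ideal{m}_{\mu}) = \mathfrak{p} \cap \alg{O}_{K}(k'_{\mu})$, whereas the paper invokes its Teichm\"uller characterization. Your approach buys a more transparent maximality argument and makes the rational case do the work; the paper's avoids choosing a presentation and isolates the unit-times-idempotent decomposition as the key property. Two small wording points: the deduction ``$\ideal{m}_{0} = k'$ would make $\mathfrak{p}$ a maximal ideal'' is not quite the right intermediate claim --- what you actually need (and what your Jacobson-radical remark supplies) is that $\mathfrak{p} + \pi \alg{O}_{K}(k') = \alg{O}_{K}(k')$ forces $\mathfrak{p}$ to contain the unit $1 - \pi y$; and the compatibility $\ideal{m}_{\lambda} = \ideal{m}_{\mu} \cap k'_{\lambda}$ deserves the one-line justification that $\alg{O}_{K}(k'_{\lambda}/(\ideal{m}_{\mu} \cap k'_{\lambda})) \to \alg{O}_{K}(k'_{\mu}/\ideal{m}_{\mu})$ is injective. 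Neither affects the validity of the argument.
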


\begin{proof}
	Let $\ideal{n}$ be a maximal ideal of $\alg{K}(k')$.
	Let $\ideal{m} \subset k'$ be the ideal given by
	the image of the ideal $\alg{O}_{K}(k') \cap \ideal{n}$
	via the natural surjection $\alg{O}_{K}(k') \onto k'$.
	
	We show that for an element $a \in k'$ to be in $\ideal{m}$,
	it is necessary and sufficient that
	$\omega(a) \in \ideal{n}$, where $\omega$ is the Teichm\"uller lift.
	Clearly this is sufficient.
	For necessity, let $\alpha = \alg{O}_{K}(k') \cap \ideal{n}$ map to $a \in \ideal{m}$.
	Take a prime element $\pi \in \Order_{K}$
	and write $\alpha = \omega(a) + \pi \beta$, where $\beta \in \alg{O}_{K}(k')$.
	Since $k' \in k^{\ind\rat}$,
	we can write $a = u e$ for an idempotent $e \in k'$ and a unit $u \in k'^{\times}$.
	Then
		$
				\omega(e) \alpha
			=
				\omega(e) (\omega(u) + \pi \beta)
		$.
	Since $\omega(u) + \pi \beta \in \alg{O}_{K}(k')^{\times}$
	and $\alpha \in \ideal{n}$,
	we have $\omega(e) \in \ideal{n}$.
	Hence $\omega(a) \in \ideal{n}$.	
	This characterization shows that $\ideal{m} \subset k'$ is a prime ideal.
	It has to be maximal since $k' \in k^{\ind\rat}$.
	To finish the proof, it is enough to show that
	$\ideal{n} \subset \alg{K}(\ideal{m})$.
	Let $\alpha = \sum_{i \ge n} \omega(a_{i}) \pi^{i} \in \ideal{n}$ be any element
	($n \in \Z$).
	Since $\pi \in K^{\times}$, we have $\pi^{- n} \alpha \in \alg{O}_{K}(k') \cap \ideal{n}$.
	Hence $a_{n} \in \ideal{m}$ by definition of $\ideal{m}$.
	Hence $\omega(a_{n}) \pi^{n} \in \ideal{n}$ and
		$
				\alpha - \omega(a_{n}) \pi^{n}
			=
				\sum_{i \ge n + 1} \omega(a_{i}) \pi^{i}
			\in
				\alg{O}_{K}(k') \cap \ideal{n}
		$.
	Inductively, we have $a_{i} \in \ideal{m}$ for all $i$.
	Hence $\alpha \in \alg{K}(\ideal{m})$.
	Therefore $\ideal{n} \subset \alg{K}(\ideal{m})$.
\end{proof}

Hence the maximal spectrum of $\alg{K}(k')$ is in bijection with $\Spec k'$.
(The whole prime spectrum is much different.
Results from \cite{Arn73}, in the equal characteristic case, show that
$\alg{O}_{K}(k') \cong k'[[T]]$ and $\alg{K}(k') \cong k'[[T]][1 / T]$
have infinite Krull dimensions if $k'$ has infinitely many direct factors.)

\begin{Lem} \label{lem: Zariski comparison}
	Let $\ideal{m} \subset k'$ be maximal.
	A neighborhood base of the maximal ideal $\alg{K}(\ideal{m})$ in $\Spec \alg{K}(k')$
	is given by the family $\Spec \alg{K}(k')[1 / \omega(a)] = \Spec \alg{K}(k'[1 / a])$ of open sets,
	where $a \in k' \setminus \ideal{m}$.
	In particular, any Zariski covering of $\Spec \alg{K}(k')$ can be refined
	by a disjoint Zariski covering.
\end{Lem}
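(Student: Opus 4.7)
The strategy for the neighborhood base claim is to produce, given $\alpha \in \alg{K}(k') \setminus \alg{K}(\ideal{m})$, an \emph{idempotent} $e \in k' \setminus \ideal{m}$ with $\omega(e) \in \alpha \cdot \alg{K}(k')$; this gives $D(\omega(e)) \subset D(\alpha)$ as required. After multiplying $\alpha$ by a power of $\pi \in K^{\times}$, we may assume $\alpha \in \alg{O}_{K}(k')$ and write $\alpha = \sum_{i \ge 0} \omega(a_{i}) \pi^{i}$ with $a_{i} \in k'$. By the characterization in the preceding lemma, $\alpha \notin \alg{K}(\ideal{m})$ forces some $a_{i} \notin \ideal{m}$; let $n_{0}$ be the smallest such index, so $a_{i} \in \ideal{m}$ for $i < n_{0}$.

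To construct $e$, I would choose a rational subalgebra $k_{0} \subset k'$ containing $a_{0}, \dots, a_{n_{0}}$; then $\ideal{m}_{0} := \ideal{m} \cap k_{0}$ is a maximal ideal of the finite product of fields $k_{0}$, corresponding to one direct factor, and I let $e \in k_{0}$ be the primitive idempotent for that factor. Then $e \notin \ideal{m}_{0}$, $e a_{i} = 0$ for $i < n_{0}$, and $e a_{n_{0}}$ is a unit in the field $e k_{0}$, hence remains a unit in the larger ind-rational ring $e k'$. Using $W(k') \cong W(e k') \times W((1 - e) k')$, one identifies $\omega(e) \alg{O}_{K}(k') = \alg{O}_{K}(e k')$; inside this ring,
\[
        \omega(e) \alpha
    =
        \pi^{n_{0}}
        \bigl(
            \omega(e a_{n_{0}})
            +
            \pi \beta
        \bigr)
\]
for some $\beta$. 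Since $e a_{n_{0}} \in (e k')^{\times}$, the unit-verification argument already used in the preceding lemma shows $\omega(e a_{n_{0}}) + \pi \beta \in \alg{O}_{K}(e k')^{\times}$, so $\omega(e) \alpha$ is a unit in $\alg{K}(e k') = \omega(e) \alg{K}(k')$; its inverse $\gamma$, viewed in $\alg{K}(k')$, satisfies $\gamma = \omega(e) \gamma$ and hence $\alpha \gamma = \omega(e)$.

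For the disjoint refinement, let $\{D(\alpha_{j})\}_{j}$ be a Zariski covering of $\Spec \alg{K}(k')$; by quasi-compactness reduce to a finite subcovering $\{D(\alpha_{1}), \dots, D(\alpha_{n})\}$. For each maximal ideal $\ideal{m}$ of $k'$, pick $j(\ideal{m})$ with $\alpha_{j(\ideal{m})} \notin \alg{K}(\ideal{m})$ and an idempotent $e_{\ideal{m}}$ as above. The family $\{D(e_{\ideal{m}})\}$ covers $\Spec k'$, which is quasi-compact and all of whose primes are maximal ($k'$ being von Neumann regular as a filtered union of finite products of fields), so it admits a finite subcover by idempotents $e_{1}, \dots, e_{N}$. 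Their disjointification $e'_{i} := e_{i} \prod_{j < i}(1 - e_{j})$ yields pairwise orthogonal idempotents summing to $1$ (the relation $(e_{1}, \dots, e_{N}) = k'$ forces $\prod_{j}(1 - e_{j}) = 0$), whence $k' = \prod_{i} e'_{i} k'$ and the cover $\{\Spec \alg{K}(e'_{i} k')\}$ is a disjoint Zariski refinement. The most delicate step throughout is the unit-ness verification in $\alg{O}_{K}(e k')$ above, which relies on $\pi$-adic completeness of $\alg{O}_{K}(e k')$ and is precisely the reason one has to pass through the idempotent $e$ rather than try to regularize $\alpha$ directly over $k'$.
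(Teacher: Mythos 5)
Your proof is correct and follows essentially the same route as the paper's: multiply $\alpha$ by the Teichm\"uller lift of an idempotent $e \notin \ideal{m}$ that kills the coefficients $a_{i} \in \ideal{m}$ for $i < n_{0}$ and turns $a_{n_{0}}$ into a unit, so that $\omega(e)\alpha$ becomes $\pi^{n_{0}}$ times a unit of $\alg{O}_{K}(e k')$. The only cosmetic differences are that the paper builds the idempotent as $(1 - e_{<m})e_{m}$ directly from the coefficients (using that finitely generated ideals of an ind-rational ring are generated by idempotents) while you extract it from a rational subalgebra containing $a_{0}, \dots, a_{n_{0}}$, and that your disjointification over the profinite space $\Spec k'$ spells out what the paper leaves implicit.
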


\begin{proof}
	Let $\alpha = \sum_{i \ge n} \omega(a_{i}) \pi^{i} \in \alg{K}(k') \setminus \alg{K}(\ideal{m})$
	be any element ($\pi$ a prime),
	so $a_{m} \notin \ideal{m}$ for some $m \ge n$.
	We may assume that $a_{n}, \dots, a_{m - 1} \in \ideal{m}$.
	Let $e_{< m} \in k'$ be the idempotent generating the ideal $(a_{n}, \dots, a_{m - 1})$ of $k'$.
	Write $a_{m} = u_{m} e_{m}$, where $e_{m}$ is an idempotent of $k'$ and $u_{m}$ is a unit of $k'$.
	Then
		\[
				\omega \bigl( (1 - e_{< m}) e_{m} \bigr) \alpha
			=
				\omega \bigl( (1 - e_{< m}) e_{m} \bigr)
				\left(
						\omega(u_{m}) \pi^{m}
					+
						\sum_{i \ge m + 1} \omega(a_{i}) \pi^{i}
				\right).
		\]
	The term in the large brackets is a unit in $\alg{K}(k')$.
	Hence
		\[
				\Spec \alg{K}(k') \bigl[
					1 / \omega \bigl(
						(1 - e_{< m}) e_{m}
					\bigr)
				\bigr]
			\subset
				\Spec \alg{K}(k')[1 / \alpha].
		\]
	This proves the lemma
	since $(1 - e_{< m}) e_{m} \notin \ideal{m}$.
\end{proof}

By a similar argument,
we know that if an element of $\alg{K}(k')$ becomes a unit in $\alg{K}(k' / \ideal{m})$
(resp.\ lies in $\alg{O}_{K}(k' / \ideal{m})$),
then it is a unit in $\alg{K}(k'[1 / a])$ (resp.\ lies in $\alg{O}_{K}(k'[1 / a])$)
for some $a \in k' \setminus \ideal{m}$.

\begin{Lem} \label{lem: refinement}
	Any \'etale covering of $\alg{K}(k')$ can be refined by
	a covering coming from an \'etale covering of $\alg{K}^{0}(k')$.
	More precisely, let $S$ be a faithfully flat \'etale $\alg{K}(k')$-algebra.
	Then there exist a faithfully flat \'etale $\alg{K}^{0}(k')$-algebra $L^{0}$
	and a $\alg{K}(k')$-algebra homomorphism $S \to L^{0} \tensor_{\alg{K}^{0}(k')} \alg{K}(k')$.
\end{Lem}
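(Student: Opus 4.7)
The plan is to mimic the proof of Krasner's lemma via Hensel's lemma: approximate the polynomial defining the étale extension $S$ by one over a rational subalgebra, and then lift roots back through Hensel.

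First I would reduce to a standard form. By the local structure of finitely presented étale morphisms, every point of $\Spec S$ has a Zariski neighborhood isomorphic to a principal open in $\Spec \alg{K}(k')[X]/(f(X))$ for some monic $f$ with $f'$ invertible at the point. Quasi-compactness of $\Spec S$ over $\Spec \alg{K}(k')$ together with Lemma \ref{lem: Zariski comparison}, which refines any Zariski covering by a disjoint one arising from a clopen partition of $\Spec k'$, reduces us to the case $S = \alg{K}(k')[X]/(f)$ with $f$ monic of unit discriminant. After the rescaling $Y = \pi^{m} X$ with $\pi$ a prime of $\Order_{K}$ and $m$ chosen large, I may further assume $f \in \alg{O}_{K}(k')[Y]$; its discriminant is then a fixed power of $\pi$ times a unit of $\alg{O}_{K}(k')$.

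Next I would approximate the coefficients of $f$. Each coefficient lies in $\alg{O}_{K}(k')$ and expands as $\sum_{i \ge 0} \omega(a_{i}) \pi^{i}$ with $a_{i} \in k'$ using Teichmüller lifts. Truncating at high order $N$ produces $f_{N} \in \alg{O}_{K}(k')[Y]$ with $f - f_{N} \in \pi^{N+1} \alg{O}_{K}(k')[Y]$. Only finitely many $a_{i}$ appear in $f_{N}$, so they lie in some rational subalgebra $k'_{\lambda} \subset k'$, giving $f_{N} \in \alg{O}_{K}(k'_{\lambda})[Y] \subset \alg{O}_{K}^{0}(k')[Y]$. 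Set $L^{0} := \alg{K}^{0}(k')[Y]/(f_{N})$. This is finite free of rank $\deg f \ge 1$ over $\alg{K}^{0}(k')$, hence faithfully flat; for $N$ large its discriminant equals that of $f$ up to an element of $\pi^{N+1} \alg{O}_{K}(k')$, so remains a unit in $\alg{K}^{0}(k')$ and $L^{0}$ is étale.

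Finally, Hensel's lemma delivers the required homomorphism. In the integral closure $\alg{O}_{K}(k')[Y]/(f_{N}) \subset L^{0} \tensor_{\alg{K}^{0}(k')} \alg{K}(k') = \alg{K}(k')[Y]/(f_{N})$, the class $\bar Y$ of $Y$ satisfies $f(\bar Y) = (f - f_{N})(\bar Y)$, which has $\pi$-adic order $\ge N+1$, while $f'(\bar Y)$ is a unit since $f_{N}'(\bar Y)$ is (as $f_{N}$ has unit discriminant) and $(f - f_{N})'(\bar Y) \in \pi^{N+1} \cdot (\text{integer closure})$. The ring $\alg{O}_{K}(k')[Y]/(f_{N})$ is $\pi$-adically complete (finite over the $\pi$-adically complete $\alg{O}_{K}(k')$), so Newton iteration starting at $\bar Y$ converges there to a unique root $\alpha$ of $f$, and sending $X \mapsto \alpha$ defines the desired $\alg{K}(k')$-algebra homomorphism $S \to L^{0} \tensor_{\alg{K}^{0}(k')} \alg{K}(k')$. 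The main obstacle I expect is keeping track of subtleties when $k'$ is only ind-rational rather than a field: $\alg{O}_{K}(k')$ is not a complete discrete valuation ring but only a $\pi$-adically complete ring with profinite maximal spectrum, and one must verify carefully that the discriminant approximation produces a genuine unit in $\alg{K}^{0}(k')$ (not merely in the ambient $\alg{K}(k')$), and that Hensel's lemma in this non-local setting yields the root inside $L^{0} \tensor_{\alg{K}^{0}(k')} \alg{K}(k')$ itself.
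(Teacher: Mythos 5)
Your overall strategy---approximate, then lift by Hensel's lemma---is the one the paper itself advertises, but the step where you pass from a general faithfully flat \'etale $S$ to $S = \alg{K}(k')[X]/(f)$ with $f$ monic of \emph{unit discriminant} is a genuine gap, and it is where most of the difficulty of the lemma lives. The local structure theorem only gives standard \'etale presentations $S \cong \alg{K}(k')[X]_{g}/(f)$ in which $f'$ is invertible only \emph{after inverting $g$}; the roots of $f$ lying outside $D(g)$ may collide, so $\mathrm{disc}(f)$ need not be a unit of $\alg{K}(k')$, and Lemma \ref{lem: Zariski comparison} only refines coverings of $\Spec \alg{K}(k')$ by clopen ones coming from idempotents of $k'$---it says nothing about opens of $\Spec S$ or of $\Spec \alg{K}(k')[X]/(f)$. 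Over a field one can always discard $g$ (an \'etale algebra is a finite product of separable extensions, each monogenic with unit discriminant), but $\alg{K}(k')$ is neither local nor a product of fields when $k'$ is genuinely ind-rational (it is not even Noetherian), and producing a global monic presentation of unit discriminant after a clopen refinement of the base is essentially equivalent to the lemma you are trying to prove. Granting that presentation, your coefficient-truncation argument does go through, so the proposal is not so much wrong as circular at its first step.

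The paper's proof is organized precisely to avoid needing a unit discriminant: it keeps the localization $g$, fixes a maximal ideal $\ideal{m}$ of $k'$, and works in the residue complete discrete valuation field $\alg{K}(k'/\ideal{m})$, where $S/S\alg{K}(\ideal{m}) \neq 0$ yields a single \emph{simple} root $\alpha$ of $f_{\ideal{m}}$ with $g_{\ideal{m}}(\alpha) \neq 0$. It then approximates that one root---not the polynomial---by an element $\beta$ algebraic over $\alg{K}^{0}(k'/\ideal{m})$ satisfying the Krasner condition $\| \alpha - \beta \| < \| \gamma \| \le \| \alpha_{i} - \beta \|$, takes $L^{0}$ to be defined by the minimal polynomial of $\beta$, and uses the Newton polygon of $(\gamma f'(\beta))^{-1} f(\gamma x + \beta)$ to isolate exactly the root near $\beta$ before applying Hensel's lemma; the conclusion is spread out to a clopen neighborhood of $\ideal{m}$, and $g$ is made invertible by a further localization. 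Two smaller points in your write-up would also need repair even in the favorable case: first, a unit discriminant in $\alg{K}^{0}(k')$ only makes $f_{N}'(\bar Y)$ a unit in $L^{0}$, not in $\alg{O}_{K}(k')[Y]/(f_{N})$ (its norm is $\pi^{d}$ times a unit of $\alg{O}_{K}(k')$), so the naive Hensel statement does not apply and you must use the quantitative version requiring $f(\bar Y) \in f'(\bar Y)^{2}\,\pi\,\alg{O}_{K}(k')[Y]/(f_{N})$, i.e.\ choose $N$ large relative to the valuation $d$ of the discriminant; second, you never verify that the finitely many algebras $L^{0}$ obtained from the pieces of your covering are jointly faithfully flat over $\alg{K}^{0}(k')$, which requires a compactness argument over the profinite space $\Spec k'$ of the kind the paper performs by repeatedly localizing away from $\ideal{m}$.
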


\begin{proof}
	We may assume that $S$ is a standard \'etale $\alg{K}(k')$-algebra
	by Lemma \ref{lem: Zariski comparison}.
	This means that $S = \alg{K}(k')[x]_{g(x)} / (f(x))$
	for some polynomials $f(x)$ and $g(x)$
	such that $f(x)$ is monic and $f'(x) \in S^{\times}$.
	Fix a maximal ideal $\ideal{m} \subset k'$.
	By the same lemma, it is enough to show the existence of such $L^{0}$
	after localizing $k'$ by an element not in $\ideal{m}$.
	We denote by $f_{\ideal{m}}(x)$ the image of $f(x)$ in $\alg{K}(k' / \ideal{m})[x]$.
	Since $S / S \alg{K}(\ideal{m}) \ne 0$,
	there exists a simple root $\alpha$ (in a separable closure of $\alg{K}(k' / \ideal{m})$)
	of the polynomial $f_{\ideal{m}}(x)$
	such that $g_{\ideal{m}}(\alpha) \ne 0$.
	Let $\alpha_{1}, \alpha_{2}, ...$ be the other roots of $f_{\ideal{m}}(x)$.
	We can take an algebraic element $\beta$ over $\alg{K}^{0}(k' / \ideal{m})$
	arbitrarily close to $\alpha$ since
	$\alg{K}(k' / \ideal{m})$ is the completion of
	$\alg{K}^{0}(k' / \ideal{m})$.
	We choose such $\beta$ so that:
	it is separable over $\alg{K}^{0}(k' / \ideal{m})$;
	$f'_{\ideal{m}}(\beta) \ne 0$;
	and there exists an element $\gamma \in K^{\times}$ such that
	$|| \alpha - \beta || < || \gamma || \le || \alpha_{i} - \beta||$ for all $i$
	($||\,\cdot\,||$ denotes an absolute value).
	Let $h(y) \in \alg{K}^{0}(k' / \ideal{m})[y]$ be the monic minimal polynomial of $\beta$.
	Take a rational $k$-subalgebra $k'' \subset k'$ such that
	$h(y) \in \alg{K}(k'' / k'' \cap \ideal{m})[y]$.
	We may assume that $k'' \cap \ideal{m} = 0$ and so $k''$ is a field
	by localizing $k'$ by an element not in $\ideal{m}$.
	We define $L = \alg{K}(k'')[y] / (h(y)) \cong \alg{K}(k'')[\beta]$.
	This is a finite separable extension of the complete discrete valuation field $\alg{K}(k'')$
	linearly disjoint from $\alg{K}(k' / \ideal{m})$.
	We define $L^{0} = \alg{K}^{0}(k')[y] / (h(y))$.
	This is a faithfully flat \'etale $\alg{K}^{0}(k')$-algebra.
	Since $f'_{\ideal{m}}(\beta) \ne 0$,
	we may assume that $f'(y)$ is a unit in $L^{0} \tensor_{\alg{K}^{0}(k')} \alg{K}(k')$
	by a similar localization
	(apply the argument right before this lemma
	to the norm $N_{L / \alg{K}(k'')} f'(y) \in \alg{K}(k')$).
	Consider the polynomial $(\gamma f'_{\ideal{m}}(\beta))^{-1} f_{\ideal{m}}(\gamma x + \beta)$
	with coefficients in the complete discrete valuation field
	$\alg{K}(k' / \ideal{m})[\beta] \cong L \tensor_{\alg{K}(k'')} \alg{K}(k' / \ideal{m})$.
	By looking at the Newton polygon and by the choice of $\beta$,
	we know that this polynomial has integral coefficients,
	the constant term has positive valuation,
	and the coefficient for $x$ is 1.
	Hence we can localize $k'$ by an element not in $\ideal{m}$ so that
	$(\gamma f'(y))^{-1} f(\gamma x + y)$ is in $\Order_{L} \tensor_{\alg{O}_{K}(k'')} \alg{O}_{K}(k')[x]$,
	the constant term in $x$ is in $\ideal{p}_{L} \tensor_{\alg{O}_{K}(k'')} \alg{O}_{K}(k')$,
	and the coefficient for $x$ is $1$,
	where $\Order_{L}$ is the ring of integers of $L$
	and $\ideal{p}_{L}$ its maximal ideal.
	The pair
		$
			(\Order_{L} \tensor_{\alg{O}_{K}(k'')} \alg{O}_{K}(k'),
			\ideal{p}_{L} \tensor_{\alg{O}_{K}(k'')} \alg{O}_{K}(k'))
		$
	is Henselian as
	the pairs $(W(k'), (p))$ and $(k'[[T]], (T))$ are so.
	Therefore the polynomial $(\gamma f'(y))^{-1} f(\gamma x + y)$ of $x$ has a unique root
	in $\Order_{L} \tensor_{\alg{O}_{K}(k'')} \alg{O}_{K}(k')$
	whose image in $k_{L} \tensor_{k''} k'$ is zero,
	where $k_{L}$ is the residue field of $\Order_{L}$.
	Write this root as $x = (z - y) / \gamma$,
	so $z \in L \tensor_{\alg{K}(k'')} \alg{K}(k')$ is a root of the polynomial $f(x)$.
	Sending $x$ to $z$, we have a $\alg{K}(k')$-algebra homomorphism
		$
				\alg{K}(k')[x] / (f(x))
			\to
				L \tensor_{\alg{K}(k'')} \alg{K}(k')
			=
				L^{0} \tensor_{\alg{K}^{0}(k')} \alg{K}(k')
		$.
	The image of $z$ in the field
		$
				L \tensor_{\alg{K}(k'')} \alg{K}(k' / \ideal{m})
			=
				\alg{K}(k' / \ideal{m})[\beta]
		$
	is $\alpha$ by uniqueness.
	Hence the image of $g(z)$ in the same field is $g_{\ideal{m}}(\alpha) \ne 0$.
	Therefore we can localize $k'$ by an element not in $\ideal{m}$ so that
	$g(z)$ is a unit in $L \tensor_{\alg{K}(k'')} \alg{K}(k')$.
	Thus we get a $\alg{K}(k')$-algebra homomorphism
	$S \to L^{0} \tensor_{\alg{K}^{0}(k')} \alg{K}(k')$,
	as required.
\end{proof}

\begin{Lem} \label{lem: fully faithful}
	The tensor product $(\;\cdot\;) \tensor_{\alg{K}^{0}(k')} \alg{K}(k')$
	induces a fully faithful embedding from
	the category of \'etale $\alg{K}^{0}(k')$-algebras
	into the category of \'etale $\alg{K}(k')$-algebras.
\end{Lem}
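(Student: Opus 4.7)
The plan is to reduce the fully-faithful claim to a bijection of root sets of a monic separable polynomial, and then to establish that bijection by $\pi$-adic approximation followed by Hensel's lemma. Given étale $\alg{K}^{0}(k')$-algebras $L_{1}^{0}, L_{2}^{0}$, I first descend along the filtered union $\alg{K}^{0}(k') = \bigcup_{\lambda} \alg{K}(k'_{\lambda})$: since étale algebras are finitely presented, there exists a $\lambda$ and étale $\alg{K}(k'_{\lambda})$-algebras $L_{1}, L_{2}$ with $L_{i}^{0} = L_{i} \tensor_{\alg{K}(k'_{\lambda})} \alg{K}^{0}(k')$, and hence $L_{i}^{0} \tensor_{\alg{K}^{0}(k')} \alg{K}(k') = L_{i} \tensor_{\alg{K}(k'_{\lambda})} \alg{K}(k')$. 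Using Lemma \ref{lem: Zariski comparison} and decomposing along the idempotents of the product ring $\alg{K}(k'_{\lambda})$, I may assume $\alg{K}(k'_{\lambda}) = M$ is a single complete discrete valuation field and $L_{1} = M[x]_{f'(x)}/(f(x))$ is standard étale with $f$ monic separable. A homomorphism $L_{1}^{0} \to L_{2}^{0}$ (resp.\ $L_{1}^{0} \tensor \alg{K}(k') \to L_{2}^{0} \tensor \alg{K}(k')$) is then the same data as a root of $f$ in $L_{2}^{0}$ (resp.\ in $L_{2} \tensor_{M} \alg{K}(k')$) at which $f'$ is a unit. Thus the claim reduces to showing that the inclusion $L_{2}^{0} \hookrightarrow L_{2} \tensor_{M} \alg{K}(k')$ induces a bijection on such roots of $f$.

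Uniqueness of a root lift is immediate, since $L_{2}^{0} \to L_{2} \tensor_{M} \alg{K}(k')$ is injective: $L_{2}$ is flat over $M$, and $\alg{K}^{0}(k') \hookrightarrow \alg{K}(k')$ is injective as $\alg{K}(k')$ is, up to inverting $\pi$, the $\pi$-adic completion of the $\pi$-adically separated ring $\alg{K}^{0}(k')$. For existence, let $\alpha \in L_{2} \tensor_{M} \alg{K}(k')$ be a root of $f$ at which $f'$ is a unit; since $f$ is monic, $\alpha$ is integral. The structural heart of the argument is that $\alg{O}_{K}(k')$ is the $\pi$-adic completion of $\alg{O}_{K}^{0}(k')$ with matching finite-length quotients $\alg{O}_{K}^{0}(k')/\pi^{n} = \alg{O}_{K}(k')/\pi^{n}$ for every $n \geq 1$; this follows from the Witt-vector construction (in mixed characteristic) and the power-series construction (in equal characteristic) commuting, at fixed $\pi$-length, with the filtered colimit $k' = \bigcup k'_{\lambda}$. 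Consequently I can pick, for any $n$, an approximation $\alpha' \in L_{2}^{0}$ with $\alpha' \equiv \alpha \pmod{\pi^{n}}$ in the common integral version. For $n$ sufficiently large, $f'(\alpha')$ is a unit and $f(\alpha') \equiv 0 \pmod{\pi^{n}}$ with $n > 2 v(f'(\alpha'))$. Hensel's lemma then applies in the Henselian pair formed by the integral part of $L_{2}^{0}$ and the ideal $(\pi)$—Henselian because it arises as a filtered colimit of Henselian pairs coming from finite products of complete discrete valuation rings—and yields a root $\tilde{\alpha} \in L_{2}^{0}$ close to $\alpha'$. Uniqueness of the Hensel lift in the Henselian ring $L_{2} \tensor_{M} \alg{K}(k')$ (itself a product of complete discrete valuation fields) then forces $\tilde{\alpha}$ to map to $\alpha$.

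The main obstacle is the existence step: precisely verifying the identification $\alg{O}_{K}^{0}(k')/\pi^{n} = \alg{O}_{K}(k')/\pi^{n}$ via the explicit Witt/power-series descriptions, and then carrying through the integrality, approximation, and valuation bookkeeping needed to invoke Hensel's lemma safely inside the tensor-product ring. Once that technical core is in place, the descent, reduction to the standard étale case, uniqueness, and the final identification of the Hensel lift with $\alpha$ are routine.
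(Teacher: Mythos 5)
Your argument is correct, but it takes a genuinely different route from the paper's. The paper does not use Hensel's lemma here at all: it observes that the map on Hom-sets is the global sections of a morphism of sheaves
$\sheafhom_{\alg{K}^{0}(k')}(L^{0}_{1}, L^{0}_{2}) \to f_{\ast}\sheafhom_{\alg{K}(k')}(L^{0}_{1} \tensor \alg{K}(k'), L^{0}_{2} \tensor \alg{K}(k'))$
on $\alg{K}^{0}(k')_{\et}$, so it may be checked after an \'etale cover of $\alg{K}^{0}(k')$; since $\alg{K}^{0}(k')$ is a filtered union of finite products of fields, such a cover splits $L^{0}_{1}$ and $L^{0}_{2}$ into direct factors $\alg{K}^{0}(k'_{i})$ of $\alg{K}^{0}(k')$, after which both sides are computed combinatorially (a point if $\Spec k'_{2} \subset \Spec k'_{1}$, empty otherwise). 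You instead keep $L^{0}_{1}$ monogenic, identify Homs with roots of a monic separable $f$, and transport roots across $\alg{K}^{0}(k') \to \alg{K}(k')$ by $\pi$-adic approximation followed by Hensel's lemma in the Henselian pair $(\Order_{L^{0}_{2}}, (\pi))$ --- essentially the Krasner-style technique that the paper reserves for the harder refinement statement (Lemma \ref{lem: refinement}), where no soft argument is available because refining a cover of $\alg{K}(k')$ is not an \'etale-local condition on $\alg{K}^{0}(k')$. What your route buys is independence from the sheaf-theoretic reformulation and a uniform treatment alongside Lemma \ref{lem: refinement}; what the paper's route buys is brevity and complete avoidance of integrality and valuation bookkeeping. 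The identification $\alg{O}_{K}^{0}(k')/\pi^{n} = \alg{O}_{K}(k')/\pi^{n}$ that your existence step rests on is indeed correct ($W_{n}$ commutes with filtered colimits and $\Order_{K}$ is finite free over $W(k)$ in mixed characteristic; $k'[[T]]/T^{n} = k'[T]/T^{n}$ in equal characteristic), and your reduction to a single complete discrete valuation field $M$ and a monogenic $L_{1} = M[x]/(f)$ is legitimate (primitive element theorem for \'etale algebras over the infinite field $M$), so the steps you defer are genuinely routine.
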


\begin{proof}
	Let $f \colon \Spec \alg{K}(k') \to \Spec \alg{K}^{0}(k')$ be the natural morphism.
	Let $L^{0}_{1}$ and $L^{0}_{2}$ be \'etale $\alg{K}^{0}(k')$-algebras.
	It is enough to show that the natural sheaf morphism
		\[
				\sheafhom_{\alg{K}^{0}(k')}(L^{0}_{1}, L^{0}_{2})
			\to
				f_{\ast}
				\sheafhom_{\alg{K}(k')}(
					L^{0}_{1} \tensor_{\alg{K}^{0}(k')} \alg{K}(k'),
					L^{0}_{2} \tensor_{\alg{K}^{0}(k')} \alg{K}(k')
				)
		\]
	in $\Set(\alg{K}^{0}(k')_{\et})$ is an isomorphism.
	We may check this \'etale locally on $\alg{K}^{0}(k')$.
	Note that $\alg{K}^{0}(k')$ is a filtered union of finite products of fields.
	Hence by taking an \'etale cover of $\alg{K}^{0}(k')$,
	we may assume that $L^{0}_{1}$ and $L^{0}_{2}$ are
	direct factors of $\alg{K}^{0}(k')$.
	Since $k' \in k^{\ind\rat}$, we know that
	$L^{0}_{i} = \alg{K}^{0}(k'_{i})$ for some direct factor $k'_{i}$ of $k'$
	for each $i = 1, 2$.
	The morphism above becomes
		\[
				\sheafhom_{\alg{K}^{0}(k')}(\alg{K}^{0}(k'_{1}), \alg{K}^{0}(k'_{2}))
			\to
				f_{\ast}
				\sheafhom_{\alg{K}(k')}(
					\alg{K}(k'_{1}),
					\alg{K}(k'_{2})
				).
		\]
	The both sides are a point if $\Spec k'_{2} \subset \Spec k'_{1}$
	and empty otherwise.
\end{proof}

\begin{Lem} \label{lem: push is exact}
	Let $f \colon \Spec \alg{K}(k') \to \Spec \alg{K}^{0}(k')$ be the natural morphism.
	Then the pushforward
	$f_{\ast} \colon \Ab(\alg{K}(k')_{\et}) \to \Ab(\alg{K}^{0}(k')_{\et})$
	is exact.
\end{Lem}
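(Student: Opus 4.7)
The plan is to verify that $f_{\ast}$ preserves epimorphisms; left exactness is automatic for any morphism of sites. Given a surjection $\varphi \colon A \onto B$ in $\Ab(\alg{K}(k')_{\et})$, an étale $\alg{K}^{0}(k')$-algebra $L^{0}$, and a section $b \in (f_{\ast} B)(L^{0}) = B(L^{0} \tensor_{\alg{K}^{0}(k')} \alg{K}(k'))$, I must produce an étale cover $\{L^{0}_{j} \to L^{0}\}$ in $\alg{K}^{0}(k')_{\et}$ on which $b$ lifts to $A$. Surjectivity of $\varphi$ immediately provides a faithfully flat étale $L^{0} \tensor \alg{K}(k')$-algebra $S$ with a lift $a \in A(S)$ of $b|_{S}$, so the content reduces to a \emph{relative} form of Lemma~\ref{lem: refinement}: every faithfully flat étale $L^{0} \tensor \alg{K}(k')$-algebra is refined by $L^{0}_{1} \tensor \alg{K}(k')$ for some étale cover $L^{0} \to L^{0}_{1}$ in $\alg{K}^{0}(k')_{\et}$.

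To prove the relative refinement, I would first use Lemma~\ref{lem: Zariski comparison} to work Zariski-locally on $L^{0}$, which allows me to assume $L^{0}$ is faithfully flat over $\alg{K}^{0}(k')$; then $S$ is faithfully flat étale over $\alg{K}(k')$. Applying Lemma~\ref{lem: refinement} directly to $S$ yields a faithfully flat étale $\alg{K}^{0}(k')$-algebra $L^{0}_{1}$ and a $\alg{K}(k')$-algebra homomorphism $S \to L^{0}_{1} \tensor \alg{K}(k')$. The key step is then to promote this to an $L^{0}$-algebra refinement: composing with the structural map $L^{0} \tensor \alg{K}(k') \to S$ gives a $\alg{K}(k')$-algebra homomorphism $L^{0} \tensor \alg{K}(k') \to L^{0}_{1} \tensor \alg{K}(k')$, which by Lemma~\ref{lem: fully faithful} is the base change of a unique $\alg{K}^{0}(k')$-algebra homomorphism $L^{0} \to L^{0}_{1}$. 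Via this map, $L^{0}_{1}$ is étale over $L^{0}$, and the composition $L^{0} \tensor \alg{K}(k') \to S \to L^{0}_{1} \tensor \alg{K}(k')$ agrees with the base change of $L^{0} \to L^{0}_{1}$, so the refinement $S \to L^{0}_{1} \tensor \alg{K}(k')$ is automatically an $L^{0} \tensor \alg{K}(k')$-algebra map.

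Granted the relative refinement, the section $a$ pulls back to $a_{j} \in A(L^{0}_{j} \tensor \alg{K}(k')) = (f_{\ast} A)(L^{0}_{j})$, and $(f_{\ast}\varphi)(a_{j}) = b|_{L^{0}_{j}}$ by functoriality, which establishes surjectivity of $f_{\ast}\varphi$ and hence exactness of $f_{\ast}$.

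The main technical obstacle will be the gluing step: Lemma~\ref{lem: refinement} is proved locally at a chosen maximal ideal of $\alg{K}^{0}(k')$, so the $L^{0}_{1}$ produced above refines $S$ only in a Zariski neighborhood of a single maximal ideal of $L^{0}$. To assemble a genuine finite étale cover of $L^{0}$, one must reprise the idempotent-based localization argument from the proof of Lemma~\ref{lem: refinement}, using elements of $k'$ (equivalently, Zariski localizations of $\alg{K}^{0}(k')$) to combine the local refinements at distinct maximal ideals of $L^{0}$ into a single covering family in $\alg{K}^{0}(k')_{\et}$.
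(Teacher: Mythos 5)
Your overall strategy is the one the paper intends: the paper's own proof is the single sentence that the lemma is ``a formal consequence of Lemmas \ref{lem: refinement} and \ref{lem: fully faithful}'', and the formal content is exactly what you identify --- left exactness is free, and preservation of surjections reduces to a refinement statement relative to an arbitrary \'etale $\alg{K}^{0}(k')$-algebra $L^{0}$, not merely to $\alg{K}^{0}(k')$ itself. So the reduction in your first and last displayed steps is correct and is the right reading of ``formal''.

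The step in your second paragraph that genuinely fails as written is the claim that the map $L^{0} \to L^{0}_{1}$ obtained from Lemma \ref{lem: fully faithful} exhibits $L^{0}_{1}$ as a \emph{covering} of $L^{0}$. It is \'etale, but nothing forces $\Spec L^{0}_{1} \to \Spec L^{0}$ to be surjective: Lemma \ref{lem: refinement} only guarantees that $L^{0}_{1}$ is faithfully flat over $\alg{K}^{0}(k')$ and that \emph{some} $\alg{K}(k')$-algebra homomorphism $S \to L^{0}_{1} \tensor_{\alg{K}^{0}(k')} \alg{K}(k')$ exists, with no compatibility with the $L^{0}$-structure. Concretely, for $L^{0} = \alg{K}^{0}(k') \times \alg{K}^{0}(k')$ and $S = L$ the identity cover, the lemma may legitimately return $L^{0}_{1} = \alg{K}^{0}(k')$ with the first projection $S \to \alg{K}(k')$; the induced $L^{0} \to L^{0}_{1}$ is then a projection and covers only half of $\Spec L^{0}$. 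Moreover the repair is not quite the one sketched in your last paragraph: two maximal ideals of $L^{0}$ lying over the same maximal ideal of $\alg{K}^{0}(k')$ cannot be separated by elements of $k'$ (equivalently, by Zariski localizations of the base), so you must localize by idempotents, and more generally by elements, of $L^{0}$ itself. The efficient fix is to note that, Zariski-locally on its (profinite) maximal spectrum, $L^{0}$ is again a filtered union of finite products of complete discrete valuation fields with ind-rational residue rings --- namely of the form $M \tensor_{\alg{K}(k'')} \alg{K}^{0}(k')$ with $k'' \in k^{\rat}$ and $M$ finite separable over $\alg{K}(k'')$ --- so that the proofs of Lemmas \ref{lem: Zariski comparison}, \ref{lem: refinement} and \ref{lem: fully faithful} run verbatim with the pair $\alg{K}^{0}(k') \subset \alg{K}(k')$ replaced by $L^{0} \subset L$, rather than trying to bootstrap the relative statement from a single application of the absolute one.
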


\begin{proof}
	This is a formal consequence of Lemmas \ref{lem: refinement} and \ref{lem: fully faithful}.
\end{proof}

Therefore we have
$H^{i}(\alg{K}(k')_{\et}, \Gm) = H^{i}(\alg{K}^{0}(k')_{\et}, f_{\ast} \Gm)$ for all $i$.
To compute this, we recall Tate cohomology sheaves from \cite[\S 3.4]{SY12}
and give a variant of the vanishing result \cite[Proposition 4.2]{SY12}.
Let $L / K$ be a finite Galois extension with Galois group $G$.
Let $\Order_{L}$ be the ring of integers of $L$.
Define sheaves $\alg{L}_{k}$, $\alg{U}_{L, k}$ on $\Spec k^{\ind\rat}_{\et}$ by
	\[
			k'' \in k^{\ind\rat} \mapsto L \tensor_{K} \alg{K}(k''),
		\quad
			k'' \in k^{\ind\rat} \mapsto (\Order_{L} \tensor_{\Order_{K}} \alg{O}_{K}(k''))^{\times},
	\]
respectively.
The finite group $G$ acts on them as morphisms of sheaves.
Define a complex
	\[
			\cdots
		\to
			\prod_{\sigma_{1}, \sigma_{2} \in G}
				\alg{L}_{k}^{\times}
		\to
			\prod_{\sigma_{1} \in G}
				\alg{L}_{k}^{\times}
		\to
			\alg{L}_{k}^{\times}
		\stackrel{N}{\to}
			\alg{L}_{k}^{\times}
		\to
			\prod_{\sigma_{1} \in G}
				\alg{L}_{k}^{\times}
		\to
			\prod_{\sigma_{1}, \sigma_{2} \in G}
				\alg{L}_{k}^{\times}
		\to
			\cdots
	\]
in $\Ab(k^{\ind\rat}_{\et})$ using the differentials for inhomogeneous chains and cochains
(\cite[VII, \S 3 and \S 4]{Ser79})
and the norm map $N$.
For each $i \in \Z$, define the Tate cohomology sheaf $\hat{H}^{i}(G, \alg{L}_{k}^{\times})$
to be the $i$-th cohomology of this complex.
This is the \'etale sheafification of the presheaf
$k'' \mapsto \hat{H}^{i}(G, \alg{L}_{k}^{\times}(k''))$ of Tate cohomology groups.
We will show below that
the sheaves $\hat{H}^{i}(G, \alg{L}_{k}^{\times})$ on $\Spec k^{\ind\rat}_{\et}$ are all zero.
The difference from \cite[Proposition 4.2]{SY12} is the choice of the site.
To kill a cocycle in $\hat{H}^{i}(G, \alg{L}_{k}^{\times}(k''))$
with $k'' \in k^{\ind\rat}$,
we only need to extend $k''$ to its \emph{finite} extension.

\begin{Lem} \label{lem: vanishing of Tate cohomology sheaves}
	We have $\hat{H}^{i}(G, \alg{L}_{k}^{\times}) = 0$ for all $i \in \Z$.
\end{Lem}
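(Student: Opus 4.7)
The sheaf in question is, by construction, the étale sheafification of the presheaf $P \colon k'' \mapsto \hat{H}^{i}(G, (L \otimes_{K} \alg{K}(k''))^{\times})$, so the plan is to show that every section of $P$ at any $k'' \in k^{\ind\rat}$ is killed by some finite étale cover $k'' \to k'''$. My first reduction will be to a single perfect field: writing $k'' = \bigcup_{\nu} k''_{\nu}$ as a filtered union of rational subalgebras and using that $\alg{K}^{0}(k'') = \bigcup_{\nu} \alg{K}(k''_{\nu})$ together with the comparison of étale cohomologies of $\alg{K}(k'')$ and $\alg{K}^{0}(k'')$ provided by Lemmas \ref{lem: refinement}--\ref{lem: push is exact}, sections of $P$ at $k''$ can be expressed as filtered colimits of their values at rational stages, allowing the assumption $k'' \in k^{\rat}$. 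Splitting a rational $k$-algebra into its perfect-field factors further reduces to $k''$ a single finitely generated perfect field.

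Next, for such $k''$ and a class $c$, every étale cover of $k''$ is refined by a finite separable extension, and the algebraic closure $\bar{k''}$ is the filtered union of these. Since Tate cohomology of a finite group commutes with filtered colimits of modules, it will suffice to show that $\hat{H}^{i}(G, (L \otimes_{K} \alg{K}^{0}(\bar{k''}))^{\times}) = 0$ for all $i$. Now $\alg{K}^{0}(\bar{k''}) = \bigcup_{k'''} \alg{K}(k''')$ is the maximal unramified extension of $\alg{K}(k'')$, a (non-complete) DVF with algebraically closed residue field $\bar{k''}$. The Galois $\alg{K}^{0}(\bar{k''})$-algebra $L \otimes_{K} \alg{K}^{0}(\bar{k''})$ decomposes into a product of fields indexed by $G$-orbits on its primitive idempotents, and Shapiro's lemma reduces the Tate cohomology to $\hat{H}^{i}(H, L_{1}^{\times})$ for a stabilizer subgroup $H \leq G$ acting on a single totally ramified Galois extension $L_{1}/K_{0}$, where $K_{0} := \alg{K}^{0}(\bar{k''})$.

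It then remains to establish the classical claim: for a DVF $K_{0}$ with algebraically closed residue field and a totally ramified finite Galois extension $L_{1}/K_{0}$ with group $H$, the Tate cohomology $\hat{H}^{i}(H, L_{1}^{\times})$ vanishes for every $i \in \Z$. Hilbert 90 gives $H^{1} = 0$, and the vanishing of the Brauer group and of higher cohomology of $\Gm$ over such $K_{0}$ gives $H^{i} = 0$ for $i \geq 2$. For $\hat{H}^{0} = K_{0}^{\times}/N_{L_{1}/K_{0}}(L_{1}^{\times})$: a uniformizer of $K_{0}$ is a norm of a uniformizer of $L_{1}$ up to a unit, and the filtration of units by higher $U^{(n)}$ has graded pieces that are vector spaces over the algebraically closed residue field, on which the norm map becomes a surjective additive map. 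Applying the same vanishings to every intermediate extension of $L_{1}/K_{0}$ and invoking Tate's criterion on cohomological triviality (vanishing in two consecutive Tate degrees on all subgroups of a Sylow subgroup) will produce $\hat{H}^{i}(H, L_{1}^{\times}) = 0$ for all $i$.

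The hard part will be the norm surjectivity on $\hat{H}^{0}$ in the possibly wildly ramified case; this is essentially the content of \cite[V, §3]{Ser79}, and what makes the ind-rational étale topology the right setting is precisely that every such class can be killed by a finite extension of the residue field. Once that is secured, the remaining reductions (passage to a perfect field, then to algebraic closure, Shapiro's lemma, and Tate's cohomological-triviality criterion) together with the bookkeeping around $\alg{K}^{0}$ versus $\alg{K}$ are formal.
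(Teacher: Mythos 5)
Your classical endgame (Shapiro's lemma, the two-consecutive-degrees criterion for cohomological triviality, norm surjectivity over an algebraically closed residue field) is the right kind of input, but your opening reduction contains a genuine gap that undercuts the whole plan. The presheaf $k'' \mapsto \hat{H}^{i}(G, \alg{L}_{k}^{\times}(k''))$ with $\alg{L}_{k}^{\times}(k'') = (L \tensor_{K} \alg{K}(k''))^{\times}$ does \emph{not} commute with the filtered unions $k'' = \bigcup_{\nu} k''_{\nu}$ defining an ind-rational algebra, because $\alg{K}$ is built from a \emph{completed} tensor product: one has $\bigcup_{\nu} \alg{K}(k''_{\nu}) = \alg{K}^{0}(k'') \subsetneq \alg{K}(k'')$ in general. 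So the claim that "sections of $P$ at $k''$ can be expressed as filtered colimits of their values at rational stages" is false, and Lemmas \ref{lem: refinement}--\ref{lem: push is exact} do not repair it: they identify $H^{i}(\alg{K}(k'')_{\et}, \Gm)$ with cohomology over $\alg{K}^{0}(k'')$ of the \emph{pushforward} $f_{\ast}\Gm$, whose value on a finite extension $L^{0}$ is still $(L^{0} \tensor_{\alg{K}^{0}(k'')} \alg{K}(k''))^{\times}$ — the Galois module retains the completed ring. Indeed the lemma is used in the proof of Proposition \ref{prop: cohomology of Gm} precisely with these "big" modules $\alg{L}^{\times}_{k'_{\lambda}}(k')$ at ind-rational $k'$. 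What your reduction to a finitely generated field and then to its algebraic closure would actually deliver is the vanishing over rational $k''$ only, which is essentially \cite[Prop.\ 4.2]{SY12}; the paper explicitly flags that the new content here is killing a cocycle over an \emph{arbitrary ind-rational} $k''$ by a finite étale extension.

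The paper therefore argues directly at the sheaf level over all of $k^{\ind\rat}$: (i) $\hat{H}^{1}(G, \alg{L}_{k}^{\times}) = 0$ because $\Pic(\alg{K}(k'')) = 0$ for every ind-rational $k''$, via Lemma \ref{lem: Zariski comparison}; (ii) Serre's criterion \cite[IX, \S 5, Thm.\ 8]{Ser79}, transported to cohomology sheaves by testing against injectives of $\Ab(k^{\ind\rat}_{\et})$, reduces everything to $\hat{H}^{0} = 0$; (iii) $\hat{H}^{0} = 0$ because the norm $N \colon \alg{U}_{L, k} \to \alg{U}_{K}$ is a surjection of sheaves on $\Spec k^{\ind\rat}_{\et}$ — the kernel's filtration by principal units has connected unipotent graded pieces and $H^{1}(k''_{\et}, \Ga) = 0$ for every ind-rational $k''$, which is exactly why a finite étale extension of $k''$ suffices. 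If you want to salvage your structure, you must replace the colimit reduction by an argument of this proalgebraic, sheaf-theoretic type that applies to $(L \tensor_{K} \alg{K}(k''))^{\times}$ for arbitrary ind-rational $k''$; the Shapiro and Tate-criterion steps can then be kept, but they are not where the difficulty lies.
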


\begin{proof}
	For any $k'' \in k^{\ind\rat}$,
	any Zariski locally free sheaf on $\Spec \alg{K}(k'')$ of constant rank is free
	by Lemma \ref{lem: Zariski comparison}.
	Hence $\Pic(\alg{K}(k'')) = 0$ and
		\[
				\hat{H}^{1}(G, \alg{L}_{k}^{\times}(k''))
			=
				\Ker \bigl(
					\Pic(\alg{K}(k'')) \to \Pic(\alg{L}_{k}(k''))
				\bigr)
			=
				0,
		\]
	so $\hat{H}^{1}(G, \alg{L}_{k}^{\times}) = 0$.
	Therefore it is enough to show that $\hat{H}^{0}(G, \alg{L}_{k}^{\times}) = 0$
	by \cite[IX, \S 5, Theorem 8]{Ser79}
	(to apply this theorem on group cohomology groups to our situation of cohomology sheaves,
	take an arbitrary injective object $I$ in $\Ab(k^{\ind\rat}_{\et})$
	and consider a corresponding statement for
	$\hat{H}^{i}(G, \Hom_{k^{\ind\rat}_{\et}}(\alg{L}_{k}^{\times}, I))$).
	It is enough to show that the norm map
	$N \colon \alg{U}_{L, k} \to \alg{U}_{K}$ is surjective in $\Ab(k^{\ind\rat}_{\et})$.
	
	Let $k' \in k^{\ind\rat}$ and $x \in \alg{U}_{K}(k')$.
	The map $N$ is a surjective homomorphism of proalgebraic groups over $k$
	(\cite[\S 2.1; \S 2.2, Corollary to Proposition 1]{Ser61}).
	The norm computation given in \cite[V, \S 3]{Ser79} shows that
	almost all subquotients of the filtration $\{ \Ker(N) \cap \alg{U}_{L, k}^{n} \}$
	of $\Ker(N)$ are connected unipotent,
	where $\alg{U}_{L, k}^{n}$ is the group of $n$-th principal units.
	Assume that the subquotients are connected unipotent for $n \ge n_{0}$.
	Consider the cartesian diagram of surjections of proalgebraic groups
		\[
			\begin{CD}
					\alg{U}_{L, k} / \bigl( \Ker(N) \cap \alg{U}_{L, k}^{n_{0}} \bigr)
				@>> N >
					\alg{U}_{K}
				\\
				@VVV
				@VVV
				\\
					\alg{U}_{L, k} / \alg{U}_{L, k}^{n_{0}}
				@> N >>
					\alg{U}_{K} / N \alg{U}_{L, k}^{n_{0}}.
			\end{CD}
		\]
	The lower left term and hence the lower right term are quasi-algebraic.
	Hence the image of the $k'$-point $x$ in the lower right term
	lifts to a $k''$-point of the lower left term
	for some faithfully flat \'etale $k'$-algebra $k''$.
	Since the diagram is cartesian,
	the $k'$-point $x$ of the upper right term itself lifts
	to a $k''$-point $x_{0}$ of the upper left term.
	Consider the cartesian diagram of surjections of proalgebraic groups
		\[
			\begin{CD}
					\alg{U}_{L, k} / \bigl( \Ker(N) \cap \alg{U}_{L, k}^{n_{0} + 1} \bigr)
				@>>>
					\alg{U}_{L, k} / \bigl( \Ker(N) \cap \alg{U}_{L, k}^{n_{0}} \bigr)
				\\
				@VVV
				@VVV
				\\
					\alg{U}_{L, k} / \alg{U}_{L, k}^{n_{0} + 1}
				@>>>
					\alg{U}_{L, k} / \bigl( \alg{U}_{L, k}^{n_{0} + 1} + \Ker(N) \cap \alg{U}_{L, k}^{n_{0}} \bigr).
			\end{CD}
		\]
	The kernel $G$ of the lower horizontal morphism is connected unipotent quasi-algebraic by assumption.
	The image of the $k''$-point $x_{0}$ in the lower right term defines an element of $H^{1}(k''_{\et}, G)$.
	The group $H^{1}(k''_{\et}, \Ga)$ is trivial
	since it is isomorphic to coherent cohomology by \cite[III, Remark 3.8]{Mil80}
	and the affine scheme $\Spec k''$ has trivial coherent cohomology.
	This implies that $H^{1}(k''_{\et}, G) = 0$.
	Hence the image of the $k''$-point $x_{0}$ in the lower right term
	lifts a $k''$-point of the lower left term (with no need to extend $k''$).
	Since the diagram is cartesian,
	the $k''$-point $x_{0}$ of the upper right term itself lifts
	to a $k''$-point $x_{1}$ of the upper left term.
	Iteratively applying this argument, we obtain a $k''$-point of
		\[
				\invlim_{n \ge n_{0}}
					\alg{U}_{L, k} / \bigl( \Ker(N) \cap \alg{U}_{L, k}^{n} \bigr)
			=
				\alg{U}_{L, k}
		\]
	whose norm is $x \in \alg{U}_{K}(k')$.
	This shows the surjectivity of
	$N \colon \alg{U}_{L, k} \to \alg{U}_{K}$ in $\Ab(k^{\ind\rat}_{\et})$.
\end{proof}

\begin{proof}[Proof of Proposition \ref{prop: cohomology of Gm}]
	By Lemma \ref{lem: push is exact}, we have
		\[
				H^{i}(\alg{K}(k')_{\et}, \Gm)
			=
				H^{i}(\alg{K}^{0}(k')_{\et}, f_{\ast} \Gm),
		\]
	where $f \colon \Spec \alg{K}(k') \to \Spec \alg{K}^{0}(k')$ is the natural morphism.
	Write $k' = \bigcup k'_{\lambda}$ with $k'_{\lambda} \in k^{\rat}$.
	We have
		\[
				H^{i}(\alg{K}^{0}(k')_{\et}, f_{\ast} \Gm)
			=
				\dirlim_{\lambda, L}
				H^{i}(
					\Gal(L / \alg{K}(k'_{\lambda})),
					(L \tensor_{\alg{K}(k'_{\lambda})} \alg{K}(k'))^{\times}
				),
		\]
	where the $L$ runs through the finite Galois extensions of $\alg{K}(k'_{\lambda})$.
	The coefficient group on the right-hand side can be written as
	$\alg{L}^{\times}_{k'_{\lambda}}(k')$ in the notation above.
	Therefore for $i \ge 1$,
	its element is killed after an \'etale faithfully flat extension of $k'$
	by Lemma \ref{lem: vanishing of Tate cohomology sheaves}.
	Hence $\alg{H}^{i}(\Gm) = 0$ for $i \ge 1$.
	This proves the proposition.
\end{proof}

The following will be needed in the next subsection
to reduce the computation of $R \alg{\Gamma}(\Z)$
to that of cohomology of complete discrete valuation fields.

\begin{Prop} \label{prop: cohomology of constant sheaves when base is indrational}
	Let $k' \in k^{\ind\rat}$.
	Let $A$ be a constant sheaf of abelian groups.
	Then $R \Gamma(\alg{K}(k')_{\et}, A) = R \Gamma(\alg{K}^{0}(k')_{\et}, A)$.
\end{Prop}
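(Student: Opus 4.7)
The plan is to use the continuous map $f \colon \Spec \alg{K}(k')_{\et} \to \Spec \alg{K}^{0}(k')_{\et}$ considered in Lemma \ref{lem: push is exact}, according to which $f_{\ast}$ is exact. The Leray spectral sequence then collapses and yields
\[
R \Gamma(\alg{K}(k')_{\et}, A) = R \Gamma(\alg{K}^{0}(k')_{\et}, f_{\ast} A).
\]
Thus the proposition reduces to showing $f_{\ast} A = A$ for a constant sheaf $A$, where on the right $A$ denotes the corresponding constant sheaf on $\Spec \alg{K}^{0}(k')_{\et}$.

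By unwinding the definition, this comes down to showing that for every \'etale $\alg{K}^{0}(k')$-algebra $L^{0}$, the natural ring homomorphism $L^{0} \to L^{0} \tensor_{\alg{K}^{0}(k')} \alg{K}(k')$ induces a bijection between \'etale-locally-constant $A$-valued functions on $\Spec L^{0}$ and those on $\Spec(L^{0} \tensor_{\alg{K}^{0}(k')} \alg{K}(k'))$. Lemma \ref{lem: refinement} guarantees that every \'etale covering of $L^{0} \tensor_{\alg{K}^{0}(k')} \alg{K}(k')$ is refined by one pulled back from an \'etale covering of $L^{0}$, while Lemma \ref{lem: fully faithful} says that the tensor product embeds \'etale $L^{0}$-algebras fully faithfully into \'etale $L^{0} \tensor_{\alg{K}^{0}(k')} \alg{K}(k')$-algebras. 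Together these should imply that an \'etale-locally-constant $A$-valued function on the larger spectrum is determined by, and descends to, an \'etale-locally-constant function on $\Spec L^{0}$.

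The step I would verify most carefully is the basic case $L^{0} = \alg{K}^{0}(k')$, establishing that idempotents of $\alg{K}(k')$ correspond bijectively with those of $\alg{K}^{0}(k')$. Since $\alg{O}_{K}(k') = W(k') \hat{\tensor}_{W(k)} \Order_{K}$ is Henselian along the uniformizer direction (as used throughout the preceding lemmas), its idempotents lift uniquely from idempotents of $k'$, and the same then holds for $\alg{K}(k')$ after inverting the uniformizer. Applying this to each rational subalgebra $k'_{\lambda} \subset k'$ and passing to the filtered union $\alg{K}^{0}(k') = \bigcup_{\lambda} \alg{K}(k'_{\lambda})$ gives the same identification of idempotents with those of $k'$. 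The main obstacle is then propagating this matching from the base case to arbitrary \'etale $L^{0}$ via Lemmas \ref{lem: refinement} and \ref{lem: fully faithful}: while conceptually these two lemmas assert that $f$ is an equivalence on the level of connected-component data, verifying this for arbitrary $L^{0}$ requires care in handling the \'etale-local decompositions and in checking that no new decompositions appear over $\alg{K}(k')$ that are not already visible over $\alg{K}^{0}(k')$.
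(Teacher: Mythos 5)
Your reduction is the same as the paper's: Lemma \ref{lem: push is exact} collapses the Leray spectral sequence, so everything comes down to $f_{\ast}A = A$, i.e.\ to showing that $A(L^{0}) \to A(L)$ is bijective for every \'etale $\alg{K}^{0}(k')$-algebra $L^{0}$, where $L = L^{0}\tensor_{\alg{K}^{0}(k')}\alg{K}(k')$; and since a section of the constant sheaf over a quasi-compact affine scheme is a finite complete system of orthogonal idempotents labelled by elements of $A$, this in turn comes down to the idempotents of $L^{0}$ and of $L$ being in bijection. The gap sits exactly where you flag ``the main obstacle'': you establish the idempotent bijection only in the base case $L^{0} = \alg{K}^{0}(k')$ (via Henselianness of $(\alg{O}_{K}(k'), (\pi))$), and the proposed propagation to arbitrary \'etale $L^{0}$ ``via Lemmas \ref{lem: refinement} and \ref{lem: fully faithful}'' is not yet an argument --- Lemma \ref{lem: refinement} refines coverings of $\alg{K}(k')$ by coverings of $\alg{K}^{0}(k')$ and says nothing directly about direct-product decompositions of $L$, and you give no mechanism for ruling out idempotents of $L$ that are not induced from $L^{0}$.

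The missing observation, which is how the paper finishes, is that the idempotent bijection for \emph{arbitrary} $L^{0}$ is itself a literal instance of Lemma \ref{lem: fully faithful}: an idempotent of an $R_{0}$-algebra $R$ is the same thing as an $R_{0}$-algebra homomorphism $R_{0}\times R_{0}\to R$ (send $(1,0)$ to the idempotent), so the idempotents of $L^{0}$ and of $L$ are identified with
\[
\Hom_{\alg{K}^{0}(k')}(\alg{K}^{0}(k')\times\alg{K}^{0}(k'),\,L^{0}),
\qquad
\Hom_{\alg{K}(k')}(\alg{K}(k')\times\alg{K}(k'),\,L),
\]
respectively, and these agree by full faithfulness of $(\;\cdot\;)\tensor_{\alg{K}^{0}(k')}\alg{K}(k')$ applied to the pair of \'etale algebras $\alg{K}^{0}(k')\times\alg{K}^{0}(k')$ and $L^{0}$. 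With this in hand, neither the Henselian base-case computation nor Lemma \ref{lem: refinement} is needed for the present proposition (both are already consumed in the proof of Lemma \ref{lem: push is exact}).
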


\begin{proof}
	By Lemma \ref{lem: push is exact},
	this amounts to saying that $f_{\ast} A = A$ in $\Ab(\alg{K}^{0}(k')_{\et})$,
	where $f \colon \Spec \alg{K}(k') \to \Spec \alg{K}^{0}(k')$ is the natural morphism.
	Let $L^{0}$ be any \'etale $\alg{K}^{0}(k')$-algebra
	and set $L = L^{0} \tensor_{\alg{K}^{0}(k')} \alg{K}(k')$.
	We need to show that $A(L^{0}) = A(L)$.
	It is enough to show that the inclusion $L^{0} \into L$ induces a bijection
	on the set of idempotents.
	The set of idempotents of $L^{0}$, $L$ can be identified with
		\[
				\Hom_{\alg{K}^{0}(k')}(\alg{K}^{0}(k') \times \alg{K}^{0}(k'), L^{0}),
			\quad
				\Hom_{\alg{K}(k')}(\alg{K}(k') \times \alg{K}(k'), L),
		\]
	respectively.
	They are in bijection by Lemma \ref{lem: fully faithful}.
\end{proof}


\subsection{Duality with coefficients in $\Gm$}
\label{sec: Duality with coefficients in Gm}

The following states the duality for $K$ with coefficients in $\Gm$.

\begin{Thm} \label{thm: duality, Z-coefficients}
	The morphism
		\[
				R \alg{\Gamma}(\Z)
			\to
				R \sheafhom_{k}(\alg{K}^{\times}, \Z)
		\]
	in $D(k)$ defined at the end of
	Section \ref{sec: The structure morphism of a local field and the cup product pairing}
	is an isomorphism.
\end{Thm}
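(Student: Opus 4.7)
The plan is to identify both complexes by computing their cohomology sheaves on $\Spec k^{\ind\rat}_{\et}$ and match them through Serre's local class field theory, with Theorem~\ref{thm: comparison of Ext, proalgebraic setting} serving as the bridge between sheaf-theoretic $\sheafext$ and proalgebraic $\Ext$. Since the claim is a quasi-isomorphism in $D(k^{\ind\rat}_{\et})$, it suffices to check that the map induces an isomorphism on each cohomology sheaf $\alg{H}^{i}$.

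For the left hand side, I would use the sequence $0 \to \Z \to \Q \to \Q/\Z \to 0$. The vanishing $\alg{H}^{i}(\Q) = 0$ for $i \ge 1$ (Galois cohomology of profinite groups with torsion-free divisible coefficients) gives $\alg{H}^{0}(\Z) = \Z$, $\alg{H}^{1}(\Z) = 0$ (continuous homomorphisms from profinite groups into $\Z$ vanish), and $\alg{H}^{i+1}(\Z) \cong \alg{H}^{i}(\Q/\Z) = \dirlim_{m} \alg{H}^{i}(\Z/m \Z)$ for $i \ge 1$.

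For the right hand side I would use the valuation sequence $0 \to \alg{U}_{K} \to \alg{K}^{\times} \to \Z \to 0$ in $\Ab(k^{\ind\rat}_{\et})$ (the quotient is $\Z$, with the sequence locally split by a choice of prime element) to reduce the computation to $R\sheafhom_{k}(\alg{U}_{K}, \Z)$. Since $\alg{U}_{K} \in \Pro \Alg/k$, Theorem~\ref{thm: comparison of Ext, proalgebraic setting} supplies the vanishings $\sheafhom_{k}(\alg{U}_{K}, \Z) = 0$ and $\sheafext^{i}_{k}(\alg{U}_{K}, \Q) = 0$, collapsing $R\sheafhom_{k}(\alg{U}_{K}, \Z)$ to $R\sheafhom_{k}(\alg{U}_{K}, \Q/\Z)[-1] \cong \dirlim_{m} R\sheafhom_{\Pro \Alg/k}(\alg{U}_{K}, \Z/m\Z)[-1]$. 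Piecing together, I obtain $\sheafhom_{k}(\alg{K}^{\times}, \Z) = \Z$, $\sheafext^{1}_{k}(\alg{K}^{\times}, \Z) = 0$ (since a connected proalgebraic group admits no nontrivial map into a finite \'etale group $\Z/m\Z$), and $\sheafext^{i}_{k}(\alg{K}^{\times}, \Z) \cong \dirlim_{m} \sheafext^{i-1}_{\Pro \Alg/k}(\alg{U}_{K}, \Z/m\Z)$ for $i \ge 2$.

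In degrees $0$ and $1$ the comparison is then immediate. For $i \ge 2$, I need the functorial isomorphism
\[
        \alg{H}^{i-1}(\Z/m \Z)
    \;\cong\;
        \sheafext^{i-1}_{\Pro \Alg/k}(\alg{U}_{K}, \Z/m\Z),
\]
which is the content of Serre's local class field theory for complete discrete valuation fields with perfect residue field. For $i = 2$ this is the classical reciprocity identification of ramified characters of $\Gal(K^{\ab}/K)$ with $\Hom_{\Pro \Alg/k}(\alg{U}_{K}, \Z/m\Z)$; for higher $i$ both sides vanish (using cohomological dimension and Serre's computation that $\Ext^{\ge 2}_{\Pro \Alg/k}$ against $\alg{U}_{K}$ vanishes for $k$ algebraically closed, which is enough after passing to stalks).

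The main obstacle will be step four: verifying that the isomorphism just produced really is the one induced by cup product with the trace. Concretely one has to unwind the pairing $R\alg{\Gamma}(\Z) \dtensor \alg{K}^{\times} \to \Z$ (arising from $R\alg{\Gamma}(\Z) \dtensor R\alg{\Gamma}(\Gm) \to R\alg{\Gamma}(\Gm) = \alg{K}^{\times}$ composed with the valuation) and match it against Serre's reciprocity. The natural route is to reduce modulo each $m$: the Kummer sequence $0 \to \mu_{m} \to \Gm \to \Gm \to 0$, combined with Proposition~\ref{prop: cohomology of Gm} (Hilbert 90), identifies $\alg{H}^{1}(\mu_{m}) = \alg{K}^{\times}/(\alg{K}^{\times})^{m}$, and one then compares the connecting maps of the two long exact sequences degree by degree. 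This is essentially the $A = \Z$ instance of the ``essential exact sequence'' for finite $A$ sketched in the Introduction.
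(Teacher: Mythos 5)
Your overall strategy --- reduce to cohomology sheaves, kill everything outside degrees $0$ and $2$ using Theorem \ref{thm: comparison of Ext, proalgebraic setting} together with Serre's computation of the homotopy groups $\pi_{i}(\alg{U}_{K})$ and the fact that $K$ with algebraically closed residue field has cohomological dimension one, and then invoke Serre's reciprocity in degree $2$ --- is exactly the paper's. The vanishing computations you give are correct, modulo the routine reduction from an ind-rational base $k'$ down to $k = \algcl{k}$, which in the paper uses the direct-limit statement of Theorem \ref{thm: comparison of Ext, proalgebraic setting} together with Proposition \ref{prop: cohomology of constant sheaves when base is indrational}.

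The genuine gap is the step you yourself flag and then defer. In degree $2$ you have two infinite groups, $H^{1}(K, \Q/\Z)$ and $\dirlim_{m}\Ext^{1}_{\Pro\Alg/k}(\alg{U}_{K}, \Z/m\Z)$, which are abstractly isomorphic (both are Pontryagin duals of $\Gal(K^{\ab}/K) \cong \pi_{1}(\alg{U}_{K})$); but that alone says nothing about whether \emph{your} map between them is an isomorphism. Your proposed fix --- reduce mod $m$ via the Kummer sequence and ``compare the connecting maps degree by degree'' --- only restates the problem: after that reduction one must still identify the mod-$m$ cup-product pairing between $H^{1}(K,\Z/m\Z)$ and $\alg{K}^{\times}/(\alg{K}^{\times})^{m}$ with the norm-residue symbol, and nothing in the Kummer formalism supplies that identification. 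The paper closes the gap by an explicit computation: a cyclic extension $L/K$ of degree $n$ with group $G = \langle\sigma\rangle$ is represented in $\Ext_{K}^{2}(\Z,\Z)$ by the periodic resolution $0 \to \Z \to \Z[G] \to \Z[G] \to \Z \to 0$; tensoring with $\Gm$ produces a four-term sequence built from $\Res_{L/K}\Gm$, whose terms are $\alg{\Gamma}$-acyclic by Proposition \ref{prop: cohomology of Gm}, so the second half of Proposition \ref{prop: functoriality} pushes the extension class through $\alg{\Gamma}$ term by term; pushing out along the valuation then lands exactly on the class of
\[
		0
	\to
		G
	\to
		\alg{L}^{\times}/I_{G}\alg{U}_{L}
	\to
		\alg{K}^{\times}
	\to
		0,
\]
which is the extension Serre's local class field theory attaches to $L/K$, whence the map is dual to the reciprocity isomorphism $\pi_{1}(\alg{U}_{K}) \isomto \Gal(K^{\ab}/K)$. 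Some such explicit tracking of the class of a cyclic extension (or an equivalent identification of the pairing with the reciprocity map) is unavoidable; without it the argument establishes only that the two complexes have isomorphic cohomology, not that your morphism realizes the isomorphism.
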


We prove this in this subsection.
This implies,
by taking $R \Gamma(k'_{\et}, \,\cdot\,)$ of the both sides for any $k' \in k^{\ind\rat}$,
an isomorphism
	\begin{equation} \label{eq: duality for Gm with ind-rational base}
			R \Gamma(\alg{K}(k')_{\et}, \Z)
		\to
			R \Hom_{k^{\ind\rat}_{\et} / k'}(\alg{K}^{\times}, \Z)
	\end{equation}
in $D(\Ab)$,
where we again used the comparison between fppf and \'etale cohomology
with coefficients in a smooth group scheme
(\cite[III, Remark 3.11 (b)]{Mil80}).
Conversely, this isomorphism for any $k' \in k^{\ind\rat}$ implies the theorem.
Let $k' = \bigcup_{\lambda} k'_{\lambda}$ with $k'_{\lambda} \in k^{\rat}$.
The left-hand side of \eqref{eq: duality for Gm with ind-rational base} is the direct limit of
$R \Gamma(\alg{K}(k'_{\lambda})_{\et}, \Z)$ in $\lambda$
by Proposition \ref{prop: cohomology of constant sheaves when base is indrational}
and \cite[III, Lemma 1.16, Remark 1.17 (a)]{Mil80}.
For the right-hand side, note that $\alg{K}^{\times} \cong \Z \times \alg{U}_{K}$,
where $\alg{U}_{K} = \alg{O}_{K}^{\times}$.
The sheaf $\alg{U}_{K}$ is represented
by the proalgebraic group of units of $K$ studied by Serre \cite{Ser61},
which is affine.
Hence Theorem \ref{thm: comparison of Ext, proalgebraic setting} shows that
$R \Hom_{k^{\ind\rat}_{\et} / k'}(\alg{U}_{K}, \Z)$ is the direct limit of 
$R \Hom_{(k'_{\lambda})^{\ind\rat}_{\et}}(\alg{U}_{K}, \Z)$ in $\lambda$.
On the other hand, Proposition \ref{prop: rational etale cohomology is Galois cohomology} shows that
$R \Hom_{k^{\ind\rat}_{\et} / k'}(\Z, \Z) = R \Gamma(k'_{\et}, \Z)$,
which is the direct limit of
$R \Hom_{(k'_{\lambda})^{\ind\rat}_{\et}}(\Z, \Z) = R \Gamma((k'_{\lambda})_{\et}, \Z)$ in $\lambda$
by \cite[III, Lemma 1.16, Remark 1.17 (a)]{Mil80}.
Hence the homomorphism \eqref{eq: duality for Gm with ind-rational base} is
the direct limit of
	\[
			R \Gamma(\alg{K}(k'_{\lambda})_{\et}, \Z)
		\to
			R \Hom_{(k'_{\lambda})^{\ind\rat}_{\et}}(\alg{K}^{\times}, \Z).
	\]
We want to show that this is an isomorphism for any $\lambda$.
Replacing $K$ with $\alg{K}(k'_{\lambda})$,
we only need to consider the case $k'_{\lambda} = k$:
	\[
			R \Gamma(K_{\et}, \Z)
		\to
			R \Hom_{k^{\ind\rat}_{\et}}(\alg{K}^{\times}, \Z).
	\]
Compare this morphism with the morphism for the case $k' = \closure{k}$:
	\[
			R \Gamma(\alg{K}(\closure{k})_{\et}, \Z)
		\to
			R \Hom_{k^{\ind\rat}_{\et} / \closure{k}}(\alg{K}^{\times}, \Z)
		=
			R \Hom_{\closure{k}^{\ind\rat}_{\et}}(\alg{K}^{\times}, \Z),
	\]
where we used the fact $\Spec \closure{k}^{\ind\rat}_{\et} = \Spec k^{\ind\rat}_{\et} / \closure{k}$
observed in Section \ref{sec: The ind-rational etale site}.
By applying $R \Gamma(k_{\et}, \,\cdot\,)$ to the latter,
we may assume that $k = \closure{k}$.
What we have to show is hence
		\[
				H^{i}(K, \Z)
			\isomto
				\Ext_{k}^{i}(\alg{K}^{\times}, \Z)
		\]
for algebraically closed $k$.
We first treat the part $i \ne 2$.

\begin{Prop} \label{prop: vanishing for duality with coefficients in Gm}
	We have
		\begin{gather*}
					\Gamma(K, \Z)
				=
					\Hom_{k}(\alg{K}^{\times}, \Z)
				=
					\Z,
			\\
					H^{i}(K, \Z)
				=
					\Ext_{k}^{i}(\alg{K}^{\times}, \Z)
				=
					0
				\quad \text{for} \quad
					i \ne 0, 2.
		\end{gather*}
\end{Prop}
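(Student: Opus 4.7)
The plan is to exploit the valuation decomposition $\alg{K}^{\times} \cong \Z \times \alg{U}_{K}$ to split each side, and then to reduce higher-degree vanishing to finite coefficients via the short exact sequence $0 \to \Z \to \Q \to \Q / \Z \to 0$. This brings the Ext side into the range of Theorem \ref{thm: comparison of Ext, proalgebraic setting} and the $K$ side into classical Galois cohomology.

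First I would establish the degree-zero identities. Since $\Spec \alg{K}(k')$ is connected for every $k' \in k^{\ind\rat}$ (idempotents of $\alg{K}(k')$ descend to $k'$, and the global sections of the constant sheaf $\Z$ are computed by Proposition \ref{prop: rational etale cohomology is Galois cohomology}), we get $\Gamma(K, \Z) = \Z$. On the other side, $\alg{U}_{K}$ is proalgebraic via the principal-unit filtration with successive quotients $\Gm$ and $\Ga$, so Theorem \ref{thm: comparison of Ext, proalgebraic setting} gives $\Hom_{k}(\alg{U}_{K}, \Z) = 0$, whence $\Hom_{k}(\alg{K}^{\times}, \Z) = \Hom_{k}(\Z, \Z) = \Z$.

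For $i \ge 1$ I would apply the long exact sequences on both sides coming from $0 \to \Z \to \Q \to \Q / \Z \to 0$. On the $K$ side, $H^{i}(K, \Q) = 0$ for $i \ge 1$ since $\Q$ is uniquely divisible and $G_{K}$ is profinite. On the Ext side, Theorem \ref{thm: comparison of Ext, proalgebraic setting} gives $\Ext^{i}_{k}(\alg{U}_{K}, \Q) = 0$ for all $i$, while Proposition \ref{prop: rational etale cohomology is Galois cohomology} with $k = \algcl{k}$ gives $\Ext^{i}_{k}(\Z, \Q) = H^{i}(k_{\et}, \Q) = 0$ for $i \ge 1$, so $\Ext^{i}_{k}(\alg{K}^{\times}, \Q) = 0$ for $i \ge 1$. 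The case $i = 1$ then falls out from the surjectivity $\Gamma(\Q) \onto \Gamma(\Q / \Z)$ on both sides (on the Ext side one additionally uses $\Hom_{k}(\alg{U}_{K}, \Q / \Z) = \dirlim_{m} \Hom_{\Pro \Alg / k}(\alg{U}_{K}, \Z / m \Z) = 0$, a connected proalgebraic source mapping to an \'etale finite target). For $i \ge 2$ the sequences simplify to $H^{i}(K, \Z) \cong H^{i-1}(K, \Q / \Z)$ and $\Ext^{i}_{k}(\alg{K}^{\times}, \Z) \cong \Ext^{i-1}_{k}(\alg{K}^{\times}, \Q / \Z)$, so the $i \ge 3$ part reduces to vanishing in degrees $j = i - 1 \ge 2$ with coefficients $\Q / \Z = \dirlim_{m} \Z / m \Z$.

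For the $K$-side vanishing $H^{j}(K, \Z / m) = 0$ for $j \ge 2$, I would invoke $\mathrm{cd}(G_{K}) \le 1$: for $\ell \ne p$ this is the standard local-field formula $\mathrm{cd}_{\ell}(K) = \mathrm{cd}_{\ell}(\algcl{k}) + 1 = 1$, and for $\ell = p$ the wild inertia of $G_{K}$ is a free pro-$p$ group (Iwasawa in mixed characteristic, Artin--Schreier--Witt in equal characteristic). For the Ext-side vanishing $\Ext^{j}_{k}(\alg{K}^{\times}, \Z / m) = 0$ for $j \ge 2$, note $\Ext^{j}_{k}(\Z, \Z / m) = H^{j}(k_{\et}, \Z / m) = 0$ since $k$ is algebraically closed, while Theorem \ref{thm: comparison of Ext, proalgebraic setting} identifies $\Ext^{j}_{k}(\alg{U}_{K}, \Z / m)$ with $\Ext^{j}_{\Pro \Alg / k}(\alg{U}_{K}, \Z / m)$, which vanishes for $j \ge 3$ by Serre \cite[\S 10]{Ser60}. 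The main obstacle is the remaining case $j = 2$, corresponding to $i = 3$: exploiting that Ext against a finite target turns the pro-limit $\alg{U}_{K} = \invlim \alg{U}_{K} / \alg{U}_{K}^{n}$ into a direct limit, and d\'evissaging through the principal-unit filtration, one reduces to the vanishings $\Ext^{2}_{\Pro \Alg / k}(\Gm, \Z / m) = 0$ and $\Ext^{2}_{\Pro \Alg / k}(\Ga, \Z / m) = 0$, both covered by Serre's explicit computations in the same reference.
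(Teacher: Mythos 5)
Your proposal is correct and follows essentially the same route as the paper: split off the valuation factor $\Z$ from $\alg{K}^{\times}$, reduce to finite coefficients via $0 \to \Z \to \Q \to \Q / \Z \to 0$, and conclude with $\mathrm{cd}(K) \le 1$ on the Galois side and Theorem \ref{thm: comparison of Ext, proalgebraic setting} plus Serre's structure theory of $\Pro \Alg / k$ on the sheaf side. The only divergence is in the last step: where you d\'evissage $\alg{U}_{K}$ through $\Gm$ and $\Ga$ to kill $\Ext^{2}_{\Pro \Alg / k}(\alg{U}_{K}, \Z / m \Z)$, the paper instead identifies $\dirlim_{m} \Ext^{j}_{\Pro \Alg / k}(\alg{U}_{K}, \Z / m \Z)$ with the Pontryagin dual of Serre's homotopy group $\pi_{j}(\alg{U}_{K})$ (\cite[\S 5.4, Cor.\ to Prop.\ 7]{Ser60}) and uses $\pi_{j}(\alg{U}_{K}) = 0$ for $j = 0$ (connectedness) and $j \ge 2$ (\cite[\S 10, Thm.\ 2]{Ser60}), which disposes of all the relevant degrees at once.
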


\begin{proof}
	We compute $\Ext_{k}^{i}(\alg{U}_{K}, \Z)$ for $i \ne 2$.
	The case $i = 0$ is done in Theorem \ref{thm: comparison of Ext, proalgebraic setting}.
	If $i \ne 0$, then we have
		\[
				\Ext_{k}^{i}(\alg{U}_{K}, \Z)
			=
				\dirlim_{n} \Ext_{\Pro \Alg / k}^{i - 1}(\alg{U}_{K}, \Z / n \Z).
		\]
	by the same theorem.
	By \cite[\S 5.4, Corollary to Proposition 7]{Ser60}, this group
	is the Pontryagin dual of the $(i - 1)$-st homotopy group $\pi_{i - 1}(\alg{U}_{K})$ of
	the proalgebraic group $\alg{U}_{K}$ in the sense of Serre.
	We have $\pi_{i - 1}(\alg{U}_{K}) = 0$ if $i - 1= 0$ by the connectedness of $\alg{U}_{K}$,
	and if $i - 1\ge 2$ by \cite[\S 10, Theorem 2]{Ser60}.
	This finishes the proof of the statements for $\Ext_{k}^{i}(\alg{K}^{\times}, \Z)$.
	
	The groups $H^{i}(K, \Z)$ are Galois cohomology groups
	of the complete discrete valuation field $K$
	with algebraically closed residue field.
	Such a field has cohomological dimension $1$ by \cite[II, \S 3.3]{Ser02}.
	The result then follows.
\end{proof}

Therefore it is enough to show that the homomorphism
	\[
			H^{2}(K, \Z)
		\to
			\Ext_{k}^{2}(\alg{K}^{\times}, \Z)
	\]
is an isomorphism.
Note that the left-hand side is equal to
$H^{1}(K, \Q / \Z) = \Ext_{K}^{2}(\Z, \Z)$
and the right-hand side
	$
			\Ext_{k}^{1}(\alg{K}^{\times}, \Q / \Z)
		=
			\dirlim_{n}
				\Ext_{k}^{1}(\alg{K}^{\times}, \Z / n \Z)
	$
as shown in the proof of the above proposition.

\begin{Prop}
	The above homomorphism
		\[
				H^{1}(K, \Q / \Z)
			\to
				\Ext_{k}^{1}(\alg{K}^{\times}, \Q / \Z)
		\]
	sends a finite cyclic extension $L$ of $K$ of degree $n$
	with Galois group $G = \langle \sigma \rangle$
	to the extension class
		\[
				0
			\to
				G
			\stackrel{\sigma \mapsto \sigma(\pi_{L}) / \pi_{L}}{\longrightarrow}
				\alg{L}^{\times} / I_{G} \alg{U}_{L}
			\stackrel{N_{G}}{\to}
				\alg{K}^{\times}
			\to
				0,
		\]
	where $\alg{L}$ is defined from $L$ in the same way as we defined $\alg{K}$,
	$I_{G} = \Z[G] (\sigma - 1)$ the augmentation ideal of the integral group ring $\Z[G]$,
	$\pi_{L}$ a prime element of $\Order_{L}$,
	and $N_{G}$ the norm map.
\end{Prop}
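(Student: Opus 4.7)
The plan is to represent the class $[L] \in H^{1}(K, \Q/\Z) = H^{2}(K, \Z)$ as a concrete 2-extension via the standard periodic $\Z[G]$-resolution of the trivial $\Z[G]$-module $\Z$, trace it through each step of the duality morphism, and match the result with the claimed extension.  Since we have reduced to $k = \algcl{k}$, every finite Galois extension $L/K$ is totally ramified, so in particular the ramification index satisfies $e = n$.

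Letting $p \colon \Spec L \to \Spec K$, the class $[L]$ is represented by the 2-extension $0 \to \Z \to p_{\ast}\Z \xrightarrow{\sigma - 1} p_{\ast}\Z \xrightarrow{\epsilon} \Z \to 0$ in $\Ab(K_{\et})$.  Composing with the identity $\Z \to R\sheafhom_{K}(\Gm, \Gm)$ and applying the projection formula $p_{\ast}\Z \tensor \Gm \cong p_{\ast}\Gm = \alg{L}^{\times}$ yields the 2-extension
\[
	0 \to \Gm \into \alg{L}^{\times}
	\xrightarrow{x \mapsto \sigma(x)/x} \alg{L}^{\times}
	\xrightarrow{N_{L/K}} \Gm \to 0
\]
in $\Ext^{2}_{K}(\Gm, \Gm)$.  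Since $p$ is finite \'etale, $R^{i} p_{\ast}\Gm = 0$ for $i \ge 1$, and combined with Proposition \ref{prop: cohomology of Gm} this forces $R^{i}\alg{\Gamma}(\Gm) = R^{i}\alg{\Gamma}(\alg{L}^{\times}) = 0$ for $i \ge 1$.  By Proposition \ref{prop: functoriality}, applying $R\alg{\Gamma}$ to the above 2-extension yields the 4-term exact sequence
\[
	0 \to \alg{K}^{\times} \to \alg{L}^{\times}
	\xrightarrow{d} \alg{L}^{\times}
	\xrightarrow{N} \alg{K}^{\times} \to 0
\]
in $\Ab(k^{\ind\rat}_{\et})$, with $d(x) = \sigma(x)/x$, representing a class in $\Ext^{2}_{k}(\alg{K}^{\times}, \alg{K}^{\times})$.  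Pushing out the leftmost $\alg{K}^{\times}$ along the valuation $v \colon \alg{K}^{\times} \to \Z$ gives an element of $\Ext^{2}_{k}(\alg{K}^{\times}, \Z)$, which corresponds to the desired class in $\Ext^{1}_{k}(\alg{K}^{\times}, \Q/\Z)$ via the connecting isomorphism from $0 \to \Z \to \Q \to \Q/\Z \to 0$ (an isomorphism because $\Ext^{i}_{k}(\alg{K}^{\times}, \Q) = 0$ for $i \ge 1$ by Theorem \ref{thm: comparison of Ext, proalgebraic setting}).

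The main obstacle will be making this last step explicit and matching it with the claimed extension.  Set $J := \Im(d) = \Ker(N)$; by Hilbert 90, $J = (\sigma - 1)\alg{L}^{\times}$, and since $N$ preserves the valuation, $J \subset \alg{U}_{L}$.  Splitting the pushed-out 4-term as a Yoneda composition through $J$ yields $0 \to \Z \to M \to J \to 0$ (with $M$ the pushout) and $0 \to J \to \alg{L}^{\times} \to \alg{K}^{\times} \to 0$; the first corresponds via $\Ext^{1}_{k}(J, \Z) \cong \Hom_{k}(J, \Q/\Z)$ to a homomorphism $\phi \colon J \to \Q/\Z$, and the target class is the pushout of the second along $\phi$.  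Using $e = n$ and $\Hom_{k}(\alg{L}^{\times}, \Q) = \Q \cdot v_{L}$ (since $\alg{U}_{L}$ is pro-connected proalgebraic so $\Hom_{k}(\alg{U}_{L}, \Q) = 0$), the unique retraction of the $\Q$-pushout of $M$ gives $\phi(y) = v_{L}(x)/n \bmod \Z$ where $d(x) = y$, so $\Ker(\phi) = d(\alg{U}_{L}) = I_{G} \alg{U}_{L}$ and $\phi(u_{\sigma}) = v_{L}(\pi_{L})/n = 1/n$ for $u_{\sigma} := \sigma(\pi_{L})/\pi_{L}$.  The telescoping identity $u_{\sigma}^{n} \equiv N_{G}(u_{\sigma}) = \sigma^{n}(\pi_{L})/\pi_{L} = 1 \bmod I_{G}\alg{U}_{L}$ shows $J/I_{G}\alg{U}_{L}$ is cyclic of order $n$, generated by $u_{\sigma}$.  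Hence the pushout of $0 \to J \to \alg{L}^{\times} \to \alg{K}^{\times} \to 0$ along $\phi$ factors through $\alg{L}^{\times}/I_{G}\alg{U}_{L}$ and identifies with the stated extension $0 \to G \to \alg{L}^{\times}/I_{G}\alg{U}_{L} \to \alg{K}^{\times} \to 0$, with $G \hookrightarrow \Q/\Z$ via $\sigma \mapsto 1/n$ and $u_{\sigma} \leftrightarrow \sigma$.
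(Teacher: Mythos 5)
Your proposal is correct and follows the same main route as the paper: represent $[L]$ by the periodic $2$-extension of $\Z$ by $\Z$ through two copies of $\Z[G]$ (your $p_{\ast}\Z$), tensor with $\Gm$ to get the $2$-extension of $\Gm$ by $\Gm$ through two copies of $\Res_{L/K}\Gm$, apply $R\alg{\Gamma}$ using the acyclicity of $\Gm$ and $\Res_{L/K}\Gm$ together with the second half of Proposition \ref{prop: functoriality} to obtain the four-term sequence $0 \to \alg{K}^{\times} \to \alg{L}^{\times} \to \alg{L}^{\times} \to \alg{K}^{\times} \to 0$, and push out along the valuation. The only divergence is the final identification with the stated class: the paper exhibits an explicit surjection of four-term exact sequences from the pushed-out sequence onto $0 \to \Z \stackrel{n}{\to} \Z \to \alg{L}^{\times}/I_{G}\alg{U}_{L} \to \alg{K}^{\times} \to 0$ and reads off the $\Ext^{1}(\alg{K}^{\times},\Z/n\Z)$-class, whereas you splice the $2$-extension through $J = \Ker(N_{G})$ and compute the character $\phi \colon J \to \Q/\Z$ via the unique $\Q$-retraction (using $\Hom_{k}(\alg{U}_{L},\Q) = 0$ and $e = n$), obtaining $\phi = v_{L}/n \bmod \Z$ and hence $\Ker\phi = I_{G}\alg{U}_{L}$, $\phi(\sigma(\pi_{L})/\pi_{L}) = 1/n$. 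Your version is slightly longer but makes fully explicit why the quotient identification is the correct one under the connecting isomorphism $\Ext_{k}^{1}(\alg{K}^{\times},\Q/\Z) \cong \Ext_{k}^{2}(\alg{K}^{\times},\Z)$, a point the paper leaves to the reader; the paper's version is shorter because the comparison map of extensions is immediate once written down. All the supporting facts you invoke ($J \subset \alg{U}_{L}$, the telescoping identity $u_{\sigma}^{n} \in I_{G}\alg{U}_{L}$, vanishing of $\Ext_{k}^{i}(\;\cdot\;,\Q)$) check out.
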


\begin{proof}
	The extension $L / K$ as an element of $\Ext_{K}^{2}(\Z, \Z)$ is represented by
		\[
				0
			\to
				\Z
			\stackrel{N_{G}}{\to}
				\Z[G]
			\stackrel{\sigma - 1}{\to}
				\Z[G]
			\stackrel{\sigma \mapsto 1}{\to}
				\Z
			\to
				0,
		\]
	Its image in $\Ext_{K}^{2}(\Gm, \Gm)$ is
		\[
				0
			\to
				\Gm
			\stackrel{\incl}{\to}
				\Res_{L / K} \Gm
			\stackrel{\sigma - 1}{\to}
				\Res_{L / K} \Gm
			\stackrel{N_{G}}{\to}
				\Gm
			\to
				0,
		\]
	where $\Res_{L / K}$ is the Weil restriction functor.
	We have
		\[
				R \alg{\Gamma}(K_{\fppf}, \Res_{L / K} \Gm)
			=
				R \alg{\Gamma}(L_{\fppf}, \Gm)
			=
				\alg{L}^{\times}.
		\]
	Hence we can use the second half of Proposition \ref{prop: functoriality}.
	Applying $\alg{\Gamma}$, we have an exact sequence
		\[
				0
			\to
				\alg{K}^{\times}
			\stackrel{\incl}{\to}
				\alg{L}^{\times}
			\stackrel{\sigma - 1}{\to}
				\alg{L}^{\times}
			\stackrel{N_{G}}{\to}
				\alg{K}^{\times}
			\to
				0
		\]
	in $\Ab(k)$.
	Pushing out this extension by the valuation map $\alg{K}^{\times} \onto \Z$ from the left term,
	we find that the image in $\Ext_{k}^{2}(\alg{K}^{\times}, \Z)$ is given by
		\[
				0
			\to
				\Z
			\stackrel{1 \mapsto \pi_{K}}{\to}
				\alg{L}^{\times} / \alg{U}_{K}
			\stackrel{\sigma - 1}{\to}
				\alg{L}^{\times}
			\stackrel{N_{G}}{\to}
				\alg{K}^{\times}
			\to
				0
		\]
	where $\pi_{K}$ is a prime element of $\Order_{K}$.
	We have a natural quotient map from this extension to the extension
		\[
				0
			\to
				\Z
			\stackrel{n}{\to}
				\Z
			\stackrel{1 \mapsto \sigma(\pi_{L}) / \pi_{L}}{\longrightarrow}
				\alg{L}^{\times} / I_{G} \alg{U}_{L}
			\stackrel{N_{G}}{\to}
				\alg{K}^{\times}
			\to
				0.
		\]
	This extension as an element of the subgroup
	$\Ext_{k}^{1}(\alg{K}^{\times}, \Z / n)$ of $\Ext_{k}^{2}(\alg{K}^{\times}, \Z)$ is
		\[
				0
			\to
				G
			\stackrel{\sigma \mapsto \sigma(\pi_{L}) / \pi_{L}}{\longrightarrow}
				\alg{L}^{\times} / I_{G} \alg{U}_{L}
			\stackrel{N_{G}}{\to}
				\alg{K}^{\times}
			\to
				0,
		\]
	as required.
\end{proof}

\begin{proof}[Proof of Theorem \ref{thm: duality, Z-coefficients}]
	We have
		\[
				\Ext_{k^{\ind\rat}_{\et}}^{1}(\alg{K}^{\times}, \Q / \Z)
			=
				\dirlim_{n}
					\Ext_{\Pro \Alg / k}^{1}(\alg{U}_{K}, \Z / n \Z)
		\]
	as seen in the proof of Proposition \ref{prop: vanishing for duality with coefficients in Gm}.
	Under this isomorphism,
	the proposition above says that our homomorphism,
	or its Pontryagin dual
		\[
				\pi_{1}(\alg{U}_{K})
			\to
				\Gal(K^{\ab} / K),
		\]
	($\ab$ denotes the maximal abelian extension) agrees with
	the reciprocity isomorphism of Serre's local class field theory \cite{Ser61}.
	This proves the theorem.
\end{proof}

A corollary  is that
	\[
			H^{1}(K, \Q / \Z)
		=
			\Ext_{k}^{1}(\alg{K}^{\times}, \Q / \Z).
	\]
When $k$ is not necessarily algebraically closed,
this recovers the main theorem of \cite{SY12}.


\subsection{Duality with coefficients in a finite flat group scheme}
\label{sec: duality with coefficients in a finite flat group scheme}

Let $A$ be a finite flat group scheme over $K$.
Assume that $A$ does not have connected unipotent part
(which is always the case if $K$ has mixed characteristic).
As in the previous subsection,
we obtain a morphism
	\begin{align*}
		&
				R \alg{\Gamma}(A^{\CDual})
			\to
				R \alg{\Gamma} R \sheafhom_{K}(A, \Gm)
		\\
		&	\quad
			\to
				R \sheafhom_{k}(R \alg{\Gamma}(A), \Z)
			=
				R \sheafhom_{k}(R \alg{\Gamma}(A), \Q / \Z [-1]),
	\end{align*}
where the last equality is from the exact sequence
$0 \to \Z \to \Q \to \Q / \Z \to 0$
and that $A$ is finite.

\begin{Thm} \label{thm: duality, finite flat coefficients}
	The morphism
		\[
				R \alg{\Gamma}(A^{\CDual})
			\to
				R \sheafhom_{k}(R \alg{\Gamma}(A), \Q / \Z [-1])
		\]
	defined above is an isomorphism.
\end{Thm}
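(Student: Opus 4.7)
The strategy is to reduce the statement to Theorem \ref{thm: duality, Z-coefficients} (the $\Gm$-duality) via Kummer theory. Since the morphism in question lives in $D(k^{\ind\rat}_{\et})$, being a quasi-isomorphism can be tested on cohomology sheaves and hence after pullback to any finite étale cover of $k$. The hypothesis that $A$ has no connected unipotent part means that in the connected-étale sequence $0 \to A^{\circ} \to A \to A^{\et} \to 0$ the connected part $A^{\circ}$ is finite multiplicative. Applying the five-lemma to the long exact cohomology sheaves of both sides, we reduce to the two cases where $A$ is finite étale or finite multiplicative. Cartier duality exchanges these cases, and the morphism of the theorem is compatible with interchanging $A$ and $A^{\CDual}$ (via biduality $A^{\CDual\CDual} = A$ and the symmetry of the cup product), so we may assume $A$ is finite étale. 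A further finite étale extension of $k$ trivializes $A$, and by the additivity of $R\alg{\Gamma}$ and $R\sheafhom_{k}$ we reduce to $A = \Z/n\Z$, $A^{\CDual} = \mu_{n}$.

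For this case, the Kummer sequence $0 \to \mu_{n} \to \Gm \stackrel{n}{\to} \Gm \to 0$ is exact on $\Spec K_{\fppf}/k^{\ind\rat}_{\et}$ because $\mu_{n}$ is finite flat over $K$; this is precisely why the fppf topology on the $K$-side was introduced. Combined with Proposition \ref{prop: cohomology of Gm}, which gives $R\alg{\Gamma}(\Gm) = \alg{K}^{\times}$ in degree zero, it produces a distinguished triangle
\[
        R\alg{\Gamma}(\mu_{n})
    \to
        \alg{K}^{\times}
    \stackrel{n}{\to}
        \alg{K}^{\times}
\]
in $D(k^{\ind\rat}_{\et})$. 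On the dual side, the resolution $0 \to \Z \stackrel{n}{\to} \Z \to \Z/n\Z \to 0$ gives a distinguished triangle $R\alg{\Gamma}(\Z) \stackrel{n}{\to} R\alg{\Gamma}(\Z) \to R\alg{\Gamma}(\Z/n\Z)$. Since $R\alg{\Gamma}(\Z/n\Z)$ is $n$-torsion, $R\sheafhom_{k}(R\alg{\Gamma}(\Z/n\Z), \Q) = 0$, and hence $R\sheafhom_{k}(R\alg{\Gamma}(\Z/n\Z), \Q/\Z[-1]) \simeq R\sheafhom_{k}(R\alg{\Gamma}(\Z/n\Z), \Z)$ via the triangle coming from $0 \to \Z \to \Q \to \Q/\Z \to 0$. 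The cup-product pairing of Proposition \ref{prop: functoriality} identifies the morphism of the theorem with a morphism of distinguished triangles whose middle map is the $\Gm$-duality quasi-isomorphism of Theorem \ref{thm: duality, Z-coefficients}, and the conclusion follows from the five-lemma.

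The main obstacle is verifying that the cup-product morphism of Proposition \ref{prop: functoriality} really is compatible with the boundary maps of these two Kummer triangles, i.e.\ that the comparison square commutes. This is a functoriality computation using the second half of Proposition \ref{prop: functoriality}: the cup product of two Yoneda-extension classes is computed by $\alg{\Gamma}$ of their Yoneda splice, so one must check that splicing $0 \to \Z \stackrel{n}{\to} \Z \to \Z/n\Z \to 0$ with $0 \to \mu_{n} \to \Gm \stackrel{n}{\to} \Gm \to 0$ via the evaluation pairing $\Z/n\Z \tensor \mu_{n} \to \Gm$ yields the Baer-Yoneda extension representing the trace map. Once this compatibility is in place, the quasi-isomorphism of the theorem follows formally from Theorem \ref{thm: duality, Z-coefficients}.
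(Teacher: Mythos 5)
Your Kummer-sequence comparison with the $\Gm$-duality is the right core idea and is essentially the paper's first lemma, but two of your reduction steps have genuine gaps. First, a finite \'etale extension of $k$ produces only an \emph{unramified} extension of $K$, and an \'etale group scheme $A$ over $K$ is in general trivialized only by a ramified extension (e.g.\ $\mu_{p}$ over a $p$-adic field, which is \'etale in mixed characteristic but split only by adjoining $\zeta_{p}$). So testing the quasi-isomorphism on covers of $\Spec k^{\ind\rat}_{\et}$ does not let you reduce to constant $A$. The paper instead passes to a genuinely ramified finite extension $M / K$, chosen so that $\Gal(L / M)$ is an $l$-group and hence $A$ acquires a filtration with graded pieces $\mu_{l}$ or $\Z / l \Z$ over $M$, proves the duality over $M$, and then descends to $K$ by Weil restriction together with the transfer argument: $A$ is a direct summand of $\Res_{M / K} A$ because $[M : K]$ is prime to $l$. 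This descent mechanism is essential and is absent from your proposal.

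Second, the appeals to ``the symmetry of the cup product'' are not free. Passing from the quasi-isomorphism for $A^{\CDual}$ to the one for $A$ requires the biduality map $R \alg{\Gamma}(A) \to R \sheafhom_{k}(R \sheafhom_{k}(R \alg{\Gamma}(A), \Q / \Z), \Q / \Z)$ to be a quasi-isomorphism, and $R \alg{\Gamma}(A)$ is not a finite group: $\alg{H}^{1}$ has a connected (pro-)unipotent part. In equal characteristic with $A = \Z / p \Z$ this biduality is exactly where the paper does its hardest computation, invoking the Breen--Serre duality for $\Ga$, Theorem \ref{thm: comparison of Ext, proalgebraic setting}, and a vanishing of derived products to handle $\alg{U}_{K}^{1} / (\alg{U}_{K}^{1})^{p} \cong \Ga^{\N}$. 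The same issue infects your final Kummer square: having reduced to $A = \Z / n \Z$, the middle vertical map of your morphism of triangles is $\alg{K}^{\times} \to R \sheafhom_{k}(R \alg{\Gamma}(\Z), \Z)$, which is the \emph{transpose} of Theorem \ref{thm: duality, Z-coefficients}, not the theorem itself, and knowing the transpose is a quasi-isomorphism is again a biduality statement. The paper avoids this by running the Kummer argument for $A = \mu_{l}$ (so $A^{\CDual} = \Z / l \Z$), for which the middle map is literally the morphism of Theorem \ref{thm: duality, Z-coefficients}; you should keep $A$ multiplicative rather than reducing to the \'etale side.
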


\begin{Lem}
	The theorem is true for $A = \mu_{l}$,
	where $l$ is a prime (possibly $l = p$).
\end{Lem}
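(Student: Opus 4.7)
The plan is to deduce the case $A = \mu_{l}$ from the already-established duality with $\Gm$-coefficients (Theorem \ref{thm: duality, Z-coefficients}) via the Kummer sequence. Since $\mu_{l}^{\CDual} = \Z/l\Z$, the morphism in question has the form $R\alg{\Gamma}(\Z/l\Z) \to R\sheafhom_{k}(R\alg{\Gamma}(\mu_{l}), \Q/\Z[-1])$. The cohomology sheaves of $R\alg{\Gamma}(\mu_{l})$ lie in $\Pro\Alg/k$: they are $\mu_{l}(\alg{K})$ in degree $0$ and $\alg{K}^{\times}/l$ in degree $1$, computed via the Kummer sequence and Proposition \ref{prop: cohomology of Gm}. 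Hence by the vanishing $\Ext_{k^{\ind\rat}_{\et}}^{n}(-, \Q) = 0$ on such sheaves (Theorem \ref{thm: comparison of Ext, proalgebraic setting}), the target is identified with $R\sheafhom_{k}(R\alg{\Gamma}(\mu_{l}), \Z)$, so it suffices to prove that
\[
R\alg{\Gamma}(\Z/l\Z) \;\to\; R\sheafhom_{k}(R\alg{\Gamma}(\mu_{l}), \Z)
\]
is a quasi-isomorphism.

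Next I would write down two distinguished triangles. The fppf-exact Kummer sequence $0 \to \mu_{l} \to \Gm \xrightarrow{l} \Gm \to 0$ (valid even when $l = p$, since we work in the fppf topology) and Proposition \ref{prop: cohomology of Gm} yield
\[
R\alg{\Gamma}(\mu_{l}) \;\to\; \alg{K}^{\times} \;\xrightarrow{\;l\;}\; \alg{K}^{\times}
\]
in $D(k)$. Dually, $0 \to \Z \xrightarrow{l} \Z \to \Z/l\Z \to 0$ gives
\[
R\alg{\Gamma}(\Z) \;\xrightarrow{\;l\;}\; R\alg{\Gamma}(\Z) \;\to\; R\alg{\Gamma}(\Z/l\Z).
\]
Applying $R\sheafhom_{k}(-, \Z)$ to the first triangle and using Theorem \ref{thm: duality, Z-coefficients} to replace each occurrence of $R\sheafhom_{k}(\alg{K}^{\times}, \Z)$ by $R\alg{\Gamma}(\Z)$, one recovers the second triangle but with $R\sheafhom_{k}(R\alg{\Gamma}(\mu_{l}), \Z)$ in place of $R\alg{\Gamma}(\Z/l\Z)$. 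A standard morphism-of-triangles argument then yields a quasi-isomorphism $R\alg{\Gamma}(\Z/l\Z) \isomto R\sheafhom_{k}(R\alg{\Gamma}(\mu_{l}), \Z)$.

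The remaining, and main, obstacle is to verify that this isomorphism coincides with the morphism of the theorem, which is defined via the cup product and trace map. This is a naturality statement for the functoriality morphism of Proposition \ref{prop: functoriality}: the pairing $R\alg{\Gamma}(\Z/l\Z) \otimes^{L} R\alg{\Gamma}(\mu_{l}) \to R\alg{\Gamma}(\Gm) \to \Z \to \Q/\Z[-1]$ is induced, through the Kummer and multiplication-by-$l$ connecting maps, from the pairing $R\alg{\Gamma}(\Z) \otimes^{L} R\alg{\Gamma}(\Gm) \to R\alg{\Gamma}(\Gm) \to \Z$ underlying Theorem \ref{thm: duality, Z-coefficients}. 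Once this compatibility is made precise in the derived category, the induced map on cones agrees with the map of the theorem, completing the proof.
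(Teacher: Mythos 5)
Your proposal is correct and is essentially the paper's own argument: the paper likewise maps the triangle $R\alg{\Gamma}(\Z) \to R\alg{\Gamma}(\Z) \to R\alg{\Gamma}(\Z/l\Z)$ (from the Cartier dual of the Kummer sequence) to the triangle obtained by applying $R\sheafhom_{k}(-,\Z)$ to $R\alg{\Gamma}(\mu_{l}) \to \alg{K}^{\times} \xrightarrow{l} \alg{K}^{\times}$, invokes Theorem \ref{thm: duality, Z-coefficients} for the first two vertical arrows, and concludes for the third. The only difference is that you explicitly flag the commutativity of this diagram (i.e.\ compatibility of the cup-product/trace pairing with the Kummer connecting maps) as a point to verify, which the paper simply asserts.
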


\begin{proof}
	Consider the Kummer sequence
	$0 \to \mu_{l} \to \Gm \to \Gm \to 0$
	and its Cartier dual
	$0 \to \Z \to \Z \to \Z / l \Z \to 0$.
	They yield a commutative diagram
		\[
		\begin{CD}
				R \alg{\Gamma}(\Z)
			& @>>> &
				R \alg{\Gamma}(\Z)
			& @>>> &
				R \alg{\Gamma}(\Z / l \Z)
			\\
			@VVV & & @VVV & & @VVV
			\\
				R \alg{Hom}_{k}(R \alg{\Gamma}(\Gm), \Z)
			& @>>> &
				R \alg{Hom}_{k}(R \alg{\Gamma}(\Gm), \Z)
			& @>>> &
				R \alg{Hom}_{k}(R \alg{\Gamma}(\mu_{l}), \Z)
		\end{CD}
		\]
	whose two rows are distinguished triangles.
	The first and second vertical morphisms are isomorphisms
	by Theorem \ref{thm: duality, Z-coefficients}.
	Hence so is the third.
\end{proof}

The following finishes the proof of the theorem
in the mixed characteristic case.

\begin{Lem} \label{lem: duality for finite multiplicative A}
	The theorem is true for a multiplicative $A$.
\end{Lem}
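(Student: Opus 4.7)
The plan is to reduce to the case $A = \mu_n$, which is established by the same Kummer-sequence-plus-five-lemma argument as in the previous lemma (applied to $0 \to \mu_n \to \Gm \to \Gm \to 0$, exact as fppf sheaves even when $p \mid n$, together with its Cartier dual $0 \to \Z \to \Z \to \Z/n\Z \to 0$, via Theorem \ref{thm: duality, Z-coefficients}). The reduction proceeds by Galois descent.

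I would first observe that the class $\mathcal{C}$ of finite multiplicative $K$-group schemes for which the theorem's duality morphism is a quasi-isomorphism is closed under finite products (trivial) and under extensions: the latter by the five-lemma in $D(k^{\ind\rat}_{\et})$ applied to the distinguished triangles obtained from a short exact sequence $0 \to A' \to A \to A'' \to 0$ and its Cartier dual $0 \to A''^{\CDual} \to A^{\CDual} \to A'^{\CDual} \to 0$, linked by the duality morphism. Since $A^{\CDual}$ is finite \'etale, some finite Galois extension $L/K$ with group $G$ and residue field extension $\kappa/k$ splits it, so that $A_{L} \simeq \prod_{i} \mu_{n_{i}}$. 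Using the closed embedding $A \hookrightarrow \Res_{L/K}(A_{L})$ furnished by the unit of adjunction and iterating the five-lemma along a $G$-stable composition series of $A^{\CDual}(K^{\sep})$, the problem reduces to the single case $A = \Res_{L/K}\mu_{n}$. For this case, Cartier duality gives $A^{\CDual} = \Res_{L/K}(\Z/n\Z)$, and --- exactly as for $\Res_{L/K}\Gm$ in the proof of Theorem \ref{thm: duality, Z-coefficients} --- both $R\alg{\Gamma}(K_{\fppf}, A)$ and $R\alg{\Gamma}(K_{\fppf}, A^{\CDual})$ identify with the corresponding quantities for $\mu_{n}$ and $\Z/n\Z$ taken on the relative fppf site of $L$ over $\Spec \kappa^{\ind\rat}_{\et}$, further pushed forward to $\Spec k^{\ind\rat}_{\et}$ along the finite \'etale morphism $\Spec \kappa \to \Spec k$. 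Modulo compatibility, the duality morphism for $\Res_{L/K}\mu_{n}$ over $K$ thus matches the duality morphism for $\mu_{n}$ over $L$, which holds by Theorem \ref{thm: duality, Z-coefficients} over $L$ via the Kummer argument recalled above.

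The main obstacle is verifying this Shapiro-type compatibility: that the duality morphism for $\Res_{L/K}\mu_{n}$, defined via cup product and the trace map $\alg{H}^{1}(K_{\et}, \Q/\Z(1)) \to \Q/\Z$, matches under the Weil-restriction and residue-field-pushforward identifications with the corresponding morphism for $\mu_{n}$ over $L$ built from $\alg{H}^{1}(L_{\et}, \Q/\Z(1)) \to \Q/\Z$. Unpacking the cup product, this boils down to the functoriality of the cup pairing together with the compatibility of the local invariants under corestriction along $\Spec L \to \Spec K$ --- a classical fact of local class field theory, which after reduction to $k = \algcl{k}$ amounts to the compatibility of the valuations on $K$ and $L$ via the norm map.
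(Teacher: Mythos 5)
Your reduction to the single case $A = \Res_{L/K}\mu_{n}$ is where the argument breaks down. The class $\mathcal{C}$ of multiplicative $A$ satisfying the duality is indeed stable under products and has the two-out-of-three property in short exact sequences, and $\Res_{L/K}\mu_{n}$ does lie in $\mathcal{C}$ by the Shapiro-type compatibility you describe (this matches the paper, which uses the commutative square of relative sites and the duality for the finite \'etale morphism of residue fields). But neither of your d\'evissage devices actually lands you in that case. A $G$-stable composition series of $A^{\CDual}(K^{\sep})$ has successive quotients that are \emph{simple} $\F_{l}[G]$-modules, and these are not of the form $(\Z/n\Z)[G]$, nor even direct summands of it when $l$ divides $|G|$ (Maschke fails); so the composition series only reduces you to simple $A$, not to $\Res_{L/K}\mu_{n}$. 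And the embedding $A \into \Res_{L/K}(A_{L})$ has cokernel a multiplicative group of order $|A|^{[L:K]-1}$, no smaller than $|A|$, so two-out-of-three sends you around in circles rather than terminating an induction.

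The missing ingredient is the prime-to-$l$ transfer argument, which is exactly how the paper closes the loop. After reducing to $A$ of $l$-power order, let $M = L^{H}$ for an $l$-Sylow subgroup $H \subset \Gal(L/K)$. Two things happen. First, the $l$-group $H$ acting on the nontrivial $l$-power-order group $A^{\CDual}(L)$ has nonzero fixed vectors, so \emph{over $M$} the group $A^{\CDual}$ (resp.\ $A$) is filtered with constant successive quotients $\Z/l\Z$ (resp.\ $\mu_{l}$); this, not a composition series over $K$, is what reduces to the case already treated. Second, the composite $A \into \Res_{M/K}A \onto A$ is multiplication by $[M:K]$, which is prime to $l$ and hence invertible on $A$, so $A$ is a canonical direct summand of $\Res_{M/K}A$; the quasi-isomorphism for $\Res_{M/K}A$ (obtained from the one over $M$ by your Shapiro compatibility) then restricts to the summand $A$. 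Without this direct-summand step there is no way to pass from induced groups back to $A$ itself. Your extension of the Kummer argument from $\mu_{l}$ to $\mu_{n}$ and your discussion of the compatibility of trace maps under Weil restriction are otherwise fine and agree with the paper.
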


\begin{proof}
	We may assume that $A$ has $l$-power order for a prime $l$ (possibly $l = p$).
	We reduce the lemma to the previously treated case $A = \mu_{l}$.
	Let $L$ be a finite Galois extension of $K$
	such that the \'etale group $A^{\CDual}$ becomes constant over $L$.
	Let $M$ be the intermediate field of $L / K$
	that corresponds to an $l$-Sylow subgroup of $\Gal(L / K)$.
	Then the $l$-power torsion finite abelian group $A^{\CDual}(L)$
	is equipped with the action of the $l$-group $\Gal(L / M)$.
	Therefore, over $M$, the group $A^{\CDual}$ (resp.\ $A$) has a filtration
	whose successive subquotients are all isomorphic to $\Z / l \Z$ (resp.\ $\mu_{l}$).
	Hence by the above lemma, we have an isomorphism
		\begin{equation} \label{eq: duality over finite extension}
				R \alg{\Gamma}(M, A^{\CDual})
			\to
				R \sheafhom_{k'}(R \alg{\Gamma}(M, A), \Q / \Z [-1])
		\end{equation}
	in $D(k')$, where $k'$ is the residue field of $M$.
	We have a commutative diagram
		\[
			\begin{CD}
					\Spec M_{\fppf} / k'^{\ind\rat}_{\et}
				@>>>
					\Spec k'^{\ind\rat}_{\et}
				\\
				@VVV
				@VVV
				\\
					\Spec K_{\fppf} / k^{\ind\rat}_{\et}
				@>>>
					\Spec k^{\ind\rat}_{\et}
			\end{CD}
		\]
	of morphisms of sites,
	where the left vertical morphism is defined by the functor
	$(S, k_{S}) \mapsto (S \tensor_{K} M, k_{S} \tensor_{k} k')$.
	Hence by applying the Weil restriction functor $\Res_{k' / k}$
	for the both sides of \eqref{eq: duality over finite extension}
	and using the duality for the finite \'etale morphism $\Spec k' \to \Spec k$
	\cite[V, Proposition 1.13]{Mil80},
	we have an isomorphism
		\[
				R \alg{\Gamma}(K, \Res_{M / K} A^{\CDual})
			\to
				R \sheafhom_{k}(R \alg{\Gamma}(K, \Res_{M / K} A), \Q / \Z [-1])
		\]
	in $D(k)$.
	The inclusion $A \into \Res_{M / K} A$ followed by
	the norm map $\Res_{M / K} A \onto A$ is given by
	multiplication by $[M : K]$.
	This is an isomorphism
	since $A$ is $l$-power torsion and $M / K$ is an extension of degree prime to $l$.
	Hence $A$ is a canonical direct summand of $\Res_{M / K} A$.
	A similar relation holds for $A^{\CDual}$.
	The above isomorphism induces an isomorphism on these direct summands.
	This proves the lemma.
\end{proof}

Note that the distinguished triangle
$R \alg{\Gamma}(\mu_{l}) \to R \alg{\Gamma}(\Gm) \to R \alg{\Gamma}(\Gm)$
used in the above proof
and Proposition \ref{prop: cohomology of Gm} show that
	\begin{gather*}
				\alg{H}^{i}(\mu_{l})
			=
				0
			\quad \text{for} \quad
				i \ne 0, 1,
		\\
				\alg{H}^{1}(\mu_{l})
			=
				\alg{K}^{\times} / (\alg{K}^{\times})^{l}.
	\end{gather*}

\begin{Lem} \label{lem: Z mod p Z equal char case of duality}
	The theorem is true for $A = \Z / p \Z$
	when $K$ has equal characteristic.
\end{Lem}

\begin{proof}
	By the previous lemma, the morphism in the theorem this case can be written as
		\[
				R \alg{\Gamma}(\mu_{p})
			\to
				R \sheafhom_{k} \bigl(
					R \sheafhom_{k}(R \alg{\Gamma}(\mu_{p}), \Q / \Z), \Q / \Z
				\bigr),
		\]
	which agrees with the canonical evaluation morphism.
	With the remark just above,
	we know that $R \alg{\Gamma}(\mu_{p})$ is acyclic outside degree $1$ and
		\[
				\alg{H}^{1}(\mu_{p})
			=
				\alg{K}^{\times} / (\alg{K}^{\times})^{p}
			=
				\Z / p \Z \times \alg{U}_{K}^{1} / (\alg{U}_{K}^{1})^{p}
			\cong
				\Z / p \Z \times \Ga^{\N},
		\]
	where the last isomorphism is given by the Artin-Hasse exponential map
	(\cite[V, \S 3.16, Proposition 9]{Ser88}).
	We need to show that the evaluation morphism
		\[
				\Ga^{\N}
			\to
				R \sheafhom_{k} \bigl(
					R \sheafhom_{k}(\Ga^{\N}, \Q / \Z), \Q / \Z
				\bigr)
		\]
	is an isomorphism.
	By the Breen-Serre duality on perfect unipotent groups
	(\cite[8.4, Remarque]{Ser60}, \cite[III, Theorem 0.14]{Mil06})
	and Theorem \ref{thm: comparison of Ext, proalgebraic setting},
	the evaluation morphism
		\[
				\Ga
			\to
				R \sheafhom_{k} \bigl(
					R \sheafhom_{k}(\Ga, \Q / \Z), \Q / \Z
				\bigr),
		\]
	is an isomorphism.%
	\footnote{The key here is that $\Ext_{k}^{i}(\Ga, \Q / \Z) = 0$ for $i \ne 1$
	and $\Ext_{k}^{1}(\Ga, \Q / \Z) = k$ via the Artin-Schreier isogeny,
	so $R \sheafhom_{k}(\Ga, \Q / \Z) = \Ga[-1]$.
	See \cite[\S 8]{Ser60}.}
	We have
		\[
				R \sheafhom_{k}(\Ga^{\N}, \Q / \Z)
			=
				R \sheafhom_{k}(\Ga, \Q / \Z)^{\bigoplus \N},
		\]
	since
		\begin{align*}
			&
					\Ext_{k}^{i}(\Ga^{\N}, \Q / \Z)
				=
					\dirlim_{m}
						\Ext_{\Pro \Alg / k}^{i}(\Ga^{\N}, \Z / m \Z)
			\\
			&	=
					\dirlim_{m}
						\Ext_{\Pro \Alg / k}^{i}(\Ga, \Z / m \Z)^{\bigoplus \N}
				=
					\Ext_{k}^{i}(\Ga, \Q / \Z)^{\bigoplus \N}
		\end{align*}
	for any $i$ by Theorem \ref{thm: comparison of Ext, proalgebraic setting},
	\cite[\S 3.4, Proposition 7]{Ser60}
	and the argument in the paragraph right after Theorem \ref{thm: duality, Z-coefficients}.
	Hence we have
		\begin{align*}
			&
					R \sheafhom_{k} \bigl(
						R \sheafhom_{k}(\Ga^{\N}, \Q / \Z), \Q / \Z
					\bigr)
			\\
			&	=
					R \sheafhom_{k} \bigl(
						R \sheafhom_{k}(\Ga, \Q / \Z)^{\bigoplus \N}, \Q / \Z
					\bigr)
			\\
			&	=
					R \prod_{n \in \N}
					R \sheafhom_{k} \bigl(
						R \sheafhom_{k}(\Ga, \Q / \Z), \Q / \Z
					\bigr)
			\\
			&	=
					R \prod_{n \in \N} \Ga,
		\end{align*}
	where $R \prod_{n \in \N}$ denotes the derived functor of the direct product functor
	(\cite{Roo06}) for $\Ab(k^{\ind\rat}_{\et})$.
	Hence it is enough to show that $R^{i} \prod_{n \in \N} \Ga = 0$ for $i \ge 1$.
	This follows from \cite[Proposition 1.6]{Roo06}
	and that $H^{i}(k'_{\et}, \Ga) = 0$ for any $k' \in k^{\ind\rat}$.
\end{proof}

\begin{Lem}
	The theorem is true for an \'etale unipotent $A$.
\end{Lem}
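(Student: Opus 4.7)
The plan is a d\'evissage reducing to the $A = \Z / p \Z$ case treated in the previous lemma, combined with a Weil restriction trick analogous to the proof of Lemma \ref{lem: duality for finite multiplicative A}. In mixed characteristic the Cartier dual $A^{\CDual}$ of an \'etale unipotent $A$ is a finite flat multiplicative group scheme, so the theorem for $A^{\CDual}$ is already known, and the two duality morphisms for $A$ and $A^{\CDual}$ are related by applying $R \sheafhom_{k}(\,\cdot\,, \Q / \Z [-1])$, so one implies the other. I therefore focus on the equal characteristic case.

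First, I would choose a finite Galois extension $L / K$ over which $A$ becomes a constant $p$-group, and let $M$ be the intermediate field fixed by a $p$-Sylow subgroup of $\Gal(L / K)$, so that $[M : K]$ is prime to $p$ and $\Gal(L / M)$ is a $p$-group. Iterating the fact that a $p$-group acting on a nonzero $p$-group has a nontrivial fixed point, I obtain over $M$ a filtration
\[
	0 = A_{0} \subset A_{1} \subset \cdots \subset A_{n} = A_{M}
\]
of finite \'etale $M$-group schemes whose successive quotients $A_{i} / A_{i - 1}$ are all isomorphic to $\Z / p \Z$.

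Next I would establish the theorem for $A_{M}$ over $M$ by induction on $n$. The base case $n = 1$ is the previous lemma. For the inductive step, the short exact sequence $0 \to A_{n - 1} \to A_{M} \to \Z / p \Z \to 0$ and its Cartier dual $0 \to \mu_{p} \to A_{M}^{\CDual} \to A_{n - 1}^{\CDual} \to 0$ yield, after applying $R \alg{\Gamma}(M, \,\cdot\,)$ and $R \sheafhom_{k'}(R \alg{\Gamma}(M, \,\cdot\,), \Q / \Z [-1])$ respectively, two distinguished triangles in $D(k')$, where $k'$ is the residue field of $M$. The cup-product duality morphism assembles these into a map of distinguished triangles whose outer vertical arrows are the duality morphisms for $\Z / p \Z$ (a quasi-isomorphism by the previous lemma) and for $A_{n - 1}$ (a quasi-isomorphism by induction); the five-lemma then forces the middle vertical to be a quasi-isomorphism.

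Finally I would transport back to $K$ exactly as in the proof of Lemma \ref{lem: duality for finite multiplicative A}: apply the Weil restriction $\Res_{k' / k}$ to the duality quasi-isomorphism over $M$, invoke the duality for the finite \'etale morphism $\Spec k' \to \Spec k$, and conclude the theorem for $\Res_{M / K} A$ over $K$. Since $[M : K]$ is prime to $p$ and $A$ is $p$-power torsion, the composition of the inclusion $A \into \Res_{M / K} A$ with the norm $\Res_{M / K} A \onto A$ is multiplication by $[M : K]$ and hence an isomorphism on $A$, and similarly on $A^{\CDual}$; thus $A$ and $A^{\CDual}$ are canonical direct summands, and the duality quasi-isomorphism for $\Res_{M / K} A$ restricts to the desired one for $A$. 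The main obstacle is largely bookkeeping: verifying that the d\'evissage diagram in the inductive step is genuinely a morphism of distinguished triangles compatible with the cup-product pairing and the trace map constructed in Section \ref{sec: The structure morphism of a local field and the cup product pairing}, but this compatibility is formal once the duality morphism is unwound as the composite built there.
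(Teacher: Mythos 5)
Your proof is correct and follows exactly the route the paper intends: the paper's own argument is the one-line reduction to $A = \Z / p \Z$ by the same d\'evissage used for multiplicative $A$ (pass to the prime-to-$p$ subextension $M$, filter $A_{M}$ with successive quotients $\Z / p \Z$ using fixed points of the $p$-group action, apply the five lemma to the resulting morphisms of distinguished triangles, and descend to $K$ by Weil restriction and the norm trick), which is precisely what you carry out. One small caveat: in your mixed-characteristic aside, deducing the duality morphism for $A$ from that for $A^{\CDual}$ by applying $R \sheafhom_{k}(\;\cdot\;, \Q / \Z [-1])$ silently invokes a biduality statement that is not free; this is harmless only because in mixed characteristic every finite flat $A$ is already multiplicative (its Cartier dual is \'etale in characteristic zero), so that case is subsumed by the previous lemma without any such step.
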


\begin{proof}
	We can reduce this to the previously treated case $A = \Z / p \Z$
	by a similar method used in the proof of
	Lemma \ref{lem: duality for finite multiplicative A}.
\end{proof}

\begin{proof}[Proof of Theorem \ref{thm: duality, finite flat coefficients}]
	A general $A$ with no infinitesimal unipotent part
	is an extension of an \'etale unipotent one by a multiplicative one.
	This finishes the proof of the theorem.
\end{proof}

Theorem \ref{thm: duality, finite flat coefficients} implies
		\[
				R \Gamma(A^{\CDual})
			=
				R \Hom_{k}(R \alg{\Gamma}(A), \Q / \Z [-1])
		\]
by taking $R \Gamma(k, \;\cdot\;)$ of the both sides.

From the proof, it follows that
$\alg{H}^{i}(A) = 0$ for $i \ge 2$.
The sheaf $\alg{\Gamma}(A) \in \Ab(k^{\ind\rat}_{\et})$ is a finite \'etale group.
The sheaf $\alg{H}^{1}(A)$ is an extension of a finite \'etale group
by a connected ind-pro-unipotent group
(that is, a filtered direct limit of connected unipotent proalgebraic groups).

If $K$ has mixed characteristic, this ind-pro-unipotent part
is a (finite-dimensional) unipotent group.
Hence the theorem remains valid
if we use the site $\Spec k^{\rat}_{\et}$ instead of $\Spec k^{\ind\rat}_{\et}$
by Theorem \ref{thm: comparison of Ext, proalgebraic setting}.
Also we can use the \'etale topology for $K$ instead of the fppf topology
since $A$ over $K$ is \'etale and
the \'etale cohomology with coefficients in $A$ agrees with the fppf cohomology.
This means that we can use $\Spec K_{\et} / k^{\rat}_{\et}$ instead of $\Spec K_{\fppf} / k^{\ind\rat}_{\et}$.
The argument of the paragraph before Lemma \ref{lem: Z mod p Z equal char case of duality}
shows that $\alg{H}^{1}(\Z / n \Z(1)) = \alg{K}^{\times} \tensor \Z / n \Z$ for any $n \ge 1$.
Hence we have $\alg{H}^{1}(\Q / \Z(1)) = \alg{K}^{\times} \tensor \Q / \Z$.
In this manner,
Theorem \ref{thm: duality, finite flat coefficients} for mixed characteristic $K$ becomes
Theorem \ref{thm: main theorem, duality} in Introduction.

\begin{Rmk} \BetweenThmAndList \label{rmk: about duality for finite coefficients}
	\begin{enumerate}
	\item
		There are some generalizations of Theorem \ref{thm: duality, finite flat coefficients},
		essentially covered in B\'egueri's and Bester's papers.
		Here we merely indicate the formulation in our setting without proof.
		\begin{enumerate}
			\item
				Theorem \ref{thm: duality, finite flat coefficients} holds
				even if $A$ is a general finite flat group scheme over $K$.
				It is reduced to the case $A = \alpha_{p}$.
				This case can be treated by the same argument as \cite{AM76} or \cite{Bes78}.
				Namely, by pushing forward $A$ from
				the relative fppf site $\Spec K_{\fppf} / k^{\ind\rat}_{\et}$
				to the relative \'etale site $\Spec K_{\et} / k^{\ind\rat}_{\et}$,
				and using the $\dlog$ map and the Cartier operator,
				our duality is reduced to the duality for coherent coefficients over $K$.
			\item
				We can also formulate and prove a similar duality for the ring $\Order_{K}$ of integers.
				Namely, we can define a similar site $(\Spec \Order_{K})_{\fppf} / k^{\ind\rat}_{\et}$,
				a morphism
					$
							\pi
						\colon
							(\Spec \Order_{K})_{\fppf} / k^{\ind\rat}_{\et}
						\to
							\Spec k^{\ind\rat}_{\et}
					$
				and fppf cohomology $R \alg{\Gamma}_{x}$ with support on the closed point $x = \Spec k$
				(\cite[\S 2.2]{Bes78}, \cite[III, \S 0]{Mil06}).
				Again by classical results on \'etale cohomology of $K$ and $\Order_{K}$,
				we have $R \alg{\Gamma}_{x}((\Order_{K})_{\fppf}, \Gm) = \Z[-1]$,
				and the morphisms
					\begin{gather*}
								R \alg{\Gamma}_{x}((\Order_{K})_{\fppf}, \Z)
							\to
								R \sheafhom_{k}(R \alg{\Gamma}((\Order_{K})_{\fppf}, \Gm), \Z[-1]),
						\\
								R \alg{\Gamma}_{x}((\Order_{K})_{\fppf}, A^{\CDual})
							\to
								R \sheafhom_{k}(R \alg{\Gamma}((\Order_{K})_{\fppf}, A), \Q / \Z[-2])
					\end{gather*}
				with finite flat $A$ are isomorphisms in $D(k)$.
		\end{enumerate}
	\item
		If we avoid Theorem \ref{thm: comparison of Ext, proalgebraic setting}
		(hence avoid the entire part of
		Section \ref{sec: extensions of algebraic groups as sheaves on the rational etale site})
		and use Proposition \ref{prop: special case of comparison of Ext} instead,
		then what we can prove for mixed characteristic $K$
		is an apparently weaker statement
			\[
					R \alg{\Gamma}(A^{\CDual})
				=
					\tau_{\le 1}
					R \sheafhom_{k^{\rat}_{\et}}(R \alg{\Gamma}(A), \Q / \Z[-1]),
			\]
		where $\tau$ denotes the truncation functor.
	\item \label{point: shift in the cohomology of local fields}
		If one feels strange about the shift,
		one can define the compact support cohomology $R \alg{\Gamma}_{c}(K, A) = R \pi_{!} A$ of $K$
		to be $R \alg{\Gamma}(K, A)[-1]$.
		Then Theorem \ref{thm: duality, finite flat coefficients} may be written as
			\[
					R \alg{\Gamma}(K, A^{\CDual}[2])
				=
					R \sheafhom_{k^{\ind\rat}_{\et}}(R \alg{\Gamma}_{c}(K, A), \Q / \Z).
			\]
		Note that if $K$ has mixed characteristic,
		then $A^{\CDual}[2] = A^{\PDual}(1)[2]$,
		where $\PDual = \sheafhom_{K}(\;\cdot\;, \Q / \Z)$ denotes the Pontryagin dual.
		The complex $R \alg{\Gamma}(K, A)$ is concentrated in degrees $0, 1$
		and $R \alg{\Gamma}_{c}(K, A)$ in degrees $1, 2$.
		Hence our duality takes the same form as
		the Poincar\'e duality for $l$-adic cohomology of an affine curve over a closed field,
		just as if $\Spec K$ were a punctured disc over $\Spec k$ via the structure morphism $\pi$.
		
		Similarly, the duality for the cohomology of $\Order_{K}$ above takes the same form
		as the duality for an open disc by setting
		$R \alg{\Gamma}_{c}(\Order_{K}, \;\cdot\;) = R \alg{\Gamma}_{x}(\Order_{K}, \;\cdot\;)$.
		We can show that $R \alg{\Gamma}(\Order_{K}, j_{!} \;\cdot\;) = 0$,
		where $j \colon \Spec \Order_{K} \into \Spec K$ is the open immersion,
		and the localization exact sequence induces an isomorphism
		$R \alg{\Gamma}_{c}(K, \;\cdot\;) \isomto R \alg{\Gamma}_{c}(\Order_{K}, j_{!} \;\cdot\;)$.
		This justifies our definition of $R \alg{\Gamma}_{c}(K, \;\cdot\;)$.
	\end{enumerate}
\end{Rmk}


\subsection{Duality for a variety over a local field}
\label{sec: duality for a variety over a local field}

In this subsection, we assume that the complete discrete valuation field $K$ has mixed characteristic.
As in Introduction, we use the site $\Spec k^{\rat}_{\et}$ instead of $\Spec k^{\ind\rat}_{\et}$
and $\Spec K_{\et} / k^{\rat}_{\et}$ instead of $\Spec K_{\fppf} / k^{\ind\rat}_{\et}$.
We denote the \'etale structure morphism by
	\[
			\pi_{K / k}
		\colon
			\Spec K_{\et} / k^{\rat}_{\et}
		\to
			\Spec k^{\rat}_{\et}.
	\]
Theorem \ref{thm: duality, finite flat coefficients} remains valid with these sites and this morphism
as we saw in the last paragraph of
Section \ref{sec: duality with coefficients in a finite flat group scheme}.

Let $X$ be a $K$-scheme of finite type.
Consider the category $X_{\et} / k^{\rat}$
whose objects are pairs $(Y, k_{Y})$ with $k_{Y} \in k^{\rat}$
and $Y$ an \'etale scheme over $X \times_{K} \alg{K}(k_{Y})$.
A morphism $(Z, k_{Z}) \to (Y, k_{Y})$
consists of a $k$-algebra homomorphism $k_{Y} \to k_{Z}$ and
a scheme morphism $Z \to Y$ such that the diagram
	\[
		\begin{CD}
					Z
			@>>>
				Y
			\\
			@VVV
			@VVV
			\\
				X \times_{K} \alg{K}(k_{Z})
			@>>>
				X \times_{K} \alg{K}(k_{Y})
		\end{CD}
	\]
commutes.
The composite of two morphisms is defined in the obvious way.
A morphism $(Z, k_{Z}) \to (Y, k_{Y})$ is said to be \emph{\'etale}
if $k_{Y} \to k_{Z}$ is \'etale
(in which case $Z \to Y$ is \'etale).
A finite family $\{(Y_{i}, k_{i})\}$ of objects \'etale over
$(Y, k_{Y}) \in X_{\et} / k^{\rat}$
is said to be an \emph{\'etale covering} if $\{Y_{i}\}$ covers $Y$.
This defines a Grothendieck topology on $X_{\et} / k^{\rat}$.
We call the resulting site the \emph{relative \'etale site of $X$ over $k$}
and denote it by $X_{\et} / k^{\rat}_{\et}$.
For $(Y, k_{Y}) \in X_{\et}/ k^{\rat}$
and $A \in \Ab(X_{\et}/ k^{\rat}_{\et})$,
we have
	\[
			R \Gamma((Y, k_{Y}), A)
		=
			R \Gamma(Y_{\et}, f_{\ast} A),
	\]
where $(f_{\ast} A)(Y') = A(Y', k_{Y})$
for \'etale $Y$-schemes $Y'$.
For $(Y, k_{Y}) \in X_{\et} / k^{\rat}$, if $k_{Y}$ is a field,
then we denote by $\pi_{0}(Y) = \Spec L_{Y}$
the finite \'etale $\alg{K}(k_{Y})$-scheme of connected components of $Y$
(\cite[I, \S 4, Definition 6.6]{DG70}).
If $k_{Y}$ is not a field but a product of fields $\prod k_{Y, i}$,
then we set $\pi_{0}(Y) = \Spec L_{Y} = \bigsqcup \pi_{0}(Y_{i})$
where $Y_{i}$ is the fiber over $\alg{K}(k_{Y, i})$.
The functor
	\[
			K_{\et} / k^{\rat}
		\to
			X_{\et} / k^{\rat},
		\quad
			(L, k_{L})
		\mapsto
			(X \times_{K} L, k_{L})
	\]
admits a left adjoint given by
$(Y, k_{Y}) \mapsto (L_{Y}, k_{Y})$.
Hence it defines a morphism of sites
	\[
			\pi_{X / K}
		\colon
			X_{\et} / k^{\rat}_{\et}
		\to
			\Spec K_{\et} / k^{\rat}_{\et}.
	\]
The composite $\pi_{K / k} \compose \pi_{X / K}$ is denoted by $\pi_{X / k}$:
	\[
			\pi_{X / k}
		\colon
			X_{\et} / k^{\rat}_{\et}
		\stackrel{\pi_{X / K}}{\longrightarrow}
			\Spec K_{\et} / k^{\rat}_{\et}
		\stackrel{\pi_{K / k}}{\longrightarrow}
			\Spec k^{\rat}_{\et}.
	\]
We denote
	\[
			\alg{\Gamma}(X_{\et}, \;\cdot\;)
		=
			\pi_{X / k, \ast},
		\quad
			\alg{H}^{i}(X_{\et}, \;\cdot\;)
		=
			R^{i} \pi_{X / k, \ast},
		\quad
			R \alg{\Gamma}(X_{\et}, \;\cdot\;)
		=
			R \pi_{X / k, \ast}
	\]
(where we denote the pushforward $(\pi_{X / k})_{\ast}$ by $\pi_{X / k, \ast}$ for brevity).
For $d \in \Z$ and a torsion sheaf $A \in \Ab(X_{\et} / k^{\rat}_{\et})$
or $A \in \Ab(K_{\et} / k^{\rat}_{\et})$,
we denote the $d$-th Tate twist of $A$ by $A(d)$.
We have $R \pi_{X / K, \ast} (A(d)) = (R \pi_{X / K, \ast})(d)$
since $\Z / n \Z (d)$ for any $n \ge 1$ is \'etale locally constant on $K$.
Similarly Tate twists commute with
$R \sheafhom_{X}$ or $R \sheafhom_{K}$.

Now assume that $X$ is proper, smooth and geometrically connected over $K$.
Let $\closure{X} = X \times_{K} \closure{K}$.
We are going to regard the Poincar\'e duality for the variety $\closure{X}$ over $\closure{K}$
as duality for $\pi_{X / K}$
and combine it with the duality for $\pi_{K / k}$
established in Theorem \ref{thm: duality, finite flat coefficients}
to obtain a duality for $\pi_{X / k}$.
Let $d = \dim(X)$.
A part of the Poincar\'e duality for $\closure{X}$
gives the isomorphism
	\[
			\tau_{\ge 2 d}
			R \Gamma(\closure{X}_{\et}, \Q / \Z(d))
		=
			\Q / \Z [- 2 d],
	\]
where $\tau$ denotes the truncation functor.
The same is true when $X$ over $K$ is replaced by $X \times_{K} \alg{K}(k')$ over $\alg{K}(k')$
for any field $k' \in k^{\rat}$.
Hence
	\[
			\tau_{\ge 2 d}
			R \pi_{X / K, \ast} \Q / \Z(d)
		=
			\Q / \Z [- 2 d],
	\]
and we have a morphism
	\begin{align*}
				R \alg{\Gamma}(X_{\et}, \Q / \Z(d + 1))
		&	=
				R \alg{\Gamma}(K_{\et}, R \pi_{X / K, \ast} \Q / \Z (d + 1))
		\\
		&	\to
				R \alg{\Gamma}(K_{\et}, \Q / \Z (1))[- 2 d]
		\\
		&	\to
				\alg{H}^{1}(K_{\et}, \Q / \Z(1)) [- 2 d - 1]
		\\
		&	\to
				\Q / \Z [- 2 d - 1]
	\end{align*}
using the trace map (Theorem \ref{thm: main theorem, duality}).
Let $A$ be a constructible sheaf on $X_{\et}$.
We denote by $A^{\PDual} = R \sheafhom_{X}(A, \Q / \Z)$
the Pontryagin dual of $A$.
We have a morphism
	\begin{align*}
		&		R \alg{\Gamma} \bigl(
					X_{\et}, A^{\PDual}(d + 1)
				\bigr)
		\\
		&	\quad \to
				R \sheafhom_{k} \bigl(
					R \alg{\Gamma}(X_{\et}, A),
					R \alg{\Gamma}(X_{\et}, \Q / \Z(d + 1))
				\bigr)
		\\
		&	\quad \to
				R \sheafhom_{k} \bigl(
					R \alg{\Gamma}(X_{\et}, A),
					\Q / \Z [- 2 d - 1]
				\bigr).
	\end{align*}

\begin{Thm}
	The above defined morphism
		\[
				R \alg{\Gamma} \bigl(
					X_{\et}, A^{\PDual}(d + 1)
				\bigr)
			\to
				R \sheafhom_{k} \bigl(
					R \alg{\Gamma}(X_{\et}, A),
					\Q / \Z [- 2 d - 1]
				\bigr)
		\]
	is an isomorphism.
\end{Thm}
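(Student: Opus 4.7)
The plan is to factor the structure morphism as $\pi_{X / k} = \pi_{K / k} \compose \pi_{X / K}$ and combine (i) a relative Poincar\'e duality for $\pi_{X / K}$ with (ii) the duality for $\pi_{K / k}$ already established in Theorem \ref{thm: duality, finite flat coefficients}. That the pairing in the statement of the theorem is induced by the cup product and the trace map tells us exactly how to match the two.

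First I would establish a relative Poincar\'e duality on $\Spec K_{\et} / k^{\rat}_{\et}$: for constructible $A$ on $X_{\et}$,
	\[
			R \pi_{X / K, \ast}(A^{\PDual}(d)[2 d])
		\isomto
			R \sheafhom_{K}(R \pi_{X / K, \ast} A, \Q / \Z)
	\]
in $D(K_{\et} / k^{\rat}_{\et})$. Since $\pi_{X / K}$ is obtained by base change from the smooth proper morphism $X \to \Spec K$ of relative dimension $d$, evaluating both sides at $(L, k_{L}) \in K_{\et} / k^{\rat}$ reduces the statement to classical Poincar\'e duality for the smooth proper $L$-scheme $X \times_{K} L$, which is SGA 4. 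Twisting by $(1)$ (which is compatible with $R \pi_{X / K, \ast}$ because $\Q / \Z (1)$ is \'etale locally constant on $K$) gives
	\[
			R \pi_{X / K, \ast}(A^{\PDual}(d + 1)[2 d])
		\isomto
			R \sheafhom_{K}(R \pi_{X / K, \ast} A, \Q / \Z (1)).
	\]

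Next I would apply $R \pi_{K / k, \ast} = R \alg{\Gamma}(K_{\et}, \,\cdot\,)$ to both sides. On the left this produces $R \alg{\Gamma}(X_{\et}, A^{\PDual}(d + 1))[2 d]$ by the composite morphism of sites. On the right, the cohomology sheaves $R^{i} \pi_{X / K, \ast} A$ are constructible torsion \'etale sheaves on $K$ by the proper base change theorem, and Theorem \ref{thm: duality, finite flat coefficients} applied to finite \'etale $B$ over $K$ gives
	\[
			R \alg{\Gamma}(K_{\et}, R \sheafhom_{K}(B, \Q / \Z (1)))
		\isomto
			R \sheafhom_{k}(R \alg{\Gamma}(K_{\et}, B), \Q / \Z [-1]),
	\]
since $R \sheafhom_{K}(B, \Q / \Z (1)) = B^{\CDual}$ for such $B$. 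Extending this from a single finite \'etale sheaf to the bounded complex $R \pi_{X / K, \ast} A$ (by trivializing $B$ on a finite extension $L / K$ using $\Res_{k_{L} / k}$ exactly as in Lemma \ref{lem: duality for finite multiplicative A}, then by a hypercohomology / truncation argument on the two-sided spectral sequence) yields
	\[
			R \alg{\Gamma}(K_{\et}, R \sheafhom_{K}(R \pi_{X / K, \ast} A, \Q / \Z (1)))
		\isomto
			R \sheafhom_{k}(R \alg{\Gamma}(X_{\et}, A), \Q / \Z [-1]).
	\]
Combining with the previous identification and shifting by $-2 d$ gives the desired quasi-isomorphism. A final compatibility check — that the resulting composite map coincides with the one built from the cup product and the trace map in the statement — is a diagram chase that reduces to the definition of both trace maps.

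\textbf{Main obstacles.} The delicate points are not the two input dualities themselves but the gluing: (a) that the relative Poincar\'e isomorphism, known on each fiber $X \times_{K} L$, patches into an isomorphism of complexes of sheaves on $\Spec K_{\et} / k^{\rat}_{\et}$ (requiring one to know that $R \sheafhom_{K}$ on the relative site really is computed fiberwise in the appropriate sense); and (b) the promotion of Theorem \ref{thm: duality, finite flat coefficients} from a single finite flat group scheme $A$ to a bounded complex with constructible cohomology such as $R \pi_{X / K, \ast} A$, where one must control higher $R \sheafhom$'s and convergence of a spectral sequence.
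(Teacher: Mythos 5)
Your proposal is correct and takes essentially the same route as the paper's proof: factor $\pi_{X / k} = \pi_{K / k} \compose \pi_{X / K}$, use Poincar\'e duality for $X$ over $\algcl{K}$ to identify $R \pi_{X / K, \ast}(A^{\PDual})(d)$ with $(R \pi_{X / K, \ast} A)^{\PDual}[-2d]$, and then apply Theorem \ref{thm: duality, finite flat coefficients} to the bounded complex $R \pi_{X / K, \ast} A$, whose cohomology sheaves are finite \'etale group schemes over $K$ by constructibility. The d\'evissage you flag as obstacle (b) is exactly the step the paper performs (implicitly, by the same boundedness and finiteness observation), so there is no substantive difference.
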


\begin{proof}
	By the Poincar\'e duality for the variety $\closure{X}$ over $\closure{K}$,
	we have
		\[
				R \Gamma \bigl(
					\closure{X}_{\et}, A^{\PDual}(d)
				\bigr)
			=
				R \Gamma(\closure{X}_{\et}, A)^{\PDual}[- 2 d].
		\]
	The same is true when $X$ over $K$ is replaced by $X \times_{K} \alg{K}(k')$ over $\alg{K}(k')$
	for any field $k' \in k^{\rat}$.
	Hence
		\[
					R \pi_{X / K, \ast}
					(A^{\PDual})(d)
			=
					(R \pi_{X / K, \ast} A)^{\PDual}[- 2 d].
		\]
	The complexes $R \pi_{X / K, \ast} A$, $R \pi_{X / K, \ast} (A^{\PDual})$
	are bounded complexes in $D(K)$
	whose cohomology at each degree is a finite \'etale group scheme over $K$
	by the constructibility of $A$ and the proper base change theorem.
	Therefore we can apply Theorem \ref{thm: duality, finite flat coefficients} to get
		\begin{align*}
			&
					R \alg{\Gamma} \bigl(
						X_{\et}, A^{\PDual}(d + 1)
					\bigr)
			\\
			&	=
					R \alg{\Gamma} \bigl(
						K_{\et}, (R \pi_{X / K, \ast} A)^{\PDual}(1)
					\bigr) [- 2 d]
			\\
			&	=
					R \sheafhom_{k} \bigl(
						R \alg{\Gamma}(K_{\et}, R \pi_{X / K, \ast} A),
						\Q / \Z
					\bigr) [- 2 d - 1]
			\\
			&	=
					R \sheafhom_{k} \bigl(
						R \alg{\Gamma}(X_{\et}, A),
						\Q / \Z
					\bigr) [- 2 d - 1],
		\end{align*}
	as desired.
\end{proof}

\begin{Rmk} \label{rmk: about shift in duality for varieties}
	As in Remark \ref{rmk: about duality for finite coefficients}
	\eqref{point: shift in the cohomology of local fields},
	one might want to define the compact support cohomology
	$R \alg{\Gamma}_{c}(X_{\et}, A) = R \pi_{X / k, !} A$ to be
	$R \alg{\Gamma}(X_{\et}, A)[-1] = R \alg{\Gamma}_{c}(K, R \pi_{X / K, \ast} A)$
	or $R \pi_{X / k, !} = R \pi_{K / k, !} \compose R \pi_{X / K, \ast}$.
	Then the above theorem may be written as
		\[
				R \alg{\Gamma} \bigl(
					X_{\et}, A^{\PDual}(d + 1)[2 d + 2]
				\bigr)
			=
				R \sheafhom_{k} \bigl(
					R \alg{\Gamma}_{c}(X_{\et}, A),
					\Q / \Z
				\bigr),
		\]
	as if $X$ were a $(d + 1)$-dimensional smooth variety over $k$.
\end{Rmk}


\subsection{Duality as adjunction}
\label{sec: the right adjoint to the pushforward}

In this subsection, we make no assumption on the characteristic of $K$
and return to the fppf structure morphism
$\pi \colon \Spec K_{\fppf} / k^{\ind\rat}_{\et} \to \Spec k^{\ind\rat}_{\et}$.
Recall that we have denoted $R \pi_{\ast} = R \alg{\Gamma}$.
As in Remarks \ref{rmk: about duality for finite coefficients}
\eqref{point: shift in the cohomology of local fields} and
\ref{rmk: about shift in duality for varieties},
we set
	\[
		R \pi_{!} = R \pi_{\ast}[-1].
	\]
We denote by $\dtensor_{k}$, $\dtensor_{K}$ the derived tensor products for
$D(k)$, $D(K)$, respectively.
Let $B \in \Ab(k)$
and $C \in \Ab(K)$.
By the paragraph after Proposition \ref{prop: functoriality} and adjunction,
we have a natural morphism and a cup product pairing
	\[
			B \dtensor_{k} R \pi_{\ast} C
		\to
			R \pi_{\ast} \pi^{\ast} B \dtensor_{k} R \pi_{\ast} C
		\to
			R \pi_{\ast}(\pi^{\ast} B \dtensor_{K} C).
	\]
With a shift, we have a morphism
	\begin{equation} \label{eq: projection formula}
			B \dtensor_{k} R \pi_{!} C
		\to
			R \pi_{!}(\pi^{\ast} B \dtensor_{K} C)
	\end{equation}
in $D(k)$.

\begin{Prop} \label{prop: projection formula}
	Assume that $B$ is a commutative finite \'etale group scheme over $k$.
	The morphism \eqref{eq: projection formula} is an isomorphism.
\end{Prop}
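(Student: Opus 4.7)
The plan is a dévissage on $B$. Both sides of \eqref{eq: projection formula}, viewed as functors $D(k^{\ind\rat}_{\et}) \to D(k^{\ind\rat}_{\et})$ in $B$ (with $C$ fixed), are exact triangulated functors, and the morphism is natural in $B$. Hence the class of $B$ for which \eqref{eq: projection formula} is a quasi-isomorphism is closed under shifts, cones, and finite direct sums. For $B = \Z$, the morphism \eqref{eq: projection formula} is visibly the identity on $R \pi_{!} C$, since $\pi^{\ast} \Z = \Z$ and $\Z \dtensor_{k} (\;\cdot\;)$ is the identity functor. The distinguished triangle coming from $0 \to \Z \stackrel{n}{\to} \Z \to \Z / n \Z \to 0$ then yields the result for $B = \Z / n \Z$, and by finite direct sums, for any constant finite abelian group viewed as a sheaf on $k^{\ind\rat}_{\et}$.

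For a general finite \'etale group scheme $B$ over $k$, choose a finite Galois extension $k' / k$ in $k^{\ind\rat}$ that splits $B$, and let $g \colon \Spec k'^{\ind\rat}_{\et} \to \Spec k^{\ind\rat}_{\et}$ be the corresponding localization morphism of sites. Since $g$ is (represented by) an \'etale covering, its pullback $g^{\ast}$ is conservative on derived categories, so it suffices to show that \eqref{eq: projection formula} becomes a quasi-isomorphism after applying $g^{\ast}$. The site $\Spec K_{\fppf} / k'^{\ind\rat}_{\et}$ is the natural localization of $\Spec K_{\fppf} / k^{\ind\rat}_{\et}$ at the object $(\alg{K}(k'), k')$, so there is a base change isomorphism $g^{\ast} R \pi_{\ast} \simeq R \pi'_{\ast} (g')^{\ast}$, where $\pi' \colon \Spec K_{\fppf} / k'^{\ind\rat}_{\et} \to \Spec k'^{\ind\rat}_{\et}$ is the structure morphism over $k'$ and $g'$ is the induced morphism on the upper row of sites. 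Under this identification, the pullback of \eqref{eq: projection formula} becomes the analogous morphism over $k'$ for $g^{\ast} B$ and the pullback of $C$. Since $g^{\ast} B$ is constant on $k'^{\ind\rat}_{\et}$, the previous paragraph applies.

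The main subtle point is checking that the base change for $\pi$ along $g$ really holds in our setting. This is essentially a standard property of $R \pi_{\ast}$ at an étale open of the base (the restriction of $R \pi_{\ast} F$ to an object of the base site computes the higher pushforward on the corresponding slice), once one verifies that the slice of $\Spec K_{\fppf} / k^{\ind\rat}_{\et}$ at $(\alg{K}(k'), k')$ coincides, as a site, with $\Spec K_{\fppf} / k'^{\ind\rat}_{\et}$; this, in turn, follows from Proposition \ref{prop: canonical lifts and coverings} and the definition of the relative fppf site. Once this verification is in place, the dévissage runs through routinely.
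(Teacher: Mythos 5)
Your proposal is correct and follows essentially the same route as the paper, which simply says: check the statement \'etale locally on $k$, reduce to $B$ constant, then to $B = \Z$, where the morphism is the identity. Your write-up merely fills in the details the paper leaves implicit — the d\'evissage from $\Z$ to $\Z/n\Z$ via the triangle attached to $0 \to \Z \to \Z \to \Z/n\Z \to 0$, and the base-change compatibility of $R\pi_{\ast}$ with the \'etale localization $\Spec k'^{\ind\rat}_{\et} \to \Spec k^{\ind\rat}_{\et}$, which holds because the slice of the relative fppf site at $(\alg{K}(k'),k')$ is the relative fppf site over $k'$.
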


\begin{proof}
	We may check the statement \'etale locally on $k$.
	Therefore we may assume that $B$ is a constant group.
	Hence we may replace $B$ by $\Z$.
	This case is obvious.
\end{proof}

We continue assuming $B$ to be a finite \'etale group scheme over $k$.
We define
	\begin{equation} \label{eq: !-pullback}
			\pi^{!} B
		=
			\pi^{\ast} B \dtensor_{K} \Gm[1].
	\end{equation}
An explicit description is given as follows.
If $K$ has mixed characteristic, we have
$\pi^{\ast} B \dtensor_{K} \Gm \cong \pi^{\ast} B \dtensor_{K} \Q / \Z(1)$
since the quotient of $\Gm$ by its torsion subgroup $\Q / \Z(1)$ is uniquely divisible.
The exact sequence $0 \to \Z \to \Q \to \Q / \Z \to 0$ induces an isomorphism
$\pi^{\ast} B \dtensor_{K} \Q / \Z(1) \cong (\pi^{\ast} B)(1)[1]$.
Thus $\pi^{!} B \cong (\pi^{\ast} B) (1)[2]$
(again, compare this with Remarks \ref{rmk: about duality for finite coefficients}
\eqref{point: shift in the cohomology of local fields} and
\ref{rmk: about shift in duality for varieties}).
For general $K$, let $\Tor_{n}^{K}$ be the $n$-th left derived functor of $\tensor_{K}$.
Then we have $\pi^{!} B \cong \Tor_{1}^{K}(\pi^{\ast} B, \Gm)[2]$,
or in other words, $\Tor_{n}^{K}(\pi^{\ast} B, \Gm) = 0$ for $n \ne 1$,
and $\Tor_{1}^{K}(\pi^{\ast} B, \Gm)$ is a finite multiplicative group scheme over $K$.
To see this, we may assume that $B$ is constant and moreover $B = \Z / m \Z$ for $m \ge 1$
since we may check the statement \'etale locally on $k$
making an unramified extension of $K$.
The exact sequence $0 \to \Z \to \Z \to \Z / n \Z \to 0$ shows
$\Tor_{n}^{K}(\Z / m \Z, \Gm) = 0$ for $n \ne 1$
and $\Tor_{1}^{K}(\Z / m \Z, \Gm) = \mu_{m}$.
Hence the statement follows.

The above proposition and the trace map $R \pi_{!} \Gm[1] = R \alg{\Gamma}(\Gm) \to \Z$
yield a morphism
	\[
			R \pi_{!} \pi^{!} B
		\cong
			B \dtensor_{k} R \pi_{!} \Gm[1]
		\to
			B.
	\]
Let $A \in \Ab(K_{\fppf} / k^{\ind\rat}_{\et})$.
We have a morphism
	\begin{align*}
		&
				R \pi_{\ast}
				\tau_{\le -1}
				R \sheafhom_{K}(A, \pi^{!} B)
			\to
				R \pi_{\ast}
				R \sheafhom_{K}(A, \pi^{!} B)
		\\
		&	\quad
			\to
				R \sheafhom_{k}(R \pi_{!} A, R \pi_{!} \pi^{!} B)
			\to
				R \sheafhom_{k}(R \pi_{!} A, B).
	\end{align*}
The truncation $\tau_{\le -1}$ here is needed
to ignore extensions of degree $\ge 2$ as sheaves over positive characteristic $K$
that do not come from extensions as commutative group schemes (see also \cite{Bre69}).
This is unnecessary if $K$ has characteristic zero.

\begin{Prop} \label{prop: right adjoint to the pushforward}
	Assume that $A$ (resp.\ $B$) is a commutative finite flat (resp.\ \'etale) group scheme
	over $K$ (resp.\ $k$) without connected unipotent part.
	Then the morphism
		\begin{equation} \label{eq: right adjoint to the pushforward}
				R \pi_{\ast}
				\tau_{\le -1}
				R \sheafhom_{K}(A, \pi^{!} B)
			\to
				R \sheafhom_{k}(R \pi_{!} A, B)
		\end{equation}
	in $D(k)$ defined above is an isomorphism.
\end{Prop}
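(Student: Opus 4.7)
The plan is to reduce to the case $B = \Z / n \Z$ and then exhibit both sides of \eqref{eq: right adjoint to the pushforward} as fibers of the same multiplication-by-$n$ map on $R \alg{\Gamma}(A^{\CDual})[2]$, the identifications coming from Theorem \ref{thm: duality, finite flat coefficients}.

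First, since $B$ is commutative finite \'etale over $k$, it becomes a constant finite group after an \'etale base change on $k$, and the formation of both sides of \eqref{eq: right adjoint to the pushforward} commutes with such a base change. A d\'evissage along short exact sequences of constant finite commutative groups (combined with the naturality of the construction, which turns such sequences into morphisms of distinguished triangles) reduces the problem to $B = \Z / n \Z$ for some $n \ge 1$. For this $B$, the Kummer sequence $0 \to \mu_n \to \Gm \xrightarrow{n} \Gm \to 0$, exact in the fppf topology for any $n$, identifies $\pi^{!} B$ with $\mu_n[2]$. Applying $R \sheafhom_{K}(A, \;\cdot\;)$ to the Kummer triangle and using that $R \sheafhom_{K}(A, \Gm) = A^{\CDual}$ for finite flat commutative $A$ over the field $K$ (the higher $\sheafext$'s vanish by the classical theory of Cartier duality), one obtains a distinguished triangle
\[
R \sheafhom_{K}(A, \mu_n[2]) \to A^{\CDual}[2] \xrightarrow{n} A^{\CDual}[2]
\]
sitting entirely in degrees $\le -1$, so that $\tau_{\le -1}$ acts trivially. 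Pushing forward by $R \pi_{\ast}$ identifies the left-hand side of \eqref{eq: right adjoint to the pushforward} with the fiber of the induced map $n \colon R \alg{\Gamma}(A^{\CDual})[2] \to R \alg{\Gamma}(A^{\CDual})[2]$.

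On the other side, the short exact sequence $0 \to \Z / n \Z \to \Q / \Z \xrightarrow{n} \Q / \Z \to 0$, together with $R \pi_{!} A = R \alg{\Gamma}(A)[-1]$ and the isomorphism $R \sheafhom_{k}(R\alg{\Gamma}(A), \Q / \Z)[1] \cong R \alg{\Gamma}(A^{\CDual})[2]$ coming from Theorem \ref{thm: duality, finite flat coefficients}, identifies the right-hand side of \eqref{eq: right adjoint to the pushforward} with the fiber of the same multiplication-by-$n$ map. By naturality of the projection formula (Proposition \ref{prop: projection formula}), the trace, and the functoriality morphism (Proposition \ref{prop: functoriality}), the morphism \eqref{eq: right adjoint to the pushforward} extends to a morphism of these two distinguished triangles. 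The main obstacle is to check that the induced map on the middle terms $R \alg{\Gamma}(A^{\CDual})[2]$ agrees, under the two identifications above, with the quasi-isomorphism of Theorem \ref{thm: duality, finite flat coefficients}. This demands an unwinding of the cup-product / adjunction construction from Section \ref{sec: The structure morphism of a local field and the cup product pairing}: the middle of the LHS triangle comes from the subobject $\mu_n \subset \Gm$ of the Kummer sequence and the middle of the RHS triangle from the quotient $\Q / \Z \onto \Q / \Z$ of the $n$-th power sequence, and the trace-based pairing connects them via the duality isomorphism. Once this is established, $\Z$-linearity of the identification commutes past the $n$, and the five lemma on triangles yields that \eqref{eq: right adjoint to the pushforward} is a quasi-isomorphism.
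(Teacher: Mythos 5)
Your route---d\'evissage to $B = \Z/n\Z$ and the Kummer triangle over $K$---is genuinely different from the paper's, which instead resolves $B$ by \'etale group schemes $B_1, B_2$ whose geometric points are finite \emph{free} abelian groups, so that each $\pi^{!}B_i = \pi^{\ast}B_i \tensor_K \Gm[1]$ involves no torsion, and then reduces each term to Theorem \ref{thm: duality, finite flat coefficients} with $B_i = \Z$ after a finite extension of $k$. Your plan is workable in outline, but as written it has two gaps.

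First, the claim that $R\sheafhom_K(A,\Gm) = A^{\CDual}$, i.e.\ that all higher $\sheafext^i_K(A,\Gm)$ vanish for finite flat $A$, is false when $K$ has equal characteristic $p$ (which is allowed: Section \ref{sec: the right adjoint to the pushforward} makes no assumption on the characteristic of $K$). Only $\sheafext^1_K(A,\Gm) = 0$ is available; the possible nonvanishing of $\sheafext^{\ge 2}_K(A,\Gm)$ is precisely why the truncation $\tau_{\le -1}$ appears in the statement, as the paper notes with reference to \cite{Bre69}. So your triangle does not ``sit entirely in degrees $\le -1$'' and $\tau_{\le -1}$ does not act trivially. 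The identification of $\tau_{\le -1}R\sheafhom_K(A,\mu_n[2])$ with the fiber of $n$ on $A^{\CDual}[2]$ can be rescued using only $\sheafext^1 = 0$ (this is exactly how the paper passes from its untruncated triangle to the truncated one), but your justification is wrong, and since $\tau_{\le -1}$ does not preserve distinguished triangles one must then argue separately that the truncated objects still assemble into the triangles you need.

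Second, and more seriously, the step you flag as ``the main obstacle'' is the crux of the proof and you do not carry it out. The left-hand identification goes through the Kummer sequence over $K$, while the right-hand one goes through $0 \to \Z/n\Z \to \Q/\Z \to \Q/\Z \to 0$ over $k$ \emph{and through the duality theorem itself}; these two triangles are not related by naturality of \eqref{eq: right adjoint to the pushforward} in $B$ (note $\Q/\Z$ is not a finite group scheme, and the construction involves $\tau_{\le -1}$). Without verifying that the given morphism induces, on the middle terms $R\alg{\Gamma}(A^{\CDual})[2]$, the quasi-isomorphism of Theorem \ref{thm: duality, finite flat coefficients}, you have no morphism of triangles and the five lemma cannot be invoked; ``once this is established'' is doing all the work. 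The paper's lattice resolution is designed to avoid exactly this check: for each $B_i$ the morphism \eqref{eq: right adjoint to the pushforward} \emph{is}, after reduction to $B_i = \Z$, the morphism of Theorem \ref{thm: duality, finite flat coefficients} up to shift, so compatibility is automatic.
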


\begin{proof}
	Take a resolution $0 \to B_{1} \to B_{2} \to B \to 0$ of $B$ by
	\'etale group schemes $B_{1}$, $B_{2}$ over $k$
	whose groups of geometric points are finite free abelian groups.
	We have a distinguished triangle
		\[
				R \sheafhom_{K}(A, \pi^{\ast} B_{1} \tensor_{K} \Gm[1])
			\to
				R \sheafhom_{K}(A, \pi^{\ast} B_{2} \tensor_{K} \Gm[1])
			\to
				R \sheafhom_{K}(A, \pi^{!} B).
		\]
	We have $\sheafext_{K}^{1}(A, \Gm) = 0$ by Cartier duality (\cite[III, Lemma 4.17]{Mil80}).
	Hence the sheaves $\sheafext_{K}^{1}(A, \pi^{\ast} B_{i} \tensor_{K} \Gm) = 0$ for $i = 1, 2$
	are locally zero and hence zero.
	Hence we have a distinguished triangle
		\[
				\sheafhom_{K}(A, \pi^{\ast} B_{1} \tensor_{K} \Gm)[1]
			\to
				\sheafhom_{K}(A, \pi^{\ast} B_{2} \tensor_{K} \Gm)[1]
			\to
				\tau_{\le -1} R \sheafhom_{K}(A, \pi^{!} B).
		\]
	Therefore, what we need to show is that the morphism
		\[
				R \pi_{\ast}
				\sheafhom_{K}(A, \pi^{\ast} B_{i} \tensor_{K} \Gm)[1]
			\to
				R \sheafhom_{k}(R \pi_{!} A, B_{i})
		\]
	is an isomorphism for $i = 1, 2$.
	This is reduced to the case $B_{i} = \Z$ for $i = 1, 2$ by
	extending the base $k$ to its finite extension.
	The statement is then Theorem \ref{thm: duality, finite flat coefficients}.
\end{proof}

\begin{Rmk} \BetweenThmAndList
	\begin{enumerate}
	\item
		Propositions \ref{prop: projection formula} and \ref{prop: right adjoint to the pushforward}
		are true for any \'etale group scheme $B$,
		using the same definition \eqref{eq: !-pullback} of $\pi^{!} B$.
		The second proposition needs the fact that the functors
		$R \Hom_{K}(A, \;\cdot\;), R \Hom_{k}(R \pi_{!}A, \;\cdot\;)$
		with $A$ a finite flat group scheme over $K$
		commute with filtered direct limits.
		See Lemma \ref{lem: lim and Ext commutes}.
	\item
		In fact, the definition \eqref{eq: !-pullback} of $\pi^{!} B$ makes sense
		for any sheaf $B \in \Ab(k^{\ind\rat}_{\et})$.
		But Proposition \ref{prop: projection formula} seems false in general.
		An interesting observation in this direction is the following.
		Let $A = \Gm$ and $B \in \Alg / k$.
		About the left-hand side of \eqref{eq: right adjoint to the pushforward},
		we have a natural morphism
			\[
					R \pi_{\ast} \pi^{\ast} B[1]
				\to
					R \pi_{\ast}
					R \sheafhom_{K}(\Gm, \pi^{\ast} B \dtensor_{K} \Gm[1])
				=
					R \pi_{\ast}
					R \sheafhom_{K}(\Gm, \pi^{!} B),
			\]
		and about the right,
		an isomorphism
			\[
					R \sheafhom_{k}(R \pi_{!} \Gm, B)
				=
					R \sheafhom_{k}(\alg{K}^{\times}, B)[1].
			\]
		We do not seem to have a morphism
			\[
					R \pi_{\ast} \pi^{\ast} B
				\to
					R \sheafhom_{k}(\alg{K}^{\times}, B)
			\]
		between them.
		Even taking $H^{0}$ and evaluating the both sheaves by $k$,
		we still do not seem to have a morphism
			\[
					(\pi^{\ast} B)(K)
				\to
					\Hom_{k}(\alg{K}^{\times}, B).
			\]
		A remark is that this hypothetical morphism is taking its shape
		quite similar to the Albanese property of $\alg{K}^{\times}$ (\cite{CC94})
		in the equal characteristic case.
		We recall this property here.
		
		Assume that $K$ has equal characteristic.
		Let $k^{\sch}$ be the category of (not necessarily perfect) $k$-algebras.
		The functor $\alg{K}$ makes sense also on $k^{\sch}$ by the same definition:
		it sends a $k$-algebra $R$ to $(R \hat{\tensor}_{k} \Order_{K}) \tensor_{\Order_{K}} K$.
		Since $K$ is a $k$-algebra,
		we have a morphism $\pi_{0} \colon \Spec K \to \Spec k$ of schemes.
		Let $B$ be a commutative smooth algebraic group over $k$
		and $\pi_{0}^{\ast} B = B \times_{k} K$ the base extension of $B$ from $k$ to $K$.
		Then there exists an isomorphism
			\[
					(\pi_{0}^{\ast} B)(K)
				\cong
					\Hom_{k^{\sch}}(\alg{K}^{\times}, B).
			\]
		This is the Albanese property of $\alg{K}^{\times}$ (\cite{CC94}).
		
		We do not know whether or not there exists a way to modify
		our formulation and Proposition \ref{prop: right adjoint to the pushforward} on duality
		so that it contains the Albanese property of $\alg{K}^{\times}$
		as the special case where $A = \Gm$ and $B \in \Alg / k$.
	\end{enumerate}
\end{Rmk}


\section{Extensions of algebraic groups as sheaves on the rational \'etale site}
\label{sec: extensions of algebraic groups as sheaves on the rational etale site}

In this section, we prove Theorem \ref{thm: comparison of Ext, proalgebraic setting}.
This and the next sections do not involve complete discrete valuation fields.

\subsection{The pro-fppf site}
\label{sec: pro-rational and pro-fppf sites}

Some general results on perfect schemes in modern language can be found in \cite[\S 3]{BS17}.
We first make some remarks about the fppf topology on the category $k^{\perf}$ of perfect $k$-algebras.
As in Notation, we say that a homomorphism $R \to S$ in $k^{\perf}$ is finitely presented
if it can be written as the perfection of a $k$-algebra homomorphism $R \to S_{0}$ of finite presentation.
For a perfect $k$-algebra $R$,
we say that a perfect $R$-algebra $S$ is \emph{flat of finite presentation}
if it is the perfection of a flat $R$-algebra of finite presentation.
(It is not clear if this is stronger than saying that
$S$ is flat \emph{and} the perfection of an $R$-algebra of finite presentation).

Note that if $R$ is the perfection of a $k$-algebra $R_{0}$ and
$S$ is a perfect $R$-algebra flat of finite presentation in the above sense,
then there exists an $R_{0}$-algebra $S_{0}$ flat of finite presentation (in the usual sense)
whose perfection is isomorphic to $S$ as an $R$-algebra.
To see this, 
write $S$ as the perfection of a flat $R$-algebra $S_{0}'$ of finite presentation.
Let $R_{0} \to R_{0}^{(1)} \to R_{0}^{(2)} \to \cdots$ be the Frobenius morphisms over $k$,
where $R_{0}^{(n)}$ is the ring $R_{0}$ with $k$-algebra structure given by
$a \cdot b := a^{p^{n}} b$, $a \in k$, $b \in R_{0}$.
There is an $R_{0}^{(n)}$-algebra $S_{0}''$ of finite presentation for some $n$
such that $R \tensor_{R_{0}^{(n)}} S_{0}'' \cong S_{0}'$ as $R$-algebras.
The flatness of $S_{0}'$ over $R$ implies that
we can take $S_{0}''$ to be flat over $R_{0}^{(n)}$ (for some larger $n$)
by the permanence property of flatness under passage to limits (\cite[Corollary 11.2.6.1]{Gro66}).
Then $S_{0}''^{(-n)}$ is an $R_{0}$-algebra flat of finite presentation
and we have $R \tensor_{R_{0}} S_{0}''^{(-n)} \cong S_{0}'^{(-n)}$ as $R^{(-n)} = R$-algebras.
Hence $S$ is the perfection of the flat $R_{0}$-algebra $S_{0} = S_{0}''^{(-n)}$ of finite presentation.

The permanence property of flatness under passage to limits used above
has the following variant for perfect flat algebras of finite presentation:
if $\{R_{\lambda} \to S_{\lambda}\}$ is a filtered direct system of finitely presented homomorphisms
between perfect $k$-algebras
such that $S_{\mu} = S_{\lambda} \tensor_{R_{\lambda}} R_{\mu}$ for any $\mu \ge \lambda$
and if its direct limit $R \to S$ is a perfect $k$-algebra homomorphism flat of finite presentation,
then so is $R_{\mu} \to S_{\mu}$ for some $\mu$.
To see this,
take a flat $R$-algebra $S_{0}$ of finite presentation in the usual sense whose perfection is $S$.
For some $\lambda$, there exists a flat $R_{\lambda}$-algebra $S_{0 \lambda}$
of finite presentation in the usual sense
such that $S_{0} \cong S_{0 \lambda} \tensor_{R_{\lambda}} R$
by \cite[Corollary 11.2.6.1]{Gro66}.
Set $S_{0 \mu} = S_{0 \lambda} \tensor_{R_{\lambda}} R$ for $\mu \ge \lambda$.
Take an $R_{\lambda}$-algebra $S_{1 \lambda}$ of finite presentation in the usual sense
whose perfection is $S_{\lambda}$
and set $S_{1} = S_{1 \lambda} \tensor_{R_{\lambda}} R$
and $S_{1 \mu} = S_{1 \lambda} \tensor_{R_{\lambda}} R_{\mu}$ for $\mu \ge \lambda$.
Since the perfections of $S_{0}$ and $S_{1}$ are both isomorphic to $S$ as $R$-algebras,
there exist $R$-algebra homomorphisms
$\varphi \colon S_{0} \to S_{1}^{(n)}$ and $\psi \colon S_{1} \to S_{0}^{(m)}$ for some $n, m \ge 0$
such that $\psi \compose \varphi$ and $\varphi \compose \psi$ are $p^{n + m}$-th power Frobenius homomorphisms.
For some $\mu \ge \lambda$,
the homomorphisms $\varphi$ and $\psi$ come (via base change $\tensor_{R_{\mu}} R$)
from some $R_{\mu}$-algebra homomorphisms
$\varphi_{\mu} \colon S_{0 \mu} \to S_{1 \mu}^{(n)}$ and
$\psi_{\mu} \colon S_{1 \mu} \to S_{0 \mu}^{(m)}$
such that $\psi_{\mu} \compose \varphi_{\mu}$ and $\varphi_{\mu} \compose \psi_{\mu}$ are
$p^{n + m}$-th power Frobenius homomorphisms
by \cite[Theorem 8.8.2 (i)]{Gro66}.
Hence the perfections of $S_{0 \mu}$ and $S_{1 \mu}$ are isomorphic to each other as $R_{\mu}$-algebras.
Therefore $S_{\mu}$ is a perfect flat $R_{\mu}$-algebra of finite presentation.

Note also that if $R \to S$ and $S \to T$ are flat homomorphisms of finite presentation
between perfect $k$-algebras,
then so is the composite $R \to T$.
To see this, write $S$ as the perfection of a flat $R$-algebra $S_{0}$ of finite presentation.
By what we saw in the second last paragraph,
$T$ is the perfection of an $S_{0}$-algebra $T_{0}$ flat of finite presentation.
Hence $T$ is the perfection of the flat $R$-algebra $T_{0}$ of finite presentation.

Therefore we can define the \emph{perfect fppf site} $\Spec k^{\perf}_{\fppf}$
to be the category $k^{\perf}$ of perfect $k$-algebras
endowed with the topology whose covering families over $R \in k^{\perf}$
are finite families $\{S_{i}\}$ of perfect flat $R$-algebras of finite presentation
with $\prod_{i} S_{i}$ faithfully flat over $R$.
The perfection functor from the category of all $k$-algebras to $k^{\perf}$
induces a morphism of sites $\Spec k^{\perf}_{\fppf} \to \Spec k_{\fppf}$
to the usual fppf site of $k$,
whose pushforward functor is exact.
Hence it induces an isomorphism on cohomology theory.
For a perfect $k$-algebra $R$, the category of objects over $R$ in $k^{\perf}$
is nothing but the category of perfect $R$-algebras,
in contrast to the case of the category of ind-rational $k$-algebras $k^{\ind\rat}$.
Hence we can write the localization of $\Spec k^{\perf}_{\fppf}$ at $R$ by
$\Spec k^{\perf}_{\fppf} / R = \Spec R^{\perf}_{\fppf}$ without ambiguity.

To define the perfect pro-fppf topology, we need the following class of homomorphisms.

\begin{Def}
	Let $R$ be a perfect $k$-algebra.
	We say that a perfect $R$-algebra $S$ is \emph{flat of ind-finite presentation}
	if there exists a filtered direct system $\{R_{\lambda}\}$ of perfect $R$-algebras
	such that each $R_{\lambda}$ is flat of finite presentation over $R$ and
	$S$ is isomorphic to $\dirlim_{\lambda} R_{\lambda}$
	as an $R$-algebra.
	In this case, we also say that
	$\Spec S \to \Spec R$ is \emph{flat of profinite presentation}.
	If moreover $S$ is faithfully flat over $R$,
	we say that $S$ is \emph{faithfully flat of ind-finite presentation} over $R$
	and $\Spec S$ is \emph{faithfully flat of profinite presentation} over $\Spec R$.
\end{Def}

Note that the transition homomorphisms $R_{\lambda} \to R_{\mu}$ are
not required to be flat.
An example of a flat homomorphism of ind-finite presentation
is a localization of a perfect $k$-algebra by an arbitrary multiplicative subset.
Another example is a surjection in $\Pro \Alg / k$.
Note that not every flat homomorphism is of ind-finite presentation.%
\footnote{\textit{Stacks Project}, \texttt{http://stacks.math.columbia.edu}, Tag 0ATE.}

The permanence property of flatness under passage to limits used above
with a standard limit argument shows the following.

\begin{Prop} \label{prop: composite of flat of ind-finite presentation}
	If $R \to S$ and $S \to T$ are flat homomorphisms of ind-finite presentation
	between perfect $k$-algebras,
	then so is the composite $R \to T$.
\end{Prop}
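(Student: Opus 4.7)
The plan is to write $T$ as a filtered direct limit of perfect $R$-algebras each of which is flat of finite presentation over $R$, by first descending the building blocks $S_\mu$ of $T$ over $S$ down to the building blocks $R_\lambda$ of $S$ over $R$, then appealing to the already-established composition result for the class of flat maps of finite presentation.

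Concretely, choose presentations $S = \dirlim_\lambda R_\lambda$ and $T = \dirlim_\mu S_\mu$ witnessing the hypotheses. For each $\mu$, write $S_\mu$ as the perfection of a flat $S$-algebra $T_\mu^0$ of finite presentation (in the non-perfect sense). By the standard limit descent results of EGA IV \S8 applied to $S = \dirlim_\lambda R_\lambda$, there exists an index $\lambda(\mu)$ and an $R_{\lambda(\mu)}$-algebra $T_{\mu,\lambda(\mu)}^0$ of finite presentation such that $T_\mu^0 \cong T_{\mu,\lambda(\mu)}^0 \tensor_{R_{\lambda(\mu)}} S$ as $S$-algebras. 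By the permanence property of flatness under limits (EGA IV, Cor.\ 11.2.6.1, the very result cited just before the definition), after enlarging $\lambda(\mu)$ we may assume $T_{\mu,\lambda(\mu)}^0$ is flat over $R_{\lambda(\mu)}$. Let $T_{\mu,\lambda(\mu)}$ denote its perfection. Then $T_{\mu,\lambda(\mu)}$ is flat of finite presentation over the perfect algebra $R_{\lambda(\mu)}$ in the perfect sense, and the composition result proved in the paragraph preceding the definition shows that $T_{\mu,\lambda(\mu)}$ is flat of finite presentation over $R$ (in the perfect sense).

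For each $\lambda \ge \lambda(\mu)$, set $U_{\mu,\lambda} = T_{\mu,\lambda(\mu)} \tensor_{R_{\lambda(\mu)}} R_\lambda$ (perfect tensor product); base change preserves flatness and finite presentation, so $U_{\mu,\lambda}$ is flat of finite presentation over $R_\lambda$, and therefore over $R$ by the same composition result. Since tensor products commute with filtered direct limits, $S_\mu \cong \dirlim_{\lambda \ge \lambda(\mu)} U_{\mu,\lambda}$, and hence
\[
    T
  = \dirlim_\mu S_\mu
  = \dirlim_\mu \dirlim_{\lambda \ge \lambda(\mu)} U_{\mu,\lambda}.
\]
Reindexing over the cofiltered poset of pairs $(\mu,\lambda)$ with $\lambda \ge \lambda(\mu)$ (which is filtered), this double limit becomes a single filtered direct limit, expressing $T$ as $\dirlim_{(\mu,\lambda)} U_{\mu,\lambda}$ with each $U_{\mu,\lambda}$ perfect and flat of finite presentation over $R$. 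This is precisely the assertion that $T$ is flat of ind-finite presentation over $R$.

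The only real subtlety is ensuring that the descent argument interacts properly with taking perfections: one needs that a non-perfect flat finitely presented $S$-algebra $T_\mu^0$ descends to some $R_\lambda$ while preserving flatness, which is handled entirely inside the non-perfect category of EGA IV, and only afterwards is the perfection taken. Once this is in place, the rest is formal, relying on the previously established single-step composition and on rewriting an iterated filtered limit as a filtered limit.
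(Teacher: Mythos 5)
Your argument is correct and is essentially the paper's: both proofs descend each finitely presented building block of $T$ over $S$ to a finite stage $R_{\lambda(\mu)}$ of $S = \dirlim_{\lambda} R_{\lambda}$, recover flatness at a possibly larger stage via EGA IV, Cor.\ 11.2.6.1, and then reassemble $T$ as a single filtered direct limit of algebras flat of finite presentation over $R$. The only step you assert rather than prove is the final reindexing: the poset of pairs $(\mu, \lambda)$ does not by itself carry transition maps $U_{\mu, \lambda} \to U_{\mu', \lambda'}$ for $\mu \neq \mu'$, and supplying them coherently (by factoring $T_{\mu, \lambda(\mu)} \to S_{\mu'}$ through some $U_{\mu', \lambda'}$, or by enlarging the index set to record the factorization data, as the paper's directed set $N$ does) is precisely where the paper spends most of its proof.
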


\begin{proof}
	Write $S = \dirlim_{\lambda} S_{\lambda}$
	with $S_{\lambda}$ perfect flat of finite presentation over $R$
	and $T = \dirlim_{\mu} T_{\mu}$
	with $T_{\mu}$ perfect flat of finite presentation over $S$.
	By \cite[Lemma 1.5]{Swa98} (plus perfection),
	it is enough to show that any $R$-algebra homomorphism
	$T' \to T$ from a perfect $R$-algebra $T'$ of finite presentation
	factors through a perfect $R$-algebra $T''$ flat of finite presentation.
	
	The $R$-algebra homomorphism $T' \to T$ factors through $T_{\mu}$ for some $\mu$.
	For some $\lambda$, there is a perfect $S_{\lambda}$-algebra $T_{\lambda \mu}$ of finite presentation
	such that $S \tensor_{S_{\lambda}} T_{\lambda \mu} \cong T_{\mu}$ as $S$-algebras.
	For $\lambda' \ge \lambda$, let $T_{\lambda' \mu} = S_{\lambda'} \tensor_{S_{\lambda}} T_{\lambda \mu}$.
	Then $T_{\lambda' \mu}$ is a perfect $S_{\lambda'}$-algebra flat of finite presentation
	for some $\lambda' \ge \lambda$
	by the perfect-algebras variant of the permanence property of flatness under passage to limits shown above.
	The $R$-algebra homomorphism $T' \to T_{\mu}$ factors through $T_{\lambda'' \mu}$
	for some $\lambda'' \ge \lambda'$.
	Therefore the $R$-algebra homomorphism $T' \to T$ factors through
	the perfect $R$-algebra $T_{\lambda'' \mu}$ flat of finite presentation.
\end{proof}

Clearly the base extension of a flat homomorphism $R \to S$ of ind-finite presentation
by arbitrary homomorphism $R \to R'$ is flat of ind-finite presentation.
Hence we can make the following definition.

\begin{Def}
	We define the \emph{perfect pro-fppf site} $\Spec k^{\perf}_{\pro\fppf}$
	to be the category $k^{\perf}$ of perfect $k$-algebras
	endowed with the topology whose covering families over $R \in k^{\perf}$
	are finite families $\{S_{i}\}$ of perfect flat $R$-algebras of ind-finite presentation
	with $\prod_{i} S_{i}$ faithfully flat over $R$.
	For a perfect affine scheme $X$,
	the localization $\Spec k^{\perf}_{\pro\fppf} / X$ of $\Spec k^{\perf}_{\pro\fppf}$ at $X$
	(see Notation in Introduction) is also denoted by $X_{\pro\fppf}$.
\end{Def}

\begin{Rmk} \BetweenThmAndList
	\begin{enumerate}
		\item
			In the same way,
			we can define pro-smooth morphisms and the perfect pro-smooth site
			using \cite[Proposition 17.7.8]{Gro67} instead.
			The rest of this section also works
			when we use the perfect pro-smooth site instead of pro-fppf.
		\item
			If we want to allow proper varieties to the underlying category of the pro-fppf site,
			we can use the category of perfect quasi-compact quasi-separated schemes over $k$
			instead of affine ones,
			with the help of the absolute noetherian approximation.
	\end{enumerate}
\end{Rmk}


\subsection{Basic relations with the ind-rational \'etale site}
\label{sec: basic relations with the rational etale site}

We have continuous maps of sites
	\[
			\Spec k^{\perf}_{\pro\fppf}
		\to
			\Spec k^{\perf}_{\fppf}
		\to
			\Spec k^{\perf}_{\et}
		\to
			\Spec k^{\ind\rat}_{\et}
	\]
defined by the identity.
The first two are morphisms of sites (that is, have exact pullback functors),
but the third is not (see Proposition \ref{prop: pullback is not exact} below for an example).

\begin{Def} \label{def: points in profinite setting}
	Let $X = \Spec R$ be a perfect affine $k$-scheme.
	Write $R = \bigcup_{\lambda} R_{\lambda}$
	as a filtered union of perfect $k$-subalgebras $R_{\lambda}$
	of finite type.
	\begin{enumerate}
		\item \label{def: profinite set of points}
			A \emph{profinite set of points of $X$} is a subsystem
			$x = \invlim_{\lambda} x_{\lambda} \subset X = \invlim_{\lambda} \Spec R_{\lambda}$
			consisting of finite subsets $x_{\lambda}$ of $\Spec R_{\lambda}$.
		\item \label{def: generic point}
			Assume that all the transition homomorphisms $R_{\lambda} \to R_{\mu}$ are flat.
			Then we have $\Frac R = \dirlim_{\lambda} \Frac R_{\lambda} \in k^{\ind\rat}$,
			where $\Frac$ denotes the total quotient ring.
			We call the profinite set of points $\xi_{X} = \Spec \Frac R$
			the \emph{generic point of $X$}.
	\end{enumerate}
	These notions are independent of the presentation $R = \bigcup_{\lambda} R_{\lambda}$
	(with flat transition homomorphisms in the second definition).
\end{Def}

Note that a profinite set of points is the $\Spec$ of an ind-rational $k$-algebra.
The set of all profinite sets of points of a given $X$ is a directed set by inclusion.
Any morphism from a perfect $k$-scheme $\Spec k'$ with $k' \in k^{\ind\rat}$ to $X$
factors through a uniquely determined profinite set of points of $X$.
Hence we have the following.

\begin{Prop} \label{prop: schemes as sheaves on the rational site}
	A perfect affine $k$-scheme $X$ regarded as a sheaf on $\Spec k^{\ind\rat}_{\et}$
	is the filtered directed union of the profinite sets of points of $X$.
	If $X$ is of finite type over $k$,
	then $X$ regarded as a sheaf on $\Spec k^{\ind\rat}_{\et}$
	is the disjoint union of points of $X$.
\end{Prop}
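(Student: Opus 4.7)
The plan is to show that every morphism $\Spec k' \to X$ with $k' \in k^{\ind\rat}$ factors through a profinite set of points of $X$, from which the first statement follows by a directedness argument. Write $X = \Spec R$ with $R = \bigcup_{\lambda} R_{\lambda}$ a filtered union of finitely generated perfect $k$-subalgebras, and write $k' = \bigcup_{\mu} k'_{\mu}$ with $k'_{\mu} \in k^{\rat}$, so that each $k'_{\mu}$ is a finite product of perfect fields. Given $\varphi \colon R \to k'$, I construct a canonical profinite set of points through which $\varphi$ factors.

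For each $\lambda$, the finite generation of $R_{\lambda}$ gives $\varphi(R_{\lambda}) \subset k'_{\mu}$ for some $\mu$. Decomposing $k'_{\mu} = \prod_{i} L_{\mu, i}$ into perfect field factors and projecting, I obtain the finite set $x_{\lambda} \subset \Spec R_{\lambda}$ of kernels of the components $R_{\lambda} \to L_{\mu, i}$. This set is independent of $\mu$: any refinement $k'_{\mu} \into k'_{\mu'}$ corresponds to a surjection of finite index sets $j \mapsto i(j)$ together with field extensions $L_{\mu, i(j)} \into L_{\mu', j}$, so the kernel of each $R_{\lambda} \to L_{\mu', j}$ equals that of $R_{\lambda} \to L_{\mu, i(j)}$, and the set of kernels is preserved. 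For $\lambda \le \lambda'$, choosing a common $\mu$ shows that the morphism $\Spec R_{\lambda'} \to \Spec R_{\lambda}$ carries $x_{\lambda'}$ into $x_{\lambda}$. Hence $x = \invlim_{\lambda} x_{\lambda}$ is a profinite set of points of $X$, and by construction $\varphi$ factors as $R \onto \dirlim_{\lambda} \prod_{p \in x_{\lambda}} \kappa(p) \to k'$, giving the desired factorization $\Spec k' \to x \into X$.

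Each profinite set of points $x$ of $X$ corresponds to a surjection $R \onto \dirlim_{\lambda} \prod_{p \in x_{\lambda}} \kappa(p)$ and hence is a subsheaf of $X$ on $\Spec k^{\ind\rat}_{\et}$. The family of all such $x$ is directed by inclusion (two of them admit a common refinement by taking componentwise set-theoretic unions in each $\Spec R_{\lambda}$), and the previous paragraph shows that the natural map $\bigcup_{x} x \to X$ of sheaves is surjective. Since the left side is a subsheaf of the right, this map is an isomorphism, proving the first statement. For the second statement, if $X$ is of finite type over $k$ then the filtered system may be taken with a single member $R_{\lambda} = R$, so profinite sets of points reduce to finite subsets of the set of points of $X$, and the filtered union over all such finite subsets is $\coprod_{p} \Spec \kappa(p)$ as a sheaf; the sum is disjoint because a morphism $\Spec k' \to X$ factoring through both $\Spec \kappa(p)$ and $\Spec \kappa(q)$ would force $\ker \varphi = p = q$. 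The main technical point is the well-definedness and compatibility of the $x_{\lambda}$, which reduces to the elementary observation that an injective refinement of a finite product of fields distinguishes components strictly.
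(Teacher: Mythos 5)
Your overall strategy coincides with the paper's, which records only the three key observations (a profinite set of points is the $\Spec$ of an ind-rational $k$-algebra, the profinite sets of points form a directed system, and every morphism from $\Spec k'$ with $k'\in k^{\ind\rat}$ factors through a uniquely determined one) and leaves the verification to the reader. Your construction of $x_{\lambda}$ as the image of $\Spec k'$ in $\Spec R_{\lambda}$, the independence-of-$\mu$ check via the structure of injections of finite products of fields, and the compatibility in $\lambda$ are correct and constitute the substantive content.

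One step is wrong as written: the ring homomorphism $R \to \dirlim_{\lambda}\prod_{p\in x_{\lambda}}\kappa(p)$ is \emph{not} surjective whenever some $p\in x_{\lambda}$ is a non-closed point, because $R_{\lambda}\to\kappa(p)=\Frac(R_{\lambda}/p)$ is then not surjective; the basic example is $x=\{\xi_{\Affine^{1}}\}$, where the map is the non-surjective inclusion $k[t]^{\perf}\into k(t)^{\perf}$, and generic points are precisely the case the paper uses most. So the inference ``$x$ corresponds to a surjection $R\onto\dirlim\prod\kappa(p)$, hence is a subsheaf of $X$'' does not stand. The subsheaf (monomorphism) property is nevertheless true and is genuinely needed to read the statement as a union; a correct argument is: for $k''\in k^{\ind\rat}$, a factorization of $\psi\colon\Spec k''\to\Spec R_{\lambda}$ through $\bigsqcup_{p\in x_{\lambda}}\Spec\kappa(p)$ amounts to a clopen decomposition $\Spec k''=\bigsqcup_{p}U_{p}$ with $U_{p}\subseteq\psi^{-1}(p)$, which forces $U_{p}=\psi^{-1}(p)$ since these preimages are pairwise disjoint and cover $\Spec k''$, and then each $U_{p}\to\Spec\kappa(p)$ is unique because $\Spec\kappa(p)\to\Spec R_{\lambda}$ is a monomorphism of schemes. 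With this repair, the directedness via componentwise unions, the surjectivity of $\bigcup_{x}x\to X$, and your treatment of the finite-type case all go through.
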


If $f \colon Y \to X$ is a morphism in $k^{\perf}$ and $y$ a profinite set of points of $Y$,
then $f(y)$ is a profinite set of points of $X$.
Under the assumption made in \eqref{def: generic point} of Definition \ref{def: points in profinite setting},
the natural inclusion $\xi_{X} \into X$ is flat of profinite presentation and dominant.
We can speak of the generic point of a proalgebraic group $A \in \Pro \Alg / k$
since $A$ satisfies the assumption in \eqref{def: generic point}.
Any $k$-algebra homomorphism from an ind-rational $k$-algebra to a perfect $k$-algebra is
flat of ind-finite presentation,
which is faithfully flat if it is injective.

We should be careful when using the continuous map to $\Spec k^{\ind\rat}_{\et}$
due to the following.

\begin{Prop} \label{prop: pullback is not exact}
	Let $h \colon \Spec k^{\perf}_{\pro\fppf} \to \Spec k^{\ind\rat}_{\et}$
	be the continuous map defined by the identity.
	Let $h^{\ast\set} \colon \Set(k^{\ind\rat}_{\et}) \to \Set(k^{\perf}_{\pro\fppf})$
	be the pullback functor for sheaves of sets.
	Let $\Affine^{1}$ be the affine line over $k$.
	Then the natural morphism
		\[
				h^{\ast\set} (\Affine^{1} \times_{k} \Affine^{1})
			\to
				h^{\ast\set} \Affine^{1} \times_{k} h^{\ast\set} \Affine^{1}
		\]
	is \emph{not} an isomorphism.
	In particular, $h^{\ast\set}$ is not exact
	and $h$ is not a morphism of sites.
\end{Prop}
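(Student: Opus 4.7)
The plan is to exhibit a section of $h^{\ast\set} \Affine^1 \times_k h^{\ast\set} \Affine^1$ over some $R \in k^{\perf}$ that does not lie in the image of the natural morphism from $h^{\ast\set}(\Affine^1 \times_k \Affine^1)(R)$. Take $R = (k(s_1) \otimes_k k(s_2))^{\perf}$ with its two evident transcendentals $s_1, s_2 \in R$. Each $s_i$ lies in the image of the ind-rational subring $k(s_i)^{\perf} \into R$, so already at the presheaf level $s_i \in h^{\ast\pre} \Affine^1(R)$ and a fortiori $s_i \in h^{\ast\set} \Affine^1(R)$. Thus the pair $(s_1, s_2)$ defines a section of $h^{\ast\set} \Affine^1 \times_k h^{\ast\set} \Affine^1$ over $R$ whose image in $R \times R$ is $(s_1, s_2)$. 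I will show that this pair has no preimage in $h^{\ast\set}(\Affine^1 \times_k \Affine^1)(R)$, which by the construction of sheafification reduces to ruling out local liftings over every pro-fppf cover.

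The key localization is to $V = R_{(s_1 - s_2)}$, the localization at the maximal ideal $(s_1 - s_2)$. This $V$ is a one-dimensional local perfect domain with principal maximal ideal generated by the uniformizer $s_1 - s_2$. Under the natural map $R \to V$, any section of $h^{\ast\set}(\Affine^1 \times_k \Affine^1)$ over $R$ restricts to a section over $V$, so it suffices to prove that $(s_1|_V, s_2|_V)$ is not in the image of $h^{\ast\set}(\Affine^1 \times_k \Affine^1)(V) \to V \times V$. By faithful flatness, any pro-fppf cover $\{V \to W_j\}$ contains a factor $W_{j_0}$ with a prime $\mathfrak{p}$ above $(s_1 - s_2) V$; in $W_{j_0}$ the element $s_1 - s_2$ lies in $\mathfrak{p}$ so is not a unit, while flatness of $V \to W_{j_0}$ preserves the non-zero-divisor $s_1 - s_2$, so $s_1 - s_2 \neq 0$ in $W_{j_0}$. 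It remains to show that no homomorphism $\phi \colon k' \to W_{j_0}$ from an ind-rational $k' \in k^{\ind\rat}$ has image containing both $s_1$ and $s_2$.

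The delicate point, which I expect to be the main obstacle, is that $W_{j_0}$ may carry nontrivial idempotents, so the image $A = \phi(k') \subset W_{j_0}$ need not be a subfield of $W_{j_0}$. My key observation is that every ind-rational $k$-algebra, and hence $A$, is von Neumann regular, so the element $s_1 - s_2 \in A$ has a support idempotent $e \in A$ characterized by $(1 - e)(s_1 - s_2) = 0$ in $A$ and $e(s_1 - s_2)$ a unit in $eA$. Writing $\tilde e \in W_{j_0}$ for the image of $e$, the direct summand $(1 - \tilde e) W_{j_0}$ is flat over $V$ (as a summand of a flat module) and annihilated by the uniformizer $s_1 - s_2$; a short $\Tor$ computation using the resolution $0 \to V \stackrel{s_1 - s_2}{\to} V \to V/(s_1 - s_2) \to 0$ forces any such flat module to vanish. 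Hence $\tilde e = 1$ in $W_{j_0}$, and the image of the unit $e(s_1 - s_2) \in eA$ in $W_{j_0}$ is $s_1 - s_2$ itself, which would then have to be a unit in $W_{j_0}$ --- contradicting $s_1 - s_2 \in \mathfrak{p}$. This contradiction shows $(s_1, s_2) \notin h^{\ast\set}(\Affine^1 \times_k \Affine^1)(R)$, so the natural morphism fails to be surjective on $R$-sections, and in particular $h^{\ast\set}$ is not exact and $h$ is not a morphism of sites.
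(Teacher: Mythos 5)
Your proof is correct, and it supplies exactly the verification that the paper leaves implicit. The paper's own argument is a one-liner: by Proposition \ref{prop: schemes as sheaves on the rational site} both sides decompose as $\bigsqcup_{z \in \Affine^{2}} z$ and $\bigsqcup_{x, y \in \Affine^{1}} x \times_{k} y$ respectively, and the comparison map is declared ``clearly not an isomorphism''. Your witness section $(s_{1}, s_{2})$ over $R = (k(s_{1}) \tensor_{k} k(s_{2}))^{\perf}$ is precisely the tautological section of the component $\xi_{\Affine^{1}} \times_{k} \xi_{\Affine^{1}} = \Spec R$ of the target, so you are making the paper's ``clearly'' rigorous rather than taking a different route; the genuinely new content is your non-lifting argument, which is sound: the image of an ind-rational algebra in $W_{j_{0}}$ is von Neumann regular, so the support idempotent $e$ of $s_{1} - s_{2}$ exists there, flatness of $W_{j_{0}}$ over the domain $V$ kills the summand $(1 - e) W_{j_{0}}$ annihilated by the nonzerodivisor $s_{1} - s_{2}$, and then $s_{1} - s_{2}$ would be a unit in $W_{j_{0}}$, contradicting the existence of a prime over $\ideal{m}_{V}$. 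One small correction: $V$ is the perfection of a discrete valuation ring, so its maximal ideal is \emph{not} principal and $s_{1} - s_{2}$ is not a uniformizer (the value group is $\Z[1/p]$); but your argument only uses that $s_{1} - s_{2}$ is a nonunit nonzerodivisor of $V$, so nothing breaks.
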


\begin{proof}
	By Proposition \ref{prop: schemes as sheaves on the rational site},
	we have
		\[
				\Affine^{1} \times_{k} \Affine^{1}
			=
				\bigsqcup_{z \in \Affine^{2}} z,
			\quad
				\Affine^{1}
			=
				\bigsqcup_{x \in \Affine^{1}} x
		\]
	in $\Set(k^{\ind\rat}_{\et})$,
	where the disjoint unions are over points of the underlying set of the schemes.
	Hence
		\[
				h^{\ast\set} (\Affine^{1} \times_{k} \Affine^{1})
			=
				\bigsqcup_{z \in \Affine^{2}} z,
			\quad
				h^{\ast\set} \Affine^{1} \times_{k} h^{\ast\set} \Affine^{1}
			=
				\bigsqcup_{x, y \in \Affine^{1}} x \times_{k} y
		\]
	in $\Set(k^{\perf}_{\pro\fppf})$.
	The natural morphism between them is clearly not an isomorphism.
\end{proof}

By the same reason,
the continuous maps from $\Spec k^{\perf}_{\fppf}$ or $\Spec k^{\perf}_{\et}$
to $\Spec k^{\ind\rat}_{\et}$ are not morphisms of sites.


\subsection{Pro-fppf, fppf and \'etale cohomology}
\label{sec: Pro-fppf, fppf and etale cohomology}

\begin{Prop} \label{prop: pro-fppf and fppf cohomology}
	Let $f \colon \Spec k^{\perf}_{\pro\fppf} \to \Spec k^{\perf}_{\fppf}$
	be the morphism defined by the identity.
	Let $B \in \Ab(k^{\perf}_{\pro\fppf})$.
	Assume that $B$ as a functor on $k^{\perf}$ commutes with filtered direct limits.
	Then we have
		\[
				R \Gamma(R_{\pro\fppf}, B)
			=
				R \Gamma(R_{\fppf}, f_{\ast} B)
		\]
	for any $R \in k^{\perf}$.
	In other words, we have $R^{j} f_{\ast} B = 0$ for any $j \ge 1$.
\end{Prop}

We need a lemma.
Let $B$ and $R$ as in the proposition.
We denote by $\check{H}^{i}(R_{\pro\fppf}, B)$ the pro-fppf \v{C}ech cohomology
and by $\underline{H}_{\pro\fppf}^{i}(B)$ the presheaf
$S \mapsto H^{i}(S_{\pro\fppf}, B)$.
The same notation is applied to the fppf cohomology.
Note that \v{C}ech cohomology is defined for any presheaf.
Hence both the pro-fppf \v{C}ech cohomology and the fppf \v{C}ech cohomology
with coefficients in $\underline{H}_{\fppf}^{j}(f_{\ast }B)$
make sense.

\begin{Lem}
	Let $B$ and $R$ as in the proposition.
	We have
		\[
				\check{H}^{i}(R_{\pro\fppf}, \underline{H}_{\fppf}^{j}(f_{\ast }B))
			=
				\check{H}^{i}(R_{\fppf}, \underline{H}_{\fppf}^{j}(f_{\ast }B))
		\]
	for any $i, j \ge 0$.
\end{Lem}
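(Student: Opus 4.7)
The plan is to compute both \v{C}ech cohomologies via the standard formula
$\check{H}^{i}(R_{\tau}, F) = \dirlim_{\mathcal{U}} \check{H}^{i}(\mathcal{U}, F)$
over covers $\mathcal{U}$ of $R$ in the topology $\tau \in \{\pro\fppf, \fppf\}$, with $F = \underline{H}_{\fppf}^{j}(f_{\ast} B)$, and to reduce the pro-fppf \v{C}ech cohomology to the fppf one by approximating pro-fppf covers with fppf covers.

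First I would verify that $F$, viewed as a presheaf on $k^{\perf}$, commutes with filtered direct limits. Since $B$ does by hypothesis, so does $f_{\ast} B$, and then for $S = \dirlim_{\lambda} S_{\lambda}$ one has $H^{j}(S_{\fppf}, f_{\ast} B) = \dirlim_{\lambda} H^{j}(S_{\lambda, \fppf}, f_{\ast} B)$ by the standard principle that fppf cohomology classes are represented by \v{C}ech data on finitely presented covers, all of which descend to a finite stage of the direct system (see the descent arguments of \cite[IV]{Gro66} or \cite[VII]{AGV72b}).

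Next I would carry out the approximation. Let $\mathcal{U} = \{S_{i} \to R\}_{i = 1}^{n}$ be a pro-fppf cover with $S_{i} = \dirlim_{\lambda \in \Lambda_{i}} S_{i, \lambda}$, each $S_{i, \lambda}$ perfect, flat and of finite presentation over $R$. Since each $\Spec S_{i, \lambda} \to \Spec R$ has open image (by flatness) and $\Spec R$ is quasi-compact, the surjectivity of $\bigsqcup_{i} \Spec S_{i} \to \Spec R$ forces some finite collection of pairs $(i, \lambda)$ to produce an fppf cover $\mathcal{V}$ refining $\mathcal{U}$. These fppf refinements form a filtered system indexed by the $\Lambda_{i}$'s; because each $S_{i_{0}} \tensor_{R} \cdots \tensor_{R} S_{i_{n}}$ is the filtered colimit of the corresponding tensor products at finite levels and $F$ commutes with filtered colimits, the \v{C}ech complex $\check{C}^{\bullet}(\mathcal{U}, F)$ is the filtered colimit of the $\check{C}^{\bullet}(\mathcal{V}, F)$. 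Taking cohomology, which commutes with filtered colimits, then yields $\check{H}^{i}(\mathcal{U}, F) = \dirlim_{\mathcal{V}} \check{H}^{i}(\mathcal{V}, F)$.

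Passing to the colimit over all pro-fppf covers $\mathcal{U}$, and observing that every fppf cover is in particular a pro-fppf cover, gives the desired equality $\check{H}^{i}(R_{\pro\fppf}, F) = \check{H}^{i}(R_{\fppf}, F)$. The hardest step will be the verification that $F$ commutes with filtered direct limits of perfect $k$-algebras, as it relies on the finite-presentation descent properties of the fppf topology; once this is in hand, the remainder of the argument is formal from the \v{C}ech calculus combined with the quasi-compactness approximation of pro-fppf covers by fppf ones.
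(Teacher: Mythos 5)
Your proposal is correct and follows essentially the same route as the paper: approximate each pro-fppf cover by its finite-presentation stages, use that $\underline{H}^{j}_{\fppf}(f_{\ast}B)$ commutes with filtered colimits of perfect $k$-algebras (the standard limit theorem for fppf cohomology, which the paper establishes inductively via the \v{C}ech-to-derived-functor spectral sequence) to identify the \v{C}ech complex of the pro-fppf cover with the filtered colimit of those of its fppf stages, and then pass to the colimit over covers. One small slip worth noting: the finite-stage covers $\mathcal{V}$ are \emph{coarser} than $\mathcal{U}$ (i.e.\ $\mathcal{U}$ refines $\mathcal{V}$, not the reverse), and their joint surjectivity is automatic because the image of $\Spec S_{i}$ in $\Spec R$ is contained in that of $\Spec S_{i,\lambda}$ — so no quasi-compactness or open-image argument is needed — but since your subsequent colimit of \v{C}ech complexes is taken in the correct direction, nothing in the argument breaks.
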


\begin{proof}
	Let $S \in k^{\perf}$ be faithfully flat of ind-finite presentation over $R$.
	By definition, we can write
	$S = \dirlim_{\lambda} S_{\lambda}$
	by a filtered directed system of faithfully flat perfect $R$-algebras $S_{\lambda}$ of finite presentation.
	It is enough to show that
		\[
				\check{H}^{i}(S / R, \underline{H}_{\fppf}^{j}(f_{\ast }B))
			=
				\dirlim_{\lambda}
				\check{H}^{i}(S_{\lambda} / R, \underline{H}_{\fppf}^{j}(f_{\ast }B)).
		\]
	We have
		$
				S^{\tensor_{R} (i + 1)}
			=
				\dirlim_{\lambda} S_{\lambda}^{\tensor_{R} (i + 1)}
		$.
	Since $B$ commutes with filtered direct limits, we have
		\[
				H^{j}((S^{\tensor_{R} (i + 1)})_{\fppf}, f_{\ast} B)
			=
				\dirlim_{\lambda}
				H^{j}((S_{\lambda}^{\tensor_{R} (i + 1)})_{\fppf}, f_{\ast }B)
		\]
	for $j \ge 0$ (argue inductively using the \v{C}ech-to-derived functor spectral sequence).
	Therefore the \v{C}ech complex of $S / R$
	with coefficients in $\underline{H}_{\fppf}^{j}(f_{\ast} B)$
	is the direct limit of the \v{C}ech complex of $S_{\lambda} / R$
	with coefficients in $\underline{H}_{\fppf}^{j}(f_{\ast }B)$.
	The result follows by taking cohomology.
\end{proof}

\begin{proof}[Proof of Proposition \ref{prop: pro-fppf and fppf cohomology}]
	We need to prove that the $j'$-th cohomology groups
	of the both sides are isomorphic for all $j' \ge 0$.
	We prove this by induction.
	The case $j' = 0$ is obvious.
	Fix $j' \ge 1$.
	Assume the equality of the $j$-th cohomology for $j = 0, 1, \dots, j' - 1$.
	Consider the \v{C}ech-to-derived functor spectral sequences
		\begin{gather*}
				E_{2, \pro\fppf}^{i j}
			=
				\check{H}^{i}(R_{\pro\fppf}, \underline{H}_{\pro\fppf}^{j}(B))
			\Longrightarrow
				H^{i + j}(R_{\pro\fppf}, B),
			\\
				E_{2, \fppf}^{i j}
			=
				\check{H}^{i}(R_{\fppf}, \underline{H}_{\fppf}^{j}(f_{\ast }B))
			\Longrightarrow
				H^{i + j}(R_{\fppf}, f_{\ast }B).
		\end{gather*}
	The induction hypothesis implies that
	$\underline{H}_{\pro\fppf}^{j}(B) = \underline{H}_{\fppf}^{j}(f_{\ast} B)$
	for $j = 0, 1, \dots, j' - 1$.
	Therefore we have $E_{\infty, \pro\fppf}^{i j} = E_{\infty, \fppf}^{i j}$
	for $i + j = j'$, $j = 0, 1, \dots, j' -  1$ by the above lemma.
	On the other hand, we have
	$E_{2, \pro\fppf}^{0 j'} = E_{2, \fppf}^{0 j'} = 0$
	by \cite[III, Proposition 2.9]{Mil80}.
	Hence the case $j = j'$ follows.
\end{proof}

Objects of $\Alg / k$ and, more generally, $\Loc \Alg / k$
are perfections of smooth group schemes
and hence commutes with filtered direct limits as functors on $k^{\perf}$.
Since the fppf cohomology with coefficients in a smooth group scheme agrees with
the \'etale cohomology (\cite[III, Remark 3.11 (b)]{Mil80}),
we have the following.

\begin{Cor} \label{cor: profppf and etale cohomology of an algebraic group}
	Let $B \in \Loc \Alg / k$.
	Then we have
		\[
				R \Gamma(R_{\pro\fppf}, B)
			=
				R \Gamma(R_{\et}, B)
		\]
	for any $R \in k^{\perf}$.
	In other words, we have $R^{j} f_{\ast} B = 0$ for any $j \ge 1$,
	where $f \colon \Spec k^{\perf}_{\pro\fppf} \to \Spec k^{\perf}_{\et}$
	is the morphism defined by the identity.
\end{Cor}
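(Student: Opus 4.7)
The plan is to factor the morphism $f \colon \Spec k^{\perf}_{\pro\fppf} \to \Spec k^{\perf}_{\et}$ through the intermediate perfect fppf site, as
\[
    f \colon \Spec k^{\perf}_{\pro\fppf}
    \stackrel{f_{1}}{\to}
    \Spec k^{\perf}_{\fppf}
    \stackrel{f_{2}}{\to}
    \Spec k^{\perf}_{\et},
\]
and to show separately that $R^{j} f_{1, \ast} B = 0$ and $R^{j} f_{2, \ast} B = 0$ for all $j \geq 1$. Once this is in hand, the Leray spectral sequence for $f = f_{2} \compose f_{1}$ yields $R^{j} f_{\ast} B = 0$, which is equivalent to the displayed identification of $R \Gamma(R_{\pro\fppf}, B)$ with $R \Gamma(R_{\et}, B)$.

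For the first vanishing, I would apply Proposition \ref{prop: comparison of the fpqc and fppf cohomology}. The hypothesis to verify is that $B$, viewed as a functor on $k^{\perf}$, commutes with filtered direct limits. This is immediate from the assumption that $B$ is locally quasi-algebraic: by definition $B$ is representable by the perfection of a group scheme locally of finite type over $k$, and the usual limit statements of EGA IV (which pass to the perfect setting since perfection commutes with filtered colimits of rings) give that $R \mapsto B(R)$ preserves filtered direct limits on perfect $k$-algebras.

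For the second vanishing, I would invoke the classical fact that fppf and \'etale cohomology agree for smooth commutative group schemes \cite[III, Rmk.\ 3.11 (b)]{Mil80}. Here $B$ is the perfection of a smooth commutative group scheme $B_{0}$ over $k$; to transfer the cited result to the perfect sites, one uses the morphism of sites $\Spec k^{\perf}_{\fppf} \to \Spec k_{\fppf}$ and its \'etale analog from Section \ref{sec: pro-rational and pro-fppf sites}, whose pushforwards are exact and hence identify cohomology of $B$ on the perfect sites with the cohomology of $B_{0}$ on the usual sites. There is no real obstacle here: the corollary is a clean consequence of the preceding proposition together with standard cohomological comparisons, and the only point requiring attention is the limit-commutation hypothesis, which is guaranteed by the finite-type nature of locally quasi-algebraic groups.
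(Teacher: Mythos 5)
Your proposal is correct and follows essentially the same route as the paper: the paper derives this corollary in one line from Proposition \ref{prop: comparison of the fpqc and fppf cohomology} together with the fact that a quasi-algebraic group is the perfection of a smooth group scheme, for which fppf and \'etale cohomology agree. You have merely made explicit the two points the paper leaves implicit, namely the limit-commutation hypothesis (guaranteed by local finite presentation) and the transfer along the exact pushforward $\Spec k^{\perf}_{\fppf} \to \Spec k_{\fppf}$ already noted in Section \ref{sec: pro-rational and pro-fppf sites}.
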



\subsection{Review of Mac Lane's resolution}
\label{sec: review of Mac Lane's resolution}

We review Mac Lane's resolution \cite{Mac57}.
We review only the part necessary for our constructions and proofs.
In particular, the base ring is taken to be $\Z$.
We denote by $\Ab$, $\GrAb$, $\DGAb$
the categories of abelian groups,
graded abelian groups,
differential graded abelian groups, respectively.
For a set $X$, we denote by $\Z[X]$ the free abelian group generated by $X$.

Let $A$ be an abelian group.
Let $C_{n} = \{0, 1\}^{n}$ for $n \ge 0$.
Define a graded abelian group $Q'(A) = \bigoplus_{n \ge 0} Q_{n}'(A)$ by
	\[
			Q_{n}'(A)
		=
			\Z[\Map(C_{n}, A)]
		=
			\Z[A^{2^{n}}].
	\]
There are certain differential maps $\boundary \colon Q_{n}'(A) \to Q_{n - 1}'(A)$
coming from the combinatorics of the vertices $C_{n}$ of the $n$-cubes
and the group structure of $A$.
The explicit definition is not needed for our purpose,
but see \cite[\S 4]{Mac57} especially for formulas in low degrees.
The assignment $A \mapsto Q'(A)$ defines a (non-additive) functor
$Q' \colon \Ab \to \DGAb$.
There is a certain differential graded subgroup $N_{A}$ of $Q'(A)$
generated by the so-called ``norms''.
The cubical construction $Q(A)$ is the normalization defined as
the differential graded abelian group $Q(A) = Q'(A) / N_{A}$.
The assignment $A \mapsto Q(A)$ defines a (non-additive) functor
$Q \colon \Ab \to \DGAb$.
For each $n \ge 0$, the degree $n$ part $Q_{n}(A)$ is a direct summand of the abelian group $Q_{n}'(A)$
and a splitting $Q_{n}'(A) \cong Q_{n}(A) \oplus (N_{A})_{n}$ can be taken functorially
(\cite[\S 5]{Pir96}, \cite[Proposition 2.6]{JP91}).

Mac Lane's resolution $M(A)$ of $A$ can be written functorially as a graded abelian group as
	\[
			M(A)
		=
			Q(A) \tensor_{\Z} \mathcal{B},
	\]
where $\mathcal{B}$ is a certain graded abelian group
$\bar{B}(0, Q(\Z), \eta_{Q})$ in Mac Lane's notation
\cite[\S 7, Remarque 1]{Mac57}.
The group $\mathcal{B}$ does not depend on $A$ and
has homogeneous parts all free abelian groups.
In particular,
each homogeneous part of $M(A)$ is a direct summand of a direct sum of groups of the form
$\Z[A^{m}]$ for various $m \ge 0$.
There are certain differential maps $\boundary \colon M_{n}(A) \to M_{n - 1}(A)$
(which is not the tensor product of the differentials for $Q(A)$ and $\mathcal{B}$).
See \cite[\S 7]{Mac57} for formulas in low degrees.
The assignment $A \mapsto M(A)$ defines a (non-additive) functor
$M \colon \Ab \to \DGAb$.
(In the language of two-sided bar constructions,
the group $\mathcal{B}$ is the unnormalized bar construction $B(\Z, Q(\Z), \Z)$
for the augmented differential graded ring $Q(\Z)$
and the group $M(A)$ is $B(Q(A), Q(\Z), \Z)$.)
Mac Lane's theorem is that $M(A)$ is a resolution of $A$, namely
	\begin{gather*}
			H_{n}(M(A)) = 0
		\quad \text{for} \quad
			n > 0,
		\\
			H_{0}(M(A)) = A
		\quad \text{(functorial isomorphism).}
	\end{gather*}
For later use, we define $M'(A) = Q'(A) \tensor_{\Z} \mathcal{B} \in \GrAb$.
Each homogeneous part of $M(A)$ is a direct sum of groups of the form
$\Z[A^{m}]$ for various $m \ge 0$.
Since $Q(A)$ is a functorial direct summand of $Q'(A)$ in $\GrAb$,
we know that $M(A)$ is a functorial direct summand of $M'(A)$ in $\GrAb$.

In the proof of our theorem,
we will need splitting homotopy $V$ for $Q(A)$ and $M(A)$
\cite[\S 5 and \S 8, respectively]{Mac57} with respect to additive projections,
so we recall it here.
Assume that $A$ is the direct sum of two abelian groups $A_{0}$ and $A_{1}$.
Let $p_{0}, p_{1} \colon A \to A$ be the corresponding projections.
These maps induce endomorphisms
$p_{0}, p_{1} \colon Q(A) \to Q(A)$ and $p_{0}, p_{1} \colon M(A) \to M(A)$
of differential graded abelian groups.
Note that $p_{0} + p_{1} \ne \id$ on $Q(A)$ or $M(A)$ in general
since $Q$ and $M$ are not additive functors.
Define an endomorphism $V$ on the graded abelian group $Q(A)$ of degree $+ 1$ by
	\[
			(V t)(\varepsilon, e)
		=
			p_{\varepsilon}(t(e))
	\]
for $t \colon C_{n} \to A$, $e \in C_{n}$ and $\varepsilon = 0, 1$.
Then
	\[
			- \boundary V - V \boundary = \id - p_{0} - p_{1}
		\quad \text{on} \quad
			Q(A).
	\]
The endomorphism $V$ induces the endomorphism $V = V \tensor \id$ 
on the graded abelian group $M(A) = Q(A) \tensor_{\Z} \mathcal{B}$ of degree $+ 1$.
Then
	\[
			- \boundary V - V \boundary = \id - p_{0} - p_{1}
		\quad \text{on} \quad
			M(A).
	\]

We can do these constructions
with arbitrary pairs of homomorphisms generalizing projections
in the following way (see also \cite[Theorem 11.2]{EM51}).
Let $A$ and $B$ be abelian groups and $p_{0}, p_{1} \colon A \to B$ any homomorphisms.
Set $p = p_{0} + p_{1} \colon A \to B$.
The product homomorphism $(p_{0}, p_{1}) \colon A \to B^{2}$ and the functoriality of $Q$ induce a morphism
$Q(A) \to Q(B^{2})$ in $\DGAb$.
As above, we have the splitting homotopy $Q(B^{2}) \to Q(B^{2})$ with respect to the natural two projections on $B^{2}$.
The summation map $B^{2} \onto B$ and the functoriality of $Q$ induce a morphism
$Q(B^{2}) \to Q(B)$ in $\DGAb$.
Composing these three morphisms in this order,
we obtain a homomorphism $Q(A) \to Q(B)$ of graded abelian groups of degree $+ 1$,
which we again call the splitting homotopy (with respect to $p_{0}$ and $p_{1}$)
and denote by the same symbol $V$.
A similar construction gives a homomorphism $V \colon M(A) \to M(B)$ of graded abelian groups of degree $+ 1$.
The splitting  homotopy conditions above yield equalities
	\begin{gather*}
				- \boundary V - V \boundary = p - p_{0} - p_{1}
			\colon
				Q(A)
			\to
				Q(B),
		\\
				- \boundary V - V \boundary = p - p_{0} - p_{1}
			\colon
				M(A)
			\to
				M(B).
	\end{gather*}

\begin{Rmk}
	In \cite[\S 5 and \S 8]{Mac57},
	the splitting homotopy $V$ was defined
	also for non-additive projections.
	The non-additivity makes the behavior of $V$ much more complicated.
	In this paper, we use $V$ only for additive projections.
\end{Rmk}


\subsection{Pullback of Mac Lane's resolution from the category of fields}
\label{sec: pullback of Mac Lane's resolution}

Let $\mathcal{A}$ be either $\Ab$, $\GrAb$ or $\DGAb$
and $F \colon \Ab \to \mathcal{A}$ a (non-additive) functor,
for example $F = \Z[\;], Q', Q, M', M$,
where $\Z[\;] \colon A \mapsto \Z[A]$ is the free abelian group functor as above.
For a sheaf $A$ of abelian groups on a Grothendieck site $S$,
we define $F(A)$ to be the sheafification of the presheaf $X \mapsto F(A(X))$.
Then $M(A)$ is a resolution of the sheaf $A$ since sheafification is exact.
We have
	\[
			Q_{n}'(A)
		=
			\Z[A^{2^{n}}]
		\in
			\Ab(S),
		\quad
			M'(A)
		=
			Q'(A) \tensor_{\Z} \mathcal{B}
		\in
			\GrAb(S),
	\]
where $\mathcal{B}$ is regarded as a constant sheaf.
The quotient morphisms $Q'_{n}(A) \onto Q_{n}(A)$ and $M'_{n}(A) \onto M_{n}(A)$ in $\Ab(S)$ admit sections.

Let $k' \in k^{\ind\rat}$.
Let $h \colon \Spec k'^{\perf}_{\pro\fppf} \to \Spec k^{\ind\rat}_{\et} / k'$ be
the continuous map defined by the identity.
For $A' \in \Ab(k'^{\perf}_{\pro\fppf})$, we consider
	\[
			h^{\ast} F(h_{\ast} A')
		\in
			\mathcal{A}(k'^{\perf}_{\pro\fppf}).
	\]
We first define a morphism $h^{\ast} F(h_{\ast} A') \to F(A')$ in
$\mathcal{A}(k'^{\perf}_{\pro\fppf})$ using adjunction.
Let $h^{-1}$ be the pullback functor for presheaves with values in $\mathcal{A}$.
Let $\sh^{\perf}_{\pro\fppf}$ (resp.\ $\sh^{\ind\rat}_{\et}$) be the sheafification functor
with respect to the perfect pro-fppf topology (resp.\ the ind-rational \'etale topology).
Let $F^{\pre}(A')$ be the presheaf $X \mapsto F(A'(X))$.
Then we have natural isomorphisms and an adjunction morphism
	\begin{align*}
				h^{\ast} F(h_{\ast} A')
		&	=
				\sh^{\perf}_{\pro\fppf} h^{-1} \sh^{\ind\rat}_{\et} F^{\pre}(h_{\ast} A')
		\\
		&	=
				\sh^{\perf}_{\pro\fppf} h^{-1} h_{\ast} F^{\pre}(A')
		\\
		&	\to
				\sh^{\perf}_{\pro\fppf} F^{\pre}(A')
			=
				F(A').
	\end{align*}
If $A \in \Pro \Alg / k$,
we denote $A' = A \times_{k} k'$ and
write $h_{\ast} A' = A'$.

\begin{Prop} \label{prop: generic resolution}
	Let $k' \in k^{\ind\rat}$.
	Let $F = Q$ or $M$.
	For any $A \in \Pro \Alg / k$,
	the above defined morphism
		\[
				h^{\ast} F(A') \to F(A')
		 \quad \text{in} \quad
		 	\DGAb(k'^{\perf}_{\pro\fppf})
		\]
	with $A' = A \times_{k} k'$ is a quasi-isomorphism.
	In particular, we have
		\begin{gather*}
				H_{n}(h^{\ast} M(A'))
			=
				0
			\quad \text{for} \quad n > 0 \quad{and}
			\\
				H_{0}(h^{\ast} M(A'))
			=
				A'.
		\end{gather*}
\end{Prop}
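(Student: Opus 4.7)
The plan is to exploit the pro-fppf cover $\sigma \colon \xi_{A'} \times_{k'} \xi_{A'} \onto A'$ given by the group operation, which is faithfully flat of ind-finite presentation, together with Mac Lane's splitting homotopy $V$. The key feature of the pro-fppf topology that distinguishes it from pro-\'etale is precisely the availability of this cover, realizing the ``covered by fields'' principle outlined in the introduction.

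The arguments for $F = Q$ and $F = M$ are structurally parallel, since the splitting homotopy applies to both; I will describe the strategy uniformly. The proposition claims $h^{\ast} F(A') \to F(A')$ is a quasi-isomorphism. Once this is proved, the ``in particular'' statement follows from Mac Lane's classical theorem applied sectionwise to $A'$ as an abelian sheaf on the pro-fppf site (using exactness of sheafification), which gives $H_{0}(M(A')) = A'$ and $H_{n}(M(A')) = 0$ for $n > 0$; right-exactness of $h^{\ast}$ and representability of $A'$ further give $H_{0}(h^{\ast} M(A')) = h^{\ast} A' = A'$.

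The key tool is the splitting homotopy: for a direct sum $B = B_{0} \oplus B_{1}$ of pro-fppf abelian sheaves with additive projections $p_{0}, p_{1}$, one has $\id - F(p_{0}) - F(p_{1}) = -\boundary V - V \boundary$ on $F(B)$. Applied to $B = A' \oplus A'$ and composed with the group law $m \colon A' \oplus A' \to A'$, this yields a chain homotopy $F(m) \simeq F(\pi_{0}) + F(\pi_{1})$ on $F(A' \oplus A')$, where $\pi_{0}, \pi_{1}$ are the coordinate projections. Restricting to the subsheaf $\xi \times \xi \subset A' \oplus A'$ with $\xi = \xi_{A'}$, the restriction $\sigma = m|_{\xi \times \xi}$ is a pro-fppf cover, and the $F(\sigma)$-pullback of any local section of $F(A')$ becomes cohomologous to a sum of sections factoring through $\xi \into A'$. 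Such sections come from the ind-rational \'etale site and thus lie in the image of $h^{\ast} F(A') \to F(A')$.

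The main obstacle is the combinatorial bookkeeping needed to iterate this construction at each degree: $F_{n}(A')$ is built from $\Z[(A')^{m}]$ for various $m$, and each coordinate of an element of $(A')^{m}$ must be simultaneously replaced, via a single common pro-fppf refinement, by a generic point modulo a boundary term; the homotopy correction $V c$ moreover introduces higher-degree terms that must themselves lie in the image of $h^{\ast} F_{n + 1}(A')$ on the refined cover. The proposition will then follow by induction on the degree, combined with the classical Mac Lane acyclicity of $M(A'(X))$ for ind-rational $X$, which provides the final bounding chain at each stage.
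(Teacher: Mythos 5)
Your overall strategy --- combine the pro-fppf cover $\xi_{A'} \times_{k'} \xi_{A'} \onto A'$ with Mac Lane's splitting homotopy --- is exactly the idea the paper advertises in its introduction, but as written the argument has a genuine gap at its centre: the splitting homotopy identity $-\boundary V - V\boundary = \id - p_{0} - p_{1}$ is only valid for \emph{additive} projections attached to a direct sum decomposition, and your decomposition of sections of $A'$ into sums of generic points is not additive. Concretely, to apply the homotopy to a chain $t \in F_{n}(A'(X))$ you must lift the finitely many group elements $a \in A'(X)$ occurring in $t$ and in its bounding chains to pairs $(a - \tilde{a}, \tilde{a})$ of generic points \emph{via a single group homomorphism} $a \mapsto \tilde{a}$ defined on the subgroup $L$ they generate; only then do $T = p_{0} + p_{1}$ and $V$ become a chain map and a chain homotopy on all of $F(L)$, so that $\boundary t = 0$ yields the clean identities $t = Tt - \boundary Vt$ (surjectivity) and $t = \boundary(Ts - Vt)$ (injectivity). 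Choosing a lift of each $a$ along $\sigma \colon \xi_{A'} \times_{k'} \xi_{A'} \to A'$ separately, as your ``$F(\sigma)$-pullback'' suggests, produces no such homomorphism; the lifted element $s \in F_{n}(A'^{2})$ with $F(m)(s) = t$ is then not a cycle even when $t$ is, the leftover term $F(m)V\boundary s$ sits in the same degree $n$ (so it cannot be deferred to a higher stage of your induction), and there is no reason it is a boundary or lies in $h^{\ast}F(A')$. The paper explicitly warns that the splitting homotopy for non-additive projections behaves in a much more complicated way. Its resolution is the auxiliary proalgebraic group $[L, A] = \sheafhom_{k}(L, A)$: after base change to $Y = X \times_{k} [L, A]$ the tautological evaluation $a \mapsto \tilde{a}$ is automatically a homomorphism $L \to A'(Y)$ with $a = (a - \tilde{a}) + \tilde{a}$, and a further pro-fppf refinement $Z \to Y$, built from preimages of generic points of $\Im(\tilde{a})$ as in Lemma \ref{lem: generic splitting}, makes each $\tilde{a}$ and $a - \tilde{a}$ generic. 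This construction is the technical heart of the proof and is absent from your proposal.

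A second, smaller issue: membership in $(h^{\ast}F_{n}(A'))(Z)$ requires the \emph{tuples} occurring in a chain to factor through profinite sets of points of the products $A'^{m}$, not merely that each coordinate be individually generic; since $h^{\ast\set}$ does not commute with products (Proposition \ref{prop: pullback is not exact}), coordinatewise genericity is not enough. This joint genericity is what the second half of Lemma \ref{lem: generic splitting} arranges, namely $(a - \tilde{a}, \tilde{a}) \in h^{\ast\set}(A'^{2})$, and it is needed to place $Vt$ in $h^{\ast}F(A'^{2})$. Relatedly, your claim that $H_{0}(h^{\ast}M(A')) = h^{\ast}A' = A'$ follows from right-exactness and representability is circular: the equality $h^{\ast}A' = A'$ (more precisely $Lh^{\ast}A' = A'$) is essentially the degree-zero content of the statement being proved, not a formal input to it.
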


This is the hardest proposition in this paper.
Its proof occupies the next subsection.
Before the proof,
we need to write $h^{\ast} F(A')$ more explicitly.
When $F = \Z[\;]$, we have
	$
			h^{\ast} \Z[A']
		=
			\Z[h^{\ast\set} A'],
	$
where $h^{\ast\set}$ denotes the pullback for sheaves of sets.
By Proposition \ref{prop: schemes as sheaves on the rational site},
we have
	\[
			h^{\ast\set} A'
		=
			\bigcup_{x \subset A'}
			x,
		\quad
			h^{\ast} \Z[A']
		=
			\Z[h^{\ast\set} A']
		=
			\bigcup_{x \subset A'}
				\Z[x],
	\]
where $x$ runs through all profinite sets of points of $A'$.
The $x$ are $\Spec$'s of objects of $k^{\ind\rat} / k'$.
The morphism $h^{\ast} \Z[A'] \to \Z[A']$ is induced from the natural inclusion
$h^{\ast\set} A' \subset A'$, or $x \subset A'$.
Also we have
	\begin{gather*}
			h^{\ast} Q_{n}'(A')
		=
			\bigcup_{x \subset A'^{2^{n}}}
				\Z[x]
		\subset
			\Z[A'^{2^{n}}]
		=
			Q_{n}'(A'),
		\\
			h^{\ast} M'(A')
		=
			h^{\ast} Q'(A') \tensor_{\Z} \mathcal{B}.
	\end{gather*}

Therefore, roughly speaking,
Proposition \ref{prop: generic resolution} says that
$A \in \Pro \Alg / k$ can completely be resolved and described by its field-valued points and group operation.

\begin{Rmk}
	The special case $k' = k$ of Proposition \ref{prop: generic resolution}
	does not imply the general case by the following reason.
	The tensor product $(\,\cdot\,) \tensor_{k} k'$
	does not define a functor $k^{\ind\rat} \to k^{\ind\rat} / k'$,
	so the localization morphism $\Set(k^{\ind\rat}_{\et} / k') \to \Set(k^{\ind\rat}_{\et})$ of topoi
	(whose pullback functor is the restriction functor; \cite[IV, \S 5.2]{AGV72a})
	does not come from a morphism $\Spec k^{\ind\rat}_{\et} / k' \to \Spec k^{\ind\rat}_{\et}$ of sites.
	Moreover, the diagram
		\[
			\begin{CD}
					\Set(k'^{\perf}_{\pro\fppf})
				@<< h^{\ast\set} <
					\Set(k^{\ind\rat}_{\et} / k')
				\\
				@AAA
				@AAA
				\\
					\Set(k^{\perf}_{\pro\fppf})
				@<<<
					\Set(k^{\ind\rat}_{\et})
			\end{CD}
		\]
	of pullback functors is not commutative.
	For example, a representable sheaf $X = \Spec k'' \in k^{\ind\rat}$
	in the right-lower corner
	maps to $\dirlim x$ ($x$ runs through all profinite sets of points of
	$X \times_{k} k' = \Spec (k'' \tensor_{k} k')$)
	through the right-upper corner
	and to $X \times_{k} k'$ through the left-lower corner.
	Therefore arguments about $\Spec k^{\perf}_{\pro\fppf} \to \Spec k^{\ind\rat}_{\et}$
	do not naturally restrict to
	$h \colon \Spec k'^{\perf}_{\pro\fppf} \to \Spec k^{\ind\rat}_{\et} / k'$.
\end{Rmk}


\subsection{Proof of the acyclicity of the pullback of Mac Lane's resolution}
\label{sec: proof of generic resolution}

The idea of proof of Proposition \ref{prop: generic resolution} is the following.
Let $X \in k'^{\perf}$ and $a \in A'(X)$.
We need to modify $a$ so as to put it into $h^{\ast\set} A'$,
which is the union of the profinite sets of points of $A'$.
By ``points'' in the definition of profinite sets of points,
we actually mean generic points of closed subschemes.
The $X$-valued point $a$ is not generic to a closed subscheme of $A'$ in general.
We can write $a = (a - \tilde{a}) + \tilde{a}$
by any other $\tilde{a} \in A'(X)$.
Since $\xi_{A'} \times_{k'} \xi_{A'} \onto A'$ is pro-fppf,
by suitably extending $X$ to its pro-fppf cover $Z$,
we can choose $\tilde{a} \in A'(Z)$ so that
each of $\tilde{a}$ and $a - \tilde{a}$ is generic to a closed subscheme of $A'$.
Since $F$ is a non-additive functor,
the decomposition of this type $a = (a - \tilde{a}) + \tilde{a}$ in $A'$
cannot directly be translated into a similar decomposition in $F(A')$.
This difference is managed by the splitting homotopy $V$
recalled in Section \ref{sec: review of Mac Lane's resolution}.
We choose $Z$ and $\tilde{a}$ carefully
so that the assignment $a \mapsto \tilde{a}$ is a homomorphism,
which is the requirement for $V$ to behave simply.
The technical core is Lemma \ref{lem: generic splitting} below.

We fix $A \in \Pro \Alg / k$ and $k' \in k^{\ind\rat}$,
so that $A' = A \times_{k} k'$.
We will need the following two lemmas
to create sufficiently many pro-fppf covers.

\begin{Lem} \label{lem: dominant is stable under flat base change}
	Let $Y \to X$, $X' \to X$ be morphisms in $k^{\perf}$
	and let $Y' = X' \times_{X} Y$.
	Assume that $X' \to X$ is flat.
	If $Y \to X$ is dominant,
	so is $Y' \to X'$.
\end{Lem}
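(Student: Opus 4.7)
The statement is the standard fact that dominance is preserved by flat base change. My plan is to reduce it to a purely ring-theoretic injectivity argument, taking advantage of the fact that all rings in sight are reduced.

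First I would translate the problem to commutative algebra. Write $X = \Spec A$, $Y = \Spec B$, $X' = \Spec A'$ with $A, B, A' \in k^{\perf}$, and denote the structure homomorphisms by $\phi \colon A \to B$ and $\psi \colon A \to A'$. Since $A$ is a perfect $k$-algebra of characteristic $p$, its Frobenius is bijective, hence $A$ is reduced. A morphism of affine schemes with reduced target is dominant if and only if the corresponding ring map is injective, so the hypothesis translates to: $\phi$ is injective.

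Next I would apply flatness. The assumption that $\psi$ is flat means $A'$ is a flat $A$-module, so tensoring the injection $\phi$ with $A'$ preserves injectivity: the map $A' \cong A' \tensor_{A} A \to A' \tensor_{A} B$ is injective. If the fiber product in $k^{\perf}$ coincides with $\Spec(A' \tensor_{A} B)$, this already gives dominance of $Y' \to X'$.

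The only remaining issue is to account for any perfection involved in forming the fiber product in $k^{\perf}$. If $Y'$ is presented by the perfection $(A' \tensor_{A} B)^{\perf}$, then since $A'$ is itself reduced and embeds in $A' \tensor_{A} B$, no nonzero element of $A'$ can become nilpotent in $A' \tensor_{A} B$; hence the composite $A' \to A' \tensor_{A} B \to (A' \tensor_{A} B)^{\perf}$ remains injective, which is the desired dominance. No step looks hard here; the only subtle point is this perfection bookkeeping, and it is handled cleanly by the reducedness of $A'$.
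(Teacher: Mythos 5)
Your proof is correct and follows essentially the same route as the paper: translate dominance into injectivity of the ring map using that perfect rings are reduced, then use flatness of $A \to A'$ to preserve injectivity after tensoring. The extra remark about perfection is harmless but not needed, since a tensor product of perfect $\F_p$-algebras over a perfect $\F_p$-algebra is already perfect (Frobenius is the tensor product of the Frobenius bijections on the factors).
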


\begin{proof}
	Write $X = \Spec R$, $Y = \Spec S$, $X' = \Spec R'$ and $Y' = \Spec S'$.
	Note that the homomorphism $R \to S$ between perfect (hence reduced) rings
	being dominant is equivalent that it is injective.
	The definition of flatness then says that $R' \to S'$ is injective.
\end{proof}

\begin{Lem} \label{lem: making fpqc covers}
	Let $Z_{i} \to Y \to X$ be morphisms in $k^{\perf}$,
	$i = 1, \dots, n$, and
	let $Z = Z_{1} \times_{Y} \dots \times_{Y} Z_{n}$.
	Assume that $Y / X$ is faithfully flat of profinite presentation.
	Assume also the following conditions for each $i$:
		\begin{itemize}
			\item
				$Z_{i} \to Y$ is flat of profinite presentation,
			\item
				the morphism $(Z_{i})_{x} \to Y_{x}$ on the fiber over any point $x \in X$ is dominant.
		\end{itemize}
	Then $Z$ also satisfies these two conditions.
	In particular, $Z / X$ is faithfully flat of profinite presentation.
\end{Lem}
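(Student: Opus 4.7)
The plan is to induct on $n$, treating $Z$ as an iterated fiber product. The case $n = 1$ is exactly the hypothesis, so assume $n \ge 2$ and that the lemma holds for $n - 1$. Set $W = Z_{1} \times_{Y} \cdots \times_{Y} Z_{n - 1}$, so that $Z = W \times_{Y} Z_{n}$; by the inductive hypothesis $W \to Y$ is flat of profinite presentation and $W_{x} \to Y_{x}$ is dominant for every $x \in X$.

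First I would verify the flatness condition for $Z$. The morphism $Z \to W$ is the base change of $Z_{n} \to Y$ along $W \to Y$, hence flat of profinite presentation by the stability of this class under arbitrary base change noted in Section \ref{sec: pro-rational and pro-fppf sites}. Composing with $W \to Y$ and invoking the composition proposition proved there, the morphism $Z \to Y$ is flat of profinite presentation.

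Next, for fiberwise dominance, fix $x \in X$. Since base change commutes with fiber products, $Z_{x} = W_{x} \times_{Y_{x}} (Z_{n})_{x}$. The morphism $(Z_{n})_{x} \to Y_{x}$ is flat as a base change of the flat $Z_{n} \to Y$, so Lemma \ref{lem: dominant is stable under flat base change} applied to the dominant morphism $W_{x} \to Y_{x}$ and the flat morphism $(Z_{n})_{x} \to Y_{x}$ yields that $Z_{x} \to (Z_{n})_{x}$ is dominant. Composing with the dominant $(Z_{n})_{x} \to Y_{x}$, and using that dominance of morphisms of perfect affine schemes amounts to injectivity of the corresponding ring homomorphisms (which is preserved under composition), I conclude that $Z_{x} \to Y_{x}$ is dominant.

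For the ``in particular'' assertion: the composite $Z \to Y \to X$ is flat of profinite presentation by the composition result, and it is surjective because $Y \to X$ is surjective and for each $x \in X$ the fiber $Z_{x}$ is dominant over the nonempty $Y_{x}$, hence nonempty. The only step requiring genuine care is the fiberwise dominance, but it reduces cleanly to Lemma \ref{lem: dominant is stable under flat base change}, so I expect no serious obstacle.
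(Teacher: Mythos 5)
Your proof is correct and is essentially the argument the paper intends: its one-line proof ("this follows from the previous lemma") is exactly your reduction of the fiber product, via base change, to Lemma \ref{lem: dominant is stable under flat base change} together with the stability of flat morphisms of profinite presentation under composition and base change. You have merely written out the induction and the fiberwise bookkeeping that the paper leaves implicit.
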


\begin{proof}
	This follows from the previous lemma.
\end{proof}

Let $L$ be a finitely generated abelian group
regarded as a constant sheaf over $k$.
To simplify the notation in the next lemma,
we denote the sheaf-Hom $\sheafhom_{k}(L, A) \in \Ab(k^{\perf}_{\pro\fppf})$ by $[L, A]$.
We have $[L, A](X) = \Hom(L, A(X))$ for any $X \in k^{\perf}$.
We have $[\Z, A] = A$.
Take an exact sequence $0 \to \Z^{m} \to \Z^{n} \to L \to 0$.
The left exactness of sheaf-Hom yields
an exact sequence $0 \to [L, A] \to A^{n} \to A^{m}$.
Hence $[L, A] \in \Pro \Alg / k$.
If $a \in L$, we denote by $\tilde{a}$ the homomorphism
$[L, A] \to A$ given by evaluation at $a$.

\begin{Lem} \label{lem: generic splitting}
	Let $X \in k'^{\perf}$,
	$L$ a finitely generated subgroup of $A'(X)$
	and $Y = X \times_{k} [L, A] = X \times_{k'} [L, A]'$.
	For any element $a \in L$,
	there exists $Z \in k'^{\perf}$
	and a $k'$-morphism $Z \to Y$ satisfying the two conditions in Lemma \ref{lem: making fpqc covers}
	such that the natural images $\tilde{a}, a - \tilde{a} \in A'(Z)$ are contained in the subset
	$(h^{\ast\set} A')(Z)$.
	If $a \in (h^{\ast\set} A')(X)$,
	then we can take $Z$ so that
		$
				(a - \tilde{a}, \tilde{a})
			\in
				(h^{\ast\set} (A'^{2}))(Z)
		$.
\end{Lem}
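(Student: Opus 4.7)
The plan is to construct $Z$ as a simultaneous generic-point localization and pro-fppf cover of $Y$, using the ``sum of two generics'' surjection $\mu \colon \xi_{A'} \times_{k'} \xi_{A'} \onto A'$ to decompose $a$ on a cover of $X$.

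Let $B' \subseteq A'$ be the scheme-theoretic image of the proalgebraic group homomorphism $\tilde a \colon [L, A]' \to A'$; its generic point $\xi_{B'}$ is ind-rational, hence a profinite set of points of $A'$. The dense open $U := \tilde a^{-1}(\xi_{B'}) \subseteq [L, A]'$ is the locus on which $\tilde a$ factors through $\xi_{B'}$. To handle $a - \tilde a$ in tandem, form the pro-fppf cover $\widetilde X := X \times_{A', a} (\xi_{A'} \times_{k'} \xi_{A'})$ via $\mu$, on which $a|_{\widetilde X} = u + v$ with $u, v \in \xi_{A'}(\widetilde X)$. Set $Z := \widetilde X \times_{k'} U$, with its morphism to $Y = X \times_{k'} [L, A]'$ obtained as the product of $\widetilde X \to X$ and $U \into [L, A]'$. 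Flatness of pro-finite presentation is a product of flatness conditions, and fiberwise dominance over $X$ holds because each $\widetilde X_x$ is nonempty (by faithful flatness of $\widetilde X \to X$) and $U$ is fiberwise dense in $[L, A]'$.

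On $Z$, the morphism $\tilde a|_Z$ factors through $\xi_{B'}$ and therefore lies in $(h^{\ast\set} A')(Z)$. For $(a - \tilde a)|_Z$, which sends $(y, u_0) \mapsto u(y) + v(y) - \tilde a(u_0)$, I would apply the same pullback construction via $\mu$ once more to this composite morphism, decomposing it pro-fppf-locally on $Z$ into a further sum of generic-point-valued morphisms, and then invoke the pro-fppf descent property of $h^{\ast\set} A'$ to conclude. For the strengthening under the hypothesis $a \in (h^{\ast\set} A')(X)$, we may assume after a pro-fppf cover that $a$ already factors through an ind-rational scheme, in which case the pair $(\tilde a, a - \tilde a)$ factors through a common ind-rational subscheme of $A' \times A'$ rather than through separate ind-rational subschemes of $A'$.

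The main obstacle I anticipate is exactly the verification that $(a - \tilde a)|_Z$ lies in $(h^{\ast\set} A')(Z)$: although this morphism visibly factors through the perfect scheme $\xi_{A'} \times_{k'} \xi_{A'} \times_{k'} \xi_{B'}$, this triple product is not itself ind-rational, since tensor products of function fields of positive transcendence degree acquire positive Krull dimension. Overcoming this requires carefully exploiting that the sum map $\mu$ is a pro-fppf cover, iterating the generic-point pullback and tracking the sheafification so that the resulting factorization descends to a profinite set of points of $A'$ pro-fppf-locally on $Z$.
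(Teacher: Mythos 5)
There is a genuine gap, and it sits exactly where you predicted: the verification that $(a - \tilde{a})|_{Z}$ lies in $(h^{\ast\set} A')(Z)$. Your treatment of $\tilde{a}$ itself is correct and is the same as the paper's (your $U = \tilde{a}^{-1}(\xi_{B'})$ is the paper's $\tilde{a}^{-1}(\xi_{\Im(\tilde{a})'})$, pulled back to $X$). But the proposed resolution for $a - \tilde{a}$ --- decompose $a = u + v$ via the cover $\mu \colon \xi_{A'} \times_{k'} \xi_{A'} \onto A'$, iterate the $\mu$-pullback on $u + v - \tilde{a}(u_{0})$, and descend --- cannot close. First, the decomposition of $a$ is a red herring: $u(y) + v(y) - \tilde{a}(u_{0}) = a - \tilde{a}(u_{0})$, so nothing is gained. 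Second, and more seriously, the subsheaf $h^{\ast\set} A' \subset A'$ is \emph{not} locally surjective for the pro-fppf topology: a morphism $Z \to A'$ lies in $h^{\ast\set} A'$ only if it factors through a profinite set of points, and this property is destroyed by nothing and created by nothing under precomposition with a faithfully flat cover (the identity $A' \to A'$ never becomes a section of $h^{\ast\set} A'$ on any cover). Pulling back by $\mu$ again only introduces fresh independent generic summands; a sum of three generic points, or of two correlated ones, has no reason to be generic, so the iteration never terminates in a factorization through an ind-rational scheme. Concretely, on your $Z = \tilde{X} \times_{k'} U$ the map $(w, u_{0}) \mapsto a(w) - \tilde{a}(u_{0})$ spreads over the ``$X$-directions'' as well as over $\Im(\tilde{a})'$, and a product-type cover cannot confine it to a profinite set of points.

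The missing idea is a \emph{shear}, not a further cover. The inclusion $L \subset A'(X)$ defines $\iota \colon X \to [L, A]'$, and $(x, \varphi) \mapsto (x, \iota(x) - \varphi)$ is an automorphism of $Y = X \times_{k'} [L, A]'$ over $X$ whose composite with $\tilde{a} \colon Y \to A'$ is exactly $a - \tilde{a}$. Transporting your cover $Z_{1} = X \times_{k'} U$ through this automorphism yields a (non-product, $a$-dependent) subscheme $Z_{2} \to Y$ on which $a - \tilde{a}$ factors through $\xi_{\Im(\tilde{a})'}$, and one takes $Z = Z_{1} \times_{Y} Z_{2}$; the two conditions of Lemma \ref{lem: making fpqc covers} are preserved because the shear is an automorphism over $X$. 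The same caution applies to your sketch of the refinement when $a \in (h^{\ast\set} A')(X)$: one must use the automorphism $(b, c) \mapsto (b + c, c)$ of $A'^{2}$ to reduce to the pair $(a, \tilde{a})$, and then pull back the generic point of the single ind-rational scheme $a \times_{k'} \Im(\tilde{a})'$ (not the product of generic points, which, as you note, is not ind-rational).
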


\begin{proof}
	Consider the following commutative diagram with a cartesian square:
		\[
			\begin{CD}
					\tilde{a}^{-1}(\xi_{\Im(\tilde{a})'})
				@>> \tilde{a} >
					\xi_{\Im(\tilde{a})'}
				\\
				@VV \text{incl} V
				@V \text{incl} VV
				\\
					[L, A]'
				@> \tilde{a} >>
					\Im(\tilde{a})'
				@> \text{incl} >>
					A',
			\end{CD}
		\]
	where $\Im(\tilde{a})' = \Im(\tilde{a}) \times_{k} k'$
	and $\xi_{\Im(\tilde{a})'}$ its generic point
	(Definition \ref{def: points in profinite setting}).
	Note that $\Im(\tilde{a}) \in \Pro \Alg / k$
	and hence $\Im(\tilde{a})'$ satisfies the assumption of
	Definition \ref{def: points in profinite setting} \eqref{def: generic point}.
	The bottom arrow in the square is faithfully flat of profinite presentation
	since it is a surjection of proalgebraic groups base-changed to $k'$.
	The right arrow is dominant flat of profinite presentation.
	Hence the left arrow is dominant flat of profinite presentation
	by Lemma \ref{lem: dominant is stable under flat base change}.
	We define $Z_{1} = X \times_{k'} \tilde{a}^{-1}(\xi_{\Im(\tilde{a})})$.
	Then the natural morphism $Z_{1} \to Y$ satisfies
	the two conditions in Lemma \ref{lem: making fpqc covers}
	by Lemma \ref{lem: dominant is stable under flat base change}.
	The natural image $\tilde{a} \in A'(Z_{1})$ is a morphism
	$Z_{1} \to A'$ that factors through
		$
				\xi_{\Im(\tilde{a})'}
			\subset
				h^{\ast\set} A'
		$.
	Hence $\tilde{a} \in (h^{\ast\set} A')(Z_{1})$.
	
	The natural inclusion $L \subset A'(X)$ defines a morphism
	$\iota \colon X \to [L, A]'$.
	We have an automorphism of the $X$-scheme $Y = X \times_{k'} [L, A]'$ given by
	$(x, \varphi) \leftrightarrow (x, \iota(x) - \varphi)$.
	The composite of this with the morphism $\tilde{a} \colon Y \to A'$
	is $a - \tilde{a}$.
	We define $Z_{2} \to Y$ to be the inverse image of
	the morphism $Z_{1} \to Y$ by this $X$-automorphism of $Y$.
	Then we have $a - \tilde{a} \in (h^{\ast\set} A')(Z_{2})$
	and $Z_{2}$ satisfies the two conditions in Lemma \ref{lem: making fpqc covers}.
	We define $Z = Z_{1} \times_{Y} Z_{2}$.
	Then we have $\tilde{a}, a - \tilde{a} \in (h^{\ast\set} A')(Z)$
	and $Z$ satisfies the two conditions in Lemma \ref{lem: making fpqc covers}.
	
	Next assume that $a \in (h^{\ast\set} A')(X)$.
	Consider the automorphism $(b, c) \leftrightarrow (b + c, c)$ of the group $A'^{2}$,
	which maps $(a - \tilde{a}, \tilde{a})$ to $(a, \tilde{a})$.
	Hence it is enough to show that we can take $Z$ so that
	$(a, \tilde{a}) \in (h^{\ast\set} (A'^{2}))(Z)$.
	We identify $a \colon X \to h^{\ast\set} A'$ with its image,
	which is an object of $k^{\ind\rat} / k'$,
	so that we have a faithfully flat morphism $a \colon X \onto a$
	of profinite presentation.
	Consider the following commutative diagram with a cartesian square:
		\[
			\begin{CD}
					(a, \tilde{a})^{-1}(\xi_{a \times_{k'} \Im(\tilde{a})'})
				@>> (a, \tilde{a}) >
					\xi_{a \times_{k'} \Im(\tilde{a})'}
				\\
				@VV \text{incl} V
				@V \text{incl} VV
				\\
					X \times_{k'} [L, A]'
				@> (a, \tilde{a}) >>
					a \times_{k'} \Im(\tilde{a})'
				@> \text{incl} >>
					A'^{2}
			\end{CD}
		\]
	Again, the generic point of $a \times_{k'} \Im(\tilde{a})' = a \times_{k} \Im(\tilde{a})$ is well-defined.
	We define $Z = (a, \tilde{a})^{-1}(\xi_{a \times_{k'} \Im(\tilde{a})'})$.
	Then $(a, \tilde{a}) \in (h^{\ast\set} (A'^{2}))(Z)$
	by the same argument as above.
	The square in the above diagram can be split into the following two cartesian squares:
		\[
			\begin{CD}
					Z
				@>> (\id, (a, \tilde{a})) >
					X \times_{a} \xi_{a \times_{k'} \Im(\tilde{a})'}
				@>> \proj_{2} \times \proj_{3} >
					\xi_{a \times_{k'} \Im(\tilde{a})'}
				\\
				@VV \text{incl} V
				@VV \proj_{1} \times \proj_{3} V
				@V \text{incl} VV
				\\
					Y
				@> (\id, \tilde{a}) >>
					X \times_{k'} \Im(\tilde{a})'
				@> (a, \id) >>
					a \times_{k'} \Im(\tilde{a})'
			\end{CD}
		\]
	The bottom two arrows are faithfully flat of profinite presentation.
	The third vertical arrow is dominant flat of profinite presentation.
	By pulling back the left square by a point of $X$
	and using Lemma \ref{lem: dominant is stable under flat base change},
	we see that the morphism $Z \to Y$ satisfies the two conditions of Lemma \ref{lem: making fpqc covers}.
\end{proof}

Let $X$, $L \subset A'(X)$, $Y = X \times_{k'} [L, A]'$
be as in the lemma.
Define two homomorphisms
	\[
			p_{0}, p_{1}
		\colon
			L
		\to
			A'(X) \oplus A'([L, A]'),
		\quad
			p_{0}(a)
		=
			a - \tilde{a},
		\quad
			p_{1}(a)
		=
			\tilde{a}.
	\]
The composites of them with the natural map
$A'(X) \oplus A'([L, A]') \to A'(X \times_{k'} [L, A]') = A'(Y)$
are denoted by the same letters $p_{0}, p_{1}$.
Their sum is the inclusion
$L \subset A'(X) \subset A'(Y)$.
Let $F \colon \Ab \to \mathcal{A}$ be one of the functors
$\Z[\;]$, $Q'$, $Q$, $M'$ or $M$.
By functoriality, we have homomorphisms
	\begin{gather*}
			p_{0}, p_{1}, \incl
		\colon
			F(L)
		\to
			F(A'(Y))
		=
			F^{\pre}(A')(Y)
		\to
			F(A')(Y),
		\\
			(p_{0}, p_{1})
		\colon
			F(L)
		\to
			F(A'(Y)^{2})
		=
			F^{\pre}(A'^{2})(Y)
		\to
			F(A'^{2})(Y).
	\end{gather*}
Define
	\begin{gather*}
			T
		\colon
			F(L)
		\to
			F(A')(Y),
		\quad
			T t
		=
			p_{0}(t) + p_{1}(t),
		\\
			V
		\colon
			F(L)
		\to
			F(A'^{2})(Y),
		\quad
			V t
		=
			(p_{0}(t), p_{1}(t)).
	\end{gather*}
Note that $T \ne \incl$ since $F$ is non-additive.
For $F = Q$ or $M$, let $F' = Q'$ or $M'$, respectively.
For each $n \ge 0$, the identification $\Z[(A'(Y)^{2})^{2^{n}}] \cong \Z[A'(Y)^{2^{n + 1}}]$ defines identifications
$F'_{n}(A'(Y)^{2}) \cong F'_{n + 1}(A'(Y))$
and $F'_{n}(A'^{2}) \cong F'_{n + 1}(A')$,
where $F'_{n}$ is the degree $n$ part of $F'$.
We have a commutative diagram
	\[
		\begin{CD}
				F'_{n}(L)
			@>> V >
				F'_{n}(A'^{2})(Y) \cong F'_{n + 1}(A')(Y)
			\\
			@VVV
			@VVV
			\\
				F_{n}(L)
			@> V >>
				F_{n + 1}(A')(Y),
		\end{CD}
	\]
where: the upper $V$ is defined above;
the lower $V$ is the splitting homotopy $F_{n}(L) \to F_{n + 1}(A'(Y))$
recalled in Section \ref{sec: review of Mac Lane's resolution}
followed by the natural homomorphism $F_{n + 1}(A'(Y)) \to F_{n + 1}(A')(Y)$;
and the vertical morphisms are the natural quotient maps.
In this sense, the above $V$ is compatible with the splitting homotopy.
By the splitting homotopy condition in Section \ref{sec: review of Mac Lane's resolution},
we have
	\begin{equation} \label{eq: splitting homotopy}
			- \boundary V - V \boundary
		=
			\incl - T
		\colon
			F(L)
		\to
			F(A')(Y).
	\end{equation}

\begin{Lem} \label{lem: generic homotopy}
	In the setting of the previous lemma and for $F = \Z[\;]$, $Q'$, $Q$, $M'$ or $M$,
	let $t \in F(L)$.
	Then there exists $Z \in k'^{\perf}$
	and a $k'$-morphism $Z \to Y$ with the composite $Z \to Y \to X$ faithfully flat of profinite presentation
	such that the natural image $T t \in F(A')(Z)$ is contained in the subgroup
	$(h^{\ast} F(A'))(Z)$.
	If $t \in (h^{\ast} F(A'))(X)$,
	then we can take $Z$ so that
	$V t \in (h^{\ast} F(A'^{2}))(Z)$.
\end{Lem}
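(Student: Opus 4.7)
The plan is to reduce to a statement about finitely many elements of $L$ and then to apply a simultaneous version of Lemma \ref{lem: generic splitting}, assembling the final cover via Lemma \ref{lem: making fpqc covers}.

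First I would observe that for each of the four functors $F$, any element $t \in F(L)$ is supported on a finite subset $S \subset L$. For $F = \Z[\,\cdot\,]$ this is immediate. For $F = Q'$ a section is a finite $\Z$-linear combination $\sum n_i [t_i]$ of maps $t_i \colon C_{n_i} \to L$, with support $\bigcup_i t_i(C_{n_i})$. For $F = Q$ I would use the functorial splitting $Q'_n \cong Q_n \oplus (N)_n$ recalled in Section \ref{sec: review of Mac Lane's resolution} to lift $t$ to $Q'(L)$. For $F = M = Q(\,\cdot\,) \tensor_{\Z} \mathcal{B}$ a section is a finite sum $\sum q_j \tensor b_j$ with $q_j \in Q(L)$, whose support is the union of the supports of the $q_j$.

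Next I would prove the following simultaneous strengthening of Lemma \ref{lem: generic splitting}: given any finite list $a_1, \dots, a_m \in L$, there exist $Z \in k'^{\perf}$ and a $k'$-morphism $Z \to Y$ satisfying the two conditions of Lemma \ref{lem: making fpqc covers} such that $(a_1 - \tilde{a}_1, \tilde{a}_1, \dots, a_m - \tilde{a}_m, \tilde{a}_m)$ lies in $(h^{\ast\set}((A')^{2m}))(Z)$; and if moreover $(a_1, \dots, a_m) \in (h^{\ast\set}((A')^{m}))(X)$, then one can additionally arrange $(a_1, a_1 - \tilde{a}_1, \tilde{a}_1, \dots, a_m, a_m - \tilde{a}_m, \tilde{a}_m) \in (h^{\ast\set}((A')^{3m}))(Z)$. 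The proof mimics the two-step scheme of Lemma \ref{lem: generic splitting}, but replaces the single evaluation $\tilde{a} \colon [L, A]' \to \Im(\tilde{a})'$ by the joint evaluation $(\tilde{a}_1, \dots, \tilde{a}_m) \colon [L, A]' \to (A')^{m}$, whose image is a proalgebraic subgroup and hence has a generic point in the sense of Definition \ref{def: points in profinite setting}. The automorphism $(x, \varphi) \leftrightarrow (x, \iota(x) - \varphi)$ of $Y$ over $X$ transfers the joint generic property of $(\tilde{a}_i)_i$ to $(a_i - \tilde{a}_i)_i$, and fiber products together with Lemma \ref{lem: dominant is stable under flat base change} produce the combined cover.

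Given $t \in F(L)$ with finite support $S = \{a_1, \dots, a_m\}$, I would apply the simultaneous strengthening to $S$ to obtain $Z$. Unravelling the definitions of $T$ and $V$ in the paragraph preceding the lemma, each elementary summand of $Tt \in F(A')(Y)$ or $Vt \in F((A')^2)(Y)$ is represented by a tuple in $(A')^{r}(Y)$ whose entries are drawn from $\{p_0(a_i), p_1(a_i)\}_{i=1}^{m}$. Since the full tuple $(p_0(a_1), p_1(a_1), \dots, p_0(a_m), p_1(a_m))$ is a profinite point of $(A')^{2m}$ over $Z$, every such subtuple lies in $h^{\ast\set}((A')^{r})(Z)$, placing $Tt$ in $h^{\ast}F(A')(Z)$; under the additional assumption $t \in (h^{\ast}F(A'))(X)$, the second half of the simultaneous statement yields $Vt \in (h^{\ast}F((A')^2))(Z)$ similarly. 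The main obstacle is the passage from the pointwise statement of Lemma \ref{lem: generic splitting} to the simultaneous one: because $h^{\ast\set}$ does not commute with finite products (Proposition \ref{prop: pullback is not exact}), having each $p_\varepsilon(a_i)$ individually in $h^{\ast\set}A'$ is strictly weaker than having the full tuple in $h^{\ast\set}((A')^{2m})$, so one genuinely needs the joint generic point of the combined evaluation map on $[L, A]'$ to close the argument.
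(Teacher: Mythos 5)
Your overall strategy---reduce to a finite support, upgrade Lemma \ref{lem: generic splitting} to tuples via the joint evaluation map $[L,A]' \to (A')^{m}$, and assemble the cover with Lemma \ref{lem: making fpqc covers}---is close in spirit to the paper's proof (which reduces to $F = \Z[\;]$, splits $t$ into basis elements, applies Lemma \ref{lem: generic splitting} to each, and takes the fiber product of the resulting covers), and you have correctly spotted that the reduction to $\Z[\;]$ tacitly requires a tuple version of Lemma \ref{lem: generic splitting}, precisely because $h^{\ast\set}$ does not commute with products. However, the first half of your ``simultaneous strengthening'' is false. Already for $m = 1$ it asserts that, \emph{unconditionally}, $(a - \tilde{a}, \tilde{a}) \in (h^{\ast\set}(A'^{2}))(Z)$ for a suitable cover $Z$ of $X$. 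Pushing forward along the group law $A'^{2} \to A'$, $(u,v) \mapsto u + v$ (which carries profinite sets of points to profinite sets of points), this would force $a \in (h^{\ast\set} A')(Z)$, hence $a \in (h^{\ast\set} A')(X)$ since $h^{\ast\set} A'$ is a subsheaf of $A'$ and $Z \to X$ is a cover---that is, exactly the extra hypothesis in the last sentence of Lemma \ref{lem: generic splitting}, which fails for general $a$ (e.g.\ $X = A' = \Ga$, $a = \id$). Your sketched proof does not deliver the interleaved $2m$-tuple either: the joint evaluation and the automorphism $(x, \varphi) \leftrightarrow (x, \iota(x) - \varphi)$ give that each of the two blocks $(\tilde{a}_{1}, \dots, \tilde{a}_{m})$ and $(a_{1} - \tilde{a}_{1}, \dots, a_{m} - \tilde{a}_{m})$ separately lies in $h^{\ast\set}((A')^{m})$, and taking the fiber product of the two covers does not merge this into membership of the concatenation in $h^{\ast\set}((A')^{2m})$---this is the very failure of $h^{\ast\set}$ to commute with products that you invoke at the end.

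The repair is to respect the pure/mixed dichotomy, which is exactly the $T$/$V$ dichotomy of the lemma. Since $T = p_{0} + p_{1}$ applies $p_{\varepsilon}$ to a whole basis element at once, every elementary summand of $Tt$ is a \emph{pure} tuple $(p_{\varepsilon}(t_{i}(e)))_{e}$ with a single $\varepsilon$; so for $Tt$ the two blocks suffice, and the unconditional blockwise statement (which your joint-evaluation argument does prove) is all that is needed. The summands of $Vt$ are mixed tuples, but only over the support of a \emph{single} basis element $t_{i}$; the hypothesis $t \in (h^{\ast} F(A'))(X)$ gives exactly that each tuple $(t_{i}(e))_{e}$ is jointly a profinite point of the relevant power of $A'$, which is the input the tuple version of the \emph{second} assertion of Lemma \ref{lem: generic splitting} requires. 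By contrast, your hypothesis that the entire support $(a_{1}, \dots, a_{m})$ lies in $h^{\ast\set}((A')^{m})(X)$ is not implied by $t \in (h^{\ast} F(A'))(X)$ once $t$ has more than one basis element, because a fiber product of profinite sets of points is not a profinite set of points. So the argument should run: prove the tuple version of Lemma \ref{lem: generic splitting} blockwise (unconditional) and mixed-per-basis-element (conditional on that basis element's tuple being a profinite point), apply it to each basis element, take the fiber product of the covers via Lemma \ref{lem: making fpqc covers}, and use the directedness of $\bigcup_{x} \Z[x]$ to combine the summands.
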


\begin{proof}
	We only need to show this for $F = \Z[\;]$
	in view of the structures of homogeneous parts of $Q', Q, M', M$
	recalled in Section \ref{sec: review of Mac Lane's resolution}.
	Write $t = \sum_{i = 1}^{n} m_{i} (a_{i})$,
	$m_{i} \in \Z$, $a_{i} \in L$,
	where $(a_{i})$ is the image of $a_{i}$ in $\Z[L]$.
	For any $i$, take $Z_{i}$ corresponding to $a_{i} \in L$ given in the previous lemma.
	Then $T (a_{i}) \in \Z[h^{\set\ast} A'](Z_{i}) = (h^{\ast} \Z[A'])(Z_{i})$.
	Let $Z = Z_{1} \times_{Y} \dots \times_{Y} Z_{n}$.
	Then $Z / X$ is faithfully flat of profinite presentation by Lemma \ref{lem: making fpqc covers},
	and we have $T t = \sum m_{i} T (a_{i}) \in (h^{\ast} \Z[A'])(Z)$.
	The statement for $V$ is similar.
\end{proof}

\begin{proof}[Proof of Proposition \ref{prop: generic resolution}]
	Let $F = Q$ or $M$.
	We want to show that 
		\[
				H_{n}(h^{\ast} F(A')) \isomto H_{n}(F(A'))
			\quad \text{in} \quad
				\Ab(k'^{\perf}_{\pro\fppf})
		\]
	for all $n \ge 0$.
	We want to show that the inverse is given by $T$
	(with pro-fppf locally defined chain homotopy to the identity given by $V$).
	To give a short and rigorous proof,
	we avoid doing this directly but as follows.
	
	We first treat the injectivity.
	Let $F' = Q'$ or $M'$ if $F = Q$ or $M$, respectively.
	Let $t \in (h^{\ast} F_{n}(A'))(X)$ be an element such that
	$t = \boundary s$ for some $s \in F_{n + 1}(A'(X))$.
	(We are ignoring the difference between $F'_{n}(A'(X)) = F_{n}^{\prime \pre}(A')(X)$ and $F'_{n}(A')(X)$
	since an element of the latter comes from an element of the former
	after taking a pro-fppf cover of $X$
	and it is enough to argue pro-fppf locally on $X$.)
	Since $F'_{n}(A') \onto F_{n}(A')$ is a split surjection,
	we can take a lift $t' \in (h^{\ast} F'_{n}(A'))(X)$ of $t$.
	Take a finitely generated subgroup $L$ of $A'(X)$ large enough so that
	$t' \in F'_{n}(L)$ and $s \in F_{n + 1}(L)$.
	Then by the above lemma,
	there exists $Z \in k'^{\perf}$ faithfully flat of profinite presentation over $X$
	such that $V t' \in (h^{\ast} F'_{n}(A'^{2}))(Z)$ and
	$T s \in (h^{\ast} F_{n + 1}(A'))(Z)$.
	Hence $V t \in (h^{\ast} F_{n + 1}(A'))(Z)$.
	A simple computation using the splitting homotopy condition \eqref{eq: splitting homotopy}
	shows that $t = \boundary(T s - V t)$ in $(h^{\ast} F_{n}(A'))(Z)$.
	This shows the injectivity.
	
	Next we show the surjectivity.
	Let $t \in F_{n}(A'(X))$ be an element with $\boundary t = 0$.
	Take a finitely generated subgroup $L$ of $A'(X)$ large enough so that
	$t \in F_{n}(L)$.
	Then by the above lemma,
	there exists $Z \in k'^{\perf}$ faithfully flat of profinite presentation over $X$
	such that $T t \in (h^{\ast} F_{n}(A'))(Z)$.
	Then
		\[
				t
			=
				T t - \boundary V t - V \boundary t
			=
				T t - \boundary V t.
		\]
	This shows the surjectivity.
\end{proof}

\begin{Rmk} \label{rmk: stable homology} \BetweenThmAndList
	\begin{enumerate}
	\item
		Proposition \ref{prop: generic resolution} is not true
		if we use the fppf topology instead of the pro-fppf topology.
		Namely, let $h_{0} \colon \Spec k'^{\perf}_{\fppf} \to \Spec k^{\ind\rat}_{\et} / k'$
		be the continuous map defined by the identity.
		Then we can show that $H_{1}(h_{0}^{\ast}Q(\Ga)) \ne 0$,
		even though $H_{1}(Q(\Ga))$, which is the first stable homology (see below), vanishes.
		This difference comes from the fact that the inclusion
		$\xi_{\Im(\tilde{a})'} \into \Im(\tilde{a})'$
		used in the proof of Lemma \ref{lem: generic splitting}
		is flat of profinite presentation but not of finite presentation,
		even if $A \in \Alg / k$ and $k' = k$.
	\item
		The groups $H_{n}(Q(A'))$ are the integral stable homology groups of
		the Eilenberg-Mac Lane spectrum $H A'$
		(see \cite{Pir96} for a simple proof of this fact).
		Therefore the part of Proposition \ref{prop: generic resolution} for $Q$ says that
		the complex $h^{\ast} Q(A')$ correctly calculates these groups.
	\end{enumerate}
\end{Rmk}


\subsection{Ext for the rational \'etale and perfect \'etale sites}
\label{sec: comparison of Ext groups for the rational etale and perfect etale sites}

For any perfect affine $k$-scheme $X$, the localization $\Spec k^{\perf}_{\et} / X$
of the perfect \'etale site $\Spec k^{\perf}_{\et}$ at $X$
is also denoted by $X^{\perf}_{\et}$.

\begin{Prop} \label{prop: Ext for the rational etale and perfect etale sites}
	Let $A \in \Pro \Alg / k$, $B \in \Loc \Alg / k$ and $k' \in k^{\ind\rat}$.
	Then we have
		\[
				R \Hom_{k^{\ind\rat}_{\et} / k'}(A, B)
			=
				R \Hom_{k'^{\perf}_{\et}}(A, B).
		\]
	If $k'$ is a field, this is further isomorphic to
	$R \Hom_{k'^{\ind\rat}_{\et}}(A, B)$.
	If $A \in \Alg / k$ and $k' = k$, then the same equality is true
	with $k^{\ind\rat}$ replaced by $k^{\rat}$.
\end{Prop}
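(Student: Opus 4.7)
The plan is to bridge $R\Hom$ on the two sites via a common computation on the perfect pro-fppf site $\Spec k'^{\perf}_{\pro\fppf}$, with Mac Lane's resolution $M(A)$ as the functorial intermediary. The two main inputs are Corollary \ref{cor: profppf and etale cohomology of an algebraic group}, which compares pro-fppf and etale cohomology with quasi-algebraic coefficients, and Proposition \ref{prop: generic resolution}, which controls the acyclicity of the pullback $h^{\ast}M(A)$ on the pro-fppf site.

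First, I would establish $R\Hom_{k'^{\perf}_{\et}}(A, B) \cong R\Hom_{k'^{\perf}_{\pro\fppf}}(A, B)$ using that the continuous map $\Spec k'^{\perf}_{\pro\fppf} \to \Spec k'^{\perf}_{\et}$ is a morphism of sites given by the identity on underlying categories. Here $M(A)$ is a resolution of $A$ on either site (sheafification preserves the exactness of Mac Lane's theorem), and the hypercohomology spectral sequence of $R\Hom_{S}(M(A), B)$ has $E_{1}$-terms given by $R\Gamma(A^{m}, B)$ (for various $m$) computed in the respective topology; these agree by Corollary \ref{cor: profppf and etale cohomology of an algebraic group}. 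For the harder comparison with $R\Hom_{k'^{\ind\rat}_{\et}}(A, B)$, I would use that by Proposition \ref{prop: generic resolution}, the pullback $h^{\ast}M(A)$ to $\Spec k'^{\perf}_{\pro\fppf}$ is still a resolution of $A$; each of its components is a direct summand of $\bigoplus_{x} \Z[x]$ where $x$ ranges over profinite sets of points of some $A^{m}$. Each such $x$ is an object of $k'^{\ind\rat}$, so by combining Corollary \ref{cor: profppf and etale cohomology of an algebraic group} and Proposition \ref{prop: rational etale cohomology is Galois cohomology}, the cohomology $R\Gamma(x_{\pro\fppf}, B)$ coincides with cohomology computed on $\Spec k'^{\ind\rat}_{\et}$. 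On the ind-rational side, the components of $M(A)$ are the sheaves $\Z[A^{m}]$, which by Proposition \ref{prop: schemes as sheaves on the rational site} coincide with $\varinjlim_{x} \Z[x]$ over the same directed set. Matching the two spectral sequences term by term then yields the identification.

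The main obstacle is the non-exactness of the pullback $h^{\ast}$ (Proposition \ref{prop: pullback is not exact}), which prevents a direct adjunction argument; this is precisely what Proposition \ref{prop: generic resolution} is designed to work around, by providing a resolution whose components are ``generated by fields'' in a way that survives pullback. The comparison of the two spectral sequences will also require a cofinality argument relating the $\bigoplus$-style direct sum indexing on the pro-fppf side to the $\varinjlim$-style colimit on the ind-rational side, which should be routine since both index sets run over the same profinite sets of points. Finally, for the case $A \in \Alg/k$ with $k' = k$: the schemes $A^{m}$ are then of finite type, so their profinite sets of points reduce to finite unions of points whose residue fields lie in $k^{\rat}$ rather than merely $k^{\ind\rat}$, and the entire argument descends to the smaller site $\Spec k^{\rat}_{\et}$.
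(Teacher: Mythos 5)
Your proposal is correct and follows essentially the same route as the paper: both pass through the perfect pro-fppf site, use Mac Lane's resolution together with Proposition \ref{prop: generic resolution} to circumvent the non-exactness of $h^{\ast}$, and invoke Corollary \ref{cor: profppf and etale cohomology of an algebraic group} to identify the cohomology of the field-like terms. The only difference is packaging: where you match hyperext spectral sequences term by term (which would also require a word on convergence for the unbounded resolution $M(A)$), the paper encapsulates the same computation as the statement $L h^{\ast} A' = A'$ and then concludes by the derived adjunction between $L h^{\ast}$ and $R h_{\ast}$.
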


We prove this below.
Throughout this subsection,
let $k' \in k^{\ind\rat}$ and
let $h \colon \Spec k'^{\perf}_{\pro\fppf} \to \Spec k^{\ind\rat}_{\et} / k'$ be
the continuous map defined by the identity.
We first show that $h^{\ast}$ admits a left derived functor.
We know that $h_{\ast}$ sends acyclic sheaves to acyclic sheaves
(see the first paragraph of Section \ref{sec: The ind-rational etale site}).

\begin{Lem}
	Let $h \colon S' \to S$ be a continuous map of sites.
	Assume that $h_{\ast}$ sends acyclic sheaves to acyclic sheaves.
	Then the pullback functor $h^{\ast} \colon \Ab(S) \to \Ab(S')$
	admits a left derived functor $L h^{\ast} \colon D(S) \to D(S')$,
	which is left adjoint to $R h_{\ast} \colon D(S') \to D(S)$.
	We have $L_{n} h^{\ast} \Z[X] = 0$ for any object $X$ of $S$ and $n \ge 1$.
\end{Lem}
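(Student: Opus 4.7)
The plan is to build $L h^{\ast}$ via K-flat resolutions by sums of representables, to deduce the derived adjunction formally from the non-derived one, and then to pin down $L h^{\ast} \Z[X]$ by Yoneda in the derived category.

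Since $\Ab(S)$ is a Grothendieck abelian category, every complex $F^\bullet \in K(\Ab(S))$ admits a quasi-isomorphism $P^\bullet \to F^\bullet$ from a K-flat complex whose terms are direct sums of sheaves $\Z[X]$ with $X$ representable; existence of such resolutions in the unbounded setting is standard (cf.\ \cite[Ch.\ 14]{KS06} or Spaltenstein). I would define $L h^{\ast} F^\bullet := h^{\ast} P^\bullet$ in $D(S')$ and check independence from the resolution in the usual way. For the adjunction $L h^{\ast} \dashv R h_{\ast}$, choose a K-flat resolution $P^\bullet \to F$ and a K-injective resolution $G \to I^\bullet$: unfolding $\Hom_{D(S')}(L h^{\ast} F, G)$ gives $H^0 \Hom^\bullet_{K(S')}(h^{\ast} P^\bullet, I^\bullet)$; the non-derived adjunction rewrites this as $H^0 \Hom^\bullet_{K(S)}(P^\bullet, h_{\ast} I^\bullet)$; and the hypothesis that $h_{\ast}$ preserves acyclic sheaves ensures $h_{\ast} I^\bullet$ computes $R h_{\ast} G$ in $D(S)$, yielding $\Hom_{D(S)}(F, R h_{\ast} G)$.

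To see $L_n h^{\ast} \Z[X] = 0$ for $n \ge 1$, I would apply Yoneda in $D(S')$. For any $G \in D(S')$ and $n \in \Z$, the adjunction just established gives
\[
    \Hom_{D(S')}(L h^{\ast} \Z[X], G[n])
  = \Hom_{D(S)}(\Z[X], R h_{\ast} G[n])
  = H^n R \Gamma(X, R h_{\ast} G).
\]
The composition-of-derived-functors formula applies: any acyclic sheaf on $S'$ is sent by $h_{\ast}$ to an acyclic, hence $\Gamma(X, -)$-acyclic, sheaf on $S$, so this equals $H^n R\Gamma(h_0(X), G)$, where $h_0$ denotes the continuous functor underlying $h$. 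Since $h^{\ast} \Z[X] = \Z[h_0(X)]$ by Yoneda at the non-derived level, the right-hand side is $\Hom_{D(S')}(h^{\ast} \Z[X], G[n])$, and Yoneda in $D(S')$ then forces $L h^{\ast} \Z[X] \cong h^{\ast} \Z[X]$ concentrated in degree zero.

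The conceptual content sits entirely in the composition-of-derived-functors step, for which the acyclicity hypothesis on $h_{\ast}$ is tailor-made; the rest is formal unbounded-derived-category yoga. The main technical obstacle, which I would defer to a standard reference, is the machinery of K-flat resolutions for unbounded complexes.
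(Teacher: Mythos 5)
There is a genuine gap, and it sits at the foundation: the existence and well-definedness of $L h^{\ast}$. You invoke K-flat resolutions, but K-flatness is adaptedness to the \emph{tensor product}, and nothing here is being tensored. The functor being derived is $h^{\ast}$ for a continuous map that is not a morphism of sites, so $h^{\ast}$ is not exact, and a K-flat complex has no a priori reason to be adapted to $h^{\ast}$. Concretely, to set $L h^{\ast} F := h^{\ast} P^{\bullet}$ and ``check independence from the resolution in the usual way,'' you must know that $h^{\ast}$ carries acyclic (say bounded-above) complexes whose terms are direct sums of sheaves $\Z[X]$ to acyclic complexes; that is precisely the nontrivial content of the lemma, and your proposal never establishes it. Tellingly, the hypothesis that $h_{\ast}$ preserves acyclic sheaves appears in your write-up only in the adjunction and composition steps, whereas it is indispensable already for existence.

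The paper supplies the missing step via the Kashiwara--Schapira formalism: the full subcategory of sheaves $P$ with $\Ext^{n}_{S}(P, I) = 0$ for every acyclic $I$ and every $n \ge 1$ is shown to be $h^{\ast}$-projective in the sense of \cite[Def.\ 13.3.4]{KS06}, and it contains the $\Z[X]$. The key verification is that if $0 \to P \to Q \to R \to 0$ is exact with $Q, R$ in this class, then for any acyclic $I' \in \Ab(S')$ the map $\Hom_{S'}(h^{\ast}Q, I') \to \Hom_{S'}(h^{\ast}P, I')$ is identified with $\Hom_{S}(Q, h_{\ast}I') \to \Hom_{S}(P, h_{\ast}I')$, which is surjective because $h_{\ast}I'$ is acyclic and $\Ext^{1}_{S}(R, h_{\ast}I') = 0$; taking $I'$ injective containing $h^{\ast}P$ then shows $h^{\ast}$ preserves the exactness of the sequence. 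This is the one place the hypothesis does real work. With that in hand, \cite[Thm.\ 14.4.5]{KS06} produces the unbounded $L h^{\ast}$, the adjunction with $R h_{\ast}$ is formal, and $L h^{\ast}\Z[X] = h^{\ast}\Z[X]$ is immediate because $\Z[X]$ lies in the projective subcategory --- your Yoneda detour through $H^{n} R\Gamma(X, R h_{\ast} G)$ is correct once existence is secured, but it is not needed.
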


\begin{proof}
	By \cite[Theorem 14.4.5]{KS06},
	it is enough to show the existence of an $h^{\ast}$-projective full subcategory of $\Ab(S)$
	in the sense of \cite[Definition 13.3.4]{KS06}
	that contains sheaves of the form $\Z[X]$.
	Consider the following condition for a sheaf $P \in \Ab(S)$:
		\begin{equation} \label{eq: condition to derive the pullback}
				\Ext_{S}^{n}(P, I)
			=
				0
		\end{equation}
	for any acyclic sheaf $I \in \Ab(S)$ and $n \ge 1$.
	The sheaves satisfying this condition form a full subcategory of $\Ab(S)$,
	which contains sheaves of the form $\Z[X]$.
	We want to show that this full subcategory is $h^{\ast}$-projective.
	We check the conditions of the dual of \cite[Corollary 13.3.8]{KS06}
	(which is a standard criterion for the existence of derived functors).
	
	First, every object of $\Ab(S)$ is
	a quotient of a direct sum of objects of the form $P = \Z[X]$.
	Next, if $0 \to P \to Q \to R \to 0$ is an exact sequence in $\Ab(S)$
	with $Q$ and $R$ satisfying \eqref{eq: condition to derive the pullback},
	then $P$ also satisfies the same condition.
	Moreover, the homomorphism $\Hom_{S'}(h^{\ast} Q, I') \to \Hom_{S'}(h^{\ast} P, I')$
	for any acyclic sheaf $I' \in \Ab(S')$
	is identified with $\Hom_{S}(Q, h_{\ast} I') \to \Hom_{S}(P, h_{\ast} I')$,
	which is surjective by \eqref{eq: condition to derive the pullback}.
	Taking $I'$ to be an injective sheaf containing $h^{\ast} P$,
	we know that $0 \to h^{\ast} P \to h^{\ast} Q \to h^{\ast} R \to 0$ is exact.
	Hence we have verified the required conditions.
\end{proof}

The purpose of the previous two subsections
was to prove the following proposition.
This proposition is highly non-trivial
since $h^{\ast\set} \colon \Set(k^{\ind\rat}_{\et} / k') \to \Set(k'^{\perf}_{\pro\fppf})$
is not exact (Proposition \ref{prop: pullback is not exact}) and
we do not know whether
$h^{\ast} \colon \Ab(k^{\ind\rat}_{\et} / k') \to \Ab(k'^{\perf}_{\pro\fppf})$
is exact or not.
It is crucial to use Mac Lane's resolution here.

\begin{Prop} \label{lem: inj to Hom-acyclic}
	We have $L h^{\ast} A' = A'$ for any $A \in \Pro \Alg / k$,
	where $A' = A \times_{k} k'$.
\end{Prop}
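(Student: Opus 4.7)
The plan is to leverage Mac Lane's resolution $M(A') \to A'$ as an $L h^{\ast}$-adapted resolution, with Proposition \ref{prop: generic resolution} providing the key geometric input.

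First, recall from the discussion in Section \ref{sec: review of Mac Lane's resolution} that $M(A') \to A'$ is a functorial resolution in $\Ab(k'^{\ind\rat}_{\et})$, with each term
\[
        M_n(A')
    =
        \bigoplus_{p+q=n} Q_p(A') \tensor_{\Z} \mathcal{B}_q
\]
being a direct summand of a (possibly infinite) direct sum of sheaves of the form $\Z[A'^{2^p}]$ (using that $Q_p(A')$ is a functorial direct summand of $Q'_p(A') = \Z[A'^{2^p}]$ and $\mathcal{B}_q$ is free abelian). Recall also from the previous lemma that the full subcategory $\mathcal{P} \subset \Ab(k'^{\ind\rat}_{\et})$ of sheaves $P$ satisfying $\Ext_{k'^{\ind\rat}_{\et}}^{n}(P, I) = 0$ for all acyclic $I$ and $n \ge 1$ is $h^{\ast}$-projective, and any resolution by objects of $\mathcal{P}$ computes $L h^{\ast}$.

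The heart of the plan is to show that each $\Z[A'^{m}]$ belongs to $\mathcal{P}$. Since $A$ is an affine proalgebraic group, $A' = A \times_k k'$ is a perfect affine $k'$-scheme satisfying the flatness assumption of Definition \ref{def: points in profinite setting}\eqref{def: generic point}, and (the evident $k'$-relative generalization of) Proposition \ref{prop: schemes as sheaves on the rational site} yields
\[
        A'^{m}
    =
        \bigcup_{x \subset A'^{m}} x
    \quad \text{in } \Set(k'^{\ind\rat}_{\et}),
\]
where $x$ runs through profinite sets of $k'^{\ind\rat}$-points of $A'^{m}$. Applying $\Z[\,\cdot\,]$, which commutes with filtered colimits of sheaves of sets, gives $\Z[A'^{m}] = \dirlim_x \Z[x]$. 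Each $\Z[x]$ is representable in $k'^{\ind\rat}_{\et}$, hence lies in $\mathcal{P}$ by the previous lemma. Closure of $\mathcal{P}$ under the direct sum and direct summand operations is immediate from the fact that $\Ext^{n}_{k'^{\ind\rat}_{\et}}(\,\cdot\,, I)$ converts direct sums to direct products; the slightly subtle ingredient is to verify that this filtered union also belongs to $\mathcal{P}$, which will follow from the acyclicity of $I$ together with the fact that the restriction maps $I(x') \to I(x)$ for $x \subset x'$ are surjective on enough of the system to kill the higher derived inverse limits appearing in the spectral sequence
\[
        R^{p} \invlim_{x} \Ext^{q}_{k'^{\ind\rat}_{\et}}(\Z[x], I)
    \Longrightarrow
        \Ext^{p+q}_{k'^{\ind\rat}_{\et}}(\Z[A'^{m}], I).
\]

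Combining these steps, each $M_n(A')$ lies in $\mathcal{P}$, so $M(A') \to A'$ computes $L h^{\ast} A'$, giving $L h^{\ast} A' \cong h^{\ast} M(A')$ in $D(k'^{\perf}_{\pro\fppf})$. Proposition \ref{prop: generic resolution} then identifies $h^{\ast} M(A')$ with $A'$ (concentrated in degree zero), yielding $L h^{\ast} A' = A'$. The principal obstacle is the filtered-colimit closure for $\mathcal{P}$ noted above; this is precisely where the combinatorial structure of Mac Lane's resolution and the generic covering $\xi_{A'} \times_{k'} \xi_{A'} \onto A'$ exploited in Proposition \ref{prop: generic resolution} must do the work, since without these the non-exactness of $h^{\ast\set}$ (Proposition \ref{prop: pullback is not exact}) would genuinely obstruct a more direct argument.
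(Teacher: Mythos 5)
Your overall strategy --- resolve $A'$ by Mac Lane's resolution $M(A')$, observe that each term is a direct summand of a direct sum of filtered unions of sheaves $\Z[\Spec k'']$ with $k'' \in k'^{\ind\rat}$, and then invoke Proposition \ref{prop: generic resolution} to identify $h^{\ast} M(A')$ with $A'$ --- is exactly the paper's. The genuine gap is in the step you yourself flag as the principal obstacle: closure of the class $\mathcal{P} = \{P : \Ext^{n}(P, I) = 0 \text{ for all acyclic } I \text{ and } n \ge 1\}$ under the filtered unions $\Z[A'^{m}] = \dirlim_{x} \Z[x]$. Your proposed mechanism does not work: there is no reason why the restriction maps $I(x') \to I(x)$ induced by the inclusions $x \into x'$ of profinite sets of points should be surjective for a general acyclic sheaf $I$ (acyclicity controls cohomology on objects of the site, not surjectivity of restrictions), and even if they were, surjectivity in an inverse system indexed by the uncountable directed set of profinite sets of points of $A'^{m}$ would at best control $R^{1}\invlim$, not the $R^{p}\invlim$ for $p \ge 2$ entering your spectral sequence. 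As written, membership of $\Z[A'^{m}]$ in $\mathcal{P}$ is not established.

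The repair --- and the way the terse ``hence'' in the paper's own proof should be read --- is to not ask for membership in $\mathcal{P}$ at all. To get $L h^{\ast} M(A') = h^{\ast} M(A')$ termwise it suffices that each $M_{n}(A')$ be $L h^{\ast}$-acyclic, i.e.\ $L_{j} h^{\ast} M_{n}(A') = 0$ for $j \ge 1$, which is weaker than the $\Ext$-vanishing condition defining $\mathcal{P}$. Since $L h^{\ast}$ is left adjoint to $R h_{\ast}$ (by the preceding lemma), it commutes with direct sums and with filtered colimits (which, $\Ab(k'^{\ind\rat}_{\et})$ being a Grothendieck category, agree with homotopy colimits), and it preserves direct summands; hence
$L_{j} h^{\ast} \Z[A'^{m}] = \dirlim_{x} L_{j} h^{\ast} \Z[x] = 0$
for $j \ge 1$ by the computation $L_{j} h^{\ast} \Z[X] = 0$ of that lemma, and the same vanishing propagates to each $M_{n}(A')$. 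This bypasses the derived inverse limits entirely; with that substitution the remainder of your argument, including the final appeal to Proposition \ref{prop: generic resolution}, goes through verbatim.
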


\begin{proof}
	Let $M(A')$ be Mac Lane's resolution of $A'$ in $\Ab(k^{\ind\rat}_{\et} / k')$.
	By Proposition \ref{prop: generic resolution},
	we have $h^{\ast} M(A') = A'$.
	For each $n \ge 0$, the sheaf $h^{\ast} M_{n}(A')$ is a direct summand of
	a filtered union of a direct sum of sheaves of the form
	$\Z[\Spec k'']$ for $k'' \in k^{\ind\rat} / k'$.
	Hence
	$L h^{\ast} M(A') = h^{\ast} M(A') = A'$.
\end{proof}

\begin{Prop} \label{prop: RHom for profppf, perf etale and indrat etale}
	Let $f \colon \Spec k'^{\perf}_{\pro\fppf} \to \Spec k'^{\perf}_{\et}$ and
	$g \colon \Spec k'^{\perf}_{\et} \to \Spec k^{\ind\rat}_{\et} / k'$ be
	the continuous maps defined by the identities.
	Let $A \in \Pro \Alg / k$ and $B \in \Ab(k'^{\perf}_{\pro\fppf})$.
	Then we have
		\[
				R \Hom_{k'^{\perf}_{\pro\fppf}}(A, B)
			=
				R \Hom_{k'^{\perf}_{\et}}(A, R f_{\ast} B)
			=
				R \Hom_{k^{\ind\rat}_{\et} / k'}(A, g_{\ast} R f_{\ast} B).
		\]
	(Here we are using the same symbol $A$ to mean $A' = A \times_{k} k'$.)
\end{Prop}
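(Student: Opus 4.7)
The plan is to reduce each equality to an application of a standard derived adjunction, with all the non-trivial input already packaged in Proposition \ref{lem: inj to Hom-acyclic}. For the first equality, I would start by observing that $f$ is actually a morphism of sites: the underlying category $k'^{\perf}$ of $\Spec k'^{\perf}_{\et}$ admits all finite limits, and the defining functor for $f$ (the identity) trivially preserves them, so the sufficient condition recalled in Section \ref{sec: The ind-rational etale site} applies. Hence $f^{\ast}$ is exact; moreover $f^{\ast} A = A$ since $A$ is representable in $\Pro \Alg / k$ and is therefore already an fpqc sheaf. Using that $f_{\ast}$ sends acyclic sheaves to acyclic sheaves (automatic for continuous maps between sites defined by pretopologies), the derived adjunction between $f^{\ast}$ and $R f_{\ast}$ yields
\[
		R \Hom_{k'^{\perf}_{\pro\fppf}}(A, B)
	=
		R \Hom_{k'^{\perf}_{\pro\fppf}}(f^{\ast} A, B)
	=
		R \Hom_{k'^{\perf}_{\et}}(A, R f_{\ast} B).
\]

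For the second equality, I would first show that $g_{\ast}$ is exact. Suppose $F \onto G$ is a surjection in $\Ab(k'^{\perf}_{\et})$ and $s \in G(Y)$ with $Y \in k'^{\ind\rat}$. There is an \'etale cover $\{X_{i} \to Y\}$ in $k'^{\perf}_{\et}$ lifting $s$ locally; by the proposition in Section \ref{sec: The ind-rational etale site} that every \'etale algebra over an ind-rational $k$-algebra is ind-rational, each $X_{i}$ already belongs to $k'^{\ind\rat}$, so this is already a cover in $k'^{\ind\rat}_{\et}$. Hence $g_{\ast} F \onto g_{\ast} G$, so $g_{\ast}$ is exact and in particular $R g_{\ast} = g_{\ast}$. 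Setting $h := g \circ f$, which is the map studied in Proposition \ref{lem: inj to Hom-acyclic}, the Leray composition formula (valid because both $f_{\ast}$ and $g_{\ast}$ preserve acyclics) then gives $R h_{\ast} B = R g_{\ast} R f_{\ast} B = g_{\ast} R f_{\ast} B$.

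It remains only to feed this into Proposition \ref{lem: inj to Hom-acyclic}, which provides $L h^{\ast} A = A$ in $D(k'^{\perf}_{\pro\fppf})$. The derived adjunction $(L h^{\ast}, R h_{\ast})$ yields
\[
		R \Hom_{k'^{\perf}_{\pro\fppf}}(A, B)
	=
		R \Hom_{k'^{\ind\rat}_{\et}}(A, R h_{\ast} B)
	=
		R \Hom_{k'^{\ind\rat}_{\et}}(A, g_{\ast} R f_{\ast} B),
\]
and combining with the first equality closes the chain. No new obstacle arises in the present proposition: the genuine difficulty, controlling the behavior of sheafification from $k'^{\ind\rat}_{\et}$ to $k'^{\perf}_{\pro\fppf}$ on a proalgebraic group, was already overcome via Mac Lane's resolution and the splitting-homotopy argument in Proposition \ref{prop: generic resolution}.
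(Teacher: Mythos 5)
Your proposal is correct and follows essentially the same route as the paper: the first equality is the derived adjunction for the morphism of sites $f$ (using $f^{\ast}A = A$), and the second combines the exactness of $g_{\ast}$ (so $R h_{\ast} = g_{\ast} R f_{\ast}$) with $L h^{\ast} A = A$ from Proposition \ref{lem: inj to Hom-acyclic} and the adjunction $(L h^{\ast}, R h_{\ast})$. The extra details you supply (why $f$ is a morphism of sites, why $g_{\ast}$ is exact via the stability of ind-rationality under \'etale extensions) are correct and merely make explicit what the paper leaves as ``clear.''
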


\begin{proof}
	The first equality is clear
	since $f$ is a morphism of sites.
	For the second, note that $g_{\ast} R f_{\ast} = R h_{\ast}$
	since $g_{\ast}$ is exact.
	The previous lemma shows that
		\[
				R \Hom_{k'^{\perf}_{\pro\fppf}}(A', B)
			=
				R \Hom_{k'^{\perf}_{\pro\fppf}}(L h^{\ast} A', B)
			=
				R \Hom_{k^{\ind\rat}_{\et} / k'}(A', R h_{\ast} B).
		\]
\end{proof}

\begin{proof}[Proof of Proposition \ref{prop: Ext for the rational etale and perfect etale sites}]
	Let $f \colon \Spec k'^{\perf}_{\pro\fppf} \to \Spec k'^{\perf}_{\et}$
	be the morphism defined by the identity.
	Let $A \in \Pro \Alg / k$ and $B \in \Loc \Alg / k$.
	We have $R f_{\ast} B = B$
	by Corollary \ref{cor: profppf and etale cohomology of an algebraic group}.
	The proposition above then gives the first statement of the proposition.
	
	For the second statement,
	if $k'$ is a field, then the first statement with $k$ replaced by $k'$ implies that
		\[
				R \Hom_{k'^{\ind\rat}_{\et}}(A, B)
			=
				R \Hom_{k'^{\perf}_{\et}}(A, B).
		\]
	This gives the second statement.
	
	For the third,
	assume that $A \in \Alg / k$.
	It is enough to show that
		\[
				R \Hom_{k^{\ind\rat}_{\et}}(A, B)
			=
				R \Hom_{k^{\rat}_{\et}}(A, B).
		\]
	Let $\alpha \colon \Spec k^{\ind\rat}_{\et} \to \Spec k^{\rat}_{\et}$
	be the morphism of sites defined by the identity.
	Clearly $\alpha_{\ast}$ is exact and $\alpha_{\ast} \alpha^{\ast} = \id$.
	Note that $A$ is the disjoint union of its points $x \in A$ as sheaves of sets
	and these points are in $k^{\rat}$.
	Hence $\alpha^{\ast} A = A$ as sheaves of abelian groups.
	This implies the above equality.
\end{proof}


\subsection{Algebraic groups as sheaves on the perfect \'etale site}
\label{sec: Ext as sheaves and as algebraic groups}

The following proposition finishes the proof of Theorem \ref{thm: comparison of Ext, proalgebraic setting}
and hence Theorem \ref{thm: main theorem, comparison of Ext}.

\begin{Prop} \label{prop: Ext as sheaves and as algebraic groups}
	Let $A \in \Pro \Alg / k$, $B \in \Loc \Alg / k$ and
	$k' = \bigcup_{\nu} k'_{\nu} \in k^{\ind\rat}$ with $k'_{\nu} \in k^{\rat}$.
	Then for any $n \ge 0$, we have
		\[
				\Ext_{k'^{\perf}_{\et}}^{n}(A, B)
			=
				\dirlim_{\nu}
					\Ext_{(k'_{\nu})^{\perf}_{\et}}^{n}(A, B).
		\]
	If $B \in \Alg / k$, then
		\begin{gather*}
					\Ext_{k^{\perf}_{\et}}^{n}(A, B)
				=
					\Ext_{\Pro \Alg / k}^{n}(A, B),
			\\
					\Ext_{k^{\perf}_{\et}}^{n}(A, \Q)
				=
					\Hom_{k^{\perf}_{\et}}(A, \Z)
				=
					0,
			\\
					\Ext_{k^{\perf}_{\et}}^{n + 1}(A, \Z)
				=
					\Ext_{k^{\perf}_{\et}}^{n}(A, \Q / \Z)
				=
					\dirlim_{m}
						\Ext_{\Pro \Alg / k}^{n}(A, \Z / m \Z),
		\end{gather*}
\end{Prop}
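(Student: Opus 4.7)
The plan is to derive all four identities from Breen's theorem (\cite{Bre70}, \cite{Bre81}), which identifies $\Ext^{n}_{k^{\perf}_{\et}}(A, B)$ with $\Ext^{n}_{\Pro \Alg / k}(A, B)$ whenever $B \in \Alg / k$; this directly supplies the second listed equality. The remaining three assertions—the filtered direct limit in the base, the vanishings with $\Q$- and $\Z$-coefficients, and the formula for $\Q/\Z$-coefficients—should follow by formal manipulations with long exact sequences, filtered colimits, and functoriality of the perfect étale site.

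For the first identity (compatibility with $k' = \bigcup_{\nu} k'_{\nu}$), I would apply a standard SGA4 limit argument: the underlying category of $\Spec k'^{\perf}_{\et}$ together with its covering families is the filtered colimit of the corresponding data for $\Spec (k'_{\nu})^{\perf}_{\et}$, and both $A$ and $B$ are pulled back from $k$. Resolving $A$ coherently by sheaves of finitely presented type transfers this colimit through $R \Hom$ and hence through each $\Ext^{n}$.

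For the remaining assertions (over $k$ itself), the steps would be as follows. First, $\Hom_{k^{\perf}_{\et}}(A, \Z) = 0$: writing $A = \invlim_{i} A_{i}$ with $A_{i} \in \Alg / k$, any sheaf morphism $A \to \Z$ factors through some $A_{i}$ and then through its finite étale component group $\pi_{0}(A_{i})$, which is torsion, hence admits no nonzero homomorphism into the torsion-free constant sheaf $\Z$. Second, $\Ext^{n}_{k^{\perf}_{\et}}(A, \Q) = 0$: since $\Q$ is uniquely divisible as a sheaf, so is $\Ext^{n}(A, \Q)$; combining this with the torsion nature of $\Ext^{n}_{\Pro \Alg / k}(A, \Z / m \Z)$ for finite coefficients (\cite[\S 5.4]{Ser60}) and the filtration $\Q = \dirlim_{m} \tfrac{1}{m} \Z$ shows that $\Ext^{n}(A, \Q)$ is also torsion, hence vanishes. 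The long exact sequence attached to $0 \to \Z \to \Q \to \Q / \Z \to 0$ then yields
\[
        \Ext^{n + 1}_{k^{\perf}_{\et}}(A, \Z)
    \isomto
        \Ext^{n}_{k^{\perf}_{\et}}(A, \Q / \Z).
\]

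The final equality $\Ext^{n}_{k^{\perf}_{\et}}(A, \Q / \Z) = \dirlim_{m} \Ext^{n}_{\Pro \Alg / k}(A, \Z / m \Z)$ requires commuting $\Ext^{n}(A, \;\cdot\;)$ with the filtered colimit $\Q / \Z = \dirlim_{m} \Z / m \Z$ and then applying Breen termwise. The main obstacle will be precisely this colimit commutation: since $A$ is a cofiltered inverse limit of algebraic groups, one must interchange $\dirlim_{m}$ with the inverse limit defining $A$, which is not automatic in a sheaf category. The cleanest route is to invoke Serre's own definition of $\Ext^{n}_{\Pro \Alg / k}(A, \;\cdot\;)$ (\cite[\S 3.6]{Ser60}) as a filtered direct limit along Frobenius of Ext groups in $\Alg / k$, where the interchange is built into the construction; this reduces the question to the case $A \in \Alg / k$, where the compactness of $A$ in $\Alg / k$ makes the commutation standard, and Breen's theorem then closes the argument.
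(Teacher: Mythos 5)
Your high-level strategy---reduce to Breen's comparison for algebraic groups via limit arguments and the long exact sequence attached to $0 \to \Z \to \Q \to \Q/\Z \to 0$---is the same as the paper's, but two of your steps have real gaps.

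The most serious one is the vanishing $\Ext^{n}_{k^{\perf}_{\et}}(A, \Q) = 0$. You argue that $\Ext^{n}(A,\Q)$ is uniquely divisible and ``also torsion, hence vanishes,'' extracting torsionness from $\Q = \dirlim_{m} \tfrac{1}{m}\Z$ and the torsionness of $\Ext^{n}(A, \Z/m\Z)$. This is circular: granting the colimit commutation, $\dirlim_{m} \Ext^{n}(A, \tfrac{1}{m}\Z)$ is $\Ext^{n}(A,\Z) \tensor \Q$, which is torsion only if $\Ext^{n}(A,\Z)$ is already known to be torsion---and the torsionness of $\Ext^{n}(A,\Z)$ for $n \ge 1$ is exactly what the isomorphism $\Ext^{n+1}(A,\Z) \cong \Ext^{n}(A,\Q/\Z)$, i.e.\ the vanishing you are trying to prove, would deliver. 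The long exact sequence alone cannot break this circle, and a group can perfectly well be divisible and nonzero. The paper's proof uses a genuinely new geometric input: each term of Mac Lane's resolution $M(A)$ is a direct summand of a direct sum of sheaves $\Z[A^{m}]$, so the hyperext spectral sequence expresses $\Ext^{n}(A,\Q)$ through the groups $H^{j}(A^{m}_{\et}, \Q)$, which vanish for $j \ge 1$ by Deninger's theorem \cite{Den88} on rational \'etale cohomology of normal schemes; for connected $A$ this collapses $\Ext^{n}(A,\Q)$ to $\Ext^{n}(0,\Q) = 0$. Some input of this kind is indispensable.

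The second gap is that all three continuity statements you call ``standard''---in the base $k'_{\nu}$, in $A = \invlim_{\lambda} A_{\lambda}$ (which you use silently when asserting that a morphism $A \to \Z$ ``factors through some $A_{i}$''), and in the second variable for $\Q/\Z = \dirlim_{m} \Z/m\Z$---constitute the technical core of the paper's Lemma \ref{lem: lim and Ext commutes}. None is formal: the sheaf $A$ is not a compact object of $\Ab(k^{\perf}_{\et})$, and your proposed ``resolution by sheaves of finitely presented type'' cannot be Mac Lane's resolution, whose terms are \emph{infinite} direct sums of $\Z[A^{m}]$'s, so the termwise Ext groups are infinite products and do not commute with filtered colimits. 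The paper instead works with the cubical construction $Q(A)$ (a single direct summand of $\Z[A^{2^{i}}]$ in each degree, but not a resolution), combining Breen's computation of $H_{j}(Q(A))$ with an inductive truncation argument to reduce everything to the limit theorems for \'etale cohomology of the quasi-compact schemes $A_{\lambda}^{2^{i}} \times_{k} R_{\nu}$. Finally, a smaller point: Breen's results do not ``directly supply'' $\Ext^{n}_{k^{\perf}_{\et}}(A,B) = \Ext^{n}_{\Alg/k}(A,B)$ even for $A, B \in \Alg/k$; one still needs the reduction to the cases $\Ga$, $\Gm$, finite, and the observation that the higher Ext groups are $p$-power torsion while $p$ is invertible on $\Gm$ over the perfect site.
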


This proposition is a consequence of results of Breen \cite{Bre70}, \cite{Bre81}.
We make this clear below.
We first need a lemma on limit arguments.

\begin{Lem} \label{lem: lim and Ext commutes}
	Let $A = \invlim_{\lambda} A_{\lambda} \in \Pro \Alg / k$ with $A_{\lambda} \in \Alg / k$.
	Let $\{R_{\nu}\}$ be a filtered direct system in $k^{\perf}$ with limit $R$.
	\begin{enumerate}
		\item \label{ass: Ext is continuous on A and k}
			If $B \in \Loc \Alg / k$,
			then we have
				\[
						\Ext_{R^{\perf}_{\et}}^{i}(A, B)
					=
						\dirlim_{\lambda, \nu}
						\Ext_{(R_{\nu})^{\perf}_{\et}}^{i}(A_{\lambda}, B)
				\]
			for all $i \ge 0$.
		\item \label{ass: Ext is continuous on B}
			The functor $\Ext_{R^{\perf}_{\et}}^{i}(A, \;\cdot\;)$ on $\Ab(R^{\perf}_{\et})$
			commutes with filtered direct limits.
	\end{enumerate}
\end{Lem}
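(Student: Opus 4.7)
The strategy is to pass to the perfect pro-fppf site via the identification
\[
    R\Hom_{R^{\perf}_{\et}}(A, B) = R\Hom_{R^{\perf}_{\pro\fppf}}(A, B),
\]
which is a consequence of Corollary \ref{cor: profppf and etale cohomology of an algebraic group} applied to $B \in \Loc\Alg/k$ (giving $R f_{\ast} B = B$), and then to compute $\Ext$ using the pullback of Mac Lane's resolution $h^{\ast} M(A) \simeq A$ from Proposition \ref{lem: inj to Hom-acyclic}. Recall that each $h^{\ast} M_{n}(A)$ is a direct summand of a filtered union $\bigcup_{x} \Z[x]$ of representables indexed by profinite sets of points $x$ of powers $A^{m}$.

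For assertion \eqref{ass: Ext is continuous on A and k}, I would handle the $\lambda$- and $\nu$-limits separately. For the $\lambda$-continuity, writing $A = \invlim_{\lambda} A_{\lambda}$ with $A_{\lambda} \in \Alg/k$, every profinite set of points of $A^{m}$ (Definition \ref{def: points in profinite setting}) factors through a profinite set of points of $A_{\lambda}^{m}$ for some $\lambda$, so the filtered union $h^{\ast} M_{n}(A) = \bigcup_{x} \Z[x]$ identifies with the filtered colimit $\dirlim_{\lambda} h^{\ast} M_{n}(A_{\lambda})$. Hence $\Ext^{i}_{R^{\perf}_{\pro\fppf}}(A, B) = \dirlim_{\lambda} \Ext^{i}_{R^{\perf}_{\pro\fppf}}(A_{\lambda}, B)$. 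For the $\nu$-continuity, we may then assume $A \in \Alg/k$; the standard continuity of étale cohomology along cofiltered limits of qcqs affine schemes with affine transition maps (SGA IV), applied to $\Spec R \times_{\Spec k} A^{m} = \invlim_{\nu} (\Spec R_{\nu} \times_{\Spec k} A^{m})$ with coefficients in the locally finitely presented sheaf $B$, transfers to an analogous equality for Ext via the resolution.

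For assertion \eqref{ass: Ext is continuous on B}, using the same resolution, $\Ext^{i}_{R^{\perf}_{\pro\fppf}}(A, \,\cdot\,)$ is expressed in terms of cohomology of $\Spec$'s of ind-rational $R$-algebras (the representables appearing in $h^{\ast} M_{n}(A)$) with coefficients in the varying sheaf. Since each such $\Spec$ is a cofiltered limit of qcqs affine schemes (namely the $\Spec$'s of rational subalgebras), and étale cohomology of qcqs schemes with locally finitely presented coefficients commutes with filtered direct limits (SGA IV), the functor $\Ext^{i}_{R^{\perf}_{\pro\fppf}}(A, \,\cdot\,)$ commutes with filtered direct limits, as desired.

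The main obstacle is the identification $h^{\ast} M(A) = \dirlim_{\lambda} h^{\ast} M(A_{\lambda})$ underlying the $\lambda$-continuity of (i). Since Mac Lane's functor $M$ is non-additive, $M(A)$ is not literally $\invlim M(A_{\lambda})$ as a sheaf of abelian groups. The resolution is to exploit the explicit form $h^{\ast} F(A) = \bigcup_{x} \Z[x]$ over profinite sets of points worked out in Section \ref{sec: pullback of Mac Lane's resolution}, so that the non-additivity is absorbed in the reindexing of direct summands (via the splittings $Q'_{n} \cong Q_{n} \oplus N$ recalled in Section \ref{sec: review of Mac Lane's resolution}), while the filtered-union structure tracks the pro-system cleanly, in the same spirit as the arguments of Section \ref{sec: proof of generic resolution}.
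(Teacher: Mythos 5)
There is a genuine gap, and it sits exactly where you locate ``the main obstacle.'' Your first step (replacing the \'etale by the pro-fppf topology via the comparison for locally quasi-algebraic coefficients) matches the paper, but the identification $h^{\ast} M_{n}(A) = \dirlim_{\lambda} h^{\ast} M_{n}(A_{\lambda})$ is not available. Since $A = \invlim_{\lambda} A_{\lambda}$, the objects $M_{n}(A_{\lambda})$ form an \emph{inverse} system (the transition maps go $M_{n}(A_{\mu}) \to M_{n}(A_{\lambda})$ for $\mu \ge \lambda$), and a profinite set of points of $A^{m}$ is by definition an inverse limit of finite sets of points of the $A_{\lambda}^{m}$; it neither factors through nor arises from any single $A_{\lambda}^{m}$. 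So the filtered union $\bigcup_{x}\Z[x]$ describing $h^{\ast}M_{n}(A)$ is not a colimit over $\lambda$ of the corresponding objects for the $A_{\lambda}$, and no reindexing of direct summands fixes the direction of the arrows. Second, even working with the $M(A_{\lambda})$ themselves and trying to transfer continuity of \'etale cohomology through the hyperext spectral sequence, Mac Lane's resolution is the wrong tool: each $M_{n}$ is a direct summand of an \emph{infinite} direct sum of sheaves $\Z[A^{m}]$ (the graded group $\mathcal{B}$ has infinite rank in each degree), so the $E_{1}$-terms are infinite \emph{products} of cohomology groups, and infinite products do not commute with the filtered colimits over $\lambda$, $\nu$, or the coefficient system. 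The paper flags precisely this point and for that reason abandons $M$ in favour of the cubical construction $Q$, whose $n$-th term is a direct summand of the single sheaf $\Z[A^{2^{n}}]$.

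The price of using $Q$ is that it is not a resolution, and handling its higher homology is the real content of the proof: one first reduces assertion (1), via the kernels, cokernels and images of $A \to A_{\lambda}$ (using exactness of $\Pro\Alg/k \to \Ab(k^{\perf}_{\pro\fppf})$), to the vanishing of $\dirlim_{\lambda,\nu}\Ext^{i}(A_{\lambda},B)$ when $\invlim_{\lambda} A_{\lambda} = 0$; one then runs an induction on the truncations $\tau_{\le -n}Q(A_{\lambda})$, using Breen's computation that $H_{0}(Q(A)) = A$ and that each $H_{j}(Q(A))$, $j \ge 1$, is a finite direct sum of kernels and cokernels of multiplication by integers on $A$, together with the continuity of \'etale cohomology of the quasi-compact schemes $A_{\lambda}^{2^{i}} \times_{k} R_{\nu}$ with locally finitely presented coefficients --- the one ingredient your sketch correctly isolates. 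Assertion (2) is proved by the same induction in the category of direct systems in $\Ab(R^{\perf}_{\et})$. The acyclicity $h^{\ast}M(A) = A$ of Proposition \ref{lem: inj to Hom-acyclic} is used elsewhere (to compute $Lh^{\ast}A$ and compare sites); it does not by itself yield the continuity statements of this lemma.
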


We prove this lemma by reducing it to the corresponding facts for cohomology:
	\[
			H^{i}(A \times_{k} R, B)
		=
			\dirlim_{\lambda, \nu}
				H^{i}(A_{\lambda} \times_{k} R_{\nu}, B)
	\]
and the functor $H^{i}(A \times_{k} R, \;\cdot\;)$ on $\Ab(R^{\perf}_{\et})$
commutes with filtered direct limits.
The first fact is given in \cite[III, Lemma 1.16, Remark 1.17 (a)]{Mil80}
and second in \cite[III, Remark 3.6 (d)]{Mil80}.
It is important here that $A \times_{k} R$ is quasi-compact and $B$ is locally of finite presentation.
We do this reduction by describing Ext groups by cohomology groups
using the cubical construction and its homology.
(Mac Lane's resolution does not behave well with respect to limits
since the graded abelian group $\mathcal{B}$ in Section \ref{sec: review of Mac Lane's resolution}
is not finite rank in each degree.)

\begin{proof}
	It is enough to show the two statements
	in the case the system $\{R_{\nu}\}_{\nu}$ is constant
	(that is, $R_{\nu} = R$ for all $\nu$).
	Indeed, let $f \colon \Spec R^{\perf}_{\et} \to \Spec k^{\perf}_{\et}$
	and $f_{\nu} \colon \Spec (R_{\nu})^{\perf}_{\et} \to \Spec k^{\perf}_{\et}$
	be the natural morphisms.
	Then for $B \in \Loc \Alg / k$ and $j \ge 0$,
	the sheaf $R^{j} f_{\ast} B \in \Ab(k^{\perf}_{\et})$ is the sheafification of the presheaf
	$R' \mapsto H^{j}(R' \tensor_{k} R, B)$
	and the sheaf $\dirlim_{\nu} R^{j} f_{\nu \ast} B$ is the sheafification of the presheaf
	$R' \mapsto \dirlim_{\nu} H^{j}(R' \tensor_{k} R_{\nu}, B)$.
	These sheaves are isomorphic by \cite[III, Lemma 1.16, Remark 1.17 (a)]{Mil80}.
	Therefore we have
		\[
				\dirlim_{\nu}
				\Ext_{k^{\perf}_{\et}}^{i}(A, R^{j} f_{\nu \ast} B)
			=
				\Ext_{k^{\perf}_{\et}}^{i}(A, R^{j} f_{\ast} B)
		\]
	for any $i, j \ge 0$ if we use \eqref{ass: Ext is continuous on B}.
	Since the pushforward functors $f_{\nu \ast}$ and $f_{\ast}$ send injectives to injectives,
	we have Grothendieck spectral sequences
		\begin{gather*}
					E_{2}^{i j}
				=
					\dirlim_{\nu}
					\Ext_{k^{\perf}_{\et}}^{i}(A, R^{j} f_{\nu \ast} B)
				\Longrightarrow
					\dirlim_{\nu}
					\Ext_{(R_{\nu})^{\perf}_{\et}}^{i + j}(A, B),
			\\
					E_{2}^{i j}
				=
					\Ext_{k^{\perf}_{\et}}^{i}(A, R^{j} f_{\ast} B)
				\Longrightarrow
					\Ext_{R^{\perf}_{\et}}^{i + j}(A, B).
		\end{gather*}
	The isomorphisms between the $E_{2}$-terms induce isomorphisms between the $E_{\infty}$-terms
	since the spectral sequences come from the commutative diagram
		\[
			\begin{CD}
					\dirlim_{\nu}
					\Hom_{k^{\perf}_{\et}}(A, f_{\nu \ast} J)
				@=
					\dirlim_{\nu}
					\Hom_{(R_{\nu})^{\perf}_{\et}}(A, J)
				\\
				@VVV
				@VVV
				\\
					\Hom_{k^{\perf}_{\et}}(A, f_{\ast} J)
				@=
					\Hom_{R^{\perf}_{\et}}(A, J)
			\end{CD}
		\]
	on the level of complexes,
	where $J$ is an injective resolution of $B$ in $\Ab(k^{\perf}_{\et})$.
	(Here $J$ is pulled back to $R_{\nu}$ and $R$, which remains injective
	since the pullback functors $f_{\nu}^{\ast}$ and $f^{\ast}$ send injectives to injectives
	by \cite[III, Lemma 1.11]{Mil80}.)
	Thus
		\[
				\dirlim_{\nu}
				\Ext_{(R_{\nu})^{\perf}_{\et}}^{i}(A, B)
			=
				\Ext_{R^{\perf}_{\et}}^{i}(A, B).
		\]
	Therefore if we have shown
		\[
				\dirlim_{\lambda}
				\Ext_{(R_{\nu})^{\perf}_{\et}}^{i}(A_{\lambda}, B)
			=
				\Ext_{(R_{\nu})^{\perf}_{\et}}^{i}(A, B)
		\]
	for each fixed $\nu$, the general case of \eqref{ass: Ext is continuous on A and k} follows.
	
	\eqref{ass: Ext is continuous on A and k}
	when $R_{\nu} = R$ for all $\nu$:
	We need several reduction steps.
	First, it is enough to show the statement with
	the \'etale topology replaced by the pro-fppf topology:
		\[
				\dirlim_{\lambda}
				\Ext_{R^{\perf}_{\pro\fppf}}^{i}(A_{\lambda}, B)
			=
				\Ext_{R^{\perf}_{\pro\fppf}}^{i}(A, B)
		\]
	for $B \in \Loc \Alg / k$ and $i \ge 0$.
	Indeed, let $f \colon \Spec R^{\perf}_{\pro\fppf} \to \Spec R^{\perf}_{\et}$ be
	the morphism defined by the identity.
	Then we have $R f_{\ast} B = B$ by Corollary \ref{cor: profppf and etale cohomology of an algebraic group}.
	Hence
		\[
				R \Hom_{R^{\perf}_{\pro\fppf}}(A, B)
			=
				R \Hom_{R^{\perf}_{\et}}(A, R f_{\ast} B)
			=
				R \Hom_{R^{\perf}_{\et}}(A, B).
		\]
	The same is true with $A$ replaced by $A_{\lambda}$.
	Hence
		\[
				\Ext_{R^{\perf}_{\pro\fppf}}^{i}(A, B)
			=
				\Ext_{R^{\perf}_{\et}}^{i}(A, B),
			\quad
				\Ext_{R^{\perf}_{\pro\fppf}}^{i}(A_{\lambda}, B)
			=
				\Ext_{R^{\perf}_{\et}}^{i}(A_{\lambda}, B).
		\]
	
	We will prove a slightly more general statement:
		\[
				\dirlim_{\lambda}
				\Ext_{R^{\perf}_{\pro\fppf}}^{i}(A_{\lambda}, B)
			=
				\Ext_{R^{\perf}_{\pro\fppf}}^{i}(A, B)
		\]
	for any sheaf $B \in \Ab(R^{\perf}_{\pro\fppf})$
	that commutes with filtered direct limits as a functor on $R^{\perf}$.
	For this form of the statement, it is actually enough to treat the case $R = k$.
	Indeed, let $f \colon \Spec R^{\perf}_{\pro\fppf} \to \Spec k^{\perf}_{\pro\fppf}$ be the natural morphism.
	As before, we have Grothendieck spectral sequences
		\begin{gather*}
					E_{2}^{i j}
				=
					\dirlim_{\lambda}
					\Ext_{k^{\perf}_{\pro\fppf}}^{i}(A_{\lambda}, R^{j} f_{\ast} B)
				\Longrightarrow
					\dirlim_{\lambda}
					\Ext_{R^{\perf}_{\pro\fppf}}^{i + j}(A_{\lambda}, B),
			\\
					E_{2}^{i j}
				=
					\Ext_{k^{\perf}_{\et}}^{i}(A, R^{j} f_{\ast} B)
				\Longrightarrow
					\Ext_{R^{\perf}_{\et}}^{i + j}(A, B)
		\end{gather*}
	and a morphism between them compatible with the $E_{\infty}$-terms.
	By Proposition \ref{prop: pro-fppf and fppf cohomology},
	the sheaf $R^{j} f_{\ast} B \in \Ab(k^{\perf}_{\pro\fppf})$ is the pro-fppf sheafification of the presheaf
	$R' \mapsto H^{j}((R' \tensor_{k} R)_{\fppf}, B)$.
	The presheaf appearing here commutes with filtered direct limits as a functor on $k^{\perf}$
	by \cite[III, Lemma 1.16, Remark 1.17 (d)]{Mil80}.
	Hence the fppf sheafification of this presheaf is already a pro-fppf sheaf,
	and $R^{j} f_{\ast} B$ commutes with filtered direct limits as a functor on $k^{\perf}$.
	Hence we may replace $R$ by $k$ and $B$ by $R^{j} f_{\ast} B$.
	Thus we are reduced to showing that
		\[
				\dirlim_{\lambda}
				\Ext_{k^{\perf}_{\pro\fppf}}^{i}(A_{\lambda}, B)
			=
				\Ext_{k^{\perf}_{\pro\fppf}}^{i}(A, B)
		\]
	for any $B \in \Ab(k^{\perf}_{\pro\fppf})$
	that commutes with filtered direct limits as a functor on $k^{\perf}$.
	
	Note that the functor $\Pro \Alg / k \to \Ab(k^{\perf}_{\pro\fppf})$ is exact
	since a surjection in $\Pro \Alg / k$ is a faithfully flat morphism of profinite presentation
	and hence a surjection in $\Ab(k^{\perf}_{\pro\fppf})$.
	For each $\lambda$, let $A_{\lambda}', A_{\lambda}'', A_{\lambda}'''$ be
	the kernel, cokernel, image of $A \to A_{\lambda}$.
	If we know that
		\[
				\dirlim_{\lambda}
					\Ext_{k^{\perf}_{\pro\fppf}}^{i}(A_{\lambda}', B)
			=
				\dirlim_{\lambda}
					\Ext_{k^{\perf}_{\pro\fppf}}^{i}(A_{\lambda}'', B)
			=
				0
		\]
	for any $i \ge 0$,
	then
		\[
				\Ext_{k^{\perf}_{\pro\fppf}}^{i}(A, B)
			=
				\dirlim_{\lambda}
					\Ext_{k^{\perf}_{\pro\fppf}}^{i}(A_{\lambda}''', B)
			=
				\dirlim_{\lambda}
					\Ext_{k^{\perf}_{\pro\fppf}}^{i}(A_{\lambda}, B)
		\]
	for any $i \ge 0$.
	Therefore it is enough to show the following statement:
	if $\{ A_{\lambda} \}_{\lambda}$ is a filtered inverse system of proalgebraic groups over $k$
	with $\invlim A_{\lambda} = 0$ and
	$B \in \Ab(k^{\perf}_{\pro\fppf})$ commutes with filtered direct limits,
	then
		\[
				\dirlim_{\lambda}
					\Ext_{k^{\perf}_{\pro\fppf}}^{i}(A_{\lambda}, B)
			=
				0
		\]
	for any $i \ge 0$.
	
	We prove this statement.
	We denote by $K^{+}(k^{\perf}_{\pro\fppf})$ and $K^{+}(\Ab)$
	the homotopy categories of bounded below complexes
	in $\Ab(k^{\perf}_{\pro\fppf})$ and $\Ab$, respectively.
	For a filtered direct system $\{D_{\lambda}\}$
	of bounded above complexes in $\Ab(k^{\perf}_{\pro\fppf})$,
	consider the triangulated functor
		\[
				K^{+}(k^{\perf}_{\pro\fppf})
			\to
				K^{+}(\Ab),
			\quad
				C
			\mapsto
				\dirlim_{\lambda}
					\Hom_{k^{\perf}_{\pro\fppf}}(D_{\lambda}, C),
		\]
	where we denote by $\Hom_{k^{\perf}_{\pro\fppf}}(D_{\lambda}, C)$
	the total complex of the Hom double complex
	and take its term-wise direct limit in $\lambda$.
	This admits a right derived functor
		\[
				D^{+}(k^{\perf}_{\pro\fppf})
			\to
				D^{+}(\Ab),
			\quad
				C
			\mapsto
				\dirlim_{\lambda}
					R \Hom_{k^{\perf}_{\pro\fppf}}(D_{\lambda}, C)
		\]
	since $\Ab(k^{\perf}_{\pro\fppf})$ has enough injectives and
	by \cite[Proposition 13.2.3]{KS06}.
	For any $i$, its $i$-th cohomology is
	$\dirlim_{\lambda} H^{i} R \Hom_{k^{\perf}_{\pro\fppf}}(D_{\lambda}, C)$.
	For $n \ge 0$, let
		\begin{gather*}
					X^{n}
				=
					\dirlim_{\lambda}
						R \Hom_{k^{\perf}_{\pro\fppf}}(H_{n} Q(A_{\lambda}), B),
			\\
					Y^{n}
				=
					\dirlim_{\lambda}
						R \Hom_{k^{\perf}_{\pro\fppf}}(\tau_{\le -n} Q(A_{\lambda}), B),
		\end{gather*}
	where $\tau$ denotes truncation (in cohomological grading).
	We have a diagram
		\[
			\begin{CD}
					X^{0}
				@.
					X^{1}[-1]
				@.
					X^{2}[-2]
				@.
					X^{3}[-3]
				@.
					\cdots
				\\
				@VVV
				@VVV
				@VVV
				@VVV
				\\
					Y^{0}
				@>>>
					Y^{1}
				@>>>
					Y^{2}
				@>>>
					Y^{3}
				@>>>
					\cdots
			\end{CD}
		\]
	in $D^{+}(\Ab)$,
	where each L shape triangle $X^{n}[-n] \to Y^{n} \to Y^{n + 1}$ is distinguished.
	The homology groups $H_{n}(Q(A))$ are the stable homology of $A$
	(Remark \ref{rmk: stable homology}).
	We use the following fact (\cite[Theorem 3]{Bre70}):
	$H_{0}(Q(A)) = A$, and for each $n \ge 1$,
	the group $H_{n}(Q(A))$ is a finite direct sum of
	the kernel or the cokernel of multiplication by $l$ on $A$
	for some various primes $l \ge 2$.
	(This is a finite direct sum for each fixed $n$
	since by the definition of admissible sequences \cite[(1.18)]{Bre70},
	there are finitely many $l$-admissible sequences $(a_{1}, a_{2}, \dots)$ of degree $n$
	with $a_{1} \equiv 0 \mod 2 l - 2$ for any prime $l$
	and there are no such if $2 l - 2 > n$.)
	In particular, we have
		\[
				X^{0}
			=
				\dirlim_{\lambda}
					R \Hom_{k^{\perf}_{\pro\fppf}}(A_{\lambda}, B),
		\]
	of which we want to show the vanishing.
	
	We first show that $Y^{0} = 0$.
	We have a hyperext spectral sequence
		\[
				E_{1}^{i j}
			=
				\dirlim_{\lambda}
					\Ext_{k^{\perf}_{\pro\fppf}}^{j}(Q_{i}(A_{\lambda}), B)
			\Longrightarrow
				H^{i + j} Y^{0}.
		\]
	Recall from Section \ref{sec: review of Mac Lane's resolution} that for each $i$,
	the $i$-th term $Q_{i}(A_{\lambda})$ is
	a direct summand of the sheaf $\Z[A_{\lambda}^{2^{i}}]$.
	Therefore we can write the Ext groups in the above spectral sequences
	in terms of pro-fppf cohomology groups of schemes of the form
	$A_{\lambda}^{2^{i}}$.
	Since $\invlim A_{\lambda} = 0$, we have
		\[
				\dirlim_{\lambda}
					H^{j}(A_{\lambda}^{2^{i}}, B)
			=
				H^{j}(0, B)
		\]
	by the remark before the proof
	(more precisely, by the pro-fppf version of \cite[III, Remark 1.17 (d)]{Mil80}).
	Hence we have
		$
				E_{1}^{i j}
			=
				\Ext_{k^{\perf}_{\pro\fppf}}^{j}(Q_{i}(0), B)
		$.
	Note that the zero map $A \onto 0$ induces
	a morphism on their spectral sequences of the above type
	compatible with the filtrations on the $E_{\infty}$-terms,
	since it comes from the morphism
		\[
				\dirlim_{\lambda, \nu}
					\Hom_{k^{\perf}_{\pro\fppf}}(Q(A_{\lambda}), J)
			\to
				\Hom_{k^{\perf}_{\pro\fppf}}(Q(0), J)
		\]
	of double complexes,
	where $J$ is an injective resolution of $B$.
	Hence
		\[
				Y^{0}
			=
				R \Hom_{k^{\perf}_{\pro\fppf}}(Q(0), B)
			=
				0
		\]
	since $Q(0)$ has zero homology.
	
	Now we show that $X^{0} = 0$.
	We prove that $\tau_{\le n} X^{0} = 0$ by induction on $n \ge 0$.
	The base case $n = 0$ follows from $Y^{0} = 0$.
	Assume that $\tau_{\le n} X^{0} = 0$ (for all $\{A_{\lambda}\}$ satisfying the assumption).
	Then
		\[
				\tau_{\le n} \dirlim_{\lambda}
				R \Hom_{k^{\perf}_{\pro\fppf}}(A_{\lambda}^{\sharp}, B)
			=
				0,
		\]
	where $A_{\lambda}^{\sharp} \in \Pro \Alg / k$ is the kernel or cokernel of
	multiplication by any positive integer on $A_{\lambda}$.
	Hence
		\[
				\tau_{\le n + 1}(X^{1}[-1])
			=
				\tau_{\le n + 1}(X^{2}[-2])
			=
				\dots
			=
				0
		\]
	by the structure of the homology of $Q$.
	Note that $Y^{n}$ is concentrated in degrees $\ge n$.
	Hence by the above diagram and $Y^{0} = 0$, we have
		\[
				\tau_{\le n + 1} X^{0}
			=
				\tau_{\le n} Y^{1}
			=
				\tau_{\le n} Y^{2}
			=
				\dots
			=
				\tau_{\le n} Y^{n + 1}
			=
				0.
		\]
	
	\eqref{ass: Ext is continuous on B}
	We may assume that $R = k$.
	Indeed, let $f \colon \Spec R^{\perf}_{\et} \to \Spec k^{\perf}_{\et}$
	be the natural morphism.
	Then for any filtered direct system $\{C_{\mu}\}$ in $\Ab(R^{\perf}_{\et})$ with direct limit $C$, $j \ge 0$
	and $R' \in k^{\perf}$,
	we have
		\[
				\dirlim_{\mu}
				H^{j}(R' \tensor_{k} R, C_{\mu})
			=
				H^{j}(R' \tensor_{k} R, C)
		\]
	by \cite[III, Remark 3.6 (d)]{Mil80}.
	Hence $\dirlim_{\mu} R^{j} f_{\ast} C_{\mu} = R^{j} f_{\ast} C$.
	We have Grothendieck spectral sequences
		\begin{gather*}
					E_{2}^{i j}
				=
					\dirlim_{\mu}
					\Ext_{k^{\perf}_{\et}}^{i}(A, R^{j} f_{\ast} C_{\mu})
				\Longrightarrow
					\dirlim_{\mu}
					\Ext_{R^{\perf}_{\et}}^{i + j}(A, C_{\mu}),
			\\
					E_{2}^{i j}
				=
					\Ext_{k^{\perf}_{\et}}^{i}(A, R^{j} f_{\ast} C)
				\Longrightarrow
					\Ext_{R^{\perf}_{\et}}^{i + j}(A, C).
		\end{gather*}
	There is a morphism from the first spectral sequence to the second compatible with the $E_{\infty}$-terms
	by the following reason.
	By \cite[Corollary 9.6.6]{KS06},
	there exists a functorial choice of embeddings $D \into \Psi(D)$
	for arbitrary $D \in \Ab(R^{\perf}_{\et})$ with $\Psi(D)$ injective.
	Let $\Psi^{0}(D) = \Psi(D)$, $\Psi^{1}(D) = \Psi(\Psi(D) / D)$ and so on.
	Then we have a functorial injective resolution $\Psi^{\bullet}(D)$ of $D$.
	We have a commutative diagram
		\[
			\begin{CD}
					\dirlim_{\mu}
					\Hom_{k^{\perf}_{\et}}(A, f_{\ast} \Psi^{\bullet}(C_{\mu}))
				@=
					\dirlim_{\mu}
					\Hom_{R^{\perf}_{\et}}(A, \Psi^{\bullet}(C)),
				\\
				@VVV
				@VVV
				\\
					\Hom_{k^{\perf}_{\et}}(A, f_{\ast} \Psi^{\bullet}(C))
				@=
					\Hom_{R^{\perf}_{\et}}(A, \Psi^{\bullet}(C))
			\end{CD}
		\]
	of complexes in $\Ab$.
	This induces a desired morphism of spectral sequences.
	Replacing $R$ by $k$ and $C_{\mu}$ by $R^{j} f_{\ast} C_{\mu}$,
	we are reduced to the case $R = k$.
	
	Now assume $R = k$.
	Let $M$ be a directed set viewed as a filtered category.
	Consider the category $\Ab(k^{\perf}_{\et})(M)$ of functors
	from $M$ to $\Ab(k^{\perf}_{\et})$
	(that is, direct systems in $\Ab(k^{\perf}_{\et})$ indexed by $M$).
	This is an abelian category with enough injectives (\cite[III, Remark 3.6 (d)]{Mil80}).
	The additive bifunctor
		\[
					\Ab(k^{\perf}_{\et})^{\op}
				\times
					\Ab(k^{\perf}_{\et})(M)
			\to
				\Ab,
			\quad
				(D, \{C_{\mu}\})
			\mapsto
				\dirlim_{\mu} \Hom_{k^{\perf}_{\et}}(D, C_{\mu})
		\]
	($\op$ means the opposite category)
	extends to a triangulated bifunctor
		\[
					K^{-}(k^{\perf}_{\et})^{\op}
				\times
					K^{+} \bigl(
						\Ab(k^{\perf}_{\et})(M)
					\bigr)
			\to
				K^{+}(\Ab)
		\]
	by \cite[Proposition 11.6.4 (i)]{KS06}.
	Evaluating the left variable at any bounded above complex $D$ in $\Ab(k^{\perf}_{\et})$,
	we have a triangulated functor
		\[
				K^{+} \bigl(
					\Ab(k^{\perf}_{\et})(M)
				\bigr)
			\to
				K^{+}.
		\]
	This admits a right derived functor
		\[
				D^{+} \bigl(
					\Ab(k^{\perf}_{\et})(M)
				\bigr)
			\to
				D^{+}(\Ab),
			\quad
				\{C_{\mu}\}
			\mapsto
				\dirlim_{\mu}
					R \Hom_{k^{\perf}_{\et}}(D, C_{\mu})
		\]
	by \cite[Proposition 13.2.3]{KS06},
	whose $i$-th cohomology is
		$
			\dirlim_{\mu}
				H^{i} R \Hom_{k^{\perf}_{\et}}(D, C_{\mu})
		$.
	We have a morphism
		\[
				\dirlim_{\mu}
					R \Hom_{k^{\perf}_{\et}}(D, C_{\mu})
			\to
				R \Hom_{k^{\perf}_{\et}}(D, \dirlim_{\mu} C_{\mu})
		\]
	by universality.
	We consider the cubical construction $Q(A)$ for $A$ viewed as an object of $\DGAb(k^{\perf}_{\et})$,
	which is the \'etale (rather than pro-fppf) sheafification of the presheaf $R \mapsto Q(A(R))$.
	Let $\{C_{\mu}\} \in \Ab(k^{\perf}_{\et})(M)$ with $\dirlim_{\mu} C_{\mu} = 0$ and set
		\begin{gather*}
					X^{j}
				=
					\dirlim_{\mu}
						R \Hom_{k^{\perf}_{\et}}(H_{j} Q(A), C_{\mu}),
			\\
					Y^{j}
				=
					\dirlim_{\mu}
						R \Hom_{k^{\perf}_{\et}}(\tau_{\le -j} Q(A), C_{\mu}).
		\end{gather*}
	We know $Y^{0} = 0$ from
		\[
				\dirlim_{\mu}
					H^{j}(A^{2^{i}}, C_{\mu})
			=
				H^{j}(A^{2^{i}}, 0)
			=
				0
		\]
	by the same argument as the proof of the previous assertion.
	For a fixed $n \ge 0$, assume $\tau_{\le n} X^{0} = 0$ for any $A \in \Pro \Alg / k$.
	Then
		\[
				\tau_{\le n} \dirlim_{\mu}
				R \Hom_{k^{\perf}_{\et}}(A[l], C_{\mu})
			=
				0
		\]
	for any $l \ge 1$,
	where $A[l] \in \Pro \Alg / k$ is the $l$-torsion part.
	This and $\tau_{\le n} X^{0} = 0$ imply
		\[
				\tau_{\le n} \dirlim_{\mu}
				R \Hom_{k^{\perf}_{\et}}(A /_{\et} l, C_{\mu})
			=
				0,
		\]
	where $A /_{\et} l \in \Ab(k^{\perf}_{\et})$ is the cokernel of multiplication by $l$ on $A$
	in $\Ab(k^{\perf}_{\et})$
	(which might not be in $\Pro \Alg / k$
	since $\Pro \Alg / k\ \to \Ab(k^{\perf}_{\et})$ is only left exact).
	From these, we can deduce $\tau_{\le n + 1} X^{0} = 0$
	by the same argument as the proof of the previous assertion.
	Induction then finishes the proof.
\end{proof}

With this lemma,
the statements of the proposition are reduced to showing the following parts
	\begin{equation} \label{eq: reduced version of Ext as sheaves and as algebraic groups}
		\begin{gathered}
					\Ext_{k^{\perf}_{\et}}^{n}(A, B)
				=
					\Ext_{\Alg / k}^{n}(A, B),
			\\
					\Ext_{k^{\perf}_{\et}}^{n}(A, \Q)
				=
					0
		\end{gathered}
	\end{equation}
for $A, B \in \Alg / k$.
(Note that $\Hom_{k^{\perf}_{\et}}(A, \Q) = 0$ implies that its subgroup
$\Hom_{k^{\perf}_{\et}}(A, \Z)$ is zero.)

Next we will reduce these statements
to the case that $k$ is algebraically closed.
Let $A \in \Alg / k$ and $B \in \Loc \Alg / k$.
Assertion \eqref{ass: Ext is continuous on A and k} of the above lemma implies that
	\[
			\Ext_{\bar{k}^{\perf}_{\et}}^{j}(A, B)
		=
			\dirlim_{k' / k}
				\Ext_{k'^{\perf}_{\et}}^{j}(A, B),
	\]
where $k'$ runs through all finite Galois subextensions of $\bar{k} / k$.
Therefore the Hochschild-Serre spectral sequences for finite Galois extensions
give an isomorphism
	\begin{equation} \label{eq: Hochschild-Serre for perfect etale Rhom}
			R \Gamma \bigl(
				\Gal(\bar{k} / k),
				R \Hom_{\bar{k}^{\perf}_{\et}}(A, B)
			\bigr)
		=
			R \Hom_{k^{\perf}_{\et}}(A, B)
	\end{equation}
in $D(\Ab)$.
Hence the second line of \eqref{eq: reduced version of Ext as sheaves and as algebraic groups}
is reduced to the case $\bar{k} = k$.

For the first line, assume $B \in \Alg / k$.
Then \cite[\S 1, Proposition]{Mil70} (or its proof) shows that
the functor $\Hom_{\Pro \Alg / \bar{k}}(\;\cdot\;, B)$ takes projectives in $\Pro \Alg / k$
to $\Gamma(\Gal(\bar{k} / k), \;\cdot\;)$-acyclics.
Hence we have an isomorphism
	\begin{equation} \label{eq: Hochschild-Serre for proalgebraic Rhom}
			R \Gamma \bigl(
				\Gal(\bar{k} / k),
				R \Hom_{\Pro \Alg / \bar{k}}(A, B)
			\bigr)
		=
			R \Hom_{\Pro \Alg / k}(A, B)
	\end{equation}
in $D(\Ab)$.
The functor $\Pro \Alg / k \to \Ab(k^{\perf}_{\pro\fppf})$ is exact as we saw in the proof of the previous lemma.
This induces morphisms
	\begin{gather*}
				R \Hom_{\Pro \Alg / k}(A, B)
			\to
				R \Hom_{k^{\perf}_{\pro\fppf}}(A, B),
		\\
				R \Hom_{\Pro \Alg / \bar{k}}(A, B)
			\to
				R \Hom_{\bar{k}^{\perf}_{\pro\fppf}}(A, B)
	\end{gather*}
in $D(\Ab)$, $D(\Gal(\bar{k} / k))$ ($=$ the derived category of discrete $\Gal(\bar{k} / k)$-modules),
respectively.
The right-hand sides are isomorphic to
$R \Hom_{k^{\perf}_{\et}}(A, B)$,
$R \Hom_{\bar{k}^{\perf}_{\et}}(A, B)$, respectively,
by Proposition \ref{prop: RHom for profppf, perf etale and indrat etale}
and Corollary \ref{cor: profppf and etale cohomology of an algebraic group}.
Hence we have morphisms
	\begin{gather*}
				R \Hom_{\Pro \Alg / k}(A, B)
			\to
				R \Hom_{k^{\perf}_{\et}}(A, B),
		\\
				R \Hom_{\Pro \Alg / \bar{k}}(A, B)
			\to
				R \Hom_{\bar{k}^{\perf}_{\et}}(A, B)
	\end{gather*}
in $D(\Ab)$, $D(\Gal(\bar{k} / k))$,
respectively.
Applying $R \Gamma(\Gal(\bar{k} / k), \;\cdot\;)$ to the second morphism results the first
by \eqref{eq: Hochschild-Serre for perfect etale Rhom}
and \eqref{eq: Hochschild-Serre for proalgebraic Rhom}.
Therefore if we show that
	\[
			R \Hom_{\Pro \Alg / \bar{k}}(A, B)
		=
			R \Hom_{\bar{k}^{\perf}_{\et}}(A, B),
	\]
then
	\[
			R \Hom_{\Pro \Alg / k}(A, B)
		=
			R \Hom_{k^{\perf}_{\et}}(A, B).
	\]
Noting that $\Ext_{\Pro \Alg / k}^{n}(A, B) = \Ext_{\Alg / k}^{n}(A, B)$,
we are reduced to the case $\bar{k} = k$.

We show \eqref{eq: reduced version of Ext as sheaves and as algebraic groups}
in the case $\bar{k} = k$.

\begin{Lem} \label{lem: higher Ext are torsion}
	Let $A, B \in \Alg / k$ be connected.
	Then the group $\Ext_{k^{\perf}_{\et}}^{i}(A, B)$ consists of
	$p$-power-torsion elements for any $i \ge 2$.
	In particular, we have
	$\Ext_{k^{\perf}_{\et}}^{i}(\Gm, \Gm) = 0$ for all $i \ge 2$.
\end{Lem}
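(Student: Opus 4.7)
With $k$ algebraically closed (already reduced above), the plan is to exploit the structure of $A$ and $B$ as commutative affine quasi-algebraic groups. First, I reduce to $A$ connected: the exact sequence $0 \to A^{0} \to A \to \pi_{0}(A) \to 0$ yields a long exact sequence for $\Ext^{\bullet}(-, B)$ in which the vanishing $\Ext^{i}(\pi_{0}(A), B) = 0$ for $i \geq 2$ gives $\Ext^{i}(A, B) \cong \Ext^{i}(A^{0}, B)$ for $i \geq 2$. This vanishing is obtained by writing $\pi_{0}(A)$ as a direct sum of cyclic groups and applying $\Ext^{\bullet}(-, B)$ to the resolution $0 \to \Z \stackrel{n}{\to} \Z \to \Z / n \Z \to 0$, combined with $H^{i}(k^{\perf}_{\et}, B) = 0$ for $i \geq 1$ over algebraically closed $k$. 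Similarly, decomposing $B$ via $0 \to B^{0} \to B \to \pi_{0}(B) \to 0$ and $B^{0} = T \oplus U$ (torus plus unipotent) over algebraically closed $k$, a long-exact-sequence argument reduces the problem to treating separately the three cases $B = T$, $B = U$, and $B = \pi_{0}(B)$.

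The unipotent and finite \'etale cases are straightforward. For $B = U$ unipotent in $\Alg / k$, some power $p^{N}$ annihilates $U$, hence annihilates $\Ext^{i}(A, U)$ as well. For $B = F$ finite \'etale, split $F = F_{p} \oplus F^{p}$ into $p$-primary and prime-to-$p$ parts; the $F_{p}$ part gives $p$-power torsion Ext automatically. For $F^{p}$, I would use the following key step: for $A$ connected and $n$ prime to $p$, multiplication by $n$ acts as an isomorphism on $\Ext^{i}(A, B)$ for $i \geq 2$. This follows by applying $\Ext^{\bullet}(-, B)$ to the exact sequence $0 \to A[n] \to A \stackrel{n}{\to} A \to 0$ (exact in $\Ab(k^{\perf}_{\et})$ since $A$ is smooth connected and $n$ is prime to $p$), combined with $\Ext^{i}(A[n], B) = 0$ for $i \geq 2$ (as in the previous paragraph, $A[n]$ being finite \'etale). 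Applied to $B = F^{p}$, the group $\Ext^{i}(A, F^{p})$ is simultaneously uniquely $|F^{p}|$-divisible and annihilated by $|F^{p}|$, hence zero.

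The torus case is the main obstacle: one needs $\Ext^{i}_{k^{\perf}_{\et}}(A, T) = 0$ for $i \geq 2$, which reduces to $T = \Gm$. For this I would invoke Breen's results \cite{Bre70, Bre81}, identifying $\Ext^{i}_{k^{\perf}_{\et}}(A, B) = \Ext^{i}_{\Pro \Alg / k}(A, B)$, combined with Serre's vanishing theorem \cite[\S 10]{Ser60} for $\Ext^{i}_{\Pro \Alg / k}(A, \Gm)$ in degrees $i \geq 2$ over algebraically closed $k$. Finally, the ``in particular'' statement $\Ext^{i}(\Gm, \Gm) = 0$ for $i \geq 2$ is an immediate corollary of the $p$-power torsion claim: since $[p] \colon \Gm \to \Gm$ is an isomorphism in the perfection category (Frobenius being invertible after perfection), $[p]$ induces an isomorphism on $\Ext^{i}(\Gm, \Gm)$, making the group uniquely $p$-divisible; combined with the established $p$-power torsion property, this forces $\Ext^{i}(\Gm, \Gm) = 0$.
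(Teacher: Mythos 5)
Your dévissage disposes of the unipotent and finite étale coefficients correctly, up to one small imprecision: for $i = 2$ multiplication by $n$ need not be \emph{injective} on $\Ext^{2}(A, B)$, because the long exact sequence only identifies its kernel with a quotient of $\Ext^{1}(A[n], B)$, which can be nonzero (e.g.\ $\Ext^{1}(\mu_{n}, \Z / n \Z) = \Z / n \Z$ over $\algcl{k}$). What saves you is weaker: since $\Ext^{i}(A, F^{p})$ is killed by $n = |F^{p}|$, the long exact sequence gives an injection $\Ext^{i}(A, F^{p}) \into \Ext^{i}(A[n], F^{p}) = 0$ for $i \ge 2$, which still yields the vanishing you want.

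The genuine gap is the torus case, which is the entire content of the lemma. You invoke the identification $\Ext^{i}_{k^{\perf}_{\et}}(A, B) = \Ext^{i}_{\Pro \Alg / k}(A, B)$, but in this paper that identification (Proposition \ref{prop: Ext as sheaves and as algebraic groups}) is proved \emph{using} the present lemma --- the case $A = B = \Gm$ of the comparison is settled precisely by the ``in particular'' clause here --- so your argument is circular. Nor do Breen's papers contain the comparison in the form you need: \cite{Bre81} treats $\Ga$ on the perfect site, while \cite{Bre70} works on the ordinary (non-perfect) site. Note also that a Kummer-sequence argument of the kind you use for $F^{p}$ can only kill prime-to-$p$ torsion in $\Ext^{i}(A, \Gm)$; it cannot show the group is torsion at all, and torsionness is the real content. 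The paper's proof is different and uniform in $B$: by \cite[Lem.\ 1.1--1.3]{Bre81} one has $\Ext^{i}_{k^{\perf}_{\et}}(A, B) = \Ext^{i}_{k^{\sch}_{\et}}(A_{0}, \dirlim_{n} B_{0}^{(-n)}) = \dirlim_{n} \Ext^{i}_{k^{\sch}_{\et}}(A_{0}, B_{0}^{(-n)})$ (the last step by continuity of $\Ext$ in the second variable, as in Lemma \ref{lem: lim and Ext commutes}), where $A_{0}, B_{0}$ are algebraic models and $B_{0}^{(-n)}$ the Frobenius twists; Breen's torsionness theorem in \cite{Bre70} then says each $\Ext^{i}_{k^{\sch}_{\et}}(A_{0}, B_{0}^{(-n)})$ is $p$-power torsion for $i \ge 2$, with no case analysis needed. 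Your derivation of the ``in particular'' statement from the $p$-power-torsion claim (unique $p$-divisibility of $\Ext^{i}(\Gm, \Gm)$ on the perfect site) is exactly the paper's.
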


\begin{proof}
	Let $A_{0}, B_{0}$ be commutative affine algebraic groups over $k$
	whose perfections are $A, B$, respectively.
	Let $k^{\sch}$ be the category of affine $k$-schemes
	and $\Spec k^{\sch}_{\et}$ the \'etale site on it.
	By \cite[Lemmas 1.1-1.3]{Bre81} and their proofs,
	we have
		\[
				\Ext_{k^{\perf}_{\et}}^{i}(A, B)
			=
				\Ext_{k^{\sch}_{\et}}^{i}(A_{0}, \dirlim_{n} B_{0}^{(-n)}),
		\]
	where $B_{0} \to B_{0}^{(-1)} \to B_{0}^{(-2)} \to \cdots$ are the Frobenius morphisms over $k$.
	Assertion \eqref{ass: Ext is continuous on B} of Lemma \ref{lem: lim and Ext commutes}
	is also true for $\Spec k^{\sch}_{\et}$
	by exactly the same proof.
	Hence we have
		\[
				\Ext_{k^{\perf}_{\et}}^{i}(A, B)
			=
				\dirlim_{n}
					\Ext_{k^{\sch}_{\et}}^{i}(A_{0}, B_{0}^{(-n)}).
		\]
	We want to show that
	$\Ext_{k^{\sch}_{\et}}^{i}(A_{0}, B_{0}^{(-n)})$ consists of $p$-power-torsion elements for $i \ge 2$.
	It is enough to treat the case $n = 0$
	since $B_{0}^{(-n)}$ is also a commutative affine algebraic group over $k$.
	Since the fppf cohomology with coefficients in a smooth group scheme agrees with the \'etale cohomology,
	the Grothendieck spectral sequence shows that
	$\Ext_{k^{\sch}_{\et}}^{i}(A_{0}, B_{0})$ is equal to $\Ext_{k_{\fppf}}^{i}(A_{0}, B_{0})$.
	By d\'evissage,
	we may assume that $A_{0}$ and $B_{0}$ are $\Ga$ or $\Gm$.
	Since $\Ga$ in characteristic $p > 0$ is killed by $p$,
	it is enough to treat the case $A_{0} = B_{0} = \Gm$.
	By \cite[\S 10, 6th paragraph (p.43)]{Bre70},
	we know that $\Ext_{k_{\fppf}}^{i}(\Gm, \Gm)$ consists of $p$-power-torsion elements for $i \ge 2$.
	(As the proof of loc.\ cit.\ shows,
	the meaning of an abelian group being ``at most $p$-torsion'' in the terminology of \cite{Bre70} is
	that it consists of $p$-power-torsion elements.)
	This proves the desired property of $\Ext_{k^{\perf}_{\et}}^{i}(A, B)$.
	On the other hand,
	the $p$-th power map on the group $\Gm$ over $k^{\perf}_{\et}$ is an isomorphism.
	Hence $\Ext_{k^{\perf}_{\et}}^{i}(\Gm, \Gm) = 0$ for all $i \ge 2$.
\end{proof}

\begin{proof}[Proof of Proposition \ref{prop: Ext as sheaves and as algebraic groups}]
	Let $A, B \in \Alg / k$.
	We first prove
		\[
				\Ext_{k^{\perf}_{\et}}^{n}(A, B)
			=
				\Ext_{\Alg / k}^{n}(A, B)
		\]
	for $A, B \in \Alg / k$ and $n \ge 0$.
	The case $n = 0, 1$ is classical
	(apply the direct limit in Frobenii to \cite[Corollary 17.5]{Oor66} and
	argue as in the proof of the previous lemma).
	We may assume that each of $A$ and $B$ is $\Ga$, $\Gm$, or finite.
	The case $A$ is finite is easy.
	The case $B$ is finite is reduced to the case $B$ is connected affine
	by embedding a finite $B$ into a connected affine group.
	There are no higher extensions between $\Ga$ (killed by $p$)
	and $\Gm$ (having invertible $p$-th power map).
	For the case $A = B = \Gm$ and $n \ge 2$, the right-hand side is zero
	since the category of quasi-algebraic groups of multiplicative type is equivalent to
	the category of finitely generated $\Z[1 / p]$-modules \cite[\S 7.2, Proposition 1, 2]{Ser60}.
	The left-hand side is also zero by Lemma \ref{lem: higher Ext are torsion}.
	For the case $A = B = \Ga$ and $n \ge 2$, the right-hand side is zero
	by \cite[\S 8.6, Corollaries 4 and 5 to Proposition 6]{Ser60}.
	The left-hand side is also zero by \cite[Corollary 1.7]{Bre81}.
	
	Next we prove
		$
				\Ext_{k^{\perf}_{\et}}^{n}(A, \Q)
			=
				0
		$
	for $n \ge 0$.
	This is trivial if $A$ is finite.
	Hence we may assume that $A$ is connected.
	Let $M(A)$ be Mac Lane's resolution of $A$ in $\Ab(k^{\perf}_{\et})$.
	Consider the hyperext spectral sequence
		\[
				E_{1}^{i j}
			=
				\Ext_{k^{\perf}_{\et}}^{j}(M_{i}(A), \Q)
			\Longrightarrow
				\Ext_{k^{\perf}_{\et}}^{i + j}(A, \Q).
		\]
	Each term of $M(A)$ is a direct summand of a direct sum of sheaves of the form $\Z[A^{n}]$
	for various $n$.
	Hence $\Ext_{k^{\perf}_{\et}}^{j}(M_{i}(A), \Q)$
	is a direct factor of a direct product of abelian groups of the form
	$H^{j}(A^{n}_{\et}, \Q)$.
	If $j \ge 1$, this is zero since
	the \'etale cohomology of a normal scheme with coefficients in $\Q$ vanishes in non-zero degree
	by \cite[\S 2.1]{Den88}.
	If $j = 0$, this is $\Q = H^{0}(0, \Q)$ since $A$ is assumed to be connected.
	Therefore
		\[
				\Ext_{k^{\perf}_{\et}}^{n}(A, \Q)
			=
				\Ext_{k^{\perf}_{\et}}^{n}(0, \Q)
			=
				0.
		\]
\end{proof}

\begin{Rmk} \label{rmk: generalizing to abelian varieties}
	Results of this section can be extended for abelian varieties instead of affine proalgebraic groups.
	This can be done as follows.
	Let $\Alg' / k$ be the category of (not necessarily affine) quasi-algebraic groups over $k$.
	Consider its procategory $\Pro \Alg' / k$
	(which is Serre's category of proalgebraic groups over $k$ \cite[\S 2.6, Proposition 12]{Ser60}).
	Let $\Pro' \Alg' / k$ be the full subcategory of $\Pro \Alg' / k$
	consisting of extensions of perfections of abelian varieties
	by affine proalgebraic groups.
	This category contains projective envelopes of perfections of abelian varieties
	by \cite[\S 9.2, Proposition 4]{Ser60}.
	It follows that any object $G \in \Pro' \Alg' / k$ has a projective resolution
	$G_{\bullet}$ in $\Pro \Alg' / k$ such that $G_{n} \in \Pro' \Alg' / k$
	(hence projective also in $\Pro' \Alg' / k$) for any $n$.
	We use $\Pro' \Alg' / k$ instead of the category $\Pro \Alg / k$ of affine proalgebraic groups.
	
	Let $k^{\perf \prime}$ be the category of quasi-compact quasi-separated perfect schemes over $k$.
	We use $k^{\perf \prime}$ instead of the category $k^{\perf}$ of perfect $k$-algebras.
	The category $k^{\perf \prime}$ contains the perfection $A$ of an abelian variety over $k$,
	its generic point $\xi_{A}$ and the inclusion morphism $\xi_{A} \into A$.
	A morphism $Y \to X$ in $k^{\perf \prime}$ is said to be \emph{flat of finite presentation} (in the perfect sense)
	if it can be written as the perfection of a $k$-scheme morphism $Y_{0} \to X$
	flat of finite presentation in the usual sense.
	A morphism $Y \to X$ in $k^{\perf \prime}$ is said to be \emph{flat of profinite presentation}
	if it can be written as the inverse limit of a filtered inverse system $\{Y_{\lambda} \to X\}$
	of morphisms in $k^{\perf \prime}$ flat of finite presentation
	such that the transition morphisms $Y_{\mu} \to Y_{\lambda}$ are affine.
	A finite family $\{Y_{i} \to X\}$ of morphisms in $k^{\perf \prime}$ is called
	an \emph{fppf} (resp.\ \emph{pro-fppf}) \emph{covering}
	if each $Y_{i} \to X$ is a flat morphism of finite (resp.\ profinite) presentation
	and $\bigsqcup Y_{i} \to X$ is surjective.
	This defines an \emph{fppf} (resp.\ \emph{pro-fppf}) \emph{site}
	$\Spec k^{\perf \prime}_{\fppf}$ (resp.\ $\Spec k^{\perf \prime}_{\pro\fppf}$) on the category $k^{\perf \prime}$.
	The \'etale site $\Spec k^{\perf \prime}_{\et}$ on $k^{\perf \prime}$
	can be defined in the usual way,
	noting that an \'etale scheme over a perfect $k$-scheme is again perfect.
	The identity functor defines a morphism of sites
	$\Spec k^{\perf \prime}_{\et} \to \Spec k^{\perf}_{\et}$,
	which induces an equivalence on the topoi.
	
	Then all the definitions, statements and proofs in this section before Lemma \ref{lem: higher Ext are torsion}
	can be generalized with $\Pro \Alg / k$ replaced by $\Pro' \Alg' / k$
	and $k^{\perf}$ replaced by $k^{\perf \prime}$.
	For Lemma \ref{lem: higher Ext are torsion},
	we need to prove the following generalization:
	the group $\Ext_{k^{\perf \prime}_{\et}}^{i}(A, B)$ consists of
	$p$-power-torsion elements for any $i \ge 2$
	and any connected $A, B \in \Alg' / k$.
	To prove this, by the same argument as the original proof,
	it is enough to show that
	$\Ext_{k_{\fppf}}^{i}(A_{0}, B_{0})$ consists of $p$-power-torsion elements for any $i \ge 2$,
	where $A_{0}$ and $B_{0}$ are abelian varieties or $\Gm$.
	This follows from the same result \cite[\S 10, 6th paragraph (p.43)]{Bre70} of Breen.
	
	For the paragraph after Lemma \ref{lem: higher Ext are torsion},
	we need to prove that
		\[
				\Ext_{k^{\perf \prime}_{\et}}^{n}(A, B)
			=
				\Ext_{\Alg' / k}^{n}(A, B)
		\]
	for $A, B \in \Alg' / k$ and $n \ge 0$.
	The case $n = 0, 1$ is again classical
	(this time using the not-necessarily-affine version \cite[Remark after Corollary 17.5]{Oor66}).
	We have $\Ext_{\Alg' / k}^{n}(A, B) = 0$ for $n \ge 3$ by \cite[\S 10.1, Theorem 1]{Ser60}.
	As noted after the proof of the cited theorem,
	the group $\Ext_{\Alg' / k}^{2}(A, B)$ is zero if $A$ and $B$ are elementary
	in the sense of \cite[\S 3.2, Definition 1]{Ser60}
	except the case $A = \Ga$ and $B = \Z / p \Z$,
	where $\Ext_{\Alg' / k}^{2}(\Ga, \Z / p \Z)$ is killed by $p$.
	By d\'evissage, this implies that $\Ext_{\Alg' / k}^{2}(A, B)$ is torsion for any $A, B \in \Alg' / k$.
	By the same argument as the proof of the original Lemma \ref{lem: higher Ext are torsion},
	we may assume that $A$ and $B$ are $\Ga$, $\Gm$ or perfections of abelian varieties.
	First, let $A$ be connected affine and $B$ the perfection of a semi-abelian variety.
	Then for any $n, m \ge 1$, the Kummer sequence $0 \to B[m] \to B \to B \to 0$ induces
	exact sequences
		\begin{gather*}
					0
				\to
					\Ext_{\Alg' / k}^{n - 1}(A, B) / m
				\to
					\Ext_{\Alg' / k}^{n}(A, B[m])
				\to
					\Ext_{\Alg' / k}^{n}(A, B)[m]
				\to
					0,
			\\
					0
				\to
					\Ext_{k^{\perf \prime}_{\et}}^{n - 1}(A, B) / m
				\to
					\Ext_{k^{\perf \prime}_{\et}}^{n}(A, B[m])
				\to
					\Ext_{k^{\perf \prime}_{\et}}^{n}(A, B)[m]
				\to
					0,
		\end{gather*}
	where $/ m$ denotes the cokernel of multiplication by $m$.
	We have a natural morphism from the first sequence to the second.
	As we saw, the morphism on the first terms of these sequences is an isomorphism if $n \le 2$.
	The group $B[m]$ is finite and hence affine.
	Hence the affine case implies that the morphism on the second terms is an isomorphism for any $n$.
	The group $\Ext_{k^{\perf \prime}_{\et}}^{n}(A, B)$ is torsion for $n \ge 2$
	by Lemma \ref{lem: higher Ext are torsion} generalized right above.
	Hence we can inductively prove that
	the morphism on the third terms of these sequences is an isomorphism for any $n$.
	Second, let $A$ be the perfection of a semi-abelian variety and $B$ connected affine.
	Then the same argument using the exact sequences
		\begin{gather*}
					0
				\to
					\Ext_{\Alg' / k}^{n - 1}(A, B) / m
				\to
					\Ext_{\Alg' / k}^{n - 1}(A[m], B)
				\to
					\Ext_{\Alg' / k}^{n}(A, B)[m]
				\to
					0,
			\\
					0
				\to
					\Ext_{k^{\perf \prime}_{\et}}^{n - 1}(A, B) / m
				\to
					\Ext_{k^{\perf \prime}_{\et}}^{n - 1}(A[m], B)
				\to
					\Ext_{k^{\perf \prime}_{\et}}^{n}(A, B)[m]
				\to
					0
		\end{gather*}
	gives the desired result in this case.
	Finally, let $A$ and $B$ be perfections of semi-abelian varieties.
	Then using either one of the above two sets of sequences reduces the statement
	to the case where $A$ or $B$ is affine.
	This finishes the case of arbitrary $A, B \in \Alg' / k$.
	The final paragraph of the proof of Proposition \ref{prop: Ext as sheaves and as algebraic groups}
	needs no change.
\end{Rmk}


\section{Comparison with Jannsen-Rovinsky's dominant topology}
\label{sec: comparison with Jannsen-Rovinsky's dominant topology}

Let $\DM_{k}$ be the category of the perfections of smooth $k$-morphisms
of smooth affine $k$-schemes with the topology
where a cover is a dominant morphism.
In \cite{JR10}, Jannsen and Rovinsky defined and used the site $\DM_{k}$
(without taking perfections or affineness).
Its topos is equivalent to the category of sets with
smooth actions by the field automorphism group of a universal domain over $k$,
which Rovinsky introduced in \cite{Rov05}.
Their purpose was to study motives.
In this section, we explain a relation between the site $\DM_{k}$
and our sites $\Spec k^{\rat}_{\et}$, $\Spec k^{\perf}_{\pro\fppf}$.
We hope our sites and Theorem \ref{thm: main theorem, comparison of Ext}
are useful for the study of motives and the site $\DM_{k}$.

Note that the scheme-theoretic fiber product $Y \times_{X} Z$ for $X, Y, Z \in \DM_{k}$
(which is always in $\DM_{k}$)
gives the fiber product in the category $\DM_{k}$ when $Y \to X$ or $Z \to X$ is \'etale,
but not always.
Morphisms in $\DM_{k}$ are restricted to be smooth.
A little more precise definition of the topology of $\DM_{k}$ is that
a sieve on an object $X \in \DM_{k}$ is a covering
if it contains a (finite) family of morphisms $X_{i} \to X$
such that $\bigsqcup X_{i} \to X$ is dominant.
A presheaf $F$ on $\DM_{k}$ is a sheaf if and only if
it sends disjoint unions to direct products and
the sequence $F(X) \to F(Y) \rightrightarrows F(Y \times_{X} Y)$ is exact
for any dominant morphism $Y \to X$ in $\DM_{k}$.

\subsection{The Zariski and \'etale cases}
\label{sec: The Zariski and etale cases}

Let $\DM_{k, \et}$ (resp.\ $\DM_{k, \zar}$) be the same category as $\DM_{k}$
with the topology where a cover is a dominant \'etale morphism (resp.\ a dominant open immersion).
Let $\Spec k^{\rat}_{\zar}$ be the category $k^{\rat}$ with the topology
where a cover is the identity map
(so a presheaf $F$ is a sheaf if and only if
$F(k_{1} \times \dots \times k_{n}) = F(k_{1}) \times \dots \times F(k_{n})$
for fields $k_{1}, \dots k_{n} \in k^{\rat}$).
Note that for a perfect scheme $X$ essentially of finite type over $k$,
its generic point $\xi_{X}$ of $X$ (Definition \ref{def: points in profinite setting}) is
the $\Spec$ of a rational $k$-algebra.

\begin{Prop}
	The functor taking an object of $\DM_{k}$ to its generic point
	defines morphisms of sites
		\[
				f \colon \Spec k^{\rat}_{\zar} \to \DM_{k, \zar},
			\quad
				g \colon \Spec k^{\rat}_{\et} \to \DM_{k, \et},
		\]
	which induce equivalences on the associated topoi.
\end{Prop}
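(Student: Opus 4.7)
The plan is to first define the functor $u \colon X \mapsto \xi_X$ on underlying categories, verify that it sends coverings to coverings and commutes with the relevant fiber products (so that $f$ and $g$ are continuous maps), and then construct an explicit quasi-inverse to the pushforward, thereby establishing the equivalence of topoi; exactness of the pullbacks (so that $f$ and $g$ are genuine morphisms of sites) will follow automatically from the equivalence.

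For the first step, every $X \in \DM_k$ is the perfection of a smooth $k$-scheme of finite type, whose finitely many irreducible components have generic points given by finitely generated perfect fields over $k$; thus $\xi_X \in k^{\rat}$. A smooth morphism $f \colon Y \to X$ is flat, hence open, so each component of $Y$ maps dominantly to some component of $X$, and $f$ carries generic points to generic points. This yields the functor $u$. A dominant open immersion in $\DM_{k, \zar}$ induces an isomorphism on generic points, and a dominant \'etale morphism in $\DM_{k, \et}$ induces a faithfully flat \'etale homomorphism of rational algebras; both give coverings in the respective targets. The fiber-product identity $\xi_{Y \times_X Z} = \xi_Y \times_{\xi_X} \xi_Z$ for \'etale $Y \to X$ and arbitrary $Z \to X$ holds since $\xi_Y \tensor_{\xi_X} \xi_Z$ is already a finite product of fields. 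By the criterion recalled at the start of Section~\ref{sec: The ind-rational etale site}, $u$ is a continuous functor.

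For the equivalence of topoi, I construct an explicit quasi-inverse to the pushforward. Given a sheaf $G$ on $\Spec k^{\rat}_{\tau}$, the presheaf $g_{\ast} G \colon X \mapsto G(\xi_X)$ on $\DM_{k, \tau}$ is automatically a sheaf: in the Zariski case, for a dominant open $U \subset X$ one has $\xi_U = \xi_X$ and $U \times_X U = U$, trivializing the sheaf condition; in the \'etale case, the sheaf condition for $G$ on $\xi_Y \to \xi_X$ combined with the fiber-product identity yields the sheaf condition for $g_{\ast} G$ on $Y \to X$. Conversely, given a sheaf $F$ on $\DM_{k, \tau}$, I define $\tilde{G}(k') := F(X)$ for any $X \in \DM_k$ with $\xi_X = k'$; for two such choices $X, X'$, the equality $\xi_X = \xi_{X'}$ yields a birational equivalence $X \sim X'$ which restricts to a morphism in $\DM_k$ between suitable dense opens, and the sheaf axiom identifies $F(X) \cong F(X')$ canonically (different choices of dense opens agree on the intersection). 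Functoriality in $k'$ is similar: every morphism $\xi_X \to \xi_Y$ in $k^{\rat}$ arises as the restriction to generic points of a morphism $Y' \to X$ in $\DM_k$ defined on some dense open $Y' \subset Y$.

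The main obstacle is verifying that the perfection of any dominant rational map between smooth $k$-varieties is smooth on some dense open of its domain—so that such a rational map really corresponds to a morphism in $\DM_k$ after shrinking. This holds because such a map factors generically as a purely inseparable morphism followed by a separable one; on perfections, the purely inseparable factor becomes an isomorphism, and the separable factor is \'etale on a dense open by generic \'etaleness. Granted this extension property, the constructions $G \mapsto g_{\ast} G$ and $F \mapsto \tilde{G}$ are mutually inverse up to canonical natural isomorphism, and the same argument applies for $f$; exactness of the pullback functors follows at once from the resulting equivalence of topoi.
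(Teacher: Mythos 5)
Your proposal follows essentially the same route as the paper: the pullback is described explicitly by $(f^{\ast}F)(x)=F(X)$ for any $X$ with $\xi_{X}=x$, shown to be mutually inverse with the pushforward $X\mapsto G(\xi_{X})$, and exactness is deduced from the resulting equivalence of topoi; the \'etale case rests, as in the paper, on lifting a faithfully flat \'etale $y\to x$ in $k^{\rat}$ to a dominant \'etale $Y\to X$ in $\DM_{k}$ together with the identity $\xi_{Y\times_{X}Y}=y\times_{x}y$. You also make explicit a point the paper passes over as obvious, namely that well-definedness and functoriality of the pullback require extending a morphism of generic points to a morphism of $\DM_{k}$ (hence a smooth morphism) on a dense open of the source; this really is needed since morphisms in $\DM_{k}$ are constrained to be smooth.

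The one flaw is in your justification of that extension lemma: when the dominant rational map has positive relative dimension there is no generic factorization into a purely inseparable morphism followed by one that is \'etale on a dense open --- the projection $\Affine^{2}\to\Affine^{1}$ is already separable yet nowhere \'etale, so ``generic \'etaleness'' does not apply. The correct argument is that $k(Y)\cdot k(X)^{\perf}$ is separably generated over the perfect field $k(X)^{\perf}$; choosing a separating transcendence basis and spreading out, the map factors, after shrinking source and target and perfecting, as an \'etale morphism to $X\times\Affine^{n}$ followed by the projection, hence is the perfection of a smooth morphism. The statement you need is true and this repair is routine, so the proof stands.
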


\begin{proof}
	First we treat $f$.
	A presheaf $F$ on $\DM_{k, \zar}$ is a sheaf
	if and only if $F(X \sqcup Y) = F(X) \times F(Y)$ for $X, Y \in \DM_{k, \zar}$
	and $F(X) \isomto F(Y)$ for a dominant open immersion $Y \to X$ in $\DM_{k, \zar}$.
	The pullback $f^{\ast} F$ for $F \in \Set(\DM_{k, \zar})$
	is given by $(f^{\ast} F)(x) = F(X)$ for any $x \in k^{\rat}$,
	where $X \in \DM_{k, \zar}$ is any object with $\xi_{X} = x$.
	Obviously $f_{\ast} f^{\ast} = \id$ and $f^{\ast} f_{\ast} = \id$.
	Hence $f$ induces an equivalence on the topoi.
	
	Hence for $g$, we only need to show that
	$f^{\ast}$ maps the subcategory $\Set(\DM_{k, \et})$ to $\Set(k^{\rat}_{\et})$.
	Let $F \in \Set(\DM_{k, \et})$.
	Let $y \to x$ be a faithfully flat \'etale morphism in $k^{\rat}$.
	We want to show that the sequence
		\[
				f^{\ast} F(x)
			\to
				f^{\ast} F(y)
			\rightrightarrows
				f^{\ast} F(y \times_{x} y)
		\]
	is exact.
	Take a dominant \'etale morphism $Y \to X$ in $\DM_{k, \et}$
	whose associated morphism $\xi_{Y} \to \xi_{X}$ gives $y \to x$.
	Then $\xi_{Y \times_{X} Y} = y \times_{x} y$.
	Since $F \in \Set(\DM_{k, \et})$,
	the sequence
		\[
				F(X)
			\to
				F(Y)
			\rightrightarrows
				F(Y \times_{X} Y)
		\]
	is exact.
	This sequence is identical to the above sequence.
\end{proof}

\begin{Rmk}
	The composite
	$\Spec k^{\perf}_{\et} \to \Spec k^{\rat}_{\et} \to \DM_{k, \et}$
	is defined by the functor sending $X \in \DM_{k}$ to its generic point $\xi_{X}$.
	This functor is different from the identity functor $X \mapsto X$.
	The identity functor does not define a continuous map
	$\Spec k^{\perf}_{\et} \to \DM_{k, \et}$.
\end{Rmk}


\subsection{The flat case}
\label{sec: the flat case}

We define a site $\Spec \tilde{k}^{\rat}_{\fl}$ and compare it with $\DM_{k}$.
Let $\tilde{k}^{\rat}$ be the full subcategory of the category of perfect $k$-schemes
consisting of (not necessarily finite) disjoint unions of
the $\Spec$'s of finitely generated perfect fields over $k$.
The fiber product of any objects $y, z$ over any object $x$ in $\tilde{k}^{\rat}$ exists.
It is given by the disjoint union of the points
of the underlying set of the usual fiber product $y \times_{x} z$ as a scheme.
We denote it by $y \tilde{\times}_{x} z$.
We endow the category $\tilde{k}^{\rat}$ with the topology
where a covering is a faithfully flat morphism.
The resulting site is denoted by $\Spec \tilde{k}^{\rat}_{\fl}$.
We call this the \emph{rational flat site} of $k$.
We denote $y \times_{x}^{\gen} z := \xi_{y \times_{x} z}$.

\begin{Prop} \label{prop: sheaf condition in the rational flat site}
	In order for a presheaf $F$ on $\Spec \tilde{k}^{\rat}_{\fl}$ to be a sheaf,
	it is necessary and sufficient that the following two conditions be satisfied:
	\begin{enumerate}
		\item
			For any fields $k_{\lambda} \in k^{\rat}$ and $x_{\lambda} = \Spec k_{\lambda}$,
			we have $F(\bigsqcup x_{\lambda}) = \prod F(x_{\lambda})$.
		\item
			For any field extension $k'' / k'$ in $k^{\rat}$ and $x = \Spec k'$, $y = \Spec k''$,
			the sequence
				\[
						F(x)
					\to
						F(y)
					\overset{p_{1}}{\underset{p_{2}}{\rightrightarrows}}
						F(y \times_{x}^{\gen} y)
				\]
			is exact,
			where $p_{1}$ and $p_{2}$ are given by the first and the second projections
			$y \times_{x}^{\gen} y \rightrightarrows y$.
	\end{enumerate}
\end{Prop}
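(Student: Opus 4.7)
The plan is to establish necessity from two special cases of the sheaf axiom and then prove sufficiency by reducing a general faithfully flat cover to these two cases.

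For necessity, condition (1) follows from applying the sheaf axiom to the family $\{x_\lambda \hookrightarrow \bigsqcup_\lambda x_\lambda\}$: since $x_\lambda \tilde{\times}_{\bigsqcup x_\mu} x_{\lambda'}$ vanishes for $\lambda \ne \lambda'$ and equals $x_\lambda$ for $\lambda = \lambda'$, the sheaf equalizer collapses to $\prod_\lambda F(x_\lambda)$. Condition (2) is literally the sheaf axiom for the single-morphism cover $\Spec k'' \to \Spec k'$.

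For sufficiency, given a general faithfully flat cover $\{y_i \to x\}$, I would first combine it into a single $y := \bigsqcup_i y_i \to x$, using condition (1) to identify $F(y) = \prod_i F(y_i)$ and $F(y \tilde{\times}_x y) = \prod_{i,j} F(y_i \tilde{\times}_x y_j)$. Decomposing $x = \bigsqcup_\lambda x_\lambda$ and $y = \bigsqcup_\mu y_\mu$ into their field-spectrum components (each connected $y_\mu$ targeting a unique $x_\lambda$), a further application of condition (1) splits the equalizer condition into one for each $\lambda$ separately. This reduces everything to the case $x = \Spec k'$ for a single field and $y = \bigsqcup_\mu y_\mu$ with each $y_\mu = \Spec k''_\mu$ a field extension of $k'$ in $k^{\rat}$.

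The heart of the argument is then as follows. Given a matching family $s = (s_\mu) \in \prod_\mu F(y_\mu)$, the diagonal $\mu = \nu$ pieces of the compatibility condition on $F(y \tilde{\times}_x y)$, combined with condition (2) applied to each $y_\mu \to x$, allow each $s_\mu$ to descend to a unique $t_\mu \in F(x)$. The off-diagonal $\mu \ne \nu$ compatibility at $F(y_\mu \tilde{\times}_x y_\nu) = \prod_i F(z_i)$ (with $z_i$ the field-spectrum components of $y_\mu \tilde{\times}_x y_\nu$) forces $t_\mu$ and $t_\nu$ to have equal pullbacks to each $F(z_i)$; since condition (2) applied to each field extension $z_i / k'$ gives injectivity of $F(x) \to F(z_i)$, one concludes $t_\mu = t_\nu$, so all $t_\mu$ coincide with a single $t \in F(x)$ pulling back to $s$. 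Injectivity of $F(x) \to F(y)$ is immediate from condition (2) applied to any single $y_{\mu_0}$.

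The main technical obstacle I expect is bookkeeping surrounding the generic-point fiber product: one must verify that $y_\mu \tilde{\times}_x y_\nu$ is indeed a nonempty disjoint union of objects of $\tilde{k}^{\rat}$. This reduces to showing that $k''_\mu \tensor_{k'} k''_\nu$ is reduced (by separability of extensions between perfect fields), has only finitely many minimal primes, and that the residue fields at these generic points are again finitely generated and perfect over $k$. Once this structural fact is in place, the element-chasing above goes through essentially mechanically.
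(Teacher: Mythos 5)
Your sufficiency argument is essentially correct (and more detailed than the paper, which dismisses that direction as clear): for a cover the compatibility hypothesis is imposed on $F(y \tilde{\times}_{x} y)$, which contains the generic locus, so condition (2) applies componentwise and your off-diagonal bookkeeping glues the local descents. The genuine gap is in necessity, which is the substantive half of the proposition. Condition (2) is \emph{not} the sheaf axiom for the single-morphism cover $y \to x$ verbatim: the sheaf axiom equalizes over $F(y \tilde{\times}_{x} y)$, where $y \tilde{\times}_{x} y$ is the disjoint union of \emph{all} points of the scheme $y \times_{x} y$, whereas condition (2) equalizes only over the generic points $y \times_{x}^{\gen} y$. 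These differ in general: for $k' = k$ and $k''$ the perfection of $k(t)$, the ring $k'' \tensor_{k} k''$ has many non-minimal primes (e.g.\ the image of $(s - t^{2})$), and on such components the two projections genuinely differ. Since the equalizer over a smaller target is a priori larger, condition (2) asserts something \emph{stronger} at $F(y)$ than the sheaf axiom hands you directly, and deducing it requires an argument that your proposal omits.

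The missing step is: if $s \in F(y)$ satisfies $p_{1}(s) = p_{2}(s)$ in $F(y \times_{x}^{\gen} y)$, then $p_{1}(s) = p_{2}(s)$ in $F(z)$ for \emph{every} point $z$ of $y \times_{x} y$, after which the full sheaf axiom descends $s$. The paper proves this with a third-projection trick: the faithfully flat morphism $z \times_{x}^{\gen} y \to z$ is a cover, so $F(z) \into F(z \times_{x}^{\gen} y)$ is injective; the morphisms $(p_{1}, p_{3})$ and $(p_{2}, p_{3})$ from $z \times_{x} y$ to $y \times_{x} y$ are flat, hence restrict to morphisms $z \times_{x}^{\gen} y \to y \times_{x}^{\gen} y$, so the hypothesis pulls back to give $p_{1}(s) = p_{3}(s)$ and $p_{2}(s) = p_{3}(s)$ in $F(z \times_{x}^{\gen} y)$; combining, $p_{1}(s) = p_{2}(s)$ there and hence in $F(z)$. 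Without some such argument the necessity direction does not go through. (A minor further point: the structural fact you flag about $y_{\mu} \tilde{\times}_{x} y_{\nu}$ is real but is part of the paper's setup of the category $\tilde{k}^{\rat}$, and note that this object comprises \emph{all} points of the fiber product, typically infinitely many, not just the minimal primes.)
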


\begin{proof}
	Clearly these conditions are sufficient.
	For necessity, let $F \in \Set(\tilde{k}^{\rat}_{\fl})$.
	The first condition is clear.
	For the second, let $s \in F(y)$ satisfy $p_{1}(s) = p_{2}(s)$ in $F(y \times_{x}^{\gen} y)$.
	We need to show that for any connected component $z \subset y \tilde{\times}_{x} y$
	(or in other words, a point of $y \times_{x} y$),
	we have $p_{1}(s) = p_{2}(s)$ in $F(z)$.
	Since $F \in \Set(\tilde{k}^{\rat}_{\fl})$,
	the projection $z \times_{x}^{\gen} y \to z$ induces
	an injection $F(z) \into F(z \times_{x}^{\gen} y)$.
	Hence it is enough to show that
	the elements $p_{1}(s)$ and $p_{2}(s)$ map to the same element of $F(z \times_{x}^{\gen} y)$.
	The third projection $p_{3} \colon z \times_{x}^{\gen} y \to y$ sends $s \in F(y)$
	to another element $p_{3}(s) \in F(z \times_{x}^{\gen} y)$.
	We show that $p_{1}(s) = p_{3}(s)$ and $p_{2}(s) = p_{3}(s)$ in $F(z \times_{x}^{\gen} y)$.
	The morphism $(p_{1}, p_{3}) \colon z \times_{x} y \to y \times_{x} y$ is flat.
	Hence this induces a morphism
	$(p_{1}, p_{3}) \colon z \times_{x}^{\gen} y \to y \times_{x}^{\gen} y$.
	Since $p_{1}(s) = p_{2}(s)$ in $F(y \times_{x}^{\gen} y)$ by assumption,
	we have $p_{1}(s) = p_{3}(s)$ in $F(z \times_{x}^{\gen} y)$.
	Similarly we have $p_{2}(s) = p_{3}(s)$ in $F(z \times_{x}^{\gen} y)$.
	Hence $p_{1}(s) = p_{2}(s)$ in $F(z \times_{x}^{\gen} y)$.
\end{proof}

\begin{Prop}
	The functor taking an object of $\DM_{k}$ to its generic point
	defines a morphism of sites
		\[
			h \colon \Spec \tilde{k}^{\rat}_{\fl} \to \DM_{k},
		\]
	which induces an equivalence on the associated topoi.
\end{Prop}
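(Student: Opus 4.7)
The strategy follows the pattern of the Zariski and \'etale cases above, with two new ingredients specific to the flat topology. The first I would exploit is \emph{rational invariance}: for any dominant open immersion $U \hookrightarrow X$ in $\DM_{k}$ (which is a cover in the flat topology), the scheme-theoretic fiber product is $U \times_{X} U = U$, so the sheaf condition collapses to $G(X) \isomto G(U)$ for every $G \in \Set(\DM_{k})$. Thus any flat sheaf on $\DM_{k}$ takes the same value on $X$ as on any dense open subscheme. Second, I would verify that for a dominant morphism $Y \to X$ of integral smooth $k$-schemes, one has $\xi_{Y \times_{X} Y} = \xi_{Y} \times_{\xi_{X}}^{\gen} \xi_{Y}$ in $\tilde{k}^{\rat}$: since the projections $Y \times_{X} Y \to Y$ are flat, generic points of $Y \times_{X} Y$ lie over $\xi_{Y}$ on both sides, and conversely each minimal prime of $k(Y) \otimes_{k(X)} k(Y)$ is the local ring at a generic point of $Y \times_{X} Y$.

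With these in hand, continuity of $h$ is immediate, since a dominant morphism in $\DM_{k}$ induces a surjection on generic points and hence a faithfully flat morphism in $\tilde{k}^{\rat}_{\fl}$. For the pullback I would define $(h^{\ast} G)(x) := G(X)$ for any lift $X \in \DM_{k}$ of $x = \Spec k' \in \tilde{k}^{\rat}$ with $k' \in k^{\rat}$ a single field; well-definedness is guaranteed by rational invariance, applied to a common smooth dense open subscheme of two lifts $X$ and $X'$ (obtainable as a smooth dense open of an irreducible component of $X \times_{k} X'$ dominated by both projections). The sheaf condition for $h^{\ast} G$ on $\tilde{k}^{\rat}_{\fl}$, in the form of Proposition \ref{prop: sheaf condition in the rational flat site}, will then translate via the second ingredient into the sheaf condition for $G$ on $\DM_{k}$ applied to a lift $Y \to X$ of the given flat cover $y \to x$. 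Exactness of $h^{\ast}$ is then formal, and the identities $h_{\ast} h^{\ast} = \id$ and $h^{\ast} h_{\ast} = \id$ follow from the formulas $(h_{\ast} F)(X) = F(\xi_{X})$ and $(h^{\ast} G)(x) = G(X)$ with $\xi_{X} = x$, combined with rational invariance.

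The main step requiring care is the spreading-out argument used for the lifting: given a morphism $y \to x$ in $\tilde{k}^{\rat}_{\fl}$, i.e., a ring homomorphism of finitely generated perfect fields $k' \to k''$, I must produce a dominant smooth morphism $Y \to X$ in $\DM_{k}$ realizing it on generic points. Any such field homomorphism extends, on smooth $k$-models $X$ with $\xi_{X} = x$ and $Y$ with $\xi_{Y} = y$, to a rational map $Y \dashrightarrow X$ that becomes regular and smooth on a dense open subscheme of $Y$ by spreading out and generic smoothness; this places the lift in $\DM_{k}$. Combined with the second ingredient, this reduces the equivalence of topoi to a mechanical matching of sheaf axioms on the two sites.
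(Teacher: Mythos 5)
Your argument is essentially the paper's: the proof there likewise comes down to checking that the generic-point functor matches the two sheaf conditions, using that a dominant morphism $Y \to X$ in $\DM_{k}$ yields a faithfully flat $\xi_{Y} \to \xi_{X}$ with $\xi_{Y \times_{X} Y} = \xi_{Y} \times_{\xi_{X}}^{\gen} \xi_{Y}$, that a cover $y \to x$ in $\Spec \tilde{k}^{\rat}_{\fl}$ lifts to a dominant morphism in $\DM_{k}$, and the characterization of flat sheaves in Proposition \ref{prop: sheaf condition in the rational flat site}; the only organizational difference is that the paper obtains the pullback formula $(h^{\ast}G)(x) = G(X)$ and the identities $h_{\ast} h^{\ast} = h^{\ast} h_{\ast} = \id$ for free from the Zariski case proved immediately before, instead of re-deriving them via your ``rational invariance.'' One small correction to your well-definedness argument: a dense open common to two lifts $X, X'$ of $x$ is not found inside an irreducible component of $X \times_{k} X'$ dominated by both projections (such a component has function field a factor of $\Frac(k' \tensor_{k} k')$, not $k'$); you should instead use the closure of the graph of the induced birational map $X \dashrightarrow X'$, or simply invoke the Zariski-case equivalence as the paper does.
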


\begin{proof}
	It is enough to show that $f_{\ast}$ (= $h_{\ast}$) and $f^{\ast}$
	used for the Zariski sites send sheaves to sheaves.
	If $F \in \Set(\tilde{k}^{\rat}_{\fl})$
	and $Y \to X$ dominant in $\DM_{k}$,
	then $\xi_{Y} \to \xi_{X}$ is faithfully flat in $k^{\rat}$.
	Therefore Proposition \ref{prop: sheaf condition in the rational flat site} implies that
	the sequence
	$F(\xi_{X}) \to F(\xi_{Y}) \rightrightarrows F(\xi_{Y \times_{X} Y})$
	is exact.
	Hence $f_{\ast} F \in \Set(\DM_{k})$.
	Conversely, let $F \in \Set(\DM_{k})$,
	$k'' / k'$ a field extension in $k^{\rat}$,
	$x = \Spec k'$ and $y = \Spec k''$.
	Take a dominant morphism $Y \to X$ in $\DM_{k}$
	whose associated morphism $\xi_{Y} \to \xi_{X}$ gives $y \to x$.
	Then $\xi_{Y \times_{X} Y} = y \times_{x}^{\gen} y$.
	The sheaf condition for $F \in \Set(\DM_{k})$ says that
	the sequence $F(X) \to F(Y) \rightrightarrows F(Y \times_{X} X)$ is exact.
	This sequence is identical to the sequence
	$(f^{\ast} F)(x) \to (f^{\ast} F)(y) \rightrightarrows (f^{\ast} F)(y \times_{x}^{\gen} y)$.
	Hence $f^{\ast} F \in \Set(\tilde{k}^{\rat}_{\fl})$
	by Proposition \ref{prop: sheaf condition in the rational flat site}.
\end{proof}


\subsection{The dominant topology and the pro-fppf topology}
\label{sec: The dominant topology and the pro-fppf topology}

Next we relate $\Spec \tilde{k}^{\rat}_{\fl}$ and $\DM_{k}$
to a variant of $\Spec k^{\perf}_{\pro\fppf}$.
Let $k^{\perf\fin}$ be the full subcategory of $k^{\perf}$
consisting of the perfections of $k$-algebras essentially of finite type.
Let $\Spec k^{\perf\fin}_{\pro\fppf}$ be the site
obtained by restricting $\Spec k^{\perf}_{\pro\fppf}$ to $k^{\perf\fin}$.
(In the category $k^{\perf\fin}$, every flat morphism is of profinite presentation,
and the pro-fppf and fpqc topologies coincide,
but we do not need this fact.)
We need a generic variant of the \v{C}ech complex.

\begin{Prop} \label{prop: acyclicity of the generic Cech complex}
	Let $k'' / k'$ be a field extension in $k^{\rat}$ and $x = \Spec k'$, $y = \Spec k''$.
	Let $\check{C}^{\gen}(y / x)$ be the complex
		\begin{equation} \label{eq: generic Cech complex}
				\cdots
			\to
				\Z[y \times_{x}^{\gen} y \times_{x}^{\gen} y]
			\to
				\Z[y \times_{x}^{\gen} y]
			\to
				\Z[y]
		\end{equation}
	($\Z[y]$ placed in degree $0$)
	in $\Ab(\tilde{k}^{\perf\fin}_{\pro\fppf})$ with differential given by
	the usual formula for \v{C}ech cohomology.
	Give it the augmentation $\check{C}^{\gen}(y / x) \to \Z[x]$
	and denote the resulting complex by $\check{C}_{0}^{\gen}(y / x)$
	(so $\Z[x]$ is placed in degree $1$).
	Then $\check{C}_{0}^{\gen}(y / x)$ is acyclic.
\end{Prop}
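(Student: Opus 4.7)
The complex $\check{C}_0^{\gen}(y/x)$ is a ``generic'' variant of the classical \v{C}ech complex of the faithfully flat cover $y \to x$, with each scheme-theoretic fiber power $y \times_x \cdots \times_x y$ replaced by its generic locus $\xi_{y \times_x \cdots \times_x y}$. My plan is to follow the standard proof of \v{C}ech acyclicity---producing a contracting homotopy after pulling back to a cover admitting a section---but replacing ``section'' by a ``generic section'' found pro-fppf locally. The mechanism for keeping all intermediate terms inside the generic locus parallels the generic splitting argument of Lemma \ref{lem: generic splitting}.

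In detail, acyclicity in $\Ab(\tilde{k}^{\perf\fin}_{\pro\fppf})$ can be verified locally: given a test object $T \in k^{\perf\fin}$ and a morphism $u \colon T \to \xi_{y \times_x \cdots \times_x y}$ (the $n$-fold generic fiber power) defining a cycle, I seek a pro-fppf cover $T' \to T$ on which $u|_{T'}$ becomes a boundary. I take $T_0 = T \times_x y$, which is fppf over $T$, with tautological projection $s_0 \colon T_0 \to y$. The combined map $(s_0, u) \colon T_0 \to y \times_x \cdots \times_x y$ ($(n+1)$-fold) need not land in the generic locus, so I localize: let $T' \subset T_0$ be the preimage of $\xi_{y \times_x \cdots \times_x y}$ under $(s_0, u)$. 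Since the inclusion of the generic locus into the full $(n+1)$-fold fiber power is a localization at the finitely many minimal primes of $(k'')^{\otimes n+1}$, hence flat of ind-finite presentation, so is $T' \hookrightarrow T_0$. Granted that $T' \to T$ is also surjective (and therefore a pro-fppf cover), the $\Z$-linear extension of the prepending map $h(v_1, \ldots, v_n) := (s_0|_{T'}, v_1, \ldots, v_n)$ lands in the required generic locus and provides a contracting homotopy satisfying $\boundary h + h \boundary = \mathrm{id}$ in positive degrees, together with the standard identity involving the augmentation in the bottom degree. This exhibits $u|_{T'}$ as a boundary and proves acyclicity.

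The main obstacle is verifying surjectivity of $T' \to T$. By Lemma \ref{lem: making fpqc covers} this reduces to fiberwise dominance: for each $t \in T$, the preimage in $y_{\kappa(t)}$ of the generic locus under the slice map $\tilde{y} \mapsto (\tilde{y}, u(t))$ should be dense. This rests on two facts. First, the projection from the $(n+1)$-fold generic fiber power to the $n$-fold one (forgetting the first coordinate) is surjective: by going-down for the flat extension $(k'')^{\otimes n} \to (k'')^{\otimes n+1}$, every minimal prime of the latter contracts to a minimal prime of the former, and conversely each fiber $\Spec(\kappa(\eta) \otimes_{k'} k'')$ is nonempty, so each minimal prime of $(k'')^{\otimes n}$ lifts. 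Second, applying Lemma \ref{lem: dominant is stable under flat base change} to the slice map then yields the required density of the preimage in $y_{\kappa(t)}$. The bookkeeping at the level of ind-finite presentation, combined with the assembly via Lemma \ref{lem: making fpqc covers}, constitutes the main technical content of the argument.
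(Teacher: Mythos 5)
Your proposal is correct and is essentially the paper's own argument: the paper base-changes to $Y = y\times_x X$ (your $T_0$) to obtain a tautological section, defines the prepending operator $U$ (your $h$) with $\boundary U + U\boundary = \id$, and constructs the pro-fppf cover $Z$ (your $T'$) as the preimage of the generic locus under $(\id,a)$, checking surjectivity by exactly the same combination of fiberwise dominance, Lemma \ref{lem: dominant is stable under flat base change} applied to the flat slice map, and Lemma \ref{lem: making fpqc covers}. The only cosmetic difference is that the paper isolates the generification step (for single cells and then for $\Z$-linear combinations of them) as a separate lemma before running the homotopy.
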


We need a lemma.
To simplify the notation,
we write $y_{x}^{n} := y \times_{x} \dots \times_{x} y$ ($n$-fold product) and
$y_{x}^{n, \gen} := y \times_{x}^{\gen} \dots \times_{x}^{\gen} y$.
Note that $y_{x}^{0} = y_{x}^{0, \gen} = x$.

\begin{Lem}
	Let $y / x$ as above.
	Let $X \in k^{\perf\fin}$, $Y = y \times_{k} X$ and $n \ge 0$.
	\begin{enumerate}
		\item \label{ass: generifying Cech cells}
			Let $a \in y_{x}^{n, \gen}(X)$ and
			$U a = (\id, a) \in (y \times_{x} y_{x}^{n, \gen})(Y)$ the natural extension of $a$.
			Then there exists $Z \in k^{\perf}$ and a morphism $Z \to Y$
			satisfying the two conditions of Lemma \ref{lem: making fpqc covers}
			such that the natural image $U a \in (y \times_{x} y_{x}^{n, \gen})(Z)$ is contained in the subset
			$y_{x}^{n + 1, \gen}(Z)$.
		\item \label{ass: generifying Cech cycles}
			Let $t \in \Z[y_{x}^{n, \gen}(X)]$ and
			$U t \in \Z[(y \times_{x} y_{x}^{n, \gen})(Y)]$ the natural extension of $t$.
			Then there exists $Z \in k^{\perf}$ and a morphism $Z \to Y$
			with the composite $Z \to Y \to X$ faithfully flat of profinite presentation
			such that the natural image $U t \in \Z[(y \times_{x} y_{x}^{n, \gen})(Z)]$ is contained in the subset
			$\Z[y_{x}^{n + 1, \gen}(Z)]$.
	\end{enumerate}
\end{Lem}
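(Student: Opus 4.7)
The plan is to mirror the strategy of Lemmas \ref{lem: generic splitting} and \ref{lem: generic homotopy}: construct $Z$ by pulling back the generic-locus inclusion along $Ua$, verify the two conditions of Lemma \ref{lem: making fpqc covers}, and then reduce part \eqref{ass: generifying Cech cycles} to part \eqref{ass: generifying Cech cells} by working termwise on the finite sum expressing $t$.

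For part \eqref{ass: generifying Cech cells}, first I will observe that the natural inclusion $y_{x}^{n+1,\gen} \hookrightarrow y \times_{x} y_{x}^{n,\gen}$ is a Zariski dense open immersion: since $y$ and each component of $y_{x}^{n,\gen}$ are $\Spec$'s of finitely generated perfect fields, the scheme $y \times_{x} y_{x}^{n,\gen}$ is a finite disjoint union of spectra of rings essentially of finite type over fields, and $y_{x}^{n+1,\gen}$ is exactly its generic locus (every minimal prime of $y_{x}^{n+1}$ meets the open set $y \times_x y_{x}^{n,\gen}$ because $y_{x}^{n,\gen}$ is dense in $y_{x}^{n}$). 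In particular this inclusion is flat of profinite presentation. I then set
\[
    Z := Y \times_{y \times_{x} y_{x}^{n,\gen}} y_{x}^{n+1,\gen}.
\]
Base change immediately gives that $Z \to Y$ is flat of profinite presentation, and by construction $Ua|_{Z}$ factors through $y_{x}^{n+1,\gen}$.

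It remains to check the fibrewise dominance condition over $X$. Since $a \in y_{x}^{n,\gen}(X)$ factors through a single component $\xi = \Spec k_{\xi}$, the $k$-algebra $R = \mathcal{O}(X)$ acquires a compatible $k'$-algebra structure, and $Ua$ factors as the quotient $y \times_{k} X \twoheadrightarrow y \times_{x} X$ followed by $\id_{y} \times_{x} a \colon y \times_{x} X \to y \times_{x} \xi$. For a point $x_{0} \in X$ and a generic point $\eta$ of $Y_{x_{0}} = \Spec(k'' \otimes_{k} \kappa(x_{0}))$, the residue-field map $k'' \otimes_{k'} k_{\xi} \to \kappa(\eta)$ induced by $Ua|_{\eta}$ factors through the fraction field of $k'' \otimes_{k'} k_{\xi}$, since it is obtained by base-changing the generic-point map $k_{\xi} \to \kappa(a(\eta))$ by $k'' / k'$. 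Hence $Ua(\eta) \in y_{x}^{n+1,\gen}$, which shows $Z_{x_{0}} \subset Y_{x_{0}}$ is a nonempty open subscheme meeting every irreducible component, hence dense.

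For part \eqref{ass: generifying Cech cycles}, following Lemma \ref{lem: generic homotopy}, write $t = \sum_{i=1}^{r} m_{i}\,(a_{i})$ with $a_{i} \in y_{x}^{n,\gen}(X)$, apply part \eqref{ass: generifying Cech cells} to each $a_{i}$ to obtain $Z_{i} \to Y$, and set $Z = Z_{1} \times_{Y} \cdots \times_{Y} Z_{r}$. Lemma \ref{lem: making fpqc covers} then guarantees that $Z \to X$ is faithfully flat of profinite presentation, and by construction $Ut = \sum_{i} m_{i}\,U(a_{i}) \in \Z[y_{x}^{n+1,\gen}(Z)]$.

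The main obstacle is the fibrewise dominance verification in part \eqref{ass: generifying Cech cells}, because $Ua$ itself need not be flat, so Lemma \ref{lem: dominant is stable under flat base change} cannot be invoked to transport dominance from $y_{x}^{n+1,\gen} \to y \times_{x} y_{x}^{n,\gen}$ directly to $Z \to Y$. One has to track the generic points of each fibre $Y_{x_{0}}$ explicitly and use that $a$ factors through the generic locus $y_{x}^{n,\gen}$ to extend the relevant residue-field maps to fraction fields; the clean formulation in terms of the factorization $Ua = (\id_{y} \times_{x} a) \circ (\text{quotient})$ is what makes this manageable.
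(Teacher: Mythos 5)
Your construction of $Z$ as the fiber product $Y \times_{y \times_{x} y_{x}^{n,\gen}} y_{x}^{n+1,\gen}$ along $Ua$ is exactly the cartesian square the paper uses, and your termwise reduction of part (2) to part (1) via $Z = Z_{1} \times_{Y} \cdots \times_{Y} Z_{r}$ together with Lemma \ref{lem: making fpqc covers} is likewise the paper's argument. The fibrewise dominance check you spell out---restricting to the fiber over a point $x_{0} \in X$, where $Ua$ becomes flat and hence carries generic points to generic points of $y \times_{x} y_{x}^{n,\gen}$---is precisely what the paper compresses into ``check the requirements in the same way as the proof of Lemma \ref{lem: generic splitting}'', so the proposal is correct and essentially identical to the paper's proof.
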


\begin{proof}
	\eqref{ass: generifying Cech cells}
	Define $Z$ by the following cartesian diagram:
		\[
			\begin{CD}
					Z
				@>>>
					Y
				@>> \text{proj} >
					X
				\\
				@VVV
				@V U a VV
				@V a VV
				\\
					y_{x}^{n + 1, \gen}
				@> \text{incl} >>
					y \times_{x} y_{x}^{n, \gen}
				@> \text{proj} >>
					y_{x}^{n, \gen}
			\end{CD}
		\]
	Then we can check the requirements for $Z$ in the same way
	as the proof of Lemma \ref{lem: generic splitting}.
	
	\eqref{ass: generifying Cech cycles}
	We can deduce this from \eqref{ass: generifying Cech cells} in the same way we deduced
	Lemma \ref{lem: generic homotopy} from Lemma \ref{lem: generic splitting}.
\end{proof}

\begin{proof}[Proof of Proposition \ref{prop: acyclicity of the generic Cech complex}]
	We use the same notation as the lemma.
	Let $n \ge 0$ and $a \in y_{x}^{n, \gen}(X)$
	(which in particular gives an element of $x(X)$, or a morphism $X \to x$).
	Write $a = (a_{0}, \dots, a_{n - 1}) \in y_{x}^{n}(X)$ with
	$a_{i} \in y(X)$.
	Define $U a = (c, a_{0}, \dots, a_{n - 1}) \in y_{x}^{n + 1}(Y)$,
	where $c$ corresponds to the identity map $y \to y$.
	Then we have $\boundary U a + U \boundary a = a$ in $\Z[y_{x}^{n, \gen}(Z)]$,
	where $\boundary$ is the boundary map for the \v{C}ech complex.
	By linearity, we have
	$\boundary U t + U \boundary t = t$ in $\Z[y_{x}^{n, \gen}(Z)]$
	for any $t \in \Z[y_{x}^{n, \gen}(X)]$.
	Therefore if $t \in \Z[y_{x}^{n, \gen}(X)]$ is a cycle,
	it is a boundary of a chain in $\Z[y_{x}^{n + 1, \gen}(Z)]$.
	Since $Z$ is faithfully flat of profinite presentation over $X$,
	this proves the result.
\end{proof}

\begin{Prop} \BetweenThmAndList
	\begin{enumerate}
		\item \label{ass: comparison of profppf and ratfl: continuous}
			The identity functor defines a continuous map
				\[
					f \colon \Spec k^{\perf\fin}_{\pro\fppf} \to \Spec \tilde{k}^{\rat}_{\fl}.
				\]
		\item \label{ass: comparison of profppf and ratfl: exact}
			The functor $f_{\ast}$ is exact.
		\item \label{ass: comparison of profppf and ratfl: identity}
			We have $f_{\ast} f^{\ast} = \id$.
		\item \label{ass: comparison of profppf and ratfl: fully faithful}
			The functor $f^{\ast} \colon \Set(\tilde{k}^{\rat}_{\fl}) \to \Set(k^{\perf\fin}_{\pro\fppf})$
			is fully faithful.
		\item \label{ass: comparison of profppf and ratfl: inj to flabby}
			The functor $f_{\ast} \colon \Ab(k^{\perf\fin}_{\pro\fppf}) \to \Ab(\tilde{k}^{\rat}_{\fl})$
			sends acyclic sheaves (see the first paragraph of Section \ref{sec: The ind-rational etale site})
			to acyclic sheaves.
		\item \label{ass: comparison of profppf and ratfl: higher push is zero}
			We have
				\[
						R \Gamma(\tilde{k}'^{\rat}_{\fl}, f_{\ast} A)
					=
						R \Gamma(X, f_{\ast} A)
					=
						R \Gamma(k'^{\perf\fin}_{\pro\fppf}, A)
				\]
			for any $A \in \Ab(k^{\perf\fin}_{\pro\fppf})$,
			$k' \in \tilde{k}^{\rat}_{\fl}$ and
			$X \in \DM_{k}$ with $\xi_{X} = k'$.
		\item \label{ass: comparison of profppf and ratfl: rational flat and etale cohomology}
			Let $g \colon \Spec \tilde{k}^{\rat}_{\fl} \to \Spec k^{\rat}_{\et}$
			be the morphism defined by the identity
			and $B \in \Alg / k$.
			Then we have $R g_{\ast} B = B$.
		\item \label{ass: comparison of profppf and ratfl: Theorem B for the rational flat site}
			Theorem \ref{thm: main theorem, comparison of Ext} also holds
			when replacing $\Spec k^{\rat}_{\et}$ with
			$\Spec \tilde{k}^{\rat}_{\fl}$ or $\DM_{k}$.
	\end{enumerate}
\end{Prop}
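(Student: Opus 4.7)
The plan is to address the eight assertions in order, building up the site-theoretic properties of $f$ before deducing the Ext comparison at the end. For (1), Proposition \ref{prop: sheaf condition in the rational flat site} reduces the $\tilde{k}^{\rat}_{\fl}$-sheaf condition for $f_{\ast} A$ to exactness of $A(x) \to A(y) \rightrightarrows A(y \times_{x}^{\gen} y)$ for field extensions $k''/k'$; applying $\Hom(-, A)$ to the tail $\Z[y \times_{x}^{\gen} y] \to \Z[y] \to \Z[x] \to 0$ of the acyclic complex $\check{C}_{0}^{\gen}(y/x)$ from Proposition \ref{prop: acyclicity of the generic Cech complex} delivers exactly this. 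For (2), given a surjection $A \onto B$ in $\Ab(k^{\perf\fin}_{\pro\fppf})$ and $b \in B(x)$ for $x \in \tilde{k}^{\rat}$, one gets a pro-fppf cover $X \to x$ and a lift $a \in A(X)$; since $x$ is a disjoint union of fields, the generic point $\xi_{X}$ lies in $\tilde{k}^{\rat}$, the composite $\xi_{X} \to X \to x$ is faithfully flat (hence a $\tilde{k}^{\rat}_{\fl}$-cover), and the lift persists. Parts (3) and (4) are formal: for $x \in \tilde{k}^{\rat}$, the identity $x \to x$ is initial in the colimit defining $(f^{\ast} F)(x)$, so $f_{\ast} f^{\ast} = \id$, and full faithfulness of $f^{\ast}$ is then the standard adjunction consequence.

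For (5) and (6), the strategy is to use $\check{C}_{0}^{\gen}(y/x)$ as a resolution of $\Z[x]$ in $\Ab(k^{\perf\fin}_{\pro\fppf})$ whose terms $\Z[y_{x}^{n,\gen}]$ are representable in $\tilde{k}^{\rat}$. For an acyclic $A$, the hypercohomology spectral sequence degenerates and the complex $A(y_{x}^{\bullet,\gen}) = (f_{\ast} A)(y_{x}^{\bullet,\gen})$ computes $R \Gamma(k'^{\perf\fin}_{\pro\fppf}/x, A) = A(x)$. The same complex bounds the $\tilde{k}^{\rat}_{\fl}$-Čech cohomology of $f_{\ast} A$ through the generic descent condition of Proposition \ref{prop: sheaf condition in the rational flat site}, giving $H^{n}_{\tilde{k}^{\rat}_{\fl}}(x, f_{\ast} A) = 0$ for $n \ge 1$; this is (5). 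Then (6) follows by combining exactness of $f_{\ast}$ from (2) with preservation of acyclics: an injective resolution $A \to I^{\bullet}$ pushes to an acyclic resolution $f_{\ast} A \to f_{\ast} I^{\bullet}$, and both $R \Gamma(\tilde{k}'^{\rat}_{\fl}, f_{\ast} A)$ and $R \Gamma(k'^{\perf\fin}_{\pro\fppf}, A)$ are computed by $\Gamma(I^{\bullet})$. The statement for $X \in \DM_{k}$ with $\xi_{X} = k'$ transfers through the topos equivalence $\DM_{k} \simeq \tilde{k}^{\rat}_{\fl}$ established in Section \ref{sec: the flat case}.

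For (7), $R^{i} g_{\ast} B$ is the étale sheafification on $k^{\rat}$ of $x \mapsto H^{i}_{\fl}(x, B)$. Any rational flat cover of a field $x \in \tilde{k}^{\rat}$ is a disjoint union of field extensions, and for the smooth algebraic group $B$ the resulting cohomology is Galois cohomology of the separable closure, which already agrees with the $\Spec k^{\rat}_{\et}$-cohomology; so the sheafification vanishes in positive degrees and $R g_{\ast} B = B$. For (8), chain the comparisons: the equivalence of topoi yields $\Ext^{n}_{\DM_{k}}(A, B) = \Ext^{n}_{\tilde{k}^{\rat}_{\fl}}(A, B)$; adjunction together with $R f_{\ast} B = B$ from (5)-(6) and the observation that $f^{\ast} A = A$ for proalgebraic $A$ (since its generic points lie in $\tilde{k}^{\rat}$) yields $\Ext^{n}_{\tilde{k}^{\rat}_{\fl}}(A, B) = \Ext^{n}_{k^{\perf\fin}_{\pro\fppf}}(A, B)$; finally Proposition \ref{prop: RHom for profppf, perf etale and indrat etale} together with Proposition \ref{prop: Ext as sheaves and as algebraic groups} identifies this with $\Ext^{n}_{\Alg/k}(A, B)$. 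The remaining identifications in the statement of Theorem \ref{thm: main theorem, comparison of Ext} transfer verbatim through the same chain.

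The main obstacle is the matching in (5) between the $\tilde{k}^{\rat}_{\fl}$-Čech cohomology of $f_{\ast} A$ and the pro-fppf cohomology of $A$: fiber products in $\tilde{k}^{\rat}$ are disjoint unions of residue fields at all primes of the scheme-theoretic product, not just the generic ones, so the ordinary Čech complexes on the two sites are not directly comparable. The generic Čech complex of Proposition \ref{prop: acyclicity of the generic Cech complex} is the bridge, since it simultaneously resolves $\Z[x]$ on the larger site and realizes the sheaf condition of Proposition \ref{prop: sheaf condition in the rational flat site} on the smaller one, and its acyclicity—proved by the generic-splitting technique already developed for Mac Lane's resolution—is what makes the argument go through.
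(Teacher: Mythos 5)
Your overall architecture matches the paper's: the generic \v{C}ech complex of Proposition \ref{prop: acyclicity of the generic Cech complex} is indeed the bridge for assertions \eqref{ass: comparison of profppf and ratfl: continuous} and \eqref{ass: comparison of profppf and ratfl: inj to flabby}, exactness of $f_{\ast}$ comes from refining a pro-fppf cover of a field by one of its points, and \eqref{ass: comparison of profppf and ratfl: higher push is zero} is the Grothendieck spectral sequence. Two steps, however, have genuine gaps. First, your proof of \eqref{ass: comparison of profppf and ratfl: continuous} by applying $\Hom(\;\cdot\;, A)$ to the tail of $\check{C}_{0}^{\gen}(y / x)$ only establishes the sheaf condition for \emph{abelian} sheaves; continuity of $f$, as well as assertions \eqref{ass: comparison of profppf and ratfl: identity} and \eqref{ass: comparison of profppf and ratfl: fully faithful} in the form you use them, require $f_{\ast}$ to preserve sheaves of \emph{sets}, and the coequalizer statement $x = \mathrm{coeq}(y \times_{x}^{\gen} y \rightrightarrows y)$ in $\Set(k^{\perf\fin}_{\pro\fppf})$ does not follow formally from acyclicity of the abelian complex. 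The paper proves the set-level statement directly: given $s \in F(y)$ with equal images in $F(y \times_{x}^{\gen} y)$, it manufactures a pro-fppf cover $Z$ of the full fiber product $y \times_{x} y$ (by the same device as Lemma \ref{lem: making fpqc covers}) on which the two pullbacks of $s$ coincide, and then descends. You need this argument or an equivalent.

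Second, your route to \eqref{ass: comparison of profppf and ratfl: Theorem B for the rational flat site} through the adjunction for $f$ is not justified. You invoke $\Ext^{n}_{\tilde{k}^{\rat}_{\fl}}(A, B) = \Ext^{n}_{k^{\perf\fin}_{\pro\fppf}}(f^{\ast}A, B)$ together with ``$f^{\ast}A = A$'', but $f$ is only a continuous map, not a morphism of sites (the counterexample of Proposition \ref{prop: pullback is not exact} adapts verbatim), so the derived adjunction requires $L f^{\ast} A = A$ --- the analogue of Proposition \ref{lem: inj to Hom-acyclic}, which is the hard Mac Lane--resolution input of the paper and has not been established for $\Spec \tilde{k}^{\rat}_{\fl}$. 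The paper sidesteps this: assertion \eqref{ass: comparison of profppf and ratfl: Theorem B for the rational flat site} follows from \eqref{ass: comparison of profppf and ratfl: rational flat and etale cohomology} because $g$ \emph{is} a morphism of sites with $g^{\ast}A = A$ (a quasi-algebraic $A$ being the disjoint union of its points, all in $k^{\rat}$), whence $R\sheafhom_{\tilde{k}^{\rat}_{\fl}}(A, B) = R\sheafhom_{k^{\rat}_{\et}}(A, R g_{\ast} B) = R\sheafhom_{k^{\rat}_{\et}}(A, B)$, and Theorem \ref{thm: main theorem, comparison of Ext} for $\Spec k^{\rat}_{\et}$ applies. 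Relatedly, your justification of \eqref{ass: comparison of profppf and ratfl: rational flat and etale cohomology} is too quick: rational flat covers include arbitrary non-algebraic field extensions, so the identification of $H^{i}(x^{\rat}_{\fl}, B)$ with Galois cohomology is not visible from the shape of the covers; it is precisely the content of \eqref{ass: comparison of profppf and ratfl: higher push is zero} combined with Corollary \ref{cor: profppf and etale cohomology of an algebraic group}.
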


Actually we need to extend $k^{\perf}$ to allow infinite disjoint unions.
Alternatively, we could define the site $\Spec \tilde{k}^{\rat}_{\fl}$
to be the category of finitely generated perfect fields over $k$
endowed with the topology whose covering sieves are all non-empty sieves.
These modifications do not change the associated topoi,
so we ignore this issue.

\begin{proof}
	\eqref{ass: comparison of profppf and ratfl: continuous}
	We need to show that $f_{\ast}$ sends sheaves to sheaves.
	Let $F \in \Set(k^{\perf\fin}_{\pro\fppf})$.
	Let $k'' / k'$ be a field extension in $k^{\rat}$
	and $x = \Spec k'$, $y = \Spec k''$.
	By Proposition \ref{prop: sheaf condition in the rational flat site},
	it is enough to show that the sequence
	$F(x) \to F(y) \rightrightarrows F(y \times_{x}^{\gen} y)$
	is exact.
	Let $s \in F(y)$ be such that $p_{1}(s) = p_{2}(s)$ in $F(y \times_{x}^{\gen} y)$,
	where $p_{i}$ is the $i$-th projection.
	To show $s \in F(x)$,
	it is enough to show that $p_{1}(s) = p_{2}(s)$ in $F(y \times_{x} y)$
	since $F \in \Set(k^{\perf\fin}_{\pro\fppf})$.
	Let $X = y \times_{x} y$, $Y = y \times_{x} X = y \times_{x} y \times_{x} y$,
	$Z_{1} = (y \times_{x}^{\gen} y) \times_{x} y \to Y$.
	Let $Z_{2} \to Y$ be obtained from $Z_{1} \to Y$
	by flipping the last two factors $y \times_{x} y$ of $Y$.
	Let $Z = Z_{1} \times_{Y} Z_{2}$.
	Let $p_{0}, p_{1}, p_{2} \colon Y = y \times_{x} y \times_{x} y \to y$ be the projections.
	In $F(Z_{1})$, we have $p_{0}(s) = p_{1}(s)$.
	In $F(Z_{2})$, we have $p_{0}(s) = p_{2}(s)$.
	Hence in $F(Z)$, we have $p_{1}(s) = p_{0}(s) = p_{2}(s)$.
	Hence it is enough to show that $Z / X$ is faithfully flat of profinite presentation.
	We check the two conditions of Lemma \ref{lem: making fpqc covers}
	for $Z_{1}$ and $Z_{2}$.
	It is enough to check them for $Z_{1}$ only.
	The flatness of $Z_{1} \to Y$ is obvious.
	Take a point $x_{0}$ of $X = y \times_{x} y$.
	Then we have a commutative diagram
		\[
			\begin{CD}
					y \times_{x}^{\gen} x_{0}
				@>>>
					y \times_{x} x_{0} = Y_{x_{0}}
				\\
				@VVV
				@VVV
				\\
					(y \times_{x}^{\gen} y) \times_{x} y = Z_{1}
				@>>>
					y \times_{x} y \times_{x} y = Y.
			\end{CD}
		\]
	Hence the fiber $(Z_{1})_{x_{0}}$ contains the generic point of $Y_{x_{0}}$.
	Hence the morphism $(Z_{1})_{x_{0}} \to Y_{x_{0}}$ is dominant.
	This completes the proof of \eqref{ass: comparison of profppf and ratfl: continuous}.
	
	\eqref{ass: comparison of profppf and ratfl: exact}
	It is enough to show that for any field $k' \in k^{\rat}$
	and a covering $X \to x$ in $\Spec k^{\perf\fin}_{\pro\fppf}$ with $x = \Spec k'$,
	there exists a covering $y \to x$ in $\Spec \tilde{k}^{\rat}_{\fl}$
	and an $x$-morphism $y \to X$.
	We can take such a $y$ to be any point of $X \ne \emptyset$.
	
	\eqref{ass: comparison of profppf and ratfl: identity}
	The argument done right above shows that
	$f_{\ast}$ commutes with sheafification.
	Let $f^{-1}$ be the pullback for presheaves of sets.
	Clearly $f_{\ast} f^{-1} = \id$.
	Combining these two facts, we have $f_{\ast} f^{\ast} = f_{\ast} f^{-1} = \id$.
	
	\eqref{ass: comparison of profppf and ratfl: fully faithful}
	This follows from \eqref{ass: comparison of profppf and ratfl: identity}.
	
	\eqref{ass: comparison of profppf and ratfl: inj to flabby}
	First note that
	the complex of the form \eqref{eq: generic Cech complex}
	in Proposition \ref{prop: acyclicity of the generic Cech complex}
	remains isomorphic to $\Z[x]$
	even if it is considered in $\Ab(\tilde{k}^{\rat}_{\fl})$
	by \eqref{ass: comparison of profppf and ratfl: exact} and
	\eqref{ass: comparison of profppf and ratfl: identity}.
	We denote this complex in $\Ab(\tilde{k}^{\rat}_{\fl})$
	by the same symbol $\check{C}^{\gen}(y / x)$.
	
	Let $x = \Spec k'$ and $I \in \Ab(k^{\perf\fin}_{\pro\fppf})$ acyclic.
	We want to show that $H^{j}(x^{\rat}_{\fl}, f_{\ast} I) = 0$ for $j \ge 1$.
	What we just saw above gives a spectral sequence
		\[
				E_{2}^{i j}
			=
				\dirlim_{y / x}
				H^{i} H^{j} \bigl(
					(y_{x}^{\gen, \,\cdot\,})^{\rat}_{\fl}, f_{\ast} I
				\bigl)
			\Longrightarrow
				H^{i + j}(x^{\rat}_{\fl}, f_{\ast} I)
		\]
	where $y = \Spec k''$ and $k'' \in k^{\rat}$ runs over all field extensions of $k'$,
	and the $H^{i}$ above is  the $i$-th cohomology of the complex
	$\Ext_{\tilde{k}^{\rat}_{\fl}}^{j}(\check{C}^{\gen}(y / x), f_{\ast} I)$
	of term-wise Ext groups.
	We have $E_{2}^{0 j} = 0$ for $j > 0$ since it is
	the $\dirlim_{y / x}$ of the kernel of
		\[
				H^{j}(y^{\rat}_{\fl}, f_{\ast} I)
			\to
				H^{j}((y \times_{x}^{\gen} y)^{\rat}_{\fl}, f_{\ast} I).
		\]
	For $E_{2}^{i 0}$ for $i > 0$, note that
	it is the $i$-th cohomology of the complex
		\[
				\Hom_{\tilde{k}^{\rat}}(\check{C}^{\gen}(y / x), f_{\ast} I)
			=
				\Hom_{k^{\perf\fin}}(\check{C}^{\gen}(y / x), I).
		\]
	Since $I$ is an acyclic sheaf, we have
		\[
				\Ext_{k^{\perf\fin}_{\pro\fppf}}^{i}(\Z[z], I)
			=
				H^{i}(z^{\perf\fin}_{\pro\fppf}, I)
			=
				0
		\]
	for any $z \in k^{\rat}$ and $i > 0$.
	Hence the isomorphism $\check{C}^{\gen}(y / x) = \Z[x]$
	(Proposition \ref{prop: acyclicity of the generic Cech complex}) implies 
	the acyclicity of $\Hom_{k^{\perf\fin}}(\check{C}^{\gen}(y / x), I)$ in positive degrees,
	so $E_{2}^{i 0} = 0$ for $i > 0$.
	Hence the above spectral sequence implies the result by induction on $j$.
	
	\eqref{ass: comparison of profppf and ratfl: higher push is zero}
	The property \eqref{ass: comparison of profppf and ratfl: inj to flabby}
	is precisely what we need to apply the Grothendieck spectral sequence
	for $R \Gamma$ (that is, the Leray spectral sequence).
	With \eqref{ass: comparison of profppf and ratfl: exact},
	the result follows.
	
	\eqref{ass: comparison of profppf and ratfl: rational flat and etale cohomology}
	This follows from \eqref{ass: comparison of profppf and ratfl: higher push is zero}
	and Corollary \ref{cor: profppf and etale cohomology of an algebraic group}.
	
	\eqref{ass: comparison of profppf and ratfl: Theorem B for the rational flat site}
	This follows from \eqref{ass: comparison of profppf and ratfl: rational flat and etale cohomology}.
\end{proof}


\end{document}